\documentclass{amsart}

\usepackage[utf8]{inputenc}
\usepackage{fullpage}
\usepackage[colorlinks,linkcolor=blue,citecolor=blue]{hyperref}
\usepackage{graphicx}
\usepackage{amsmath}
\usepackage{amsthm}
\usepackage{amsfonts}
\usepackage{amssymb}
\usepackage{mathtools}
\usepackage{array}   
\newcolumntype{L}{>{$}l<{$}}

\usepackage{pgf,tikz}
\usetikzlibrary{
  knots,
  hobby,
  decorations.pathreplacing,
  shapes.geometric,
  calc
}


\usepackage{mathrsfs}
\usetikzlibrary{arrows}
\newtheorem{Thm}{Theorem}[section]
\newtheorem{thm}{Theorem}[subsection]
\newtheorem{Cor}[Thm]{Corollary}
\newtheorem{prop}[Thm]{Proposition}
\newtheorem{lem}[Thm]{Lemma}
\newtheorem{conj}[Thm]{Conjecture}

\newtheorem{defn}[Thm]{Definition}

\theoremstyle{definition}

\newtheorem{exmp}[Thm]{Example}

\theoremstyle{remark}
\newtheorem{ques}[Thm]{Question}

\newtheorem{rmk}{Remark}[section]

\usepackage{eucal}
\usepackage{graphicx}
\usepackage{multirow}
\usepackage[all,knot]{xy}

\DeclareMathOperator{\SL}{SL}

\DeclareMathOperator{\SMod}{SMod}

\DeclareMathOperator{\Irr}{Irr}

\DeclareMathOperator{\Id}{Id}

\headheight=5pt \headsep=18pt
\footskip=18pt
\textheight=47pc \topskip=10pt
\textwidth=37pc
\calclayout
\newcommand{\RNum}[1]{\uppercase\expandafter{\romannumeral #1\relax}}

\newcommand{\BZ}{\mathbb{Z}}
\newcommand{\Zn}[1]{\BZ/{#1}\BZ}

\newcommand{\Sym}[6]{\Big\{\begin{matrix}
  #1 & #2 & #3 \\
  #4 & #5 & #6 
\end{matrix}\Big\}}
\usepackage{graphicx}
\graphicspath{ {./graph/} }
\renewcommand{\bot}{\boxtimes}

\newcommand{\PP}{\mathcal{P}}

\newcommand{\CC}{{\mathcal{C}}}
\newcommand{\DD}{\mathcal{D}}

\newcommand{\LL}{\mathcal{L}}
\newcommand{\ZZ}{\mathcal{Z}}

\def\FF{\mathcal{F}}

\newcommand{\Hom}{\operatorname{Hom}}
\newcommand{\bt}{\boxtimes}

\begin{document}
\title{On $\Zn2$ permutation gauging}
\author{Zhengwei Liu and Yuze Ruan}
\address{Z. LIU, Yau Mathematical Sciences Center and Department of Mathematics, Tsinghua University, Beijing, 100084, China}
\address{Yanqi Lake Beijing Institute of Mathematical Sciences and Applications, Huairou District,
Beijing, 101408, China}
\email{liuzhengwei@mail.tsinghua.edu.cn}

\address{Y. Ruan, Yau Mathematical Sciences Center and Department of Mathematics, Tsinghua University, Beijing, 100084, China}
\email{ryz22@mails.tsinghua.edu.cn}

\begin{abstract}
  We explicitly construct a (unitary) $\Zn2$ permutation gauging of a (unitary) modular category $\CC$. In particular, the formula for the modular data of the gauged theory is provided in terms of modular data of $\CC$,  which provides positive evidence of the reconstruction program. Moreover as a direct consequence, the formula for the fusion rules is derived, verifying the conjectured formula in \cite{EJP19}. Our construction explicitly shows the genus-$0$ data of the gauged theory contains higher genus data of the original theory. As applications, we obtain an identity for the modular data that does not come from modular group relations, and we prove that representations of the symmetric mapping class group (associated to closed surfaces) coming from weakly group-theoretical modular categories have finite images.    
\end{abstract}
\maketitle
\section{Introduction}
Given a unitary modular category $\CC$, one may ask if it can be realized as the representation category for a certain nice conformal field theory (CFT), known as the reconstruction program \cite{Jones17, EG22}. 
One possible way to approach this question is to build and verify the analogy between conformal field theory and modular category theory \cite{LX19}. 
Permutation orbifolds are important types of construction in conformal field theory \cite{BHS98, KLX05, Bantay98}. Given a completely rational conformal net or a $C_2$ co-finite vertex operator algebra, one considers the fixed-point theory under the permutation action on the $n$-fold tensor product of the original theory. The concrete constructions of the permutation orbifolds, for example \cite{Bantay98, BHS98, Bantay02, KLM01, LX04, KLX05}, suggest a relation between
genus-$0$ data of permutation orbifold CFT and higher genus data of original CFT (twisted/untwisted correspondence), see \cite{BS11, Gui21} for the topological/geometric interpretation of this correspondence. Moreover, the $S$-matrix (genus $1$ data) of the orbifold CFT was calculated (see \cite{Bantay98, KLX05, BHS98} for the $\Zn2$ permutation case and \cite{DRX21, DXY22} for general cyclic permutation case), which only involves the genus-$1$ data (the $S$- and $T$-matrices) of original CFT. Therefore the analogous concrete constructions and interpretations on the categorical side become particularly intriguing.

The categorical counterpart to the (permutation) orbifold construction is known as (permutation) gauging \cite{Muger05}; for the physical theory and examples of gauging, see, for instance, \cite{BM10, BBCW19, BB19}. The general gauging is a two-step process: given a modular category $\CC$ and a categorical action of a finite group $G$, the first step is to extend $\CC$ to a $G$-crossed braided category (with a $G$-action) then the second step is applying the $G$-equivariantization to obtain a new modular category \cite{DGNO, CGPW16}. For the special case, when we start with the $n$-fold Deligne tensor product $\CC^{\boxtimes n}$ and the action given by permuting the tensor factors, the extension and the whole process will be called permutation extension and permutation gauging respectively. 
The abstract existence of permutation extension was shown in \cite{GJ19}, by computing certain cohomological obstructions developed in \cite{ENO10}. However, the detailed structures of the gauged theory and their precise relation to the original category remain largely unexplored.

There are few related works in this direction.  \cite{BS11} gives a topological construction of the permutation extensions that are weakly rigid, and demonstrates rigidity for the $\Zn2$ case by working out the detailed structure morphisms.  \cite{LX19} constructs the $m$-interval Jones-Wassermann subfactors and proves the self-duality of them, which gives rise to certain subcategory of the permutation extension. \cite{EJP19} calculates the fusion rules for the $\Zn2$-gauged theory when $\CC$ is unpointed and conjectures the validity in general, see the discussion after \cite[Lem.~2.4]{EJP19}. 

In this paper, we completely settle the case when $G=\Zn2$. In particular, the modular data (the $S$- and $T$-matrices) of the gauged theory is explicitly calculated which coincides with those in \cite{Bantay98}, \cite{BHS98} and \cite{KLX05}. As a direct consequence, the formula for the fusion rules is derived, verifying the conjectured formula in \cite{EJP19}. 

More specifically, given a (unitary) modular category $\CC$, inspired by the methods developed in \cite{LX19}, we explicitly construct a twisted category $\DD$ which is a spherical $\Zn2$ permutation extension of $\CC\boxtimes\CC$.

\begin{Thm}\label{mainthm2}
  We obtain a $\Zn2$-graded (unitary) spherical fusion category $\DD$ with a $\Zn2$-crossed braiding, where the trivial graded part is braided equivalent to $\CC\boxtimes\CC$.    
\end{Thm}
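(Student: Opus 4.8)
The goal is to construct, by hand, a $\Zn2$-crossed braided spherical fusion category $\DD$ extending $\CC\boxtimes\CC$, where the nontrivial sector is the "twisted" sector. I would organize the proof around an explicit description of objects, morphisms, fusion, associativity, and braiding, checking the defining axioms of a $G$-crossed braided category in sequence. Since I expect the construction to be inspired by the subfactor-theoretic picture of \cite{LX19} (the $m$-interval Jones--Wassermann subfactors), the underlying idea is that the twisted sector should be governed by the *single* copy $\CC$ viewed "diagonally": its simple objects are indexed by $\Irr(\CC)$ but with a modified (half-integer) spin, reflecting the genus/twisted correspondence. So the plan is as follows.

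**Step 1: Define the category as a whole.** Set $\DD_0=\CC\boxtimes\CC$ with its standard structure, and let $\DD_1$ be a semisimple category whose simple objects are labeled by $\Irr(\CC)$ (possibly doubled or with a sign depending on a square root choice for the twist). Declare $\DD=\DD_0\oplus\DD_1$ with the obvious $\Zn2$-grading. The fusion $\DD_1\otimes\DD_1\to\DD_0$ should be read off from the conjectured formula in \cite{EJP19}; the fusion $\DD_0\otimes\DD_1\to\DD_1$ should be the "folding" map $(X\boxtimes Y)\otimes Z\mapsto (X\otimes Y)\otimes Z$ using the fusion of $\CC$ itself (this is where higher-genus pants decompositions of $\CC$ enter). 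I would define the $\Zn2$-action on $\DD$ by the braided autoequivalence of $\DD_0$ that swaps the two tensor factors (using the braiding of $\CC$ to make it coherent), extended to $\DD_1$ in the natural way.

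**Step 2: Verify the coherence data.** This is the technical heart. I must (a) supply associativity isomorphisms for all four gradings of triple products and check the pentagon; (b) supply the $\Zn2$-crossed braiding $c_{X,Y}\colon X\otimes Y\to (gY)\otimes X$ for $X$ in sector $g$ and check the two hexagon-type axioms of a $G$-crossed braided category; (c) check that the $\Zn2$-action is compatible with everything (the "equivariance" of the braiding). I expect to model the associators and braidings on explicit $6j$-symbols built from those of $\CC$ — essentially, products $N^{ab}_c N^{a'b'}_{c'}$-type data in the trivial sector, and single-copy $6j$-symbols, possibly with phases involving $T$-matrix entries, in sectors touching $\DD_1$. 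The spherical structure on $\DD$ is then built from that of $\CC\boxtimes\CC$ together with a choice of (pivotal) trace on $\DD_1$; sphericality reduces to a compatibility of the chosen quantum dimensions on $\DD_1$ with the half-integer twists.

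**The main obstacle.** The hardest part will be verifying the pentagon and hexagon identities for the sectors involving $\DD_1$, because these encode nontrivial identities among the modular data of $\CC$ — precisely the "identity not coming from modular group relations" advertised in the abstract. Concretely, the associativity of a product like $\DD_1\otimes\DD_0\otimes\DD_1\to\DD_0$ must be consistent whether one first fuses the two twisted objects (landing in $\DD_0$ via the \cite{EJP19} formula) or first fuses $\DD_0$ into one of the twisted objects; reconciling these requires a genuine identity relating the $6j$-symbols of $\CC\boxtimes\CC$, the $6j$-symbols of $\CC$, and the twists. I would isolate this as a separate lemma about modular categories, prove it using the standard graphical calculus (closing up trivalent networks, applying the relation between $S$, $T$, and the fusion coefficients, and using that $\CC$ is modular so that the $S$-matrix is invertible), and then feed it back in. The remaining axioms — the ones confined to $\DD_0$, or the ones expressing that the trivial sector is braided-equivalent to $\CC\boxtimes\CC$ — should be routine, following directly from the Deligne product structure. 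Finally, in the unitary case, one checks that all the structure morphisms can be chosen unitary, which follows from the unitarity of the $6j$-symbols and $S$-, $T$-matrices of $\CC$.
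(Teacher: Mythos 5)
Your proposal identifies the correct target — the fusion rules of $\DD$, the labeling of $\DD_1$ by $\Irr(\CC)$, the $\Zn2$-action via swapping the $\bot$-factors with the braiding of $\CC$ — but it proposes a genuinely different (and, as written, incomplete) route to constructing the coherence data. You plan to write down associators and crossed-braiding isomorphisms explicitly in terms of $6j$-symbols of $\CC$ and then verify the pentagon and hexagon axioms one by one, isolating the resulting modular-data identities as separate lemmas. The paper instead works entirely at the level of planar algebras: it defines generalized configuration spaces $Cf(n,\ZZ)=\Hom_{\CC^{\bot 2}}(1,\bigotimes_{i}\gamma Z_{2i-2}\bot Z_{2i-1})$, equips them with contraction, inclusion, and braiding tangles built from the graphical calculus of $\CC$, verifies that these satisfy a small finite list of local relations (the $(R0)$–$(R7)$ and $(C0)$–$(C5)$ relations defining a ``shaded $\Zn2$ braiding structure''), and then invokes an abstract lifting theorem (Theorem \ref{thm:lifting_thm}) converting this shaded planar algebra into an unshaded planar algebra with a $\Zn2$ braiding, whose category of projections is $\DD$. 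In this formalism the pentagon and all the crossed-braiding hexagons are automatic consequences of planar isotopy; one never confronts them head-on.

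The gap in your version is exactly at the point you flag as ``the main obstacle.'' Saying that you would ``isolate [the required coherence identity] as a separate lemma about modular categories, prove it using the standard graphical calculus, and feed it back in'' does not constitute a proof strategy: for sectors $\DD_1\otimes\DD_0\otimes\DD_1$ the pentagon is a nonlinear identity involving sums over $\Irr(\CC)$ of products of $6j$-symbols, twists and $S$-entries, and there is no obvious way to see that your proposed ansatz satisfies it. It is not a routine graphical-calculus identity — indeed the paper shows that isotopies in $\DD$ produce \emph{new} identities (e.g.\ Proposition \ref{New_identity} and Theorem \ref{thm:obstruct}) not generated by the Verlinde formula or $\SL_2(\BZ)$ relations, so you cannot expect to derive them from standard manipulations of $S,T,N$. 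The organizing principle that makes the verification tractable — encoding the associators implicitly as planar-isotopy invariance of the configuration-space planar algebra and reducing the coherence check to the finite list $(C0)$–$(C5)$, proven via Lemma \ref{lem:Theta_2_lemma}, Proposition \ref{braiding_squareroot}, Corollary \ref{Cor:F_braiding_rels}, etc.\ — is precisely what your proposal is missing, and it is the content of Sections 3--5 of the paper rather than something one can defer to a ``separate lemma.''
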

This category is generated by objects in the nontrivial graded part, which are labeled by $\hat{V}$ for $V\in obj(\CC)$. The fusion rules are as follows, with the $\Zn2$ action $\rho$ acting on objects by $\rho(\hat{V})=\hat{V}$ and $\rho(X\bot Y)=Y\bot X$.
\begin{itemize}
    \item $\hat{V}$ is simple in $\DD$ $\Leftrightarrow$ $V$ is simple in $\CC$.
    \item $\hat{V}\hat{W}=\hat{W}\hat{V}=\oplus_{X\in \Irr(\CC)}XV\boxtimes \bar{X}W$, 
    \item $\hat{V}X\boxtimes Y=X\boxtimes Y\hat{V}=\widehat{VXY}$.
\end{itemize}

More importantly, the construction relies solely on the categorical data of $\CC$, and we establish explicitly the relationship between genus-$0$ data of $\DD$ and the higher genus data of the original theory $\CC$ (the twisted/untwisted correspondence). This is illustrated in the following table, where $\gamma=\bigoplus_{X\in \Irr(\CC)} X\bot \bar{X}$, $Z_i\in obj(\CC)$ and $\SMod(\Sigma_g)$ denotes the symmetric mapping class group of the closed genus-$g$ surface \cite{BH71, BH73, MW21}.
\begin{table}[h]
    \centering
    \begin{tabular}{c|c}
      Original category  $\CC$ & Twisted category $\DD$  \\
      \hline
$Cf(n,\ZZ):=\Hom_{\CC^{\boxtimes2}}(1,\otimes^n_{i=1}\gamma Z_{2i-2}\boxtimes Z_{2i-1})$  & $\Hom_{\DD}(1,\otimes^{2n-1}_{i=0}\hat{Z}_i)$ \\
       \hline
       $\SMod(\Sigma_{n-1})$ action $T_i$ (Definition \ref{def:braiding})& Crossed braid action $\sigma_i$ \\
       \hline
       Fourier pairing/transform (Figure \ref{fig:Fourier_pairing}, \ref{fig:Fourier_transform})  &  $\sigma_{2n-2}\sigma_{2n-1}\cdots \sigma_1\sigma_0$\\
       $S$-matrix, $T$-matrix (when $n=2,\ Z_i=1$) &$\sigma_0\sigma_1\sigma_2,\ \ \sigma_0=\sigma_2$ \\
       \hline
       Hyperelliptic involution (Definition \ref{def:Z_2_action}) & $\Zn2$ action $\rho$\\
       \hline
      
       \end{tabular}
    \caption{The twisted/untwisted correspondence}
    \label{tab:t/unt}
\end{table}

Our next main theorem describes the relation among $T_i$ in table \ref{tab:t/unt}.
\begin{Thm}\label{mainthm3}
   $T_k\in End(Cf(n,\ZZ))\ (0\leq k\leq 2n-2)$ satisfy the following relations 
   \[
   \begin{aligned}
       T_kT_{k+2}&=T_{k+2}T_k,\\
       T_{k+1}T_{k}T_{k+1}&=\eta^{(-1)^k}T_{k}T_{k+1}T_{k},\\
       (T_{2n-2}T_{2n-1}\cdots T_1T_0)^{2n}&=(\prod^{2n-1}_{k=0}\theta^{\frac{1}{2}}_{Z_k})\Id,\\
       (T_0T_1\cdots T_{2n-2}T_{2n-2}\cdots T_1T_0)^2&=\theta^2_{Z_0}\Id.
   \end{aligned}
   \]
\end{Thm}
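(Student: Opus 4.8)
The plan is to verify the four identities by explicit graphical calculus, taking $T_k$ exactly as in Definition~\ref{def:braiding}; under the correspondence of Table~\ref{tab:t/unt} this is the same as computing with the $\Zn2$-crossed braiding $\sigma_k$ on the category $\DD$ provided by Theorem~\ref{mainthm2}, and I would pass freely between the two pictures. In both, $T_k$ is a morphism supported on the $k$-th and $(k+1)$-st of the $2n$ strands $\hat Z_0,\dots,\hat Z_{2n-1}$ issuing from the coupling $1\to\bigotimes_i\hat Z_i$, built from the braiding, the half-twist, and the duality of $\CC$; the half-twists appearing there are what will ultimately produce the square-root twists on the right-hand sides of the last two relations.

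The relation $T_kT_{k+2}=T_{k+2}T_k$ is immediate: $T_k$ and $T_{k+2}$ are supported on disjoint pairs of strands, so they commute by the interchange law.

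The twisted braid relation $T_{k+1}T_kT_{k+1}=\eta^{(-1)^k}T_kT_{k+1}T_k$ is the technical core, and I expect it to be the main obstacle. I would expand each side as a morphism on the three consecutive strands indexed $k,k+1,k+2$, decompose it into elementary braidings and half-twists of $\CC$, and carry one expression into the other using the two hexagon axioms of $\CC$ together with naturality. The complication is that the object on a ``$\gamma$-strand'', $\gamma=\bigoplus_X X\boxtimes\bar X$, straddles both tensor factors of $\CC\boxtimes\CC$, while the half-twist entering $T_k$ is attached to the first factor when $k$ is even and to the second when $k$ is odd; as a result the two sides do not coincide on the nose, and after all hexagon cancellations they differ by one closed $\gamma$-loop carrying a single unit of framing, which evaluates to $\eta$ for even $k$ and to $\eta^{-1}$ for odd $k$. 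From the $\DD$-side this is precisely the $\Zn2$-crossed hexagon: the crossing in $\sigma_k$ acts trivially on objects because $\rho(\hat V)=\hat V$, but the structure isomorphism $\rho(\hat V)\xrightarrow{\sim}\hat V$ is not the identity and its monodromy is the phase $\eta$ --- equivalently, the $T_k$ give a projective, not honest, representation of the symmetric mapping class group, with $\eta$ the relevant cocycle phase. The delicate points are that the hexagon moves must be carried out summand by summand along $\gamma$ and that the half-twist conventions have to be tracked carefully enough that no spurious scalar appears and the parity of the exponent is correct; the value of $\eta$ in terms of the data of $\CC$ is extracted here.

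The last two relations I would reduce, using the first two, to ``global'' framed tangles and then evaluate with the ribbon structure of $\CC$. The word $T_{2n-2}T_{2n-1}\cdots T_1T_0$ rotates the whole bundle of $2n$ strands by a full turn, so its $2n$-th power is the Dehn twist about the region carrying them --- up to an accumulated power of $\eta$ produced by the repeated twisted braid moves; since the asserted right-hand side carries no $\eta$, one checks, using the $(-1)^k$ sign pattern, that this accumulated power is trivial. The global twist then acts on $\Hom(1,\bigotimes_i\hat Z_i)$ by the global framing change, which by the ribbon axiom is the product of the individual twists up to the (trivial) twist of the outer object $1$; because each $T_k$ is assembled from half-twists this equals $\bigotimes_i\theta_{Z_i}^{1/2}$ once the square roots are fixed consistently, which is the third relation. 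For the fourth, $T_0T_1\cdots T_{2n-2}T_{2n-2}\cdots T_1T_0$ is --- up to a root of unity that is again a power of $\eta$ --- the monodromy of the $0$-th strand around all the others, i.e.\ the double braiding $c_{W,\hat Z_0}c_{\hat Z_0,W}$ with $W=\hat Z_1\cdots\hat Z_{2n-1}$; restricted to $\Hom(1,\hat Z_0\otimes W)=\Hom(\overline{\hat Z_0},W)$ it is a twist on the $\overline{\hat Z_0}$-isotypic part, so the word equals $\theta_{Z_0}$ up to a root of unity, and its square is $\theta_{Z_0}^2\,\Id$ --- the squaring in the identity being exactly what absorbs that residual root of unity.
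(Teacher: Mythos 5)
Your approach diverges substantially from the paper's.  The paper does not attack the second relation by hexagon moves in $\CC$ and the last two by framed-tangle topology; instead it first builds up Fourier transform machinery (Theorem~\ref{Thm:isomety_square_to_rot}, Corollary~\ref{Cor:F_braiding_rels}, Lemma~\ref{lem:braiding&Fourier_comp}, Corollary~\ref{Cor:full_comp}) and then derives all four relations algebraically: the twisted braid relation is Lemma~\ref{Lem:C3verify}, proved by conjugating with $F_{+,Z}$ to reduce to $T_1T_0T_1=\eta T_0T_1T_0$ and then cancelling against the known conjugation formula $\FF T_{k+1}\FF^{-1}=\eta^{(-1)^k}T_k$; the third and fourth relations come from reading $(F_+)^{2n}=\Id$ and $F_+=F_-\rho_2$ through Corollary~\ref{Cor:F_braiding_rels}, which expresses $F_\pm$ as $d_Z(\theta^{\mp1/2}_{Z_0}\otimes\Id)\prod_kT^{\pm1}_{2n-2-k}$.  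Your topological picture of $T_{2n-2}\cdots T_0$ as a single-step rotation whose $2n$-th power is the full Dehn twist, and of the palindromic word as a monodromy of strand $0$, matches exactly what Corollary~\ref{Cor:F_braiding_rels} encodes --- $F_\pm$ \emph{is} the rotation, with the half-twist factor made explicit --- so the geometric intuition is sound, and one could in principle turn it into a proof.

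That said, there are genuine gaps in the argument as written.  First, your claimed source of the $\eta$ factor is wrong: a closed $\gamma$-loop with one unit of framing, $\gamma=\bigoplus_X X\boxtimes\bar X$, evaluates to $\sum_X\theta_X^2\,d_X^4$ (up to normalization), which is not $\eta=p^+/\delta=\frac{1}{\delta}\sum_V\theta_V d_V^2$.  The $\eta$ in the paper comes from a single-layer $\Omega$-colored circle ($\Omega=\sum_V d_V V$) in $\CC$ acquiring one unit of framing, as one sees from Definition~\ref{def:braiding} where $T_{2k}$ is built from an $\Omega$-circle carrying a twist; tracking $\gamma$-loops in $\CC\boxtimes\CC$ will not reproduce the claimed constant.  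Second, your reading of the phase $\eta$ as the monodromy of the structure isomorphism $\rho(\hat V)\simeq\hat V$ in the $\Zn2$-crossed hexagon is not how the paper treats it: in the paper the actual crossed braiding is the normalized $\tilde T_k=\eta^{(-1)^k/2}T_k$, which satisfies the honest braid relation, and $\eta^{(-1)^k}$ appears purely as a normalization artifact between $T_k$ and $\tilde T_k$.  Third, for the last two relations you promise that the accumulated powers of $\eta$ produced by repeated twisted braid moves cancel, but you do not verify this and it is not obviously trivial from the $(-1)^k$ sign pattern alone; the paper avoids the bookkeeping entirely because Corollary~\ref{Cor:F_braiding_rels} and $\FF^2=\rho_1$ already absorb these constants.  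Filling these three gaps would require essentially re-deriving the paper's Fourier transform identities, at which point the algebraic route is shorter.
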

In the special case when $Z_i$ are all trivial, we show the $\SMod(\Sigma_n)$-representations are equivalent to those arising from Reshetikhin-Turaev topological quantum field theory \cite{RT91} associated to the original modular category $\CC$, see Theorem \ref{thm:Rel_to_RT}. in this case, the Fourier transform coincides with the one in \cite[Sec. ~2.3]{LX19}, which establishes the self duality of the $2$-interval Jones-Wassermann subfactor. In particular, when $n=2$, we recover the well-known projective representation of $\SL_2(\mathbb{Z})$. Hence, our results provide an interpretation of the twisted/untwisted correspondence.

As a result, the simple isotopes in the graphic calculus of $\DD$ may involve nontrivial higher genus data of $\CC$. 
This allows us to derive several interesting identities in modular categories. In particular, we obtain a non-modular-group-relation identity, see Proposition \ref{New_identity}, which generalizes the self-duality of $6j$-symbols \cite{Liu19}. Furthermore, we have the following obstruction (infinitely many obstructions through successive $\Zn2$ permutation gauging, see Remark \ref{rem:infinite_obstruc}) for modular category realization of $S$-matrix.
\begin{equation}\label{eq:intro_obstruct}
  Det|_{\Lambda}(S^{\otimes 6}P-I^{\otimes 6})=0.   
 \end{equation}
Where $P$ is the composition of a permutation matrix permuting the tensor factors by the cycle $(16)(25)(34)$ with $C^{\otimes 3}\times I^{\otimes3}$ ($C$ is the charge conjugation matrix). $Det|_\Lambda$ is the determinant of the matrix restricted in subspace $\Lambda$ spanned by admissible colours (see Theorem \ref{thm:obstruct}). We expect one can derive more obstructions by fully exploring the rich structures within the twisted category $\DD$.

Next, based on the concrete relation between $\CC$ and $\DD$ in table \ref{tab:t/unt}, we examine the equivariantization theory $\DD^{\Zn2}$ and derive the modular data in terms of the modular data of the original modular category $\CC$, where the simple objects in $\DD^{\Zn2}$ are of the form $(XY):=(X\bt Y\oplus Y\bt X,\  \Id)$, $(X,\pm):=(X\bt X,\pm\Id)$ and $(\hat{X},\pm):=(\hat{X},\pm\Id)$ ($X,Y\in \Irr(\CC)$, $1_{\DD^{\Zn2}}=(1,+)$).
\begin{Thm}\label{thm:mainthm4}
 Let $S', S^{eq}$ denote the $S (unnormalized)$ matrix for $\CC$ and the gauged theory $\DD^{\Zn2}$ respectively. We have $S^{eq}$ is symmetric, invertible and $(\epsilon_1,\epsilon_2, \epsilon\in\{\pm\})$
\[
\begin{aligned}
   S^{eq}_{(XY),(ZW)}&=2(S'_{X,Z}S'_{Y,W}+S'_{XW}S'_{YZ} )\\
   S^{eq}_{(XY),(Z,\epsilon)}&=2S'_{X,Z}S'_{Y,Z}\\
    S^{eq}_{(X,\epsilon_1),(Y,\epsilon_2)}&=(S'_{X,Y})^2\\
   S^{eq}_{(XY),(\hat{Z},\epsilon)}&=0\\
   S^{eq}_{(X,\epsilon_1),(\hat{Y},\epsilon_2)}&=\epsilon_1 \delta^2 S'_{X,Y}\\
   S^{eq}_{(\hat{X},\epsilon_1),(\hat{Y},\epsilon_2)}&=\epsilon_1\epsilon_2\eta^{-1}\theta^{1/2}_X\theta^{1/2}_Y(S'T^2S')_{X,Y}\\
\end{aligned}
\]   
And $T^{eq}$ is diagonal matrix with
\[
\begin{aligned}
T^{eq}_{(XY),(XY)}&=\theta_X\theta_Y\\
   T^{eq}_{(X,\epsilon),(X,\epsilon)}&=\theta^2_X\\
   T^{eq}_{(\hat{X},\epsilon),(\hat{X},\epsilon)}&=\epsilon\eta^{1/2}\theta^{1/2}_X    
\end{aligned}
\]
\end{Thm}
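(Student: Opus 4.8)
The plan is to compute the $S^{eq}$- and $T^{eq}$-matrices directly from the standard equivariantization formulas, feeding in the concrete crossed-braided structure of $\DD$ established in Theorem \ref{mainthm2} together with the twisted/untwisted dictionary of Table \ref{tab:t/unt}. Recall that for a $G$-crossed braided category the $S$- and $T$-matrices of $\DD^G$ are given by the usual categorical traces of (iso)morphisms built from the crossed braiding and the equivariant structure: for simple objects $(M,\phi)$ and $(N,\psi)$ one has $T^{eq}_{(M,\phi)}$ equal to the twist of $M$ twisted by $\phi$ when $M$ lies in a nontrivial graded component (for $g$-graded $M$ this is $\theta$ evaluated on the $g$-equivariant refinement), and $S^{eq}_{(M,\phi),(N,\psi)}$ is a sum over the $G$-orbit/stabilizer data of the double-braiding traces $\Tr(c_{N,M}\circ c_{M,N})$ decorated by $\phi,\psi$. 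So the first step is to record these formulas precisely in the three cases: both objects in the trivial component (i.e.\ of the form $(XY)$ or $(X,\pm)$), one in each, and both in the nontrivial component (the $(\hat X,\pm)$).

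First I would treat the ``trivial $\times$ trivial'' block. Here the relevant braiding is just the braiding of the trivial graded part, which by Theorem \ref{mainthm2} is $\CC\boxtimes\CC$; the objects $(XY)$ and $(X,\pm)$ are exactly the $\Zn2$-equivariantization of $\CC\boxtimes\CC$ under the flip, whose modular data is classical (Müger, \cite{DGNO}). Plugging in $S'_{X,Z}S'_{Y,W}$ for the $S$-matrix of $\CC\boxtimes\CC$ and summing over the two cosets of the flip gives the stated $S^{eq}_{(XY),(ZW)}=2(S'_{X,Z}S'_{Y,W}+S'_{XW}S'_{YZ})$ and its degenerations, and likewise $T^{eq}_{(XY)}=\theta_X\theta_Y$, $T^{eq}_{(X,\pm)}=\theta_X^2$. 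This block is essentially bookkeeping. The entries $S^{eq}_{(XY),(\hat Z,\epsilon)}=0$ follow because an object in the trivial component and one in the nontrivial component have no common simple summand after the relevant fusion/double-braiding, equivalently because the pairing vanishes on different ``sectors'' — this should drop out of the orbit-sum formula once one checks the stabilizers.

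The substantive computations are the two blocks involving $\hat X$. For $S^{eq}_{(X,\epsilon_1),(\hat Y,\epsilon_2)}$ and $T^{eq}_{(\hat X,\epsilon)}$ I would use the third fusion rule $\hat V\,(X\boxtimes Y)=\widehat{VXY}$ and the explicit crossed braiding $\sigma_i$ from Definition \ref{def:braiding}: the twist on $\hat X$ is, by the dictionary, half of a genus-data quantity, yielding $\theta^{1/2}_X$ up to the global factor $\eta^{1/2}$ and the sign $\epsilon$ coming from the equivariant structure, which matches $T^{eq}_{(\hat X,\epsilon)}=\epsilon\eta^{1/2}\theta^{1/2}_X$. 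The genuinely hard entry is $S^{eq}_{(\hat X,\epsilon_1),(\hat Y,\epsilon_2)}=\epsilon_1\epsilon_2\eta^{-1}\theta_X^{1/2}\theta_Y^{1/2}(S'T^2S')_{X,Y}$: the factor $(S'T^2S')_{X,Y}$ is precisely a higher-genus ($g=1$ with a handle, i.e.\ genus-$2$-flavored) invariant of $\CC$, and producing it requires evaluating the double crossed-braiding $\sigma$-word on $\hat X\otimes\hat Y$ and simplifying via the fusion rule $\hat V\hat W=\oplus_X XV\boxtimes\bar X W$ — i.e.\ the closed-up diagram is a sum over an internal $\CC$-label $X$ of $S$-$T$-$S$ composed along that label, exactly the twisted/untwisted correspondence in the last row of Table \ref{tab:t/unt}. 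I expect this $(\hat X,\hat Y)$ entry to be the main obstacle: one must carefully track the graphical calculus of $\DD$ (Section ~on $\DD$), the normalization factors $\delta,\eta$, the square roots $\theta^{1/2}$ and their well-definedness, and the signs from the $\pm$ equivariant structures; everything else is either classical equivariantization or a short diagrammatic lemma. Finally, symmetry and invertibility of $S^{eq}$ follow formally because $\DD^{\Zn2}$ is modular (it is the gauging of a modular category, cf.\ \cite{DGNO, CGPW16}), though one can also read invertibility off the block structure: the trivial-component block is $2\times$ the (invertible) equivariantized $S$-matrix of $\CC\boxtimes\CC$ and the $\hat X$-block is governed by $S'T^2S'$, which is invertible since $S'$ and $T$ are.
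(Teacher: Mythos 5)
Your proposal takes a genuinely different route. The paper does not invoke abstract orbit/stabilizer equivariantization formulas at all; instead it exploits the fact that the $\Zn2$-action on $\DD$ is strict and that all the structure morphisms live in the planar algebra $\PP$, so the entries of $S^{eq}$ and $T^{eq}$ are computed by literally evaluating four explicit closed tangles. The first tangle, via Proposition \ref{braiding_squareroot}, yields the $(XY)$--$(ZW)$, $(XY)$--$(Z,\pm)$ and $(X,\pm)$--$(Y,\pm)$ entries; the second and third yield the entries against $(\hat{Z},\pm)$; the last yields $S^{eq}_{(\hat{X},\epsilon_1),(\hat{Y},\epsilon_2)}$. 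Your route --- specializing the general $G$-crossed equivariantization modular-data formulas to the explicit crossed braiding of Theorem \ref{mainthm2} --- is in principle viable, and it does make the trivial$\times$trivial block look genuinely classical (M\"uger/\cite{DGNO}). But the hard entries are left as expectations: for $S^{eq}_{(\hat{X},\epsilon_1),(\hat{Y},\epsilon_2)}$ and $T^{eq}_{(\hat{X},\epsilon)}$ you correctly name the fusion rule $\hat{V}\hat{W}=\bigoplus_X XV\boxtimes\bar{X}W$ and the crossed braiding as the tools, and correctly anticipate $S'T^2S'$ and $\theta^{1/2}$, but you do not carry out the double-braid evaluation, and that evaluation --- where the powers of $\delta$ and $\eta$, the square-root twists, and the $\epsilon$-signs are actually pinned down --- is the entire content of the theorem.

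There is also a concrete flaw in your invertibility argument. The claim that ``the trivial-component block is $2\times$ the (invertible) equivariantized $S$-matrix of $\CC\boxtimes\CC$'' fails: that block \emph{is} exactly the $S$-matrix of the braided fusion category $(\CC\boxtimes\CC)^{\Zn2}$, which is degenerate (its M\"uger center is $\Rep(\Zn2)$), so invertibility of $S^{eq}$ cannot be read off block-by-block; it genuinely requires the $\hat{X}$-sector. Your primary argument --- that $\DD^{\Zn2}$ is modular because it is a gauging of a nondegenerate category, cf.\ \cite{DGNO,CGPW16} --- is fine and is the one to keep. Separately, note the normalization discrepancy between the statement you are proving, which gives $S^{eq}_{(X,\epsilon_1),(\hat{Y},\epsilon_2)}=\epsilon_1\delta^2 S'_{X,Y}$, and Theorem \ref{thm:S_data} in the body, which gives $\epsilon_1\delta S'_{X,Y}$; a complete proof must determine the correct power of $\delta$, which your sketch never confronts.
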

Our formulas generalize the formula in \cite{Bantay98, BHS98, KLX05} from $\Zn2$ permutation orbifold CFT to modular categories. In particular, we obtain the formulas for the fusion rules which verify the conjectured formula in \cite{EJP19} for any modular category $\CC$.  

Finally, as another application of the correspondence in table \ref{tab:t/unt}, we prove the $\SMod(\Sigma)$ version of Property $F$ conjecture \cite{NR11} for weakly group-theoretical modular categories.
\begin{Thm}\label{thm:ProF}
The symmetric mapping class group representations (associated with closed surfaces) for weakly group-theoretical modular categories have finite images. 
\end{Thm}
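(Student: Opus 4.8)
The proof will go through the twisted/untwisted dictionary of Table~\ref{tab:t/unt}, whose content is precisely that higher-genus symmetric mapping class group data of $\CC$ is genus-$0$ data of $\DD$. Fix a closed surface $\Sigma_g$ and put $n=g+1$. The hyperelliptic involution exhibits $\Sigma_g$ as a double branched cover of $S^2$ over $2n=2g+2$ points, so $\SMod(\Sigma_g)$ is a central $\Zn2$-extension of a quotient of the $2n$-strand sphere braid group; in particular every representation of $\SMod(\Sigma_g)$ is a projective representation of $B_{2n}$. By Theorem~\ref{thm:Rel_to_RT} (with all colours $Z_i$ trivial), the Reshetikhin--Turaev representation of $\SMod(\Sigma_g)$ attached to $\CC$ is equivalent to the representation generated by the crossed braid action $\{\sigma_i\}$, equivalently the $\{T_i\}$ of Theorem~\ref{mainthm3}, on $Cf(n,\ZZ)=\Hom_{\DD}(1,\hat{1}^{\otimes 2n})$. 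So it suffices to prove that this last representation has finite image.

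Next I would exhibit this $\{\sigma_i\}$-representation, up to root-of-unity scalars, as a braid group representation of the modular category $\DD^{\Zn2}$. Since $1\in\CC$ is simple, $\hat{1}$ is simple in $\DD$, and since $\rho(\hat{1})=\hat{1}$ its image under the induction functor $\DD\to\DD^{\Zn2}$ is $W:=(\hat{1},+)\oplus(\hat{1},-)$. Using the standard formula for the braiding of an equivariantization of a $\Zn2$-crossed braided category, the braiding of $\DD^{\Zn2}$ on the objects $(\hat{1},\pm)$ is the crossed braiding of $\DD$ modified only by the sign of the chosen equivariant structure; one then matches the $\{\sigma_i\}$-representation on $\Hom_{\DD}(1,\hat{1}^{\otimes 2n})$ with a direct summand of the $B_{2n}$-representation on $\Hom_{\DD^{\Zn2}}(1,W^{\otimes 2n})$. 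The only discrepancy is the collection of scalars recorded in Theorem~\ref{mainthm3} --- the anomaly $\eta^{(-1)^k}$ in $T_{k+1}T_kT_{k+1}=\eta^{(-1)^k}T_kT_{k+1}T_k$ and the twists $\theta^{1/2}_{Z_k}$ (all equal to $\theta_1^{1/2}$ here). Since the twist and the anomaly are roots of unity (Vafa; Ng--Schauenburg), $\langle\sigma_i\rangle$ is finite as soon as the image of $B_{2n}$ in $\End\big(\Hom_{\DD^{\Zn2}}(1,W^{\otimes 2n})\big)$ is finite.

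The statement now reduces to Property~$F$ at genus $0$ for $\DD^{\Zn2}$. Here I would first observe that $\DD^{\Zn2}$ is weakly group-theoretical: $\CC$ weakly group-theoretical forces $\CC\boxtimes\CC$ to be (closure under Deligne products); the spherical $\Zn2$-crossed extension $\DD$ of $\CC\boxtimes\CC$ from Theorem~\ref{mainthm2} is then weakly group-theoretical (closure under $G$-graded extensions with weakly group-theoretical trivial component); and finally $\DD^{\Zn2}$ is weakly group-theoretical (closure under equivariantization). By the known weakly group-theoretical case of the Property~$F$ conjecture \cite{NR11}, every braid group representation arising from $\DD^{\Zn2}$ has finite image --- in particular the one on $W^{\otimes 2n}$, after decomposing $W$ into simple constituents if one prefers to quote the simple-object version. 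Combining this with the previous paragraph gives the theorem, uniformly in $g$; note that a single $\Zn2$-gauging of $\CC$ handles all genera at once, only the number of strands $2n=2g+2$ changing.

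The main obstacle is the middle step: making precise, and uniform in $g$, the identification of the restricted Reshetikhin--Turaev representation of $\SMod(\Sigma_g)$ with a summand of an honest braid group representation of the modular category $\DD^{\Zn2}$, carefully tracking the $\Zn2$-action $\rho$, the spherical structure, and the central-extension scalars of Theorem~\ref{mainthm3}, so that finiteness of the $B_{2n}$-image genuinely transfers. This is exactly where Theorem~\ref{thm:Rel_to_RT} and the dictionary of Table~\ref{tab:t/unt} do the essential work; the closure properties of weakly group-theoretical fusion categories and the relevant known case of Property~$F$ are then inputs taken off the shelf.
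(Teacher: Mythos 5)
Your proof is correct and follows essentially the same route as the paper: (i) $\Zn2$ permutation gauging preserves the weakly group-theoretical property \cite{ENO11}; (ii) weakly group-theoretical modular categories have Property~$F$ \cite{GN21}; (iii) Theorem~\ref{thm:Rel_to_RT} identifies the $\SMod(\Sigma_g)$ representation of $\CC$ with the $B_{2n}$-action coming from $\DD^{\Zn2}$. Two small remarks: the paper avoids your ``direct summand of $\Hom_{\DD^{\Zn2}}(1,W^{\otimes 2n})$'' step by working directly with the simple object $(\hat{1},+)$ and the isomorphism $\Hom_{\DD^{\Zn2}}\bigl((1,+)\oplus(1,-),(\hat{1},+)^{\otimes 2n}\bigr)\cong Cf^1(n)$, where the planar-algebra description makes the identification of the $T_i$-action with the braid action literal rather than merely up to roots-of-unity scalars; and the weakly group-theoretical case of Property~$F$ is due to \cite{GN21}, not \cite{NR11}, which only states the conjecture.
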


The structure of the paper is as follows. In section $2$ we review the preliminaries of unitary modular categories, the gauging process and planar algebras. In section $3$, we 
discuss the lifting problem of a shaded planar algebra to an unshaded planar algebra and provide a sufficient condition using certain braiding structures on the even part of the shaded planar algebras. In section $4$, we define the generalized configuration spaces $Cf(n,\ZZ)$, the operations among them and check the compatibility among these operations. In section $5$, we prove Theorem 
 \ref{mainthm2}. In section $6$, we give some applications, and in particular, we prove Theorem \ref{mainthm3} and derive many identities among modular data and beyond, including the identity \ref{eq:intro_obstruct}. In section $7$, we examine the equivariantization theory and prove Theorem \ref{thm:mainthm4}. Finally, In section $8$, we prove Theorem \ref{thm:ProF}, and discuss some questions and future directions. 

\section*{Acknowledgement}
The authors would like to thank  Y. Kawahigashi, I. Runkel,  F. Xu for discussions and E. Rowell for useful comments on the preprint.  This work was supported by Beijing Municipal Science \& Technology Commission [Z221100002722017 to Z.L. and Y.R]; Beijing National Science Foundation Key Programs [Z220002 to Z.L.]; China’s National Key R\&D Programmes [2020YFA0713000 to Z.L.].
\section{Some background}
\subsection{Preliminaries of modular category}
We assume the readers are familiar with the notion of unitary modular categories and the corresponding graphic calculus for them. One may refer to \cite{Tur10, BK01, Row05f, EGNO} for more details.

Let $\CC$ be a unitary modular category, now we fix some notation and review some useful graphic calculus identities in $\CC$. 
\subsubsection{Notations}
\begin{itemize}
    \item $\overline{\ \ }$: the dual functor, $\bar{X},\ \bar{f}$,
    \item $\dagger$:  Involutive antilinear contravariant endofunctor from the unitary structure,  
    \item $d_X$: quantum dimension of the object $X$,
    \item $\mu$: global dimension of $\CC$,  $\mu=\dim(\CC)=\sum_{V\in \Irr(\CC)}d^2_V$,
    \item $\delta$: the positive square root of $\mu$,
    \item $\Omega$: the Kirby colour $\sum_{V\in \Irr(\CC)}d_V V$,
    \item $\theta^{\pm}_X$: the twist for $X$,
    \item $\theta^{\pm1/2}_X$: We pick a family of square root of twists, such that $\theta^{1/2}_{V}=\theta^{1/2}_{\bar{V}}$ for $V\in \Irr(\CC)$ and define $\theta^{1/2}_{V\oplus W}=\theta^{1/2}_{V}Id_V\oplus\theta^{1/2}_{W}Id_W $.  In particular, $\theta^{1/2}_{\bar{V}}=\overline{\theta^{1/2}_{V}}$.
    \item $p^{\pm}$: $p^{\pm}=\sum_{V\in \Irr(\CC)}\theta^{\pm}_Vd_V^2$, $p^+p^-=\mu$,
    \item $\eta$: $\eta=\frac{p^+}{\delta},\  \eta^{-1}=\frac{p^-}{\delta}$.
    \end{itemize}

Next we introduce some graphic notations. The diagrams are read from top to bottom.
\begin{itemize}
    \item The positive (negative) twists are denoted by the unfilled (filled) circle. We also denote the corresponding square root of the twists by the unfilled (filled) diamond symbol for simplicity.
    \[
    \includegraphics[width=250pt]{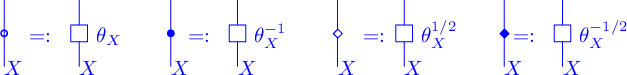}
    \]
    \item If the trivial object $1\in X$ with multiplicity one,  We use the unfilled square attached to the end of a strand (labeled by $X$) to denote the projection $X\mapsto 1$ or the inclusion $1\mapsto X$.
    \[
    \includegraphics[width=150pt]{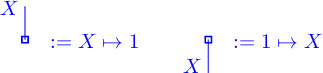}
    \]
    \item the $\dagger$ structure is given by the vertical reflection.
\end{itemize}
\subsubsection{Graphic calculus identities}
We use the red colored strands to indicate the strand is colored by $\Omega$. 
\begin{itemize}
    \item Twist property: $\theta_{X\otimes Y}=c_{Y,X}c_{X,Y}\theta_X\otimes\theta_Y$.
    \[
    \includegraphics[width=250pt]{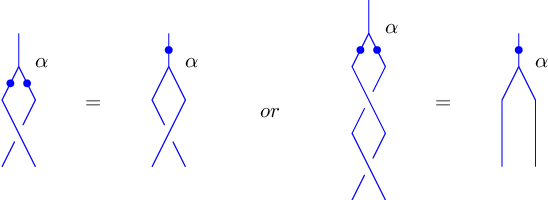}
    \]
    \item Cutting property of $\Omega$.
    \[
    \includegraphics[width=250pt]{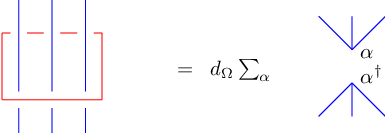}
    \]
    \item Handle slide property of $\Omega$
    \[
    \includegraphics[width=250pt]{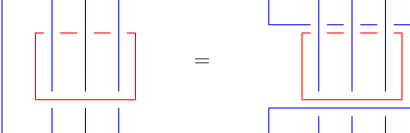}
    \]
\end{itemize}
\subsection{Preliminaries of gauging process}
We refer to \cite[Appendix~5]{TurHQFT}, \cite[Sec. 4]{DGNO} and \cite{CGPW16} for more details on the (unitary) $G$-crossed braided fusion category.

\begin{defn}
A (unitary) $G$-crossed braided fusion category $\mathcal{C}_G^{\times}$ is a (unitary) fusion category with
\begin{itemize}
\item (Unitary) action of $G$ on $\mathcal{C}_G^{\times}$.
\item Faithful $G$-grading $\mathcal{C}_G^{\times}=\oplus_{g\in G}\mathcal{C}_g$.
\item $G$-braiding: (unitary) natural isomorphisms 
$$
c_{X,Y}: X\otimes Y\rightarrow g(Y)\otimes X,\  g\in G, X\in \mathcal{C}_g, Y\in \mathcal{C}_G^{\times},
$$
\item Compatibility: 1. $g(\mathcal{C}_h)=\mathcal{C}_{ghg^{-1}}$, $\forall g,h\in G$,\\
2. $g(c_{X,Y})=c_{g(X),g(Y)}$, $\forall g,h\in G$,\\
3. some coherence, for example see \cite[Def.~4.41]{DGNO}
\end{itemize}
\end{defn}

\begin{defn}
Let $\mathcal{C}$ be a fusion category, $G$ be a finite group acting on $\mathcal{C}$. For any $g\in G$ let $\rho_g\in \underline{End(\CC)}$ be the corresponding action, and for any $g, h \in G$ let $\gamma_{g,h}$ be the isomorphism $\rho_g \circ \rho_h\cong \rho_{gh}$.The equivariantization of $\mathcal{C}$ is a category $\mathcal{C}^G$:
\begin{itemize}
\item \textbf{Object}: G-equivariant objects of $\mathcal{C}$, which is an object $X\in \CC$ together with isomorphisms $u_g: \rho_g(X)\cong X$ such that the diagram
$$
\begin{tikzpicture}[scale=1.5]
\node (A) at (-0.5,1) {$\rho_g(\rho_h(X))$};
\node (B) at (1.5,1) {$\rho_g(X)$};
\node (C) at (-0.5,0) {$\rho_{gh}(X)$};
\node (D) at (1.5,0) {$X$};
\path[->,font=\scriptsize,>=angle 90]
(A) edge node[above]{$\rho_g(u_h)$} (B)
(A) edge node[left]{$\gamma_{g,h}(X)$} (C)
(B) edge node[right]{$u_g$} (D)
(C) edge node[above]{$u_{gh}$} (D);
\end{tikzpicture}
$$
commutes for all $g,h\in G$.
\item \textbf{morphism}: morphisms in $\mathcal{C}$ commuting with $u_g$.
\end{itemize}  
\end{defn}

We refer to the \cite{Kir04, BN13} and \cite[Sec.~4]{DGNO} for more details and we summarize some known results about the $G$-equivariantization category.
\begin{itemize}
\item $\mathcal{C}^G$ is a fusion category.
\item $\CC$ is unitary, then $\CC^G$ is unitary by requiring $u_g$ are unitary isomorphisms \cite{CGPW16}.
\item If $\mathcal{C}$ is a $G$-crossed braided fusion category, then $\mathcal{C}^G$ is a braided fusion category, with the braiding $\tilde{c}$ defined as the composition
$$
X\otimes Y\xrightarrow{c_{X,Y}} g(Y)\otimes X\xrightarrow{u_g\otimes \Id_X} Y\otimes X
$$
in addition, $\mathcal{C}_1$ is non-degenerate $\Leftrightarrow\mathcal{C}^G$ is non-degenerate, where $\CC_1$ is the trivial grading part of $\CC$ 
\item $\dim(\mathcal{C}^G)=|G|\dim(\mathcal{C})$
\item The simple objects are parameterized by pairs $([X], \pi_X)$, where $[X]$ is an orbit of the $G$-action on simple objects of $\mathcal{C}$, and $\pi_X$ is an irreducible projective representation of $G_X$ (stabilizer group of $X$).
\item $\dim(([X], \pi_X))=\dim(\pi_X)\dim_{\mathcal{C}}(X)N_{[X]}$, where $N_{[X]}$ is the size of the orbit $[X]$.  
\end{itemize}

\begin{defn}\cite{CGPW16}
Let $\mathcal{C}$ be a unitary modular category with a global symmetry $(G,\rho)$, Gauging  is the two-step process:
\begin{itemize}
\item \textbf{G-extension}: Extend $\mathcal{C}$ to a $G$-crossed braided fusion category $\mathcal{C}^{\times}_{G}$.
\item \textbf{Equivariantization}: $\mathcal{C}^{\times}_{G}$ is equivariantized to a new unitary modular category $\mathcal{C}^{\times, G}_{G}$.  
\end{itemize}
\end{defn}

\subsection{Preliminaries of planar algebra}
The planar algebra was introduced by Vaughan Jones \cite{Jones21} as an axiomatization of standard 
invariant of subfactors. We refer to the following references for more details on the definition and relation to the category theory.
\begin{itemize}
    \item finite depth unshaded planar algebras and fusion categories. \cite[Sec.~ 2]{BHP12} \cite[Sec.~ 4.1]{MPS10}
    \item shaded planar algebra and 2-category \cite{Ghosh11}
\end{itemize}
 From now on, we will allow arcs to be colored and oriented, moreover we assume the coloring set has an involution which is compatible with reversing the orientations. In addition, as we are interested in the braiding structures, we allow arcs to intersect with each other transversely.
\begin{rmk}
It will be convenient to make the following simplifications and technical restrictions of our illustration of planar algebras.
\begin{itemize}
    \item Arcs intersect only the bottom boundary of input disks and output disks. And they cross the boundary of the output disk orthogonally.   
    \item The boundary of the disks and the arcs are all smooth. The corners appear in the diagram only for simplicity.
    \item We will often omit the output disks.
    \item We omit the $\$$ sign of the planar diagram if it is on the left. 
    \item The colors and orientations of the arcs will often be omitted if they are clear from the context.
\end{itemize}
\end{rmk}
It is noteworthy to present some special tangles, which give important structures of the planar algebra.
\begin{defn}\label{def:PA-composition}
 The algebra structure on $\{P_{2n}\}_{n\geq 0}$ is given by the following tangles (they are special cases of the general graded product defined in \cite{GJS10}), where $x_1\in P_{2(n_1+n_2)},\ x_2\in P_{2(n_2+n_3)},\ y\in P_{2n},\ z\in P_{2m}$. We use $x_1x_2$ to denote $x_1\wedge_{n_2} x_2$ when   $n_1=n_2=n_3=n$, and $y\otimes z$ for $y\otimes_{n_1}z$ when $n_1=n_2$.
  \[
 \includegraphics[width=350pt]{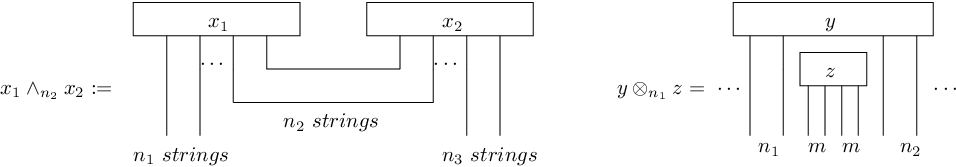}
 \]
\end{defn}
\begin{defn}
  A unitary structure on a (shaded) planar algebra is an antilinear involution $\Theta_1$ on the $P_n$, such that it is compatible with the reflection of the tangle. And the following pairing defines a positive definite inner product.
\[
  \includegraphics[width=200pt]{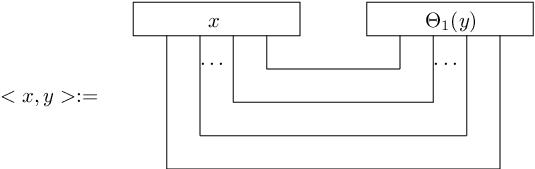}
\]
\end{defn}\label{def:Zn2_braiding(unshaded)}
The braiding on the certain planar algebra was discussed in \cite{MPS10}. They showed that braiding is a partial braiding in the sense that, away from certain input disks, diagrams in the planar algebra are equal up to framed three-dimensional isotopy. However the isotopy gives additional factors when going through these input disks \cite[Thm.~3.2]{MPS10}.

Here we will define a generalized notion of braidings on an unshaded planar algebra.
\begin{defn}
A $\Zn2$ braiding on an unshaded planar algebra is a braiding such that
\begin{itemize}
    \item there is an $\Zn2$ planar algebra action $\rho$ on $P_n$ (respect all the planar algebra structures including the braiding).
    \item Away from the input disks, the partition function $Z$ is invariant under framed three-dimensional isotopy.
    \item Strands can be isotopied above input disks. But being isotopied below input disks introduces the action $\rho$ on these input disks.
    \item The braiding is unitary when the underlying planar algebra is unitary.
\end{itemize}     
\end{defn}

Now to construct a planar algebra with a $\Zn2$ braiding, it is enough to define certain generating tangles and verify certain list of relations,  see for example the similar constructions in \cite{Jones21}, \cite{Tur10} and \cite{HPT23}. Here we will briefly summarize the construction.

\begin{figure}
    \centering
    \includegraphics[width=300pt]{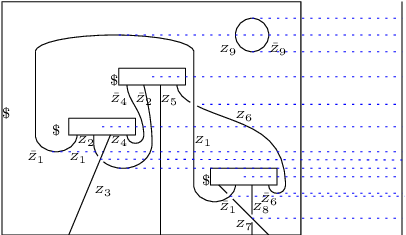}
    \caption{A generic planar tangle with braidings}
    \label{fig:PA-generic_planar_algebra}
\end{figure}

One defines the generic planar tangle such that the number of singular points (cap, cup, input disc, crossing) is finite and they are distinct with respect to the height function, see Figure \ref{fig:PA-generic_planar_algebra}. Now any planar tangle can be deformed to a generic one and thus can be assigned a map using defined generators. The isotopic (framed three-dimensional isotopy away from input disks) generic planar algebras can be related through the following moves. 
\begin{itemize}
    \item Framed three-dimensional isotopy in the class of generic planar algebra,
    \item Reidemeister moves, see Figure \ref{fig:Reidemeister},
     \begin{figure}
        \centering
        \includegraphics[width=350pt]{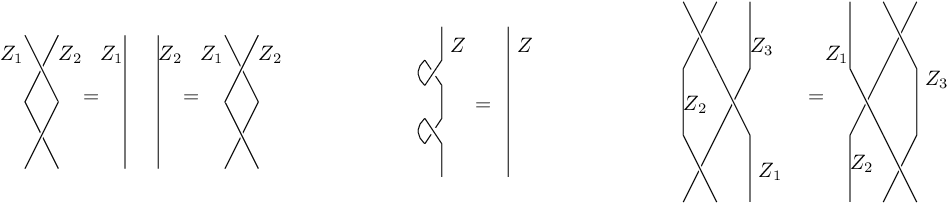}
        \caption{Reidemeister moves}
        \label{fig:Reidemeister}
    \end{figure}    
    \item Interchanging the order of two singular points with respect to
the height function, see Figure \ref{fig:exchagesingularity1}, \ref{fig:exchagesingularity2}
\begin{figure}
    \centering
    \includegraphics[width=0.8\linewidth]{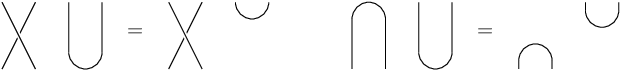}
    \caption{Interchanging the order of two singular points (without input disks)}
    \label{fig:exchagesingularity1}
\end{figure}

\begin{figure}
    \centering
    \includegraphics[width=1\linewidth]{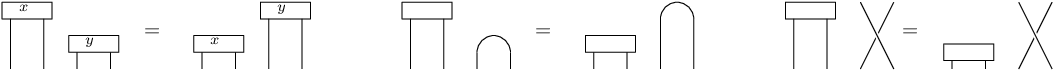}
    \caption{Interchanging the order of two singular points (with input disks)}
    \label{fig:exchagesingularity2}
\end{figure}
\item  Birth or annihilation of a pair of local extrema, see Figure \ref{fig:pa_zigzag},
\begin{figure}
    \centering
    \includegraphics[width=0.4\linewidth]{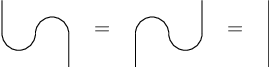}
    \caption{Zigzag relation}
    \label{fig:pa_zigzag}
\end{figure}
\item  Moving the braiding singularity along caps and cups, see Figure \ref{fig:braidingcapcup},
\begin{figure}
    \centering
    \includegraphics[width=0.7\linewidth]{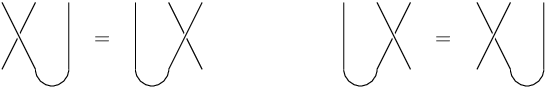}
    \caption{Moving the braiding singularity along cups}
    \label{fig:braidingcapcup}
\end{figure}
\item  $2\pi$-rotation invariance, see Figure \ref{fig:2pi_rot_inv}. 
\begin{figure}
        \centering
        \includegraphics[width=0.6\linewidth]{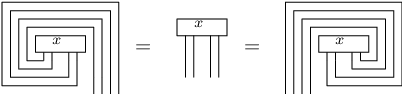}
        \caption{$2\pi$-rotation invariance.}
        \label{fig:2pi_rot_inv}
    \end{figure}
\end{itemize}
They will give the relations among generators to ensure the assigned maps are well-defined.


\section{Lifting the shaded planar algebra}
In this section we will discuss the lifting problem of a shaded planar algebras to unshaded ones, the goal is to give a sufficient condition in terms of certain braiding structures on the even part of the shaded planar algebra. 
\begin{defn}\cite[Def.~1.3]{LMP20}\label{def_symmtric_self_dual}
A shaded planar algebra $(\PP_+,\PP_-)$ is called \textbf{self-dual}, if there are isomorphisms $\Phi_{\pm}$ ($(\Phi_{+},\Phi_{-}):(\PP_+,\PP_-)\to (\PP_-,\PP_+)$), satisfying
shaded planar algebra isomorphism: natural with respect to the tangle action (with the color flipped). Moreover the shaded planar algebra is \textbf{symmetrically self-dual} if $\Phi_{\mp}\circ\Phi_{\pm}=1_{\PP_{\pm}}$.
\end{defn}
In \cite{LMP20}, the following theorem is proved.
\begin{Thm}\cite[Thm.~ A]{LMP20}
Given a symmetrically self-dual shaded planar algebra $(\PP,\Phi_{\pm})$, it can be lifted to an unshaded planar algebra.
\end{Thm}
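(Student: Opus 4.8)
The plan is to show that a symmetrically self-dual shaded planar algebra $(\PP_+,\PP_-,\Phi_\pm)$ carries enough structure to glue the shaded and unshaded pictures together into a single unshaded planar algebra, following the philosophy that an unshaded strand should be ``a shaded strand superimposed with its mirror image,'' and that the self-duality isomorphism $\Phi_\pm$ is exactly the datum that lets one cross the shading boundary. Concretely, I would build the unshaded planar algebra $\mathcal{Q}$ by declaring $\mathcal{Q}_n$ to be an appropriate subspace (or the whole space) of the shaded space attached to a disk with $2n$ boundary points where the shading is now an internal degree of freedom: one thinks of each unshaded strand as a pair of parallel oppositely-shaded regions, so that an unshaded $n$-box corresponds to a shaded box with $2n$ marked points, and the $\PP_+$ vs $\PP_-$ ambiguity of the outer region is killed precisely by identifying $\PP_+$ with $\PP_-$ via $\Phi_+$. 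The condition $\Phi_-\circ\Phi_+ = 1$ guarantees this identification is consistent (doing it twice is the identity), which is what makes the superimposition well defined rather than twisted.

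The key steps, in order, would be: (1) fix the combinatorial dictionary turning an unshaded planar tangle into a shaded one by replacing each unshaded strand with a thin bigon of shaded/unshaded regions and each unshaded disk with a shaded disk of doubled cardinality, and check this is functorial for composition of tangles; (2) use $\Phi_\pm$ to define the action of those unshaded tangles whose associated shaded tangle has an ``inconsistent'' outer shading, reducing everything to the genuine shaded planar-algebra structure on $(\PP_+,\PP_-)$ — here one must verify that different ways of resolving shading ambiguities (different choices of where to insert $\Phi_\pm$) agree, which is exactly where symmetry $\Phi_-\Phi_+ = \mathrm{id}$ and naturality of $\Phi_\pm$ with respect to tangles are used; (3) verify the planar-algebra axioms for the resulting unshaded structure (isotopy invariance, compatibility with composition, the modulus/normalization, and — if the shaded algebra is unitary — compatibility of the involution $\Theta$ with reflection, using that $\Phi_\pm$ can be chosen unitary); (4) identify $\mathcal{Q}_0$ (and low boxes) to confirm the output really is an unshaded planar algebra in the sense recalled in the excerpt, i.e.\ that $\dim \mathcal{Q}_0 = 1$ and the closed-diagram evaluations match.

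I expect the main obstacle to be step (2): coherence. Once one allows crossings and general (non-planar-in-3-space) diagrams, a single unshaded tangle has many inequivalent presentations as a shaded picture-with-$\Phi$-insertions, related by sliding $\Phi_+$ and $\Phi_-$ past cups, caps, input disks, and each other; one must show the value assigned is independent of all these choices. The clean way to organize this is to present unshaded tangles by the generators-and-relations package described in the excerpt (Reidemeister moves, interchanging heights of singular points, birth/death of extrema, moving braidings along cups/caps, $2\pi$-rotation invariance) and check relation-by-relation that the $\Phi_\pm$-decorated shaded interpretation is invariant — each relation becoming either a consequence of naturality of $\Phi_\pm$ under the shaded tangle action or of $\Phi_-\Phi_+=1$. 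The subtler relations will be the ones involving a shading change interacting with a local extremum or with the $2\pi$ rotation; there $\Phi_\pm$ being a morphism of shaded planar algebras (hence compatible with the rotation tangle and with cups/caps) is precisely what saves us. Everything else — associativity, positivity of the inner product, the modulus — should follow routinely from the corresponding facts for $(\PP_+,\PP_-)$ transported along the dictionary.
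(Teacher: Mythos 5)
The theorem you are proving is cited (it is Theorem A of \cite{LMP20}) rather than proved in this paper; the closest thing to a proof here is the construction sketched in the proof of Theorem~\ref{thm:lifting_thm}, which is the braided variant. The actual construction is: declare $P^u_{2n}:=P_{n,+}$ and $P^u_{2n+1}:=0$; given an unshaded tangle, impose the unique checkerboard shading with the region at the output disk's distinguished interval \emph{unshaded}; then, for each input disk whose distinguished interval lands in a shaded region, pre-compose with $\Phi_\pm$. No strands are doubled and no ambiguity arises: once the outer region is declared unshaded the shading of every region is forced, so the $\Phi$-insertions are canonically placed, and the only thing to verify is compatibility with tangle composition, which is exactly where naturality of $\Phi_\pm$ and $\Phi_-\Phi_+=\mathrm{id}$ enter.

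Your proposal gets the conceptual role of $\Phi_\pm$ and of the relation $\Phi_-\Phi_+=\mathrm{id}$ right, but the combinatorial dictionary you set up is not the one used, and I do not think it works. You describe a \emph{doubling} (each unshaded strand becomes a ``thin bigon,'' an unshaded $n$-box becomes a shaded box with $2n$ marked points); that produces the even-part/bimodule picture, in which the shading pattern of the doubled diagram is already globally consistent and $\Phi_\pm$ plays no role at all -- so your construction would not actually use the self-duality, and it would not yield a planar algebra whose box spaces recover $(\PP_+,\PP_-)$ in the intended way. In the genuine lift there is no doubling, the odd box spaces vanish (a fact your proposal never states, but which is forced because only disks with an even number of boundary points admit a checkerboard shading), and $\Phi$ is applied at \emph{input} disks, not to fix an ``outer region ambiguity'' (the outer region's shading is simply chosen once and for all). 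Finally, the coherence worries you raise about Reidemeister moves, sliding braidings along cups/caps, and $2\pi$-rotation invariance belong to this paper's $\Zn2$-braided refinement (Definition~\ref{def:Zn2_braiding(unshaded)} and Theorem~\ref{thm:lifting_thm}); they are not part of the cited LMP20 statement, where the only isotopies to control are planar and the coherence check reduces to naturality of $\Phi_\pm$ and $\Phi_-\Phi_+=\mathrm{id}$ under substitution of tangles.
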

Next, we will define certain shaded $\Zn2$ braiding structures on the even part of shaded planar algebras.

\begin{defn}\label{def:Zn2_braiding_struc}
 The shaded $\Zn2$ braiding structure is defined to be a sequence of tangles.
 
 \includegraphics[width=350pt]{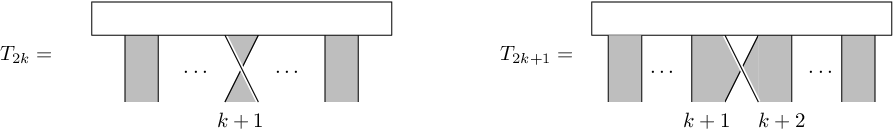}, 
 
 which satisfies the following relations (including all possible braidings and ways of shading).
 \[
 \includegraphics[width=400pt]{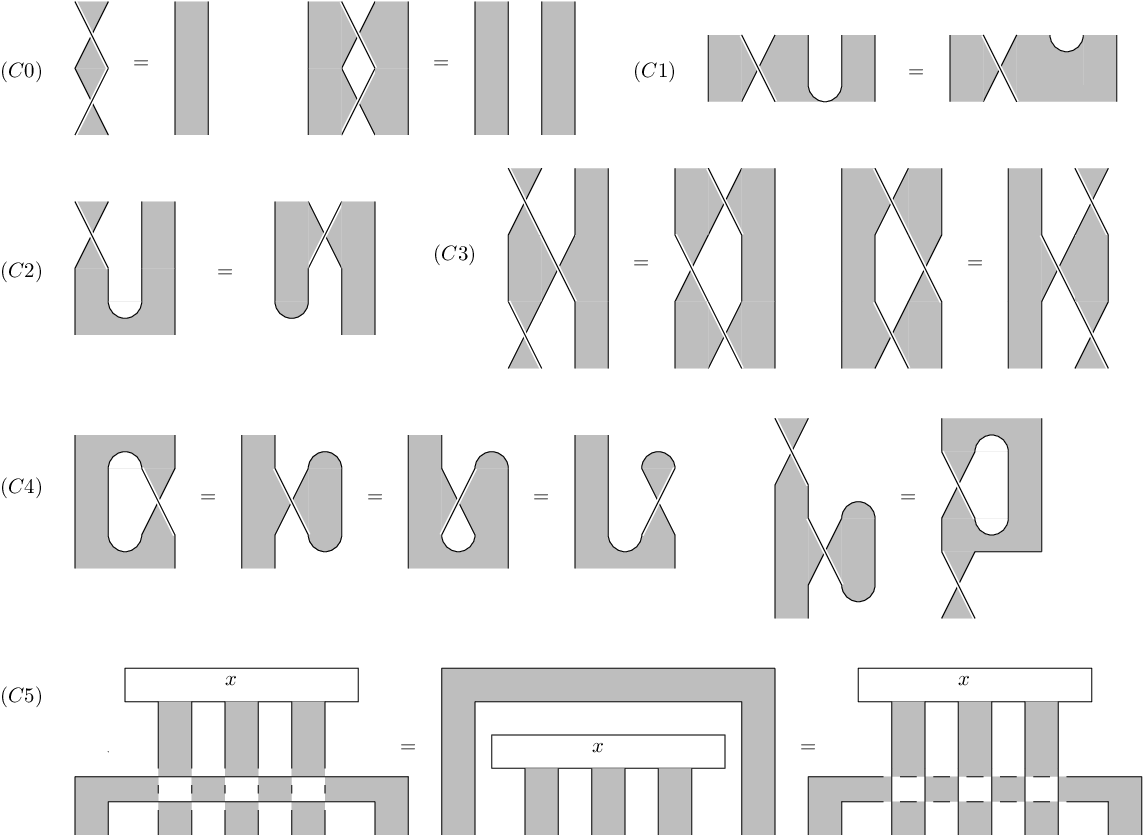}
 \]
\end{defn}

\begin{rmk}

\begin{itemize} 
    \item $(C1)$ indicates the compatibility of interchanging the order of braiding singularity and caps or cups with respect to the height function. For interchanging singularities of other types are inherited in the definition of shaded planar algebra. 
    \item $(C4)$ is the analogy of twists in the shaded version. We omit several relations there for simplicity: the similar equalities hold with the twist in the first strand, a positive twist and a negative twist in the same strand can be canceled. 
    \item $(C5)$ is well-defined relation once $(C2)$ is satisfied, since one can move the braiding singularities freely along the cap and cups. One can compare $(C5)$ with the symmetrically self-duality (Definition \ref{def_symmtric_self_dual}).
    \item All the strands can be labeled here, then the relations with all possible labels should be satisfied.
\end{itemize}
   
\end{rmk}

\begin{lem}\label{lem:Z_2_action}
The shaded $\Zn2$ braiding induces a $\Zn2$ action $\rho$ on $P_{n,+}$. In particular, $\rho(x)\otimes\rho(y)=\rho(x\otimes y)$ and $\rho(x)\rho(y)=\rho(xy)$.  
\end{lem}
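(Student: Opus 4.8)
The plan is to construct the $\Zn2$ action $\rho$ on the even part $P_{n,+}$ directly from the shaded braiding tangles, then verify the required compatibilities. The natural candidate for $\rho$ is the ``full twist'' or ``double braiding by a shaded cap/cup loop'' operation: given $x\in P_{n,+}$, one takes the braiding structure tangles and wraps the input strands of $x$ around a closed shaded region (equivalently, applies the generator that moves all strands past an unshaded region and back). Concretely, I would define $\rho(x)$ by the planar tangle that braids the $2n$ boundary points of $x$ over (and under) an auxiliary strand bounding the opposite shading and closing it off, so that $\rho$ is visibly a planar-algebra-morphism-flavored operation even though it does not respect the shading pointwise. The relations $(C1)$–$(C5)$ are exactly what is needed for this to be well-defined as a map $P_{n,+}\to P_{n,+}$ and independent of the chosen generic representative of the tangle.

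First I would pin down the precise tangle defining $\rho$ and check $\rho^2=\id$: this should follow from $(C4)$ (the twist relations, since the square of the defining operation is a full twist of a contractible bundle of strands, which the shaded twist relation trivializes) together with $(C1)$–$(C2)$ to slide the braiding singularities into cancelling position. Next, $\rho$ being antilinear or linear: it is linear, being built from tangle composition, so no issue there. Then the two algebraic compatibilities: $\rho(x\otimes y)=\rho(x)\otimes\rho(y)$ and $\rho(xy)=\rho(x)\rho(y)$. For the tensor product $\otimes_{n_1}$ from Definition \ref{def:PA-composition}, the point is that the defining tangle for $\rho$ applied to $x\otimes y$ can be isotoped — using $(C1)$ to move braiding past cups/caps and Figure \ref{fig:exchagesingularity1} to reorder singularities — so that the auxiliary braided strand splits into one loop encircling the $x$-block and one encircling the $y$-block; this is where the precise placement of the $\$$ sign and the ``graded product'' structure matters. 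For the multiplication $\wedge_{n}$, a similar argument applies: stacking $x$ above $y$ and braiding the outer boundary is isotopic, via the same moves, to braiding $x$'s and $y$'s boundaries separately and then stacking, because the intermediate strands connecting $x$ to $y$ can be pulled straight through the auxiliary region (here one uses that those connecting strands bound the same shading throughout, so no $\rho$-factors are incurred).

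The main obstacle I expect is bookkeeping the shading carefully in the isotopies: unlike the unshaded case, moving a braiding singularity below versus above a region is not symmetric, and the ``closing off'' strand that defines $\rho$ separates two differently shaded regions, so each isotopy move must be checked to preserve the shaded-compatibility hypotheses of $(C1)$–$(C5)$ rather than just topological equivalence. In particular I would be careful that the definition of $\rho$ does not secretly depend on whether the auxiliary loop is placed on the shaded or unshaded side — relation $(C5)$ (the analogue of symmetric self-duality) is presumably what guarantees this, and I would make that dependence explicit before doing anything else. A secondary, more routine check is that $\rho$ fixes the identity tangles and is compatible with the inclusion $P_{n,+}\hookrightarrow P_{n+1,+}$, which follows immediately once the generator-level description is fixed. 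Once $\rho$ is well-defined and multiplicative for both products, $\rho^2=\id$ upgrades it to a genuine $\Zn2$ action, completing the proof.
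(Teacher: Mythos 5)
Your general strategy — defining $\rho$ by a tangle that wraps the input disk in auxiliary loops and then verifying compatibilities from the axioms — matches the paper, which places the input disk between two $1$-labeled circles (normalized by $c^{-1}$, $c$ the double loop value). But your attribution of the axioms is off in ways that matter. You claim $\rho^2=\id$ follows from $(C4)$, reasoning that the square of the wrapping tangle is a full twist on a contractible bundle, which the twist relation should trivialize. That is not what $(C4)$ provides: $(C4)$ only defines twists and lets a positive twist cancel against a negative one. The square of $\rho$ produces a \emph{same-sign} double twist, which twist-cancellation does not touch, and twists are generally nontrivial scalars in a ribbon setting. The axiom that actually forces $\rho^2=\id$ is $(C5)$, the shaded analogue of symmetric self-duality from Definition \ref{def_symmtric_self_dual}; this is precisely the ``order two'' input of the $\Zn2$-braiding structure. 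You do mention $(C5)$, but only as ``presumably'' handling well-definedness with respect to the shading choice — you never connect it to $\rho^2=\id$, which is its main job here.

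A secondary issue is efficiency and a missing normalization. Your multiplicativity check proceeds by a detailed isotopy argument using $(C1)$ and the singularity-reordering moves to split the auxiliary loop into blocks around $x$ and $y$. The paper gets this in one stroke: $(C3)$ (the Reidemeister III / Yang–Baxter type relation that governs moving strands past input disks) implies $\rho$ commutes with \emph{all} planar tangle operations, so $\rho(x\otimes y)=\rho(x)\otimes\rho(y)$ and $\rho(xy)=\rho(x)\rho(y)$ follow simultaneously without a case analysis for $\otimes_{n_1}$ versus $\wedge_n$. Finally, you never normalize the wrapping tangle by the double loop value $c$; without that factor the map built from two encircling loops picks up a scalar, and the involutivity and unitality of $\rho$ would fail. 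Fixing the $(C4)\leftrightarrow(C5)$ swap and adding the $c^{-1}$ normalization would bring your argument in line with the paper's.
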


\begin{proof}
   The action $\rho$ is given by the following tangle, the equality follows from $(C3)$. Where $c>0$ is the double loop value of the original shaded planar algebra \includegraphics[width=50pt]{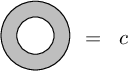}, which depends on the label of the loops.
\[
\includegraphics[width=350pt]{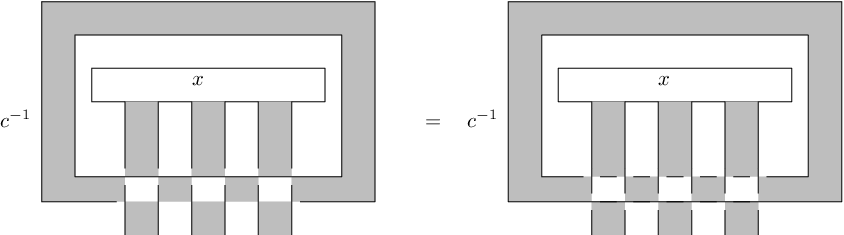}
\]

from $(C3)$ it is a planar algebra action (commute with all the planar tangle actions) and respects the braiding. Together with $(C5)$, one has it is a $\Zn2$ action.   
\end{proof}
\begin{Thm}\label{thm:lifting_thm}
If there exists a shaded $\Zn2$ braiding structure on the even part of a shaded planar algebra, then it can be lifted to an unshaded planar algebra together with a $\Zn2$ braiding.    
\end{Thm}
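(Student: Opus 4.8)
The plan is to combine the two main tools already assembled in the excerpt: the lifting theorem of \cite{LMP20} (restated as Theorem~\ref{thm:lifting_thm}'s hypothesis-free precursor) and the explicit structure of the shaded $\Zn2$ braiding from Definition~\ref{def:Zn2_braiding_struc}. First I would observe that the relation $(C5)$, together with the freedom to slide braiding singularities along caps and cups guaranteed by $(C1)$--$(C2)$, produces precisely a pair of maps $\Phi_{\pm}$ between the two shadings of the even part; the content of $(C5)$ is the identity $\Phi_{\mp}\circ\Phi_{\pm}=1$, i.e.\ symmetric self-duality in the sense of Definition~\ref{def_symmtric_self_dual}. One must check that $\Phi_{\pm}$ really is a shaded planar algebra isomorphism --- natural with respect to all tangle actions with colors flipped --- and this is where relations $(C1)$ and $(C3)$ do the work: $(C1)$ handles compatibility with cups/caps and $(C3)$ handles compatibility with input disks (the "bulk" tangles). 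Once symmetric self-duality is in hand, invoke \cite[Thm.~A]{LMP20} to lift the shaded planar algebra to an unshaded one.

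The second half is to equip this unshaded lift with a $\Zn2$ braiding in the sense of Definition~\ref{def:Zn2_braiding(unshaded)}. Here I would use that the unshaded planar algebra can be presented by generating tangles and the list of relations recalled just before Section~3 (Reidemeister moves, interchange of singularities, zigzag, braiding along cups/caps, $2\pi$-rotation invariance). The braiding generators on the unshaded lift are built from the shaded braiding tangles of Definition~\ref{def:Zn2_braiding_struc} by forgetting the shading (more precisely, by summing/averaging over the two shadings in the way dictated by the lifting construction), and the $\Zn2$ action $\rho$ is the one produced by Lemma~\ref{lem:Z_2_action}, transported to the unshaded picture. The required relations for a $\Zn2$ braiding then follow relation-by-relation: Reidemeister~I from $(C4)$ (the shaded twist relation), Reidemeister~II and III and the interchange-of-singularities moves from $(C1)$ together with the naturality of the shaded braiding, the braiding-along-cups move from $(C2)$, and the rule that isotoping a strand below an input disk introduces $\rho$ on that disk from $(C3)$ (this is exactly the partial-braiding phenomenon of \cite[Thm.~3.2]{MPS10}, now with the defect identified as the involution $\rho$ rather than a more general operator). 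Unitarity of the unshaded braiding follows from unitarity of the shaded one once the underlying planar algebra is unitary, since both the lifting construction and $\rho$ preserve the $\dagger$/$\Theta$ structure.

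The main obstacle I anticipate is bookkeeping the shadings through the lift: in the unshaded planar algebra a single strand corresponds to a formal combination of shaded strands of both colors, so each generating relation in Definition~\ref{def:Zn2_braiding_struc} must be matched against several shaded instances (the remark after Definition~\ref{def:Zn2_braiding_struc} already flags that "all the strands can be labeled" and "all possible labels" must be checked), and one must verify that the defect operator one picks up when crossing an input disk is shading-independent --- this is what makes $\rho$ well defined on the unshaded side and is ultimately forced by $(C3)$ and $(C5)$. A secondary point requiring care is checking that $\rho$ respects \emph{all} unshaded planar algebra structure including the braiding, not just the algebra and tensor structures established in Lemma~\ref{lem:Z_2_action}; this again reduces to $(C3)$, which says the defining tangle for $\rho$ commutes with braiding singularities. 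I would therefore organize the proof as: (i) extract $\Phi_{\pm}$ and verify symmetric self-duality, (ii) apply \cite[Thm.~A]{LMP20}, (iii) define the braiding generators and $\rho$ on the lift, (iv) verify the finite list of $\Zn2$-braiding relations by translating $(C1)$--$(C5)$, and (v) record unitarity, with the shading-bookkeeping of step (iv) being the technical heart.
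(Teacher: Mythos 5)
Your plan reproduces the paper's argument in its essentials: $(C5)$ plays the role of symmetric self-duality, the lift is the LMP20 checkerboard-plus-attaching-circle construction, $(C0)$--$(C4)$ give isotopy invariance away from input disks, and the $\rho$-defect rule for passing a strand below an input disk is supplied by $(C3)$ and $(C5)$ via Lemma~\ref{lem:Z_2_action}. The one small imprecision is the remark that the braiding on the lift arises by ``summing/averaging over the two shadings'': in the actual construction one simply fixes the checkerboard shading determined by the output boundary (with an attached circle, scaled by $c^{-1/2}$, whenever an input disk's distinguished interval lands in a shaded region), and the through-disk rule is Corollary~\ref{Cor:Z_2_action}, which needs $(C5)$ and the attaching-circle operation together with $(C3)$, not $(C3)$ alone.
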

\begin{proof}

Now we define a unshaded planar algebra $\PP^{u}$ , the idea is similar to the \cite[Def.~1.6,1.7]{LMP20}:
\begin{itemize}
    \item $P^{u}_{2n}:=P_{n},\ P^u_{2n+1}:=\emptyset$
    \item Given an unshaded tangle $\mathbb{U}$, we give it checkerboard shading such that the region meeting the distinguished interval of the output disk is unshaded. Now if the distinguished interval of an input disk is in a shaded region, then we put a circle above or below as follows, scaled by $c^{-1/2}$.

    \includegraphics[width=350pt]{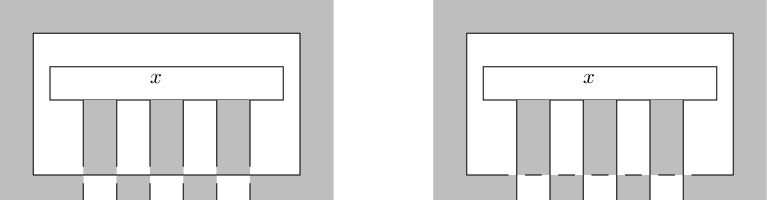}
\end{itemize}
Next, we prove this is indeed a planar algebra with $\Zn2$-braiding. The isotopy invariance follows from $1.$ The planar isotopy invariance of the shaded ones, $2.$ The isotopy involving braidings away from input disks: $(C0)-(C4)$ and $3.$ The isotopy through input disks: Corollary \ref{Cor:Z_2_action}. The proof of $Z(U\circ_{i} V)=Z(U)\circ_{i} Z(V)$ follows similarly to the proof of \cite[Thm~A]{LMP20}, one just replaces their $\Phi_{\pm}$ operator by our attaching circle operation, and the symmetrically self-duality is replaced by $(C5)$. 
\end{proof}
From now on, we fix a choice when constructing the unshaded planar algebra by attaching a circle from above. We will also denote the $n$-box space of the unshaded planar algebra $\PP^u$ by $P^u_n$ instead of $P^u_{2n}$ for simplicity. 
\begin{Cor}\label{Cor:Z_2_action}
In the unshaded planar algebra $\PP^u$, we have the following identities.
\[
\includegraphics[width=250pt]{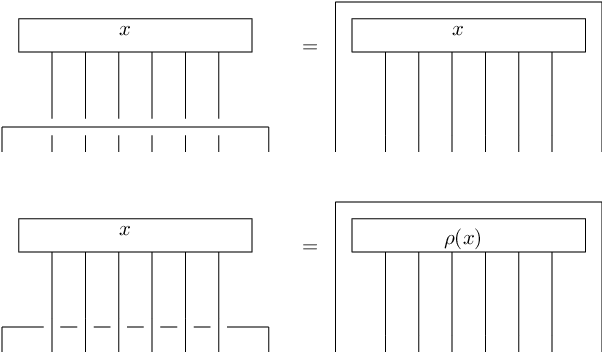}
\]

\end{Cor}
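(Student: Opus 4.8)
The plan is to unwind the definition of the lifting functor $\PP \mapsto \PP^u$ given in the proof of Theorem \ref{thm:lifting_thm} and translate each of the stated unshaded identities into a statement about shaded tangles with attached circles, which can then be verified using the cross-braiding relations $(C0)$--$(C5)$ and Lemma \ref{lem:Z_2_action}. Concretely, since $P^u_n = P_{n,+}$, every diagram in the displayed identities becomes, after checkerboard shading with the distinguished region unshaded, a shaded diagram; wherever an input disk (or a through-strand meeting the boundary) lands in a shaded region one inserts the normalizing circle scaled by $c^{-1/2}$ as in the definition. So the first step is simply to draw the shaded versions of both sides of each claimed identity, carefully tracking where the auxiliary circles appear.

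The second step is the core computation: show that in each case the two shaded diagrams agree. The identities in question are precisely the ``flatness'' statements saying that in $\PP^u$ a strand may be isotopied \emph{above} an input disk freely, while isotopying it \emph{below} an input disk produces the action $\rho$ on that disk. On the shaded side, pulling a strand past an attached circle is governed by $(C1)$ (moving a braiding singularity along a cap/cup) together with $(C2)$--$(C3)$; the asymmetry between ``above'' and ``below'' comes from the fact that the normalizing circle sits on one fixed side (we have fixed ``from above''), so dragging a strand underneath an input disk forces it to cross that circle, and $(C3)$ — which is exactly what expresses $\rho$ in Lemma \ref{lem:Z_2_action} — converts this crossing into an application of $\rho$. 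The factor $c^{-1/2}$ paired with the double-loop value $c$ from the definition of $\rho$ makes the normalization come out right. One checks each displayed identity separately, but all reduce to this same mechanism; the twist-type relation $(C4)$ handles any case where a full twist is generated in the process.

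The main obstacle I expect is bookkeeping rather than conceptual: one must be scrupulous about \emph{which} side of each input disk the normalizing circle is attached to (a choice fixed globally after Theorem \ref{thm:lifting_thm}), about orientations and labels of the strands (the relations in Definition \ref{def:Zn2_braiding_struc} must be invoked in their fully-labelled form), and about the scalar factors $c^{\pm 1/2}$, so that no spurious double-loop value survives. A secondary subtlety is that some of the claimed identities may involve a strand passing a disk from the left versus the right, or a cap/cup adjacent to the disk; these are handled by first using $(C1)$ to slide the braiding singularity into standard position and then applying the above argument. Once the normalization is pinned down, each identity follows from a short diagram chase, so I would organize the proof as: (i) recall the shaded picture of $\rho$ from Lemma \ref{lem:Z_2_action} and the circle-attachment convention; (ii) verify the ``isotopy above input disks is free'' identity using $(C1)$--$(C2)$; (iii) verify the ``isotopy below input disks yields $\rho$'' identity using $(C3)$ and the $c^{-1/2}$ normalization; (iv) note that the remaining displayed identities are obtained from these two by the same argument together with $(C4)$ for any twist contributions.
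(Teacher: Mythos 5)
Your overall strategy---unwinding the definition of $\PP^u$, restoring the checkerboard shading and the circle attachments, and then verifying the identities inside the shaded planar algebra using the $(C)$-relations and Lemma \ref{lem:Z_2_action}---is the right one, and matches the paper's framework. The paper also proceeds by a case split on the shading of the input disk $x$ (shaded vs.\ unshaded), attaching the normalizing circle according to the fixed convention and evaluating. So far so good.

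However, there is a concrete attribution gap in your core computation. In your steps (ii)--(iv) you locate the work in $(C1)$--$(C4)$: $(C1)$--$(C2)$ for "isotopy above is free," $(C3)$ for "isotopy below produces $\rho$," and $(C4)$ for twists. But once the unshaded diagram is translated back to a shaded one, the essential difficulty is how a strand interacts with the \emph{attached circle}, not with an ordinary cap, cup, or crossing. The relation that governs exactly this is $(C5)$, the analogue of symmetric self-duality $\Phi_{\mp}\circ\Phi_{\pm}=1$ from Definition \ref{def_symmtric_self_dual} (and it is precisely the one established via Proposition \ref{braiding_squareroot}). The paper's proof of the Corollary is, in both cases of the shading split, ``attach a circle by definition, apply $(C5)$, evaluate the unshaded disk, then invoke the diagrammatic description of $\rho$ from Lemma \ref{lem:Z_2_action}.'' Relations $(C1)$--$(C3)$ alone speak to moving braiding singularities past caps/cups and the braid relation, but they do not directly translate the circle-attachment operator into a braiding; that translation \emph{is} $(C5)$. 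You do list $(C5)$ in your opening toolbox, but your actual step-by-step argument never uses it, and without it the diagram chase would stall at the point where the strand meets the attached circle. So the proposal is on the right track structurally, but the key lemma in the reduction is $(C5)$, and it needs to be the centerpiece of the argument rather than an afterthought.
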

\begin{proof}
There are two cases depending on the shading of the input disk labeled by $x$ on the left-hand side, if it is shaded, then we attach a circle by definition, apply $(C5)$ and evaluate the unshaded disk (for the second equality, one need to use the diagrammatic description of the action in Lemma \ref{lem:Z_2_action}). If it is unshaded, then the input disk on the right-hand side is shaded, again we attach a circle, apply $(C5)$ and also Lemma \ref{lem:Z_2_action}. 
\end{proof}




\section{Graphic calculus in the configuration space}
In this section, we will define new configuration spaces inspired by those defined in \cite[Sec.~2]{LX19}, and several operations on them. 

Let $\CC$ be a unitary modular category, we simply use $ONB$ to denote an orthonormal basis in $\Hom$ spaces. Consider the category $\CC^{\bot 2}$, and the Frobnius algebra $\gamma=\bigoplus_X X\bot \bar{X}$. In \cite{LX19}, the authors identify the space $\Hom_{\CC^{\bot 2}}(1, \gamma^{\otimes n})$ with the configuration space $Conf(\CC)_{n,2}$, which is a Hilbert space and the orthonormal basis is given.

Now we consider the generalization to the case involving objects of the form (not symmetric) $Z_{2i-2}\boxtimes Z_{2i-1}$. Let $\ZZ$ denote the sequence $(Z_{k})_{0\leq k\leq 2n-1}$, We define the configuration space  $Cf(n,\ZZ):=\Hom_{\CC^{\boxtimes2}}(1,\otimes^n_{i=1}\gamma Z_{2i-2}\boxtimes Z_{2i-1})$, when all $Z_i's$ are trivial, we denote corresponding configuration space simply by $Cf(n)$. The configuration space is now spanned by the following vectors. (The blue vertical lines are labeled by the simple objects in $\CC$ similar as \cite[Sec.~2.2]{LX19})
\[
\includegraphics[width=350pt]{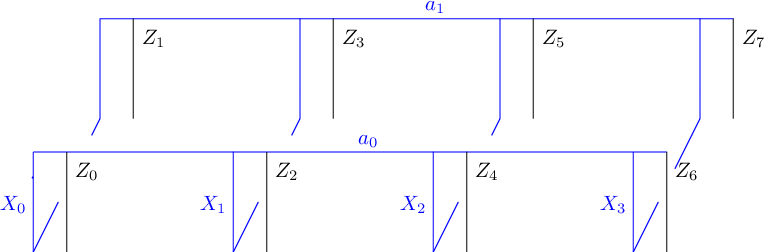}
\]
Since we only have two layers, we will simply denote $\overrightarrow{X}:=(X_k)_{0\leq k\leq n-1 }$ and we define $d_{\overrightarrow{X}}=\prod^{n-1}_{k=0}d_{X_k}$.


Now we define some operations $\rho_1,\ \Theta_2$ in $\CC$ which is similar as in \cite{LX19}, $a^*$ is the corresponding morphism in the dual vector space. In the unitary case, it will simply be $a^{\dagger}$. 


\[
\includegraphics[width=350pt]{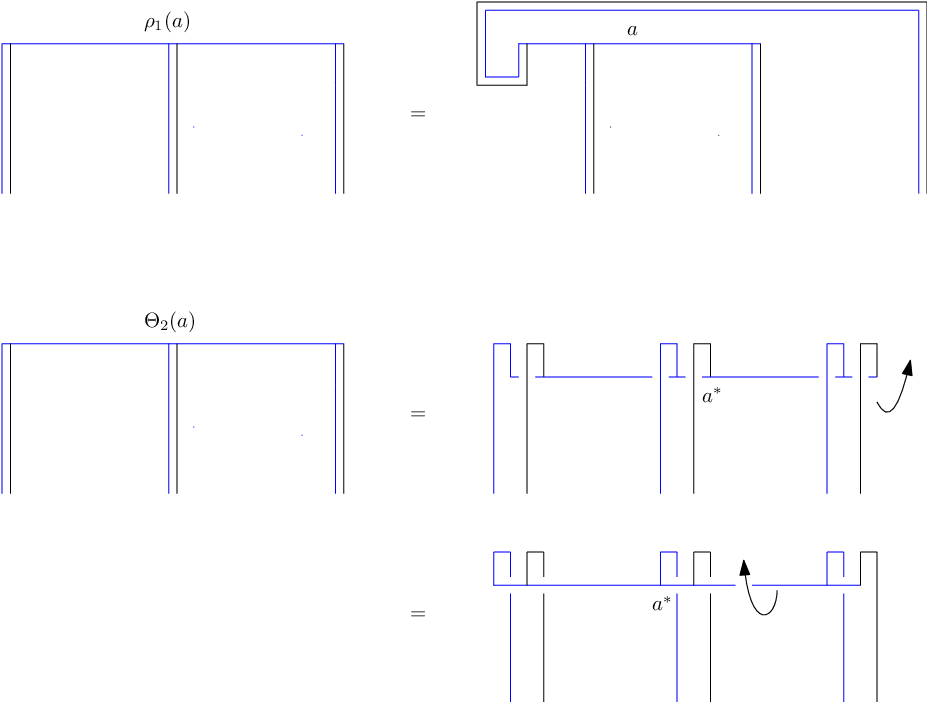}
\]
One can view the $\Theta_2$ action as bending the morphisms along the indicated directions.  
The next lemma will be used frequently and follows directly from the definition of $\Theta_2$ and the graphic calculus in $\CC$.
\begin{lem}\label{lem:Theta_2_lemma}
 The following morphisms in $\CC$ are equal (the red strand is labeded by $\Omega$)
 \[
 \includegraphics[width=400pt]{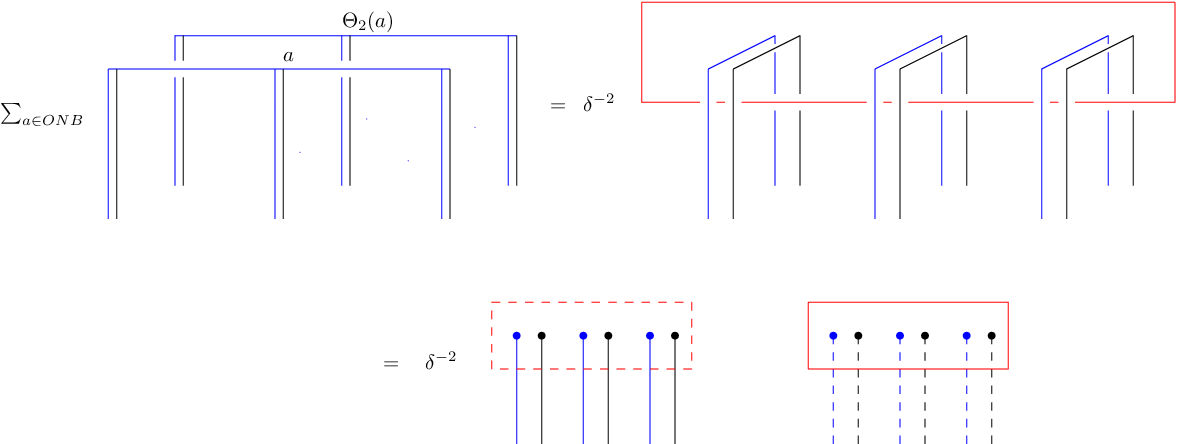}
 \]
\end{lem}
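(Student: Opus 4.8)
The plan is to unwind both sides into ordinary string-diagram calculus in $\CC$ and then match them using two standard modular-category moves: the handle-slide property of the Kirby colour $\Omega$ and the cutting property of $\Omega$, both of which were recorded in the preliminaries. First I would expand the definition of $\Theta_2$ on each side: since $\Theta_2$ is just a bending operation (the ``cap/cup'' that folds a $\CC^{\boxtimes 2}$-morphism along the indicated layer), applying it to the configuration-space vectors turns both sides into genuine morphisms in $\CC$ living on a single strand, where the $\overrightarrow{X}$-labels become a composite of fusion/splitting vertices threaded by the $\Omega$-coloured strand. The claim then becomes a local identity about how the $\Omega$-strand interacts with the folded part of the diagram.

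The key step is to recognize that bending the $\gamma = \bigoplus_X X \boxtimes \bar X$ legs produces exactly an $\Omega$-coloured loop encircling the relevant strands (because $d_X$ is the coefficient of $X$ in $\gamma$ after one closes up the second tensor factor), and that the two sides differ only by sliding this $\Omega$-loop across a coupon/strand. So I would: (i) identify the $\Omega$-loop appearing on the left-hand side after the bend; (ii) apply the handle-slide property to move it past the offending strand, which is precisely where the $\Omega$ on one side gets traded for the $\Omega$ on the other; (iii) if a closed $\Omega$-circle is created or absorbed in the process, use the cutting property of $\Omega$ to resolve it into the normalizing scalar and the appropriate sum over $\Irr(\CC)$. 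Careful bookkeeping of the $d_X$ factors and of the orientation/duality conventions (using $\theta^{1/2}_{\bar V} = \overline{\theta^{1/2}_V}$ and $\theta^{1/2}_{V\oplus W}$ defined componentwise) should make the two normalizations agree.

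The main obstacle I anticipate is purely bookkeeping rather than conceptual: making sure the scalar factors (powers of $\delta$, the $d_{\overrightarrow{X}}$ weights, and any twist factors $\theta^{\pm 1/2}$ that come from the bends if the fold is not planar-flat) match exactly on the two sides, and getting the orientations of the $\Omega$-strand consistent so that the handle-slide move applies in the stated direction. A secondary subtlety is that the handle-slide and cutting properties are stated for $\Omega$ as an unoriented Kirby colour, so I would need to check that the orientation induced by the $\gamma$-bending is the one under which these properties hold (or insert a duality $\bar{\phantom{X}}$ to correct it, using that $\Omega \cong \bar\Omega$). Once the normalizations are pinned down, the equality is immediate from the two $\Omega$-properties, so I would keep the written proof short and diagrammatic, pointing to the relevant figures rather than grinding through the scalar arithmetic.
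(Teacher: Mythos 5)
The paper does not give a written proof of this lemma at all: the sentence preceding it simply says it ``follows directly from the definition of $\Theta_2$ and the graphic calculus in $\CC$,'' and no \verb|proof| environment follows. The lemma's role is to justify the two-layer ``3D'' diagrammatic notation that the authors introduce immediately afterwards, i.e.\ it records that a bend in the $Y$-direction (what $\Theta_2$ does) can be redrawn in the front-layer picture with the red $\Omega$-strand in a standard position. Crucially, the handle-slide and cutting properties of $\Omega$ are tools the paper deploys \emph{after} this lemma, in the proofs of Lemma~\ref{lem:zig-zag}, Lemma~\ref{lem:contraction_inclusion_equality}, Lemma~\ref{lem:braiding&Fourier_comp} and Theorem~\ref{thm_flatness}; and the ``replace a $\sqrt{d}$-weighted pair of blue strands by an $\Omega$-circle'' step is deliberately separated out into Remark~\ref{rem:Kirby_color_creat}, which the paper always invokes \emph{alongside} Lemma~\ref{lem:Theta_2_lemma}, never inside it.

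So routing the argument through a handle-slide and a cutting move, as you propose, is not what the paper has in mind and is most likely over-engineered: you are importing machinery that this lemma is supposed to make \emph{available} to later proofs, not machinery it depends on. Your underlying intuition is in the right neighbourhood --- $\Theta_2$ is indeed a bend, and closing the $\bar X$-leg of $\gamma=\bigoplus_X X\boxtimes\bar X$ does produce the Kirby colour $\Omega=\sum_X d_X X$ --- but if the proof genuinely required sliding an $\Omega$-loop past a coupon, the paper would not call it a direct consequence of the definition. The two sides of the figure are most plausibly equal by a planar unfolding of the $\Theta_2$ bend together with the identification of a closed summation loop with $\Omega$, nothing more. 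A secondary issue is that your write-up never commits to what the two diagrams actually are; it hedges (``I would,'' ``I anticipate'') and postpones the normalization bookkeeping, but for a lemma whose whole content is a careful identification of normalizations under a redrawing convention, that bookkeeping \emph{is} the proof and cannot be left as ``the main obstacle I anticipate.''
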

Here we also introduce the new diagrammatic notation for simplicity, the $3D$ diagram will now be interpreted by a pair of $2D$ diagrams, one is for the front layer and the other one is for the back layer. the small dot on the boundary of an edge indicates it connects with the other layer through $Y$-direction, the dashed strands are underneath other strands. We will often draw the red circle for both layers to indicate its position.
\begin{rmk}\label{rmk:graphic_cal_exp}
The inner product $<a,a'>$ for $a,a'$ in the configuration space is the obvious one given by the evaluation in $\CC\boxtimes\CC$ of $a$ composed with the vertical reflection of $a'$ ($a'^{\ \dagger}$), which equals to the evaluation in $\CC$ (view them as morphisms in $\CC$ in an obvious way indicated by our drawing). Therefore in most cases we can use Lemma \ref{lem:Theta_2_lemma} to simplify our calculations (though it is not a valid move in $\CC\boxtimes\CC$). We will simply denote an orthonormal basis in configuration space by $Cf_{ONB}$.   
\end{rmk}
The operations act among the configuration spaces as follows ($\Theta_2$ is antilinear, similarly as in \cite{LX19}).
\[
\includegraphics[width=350pt]{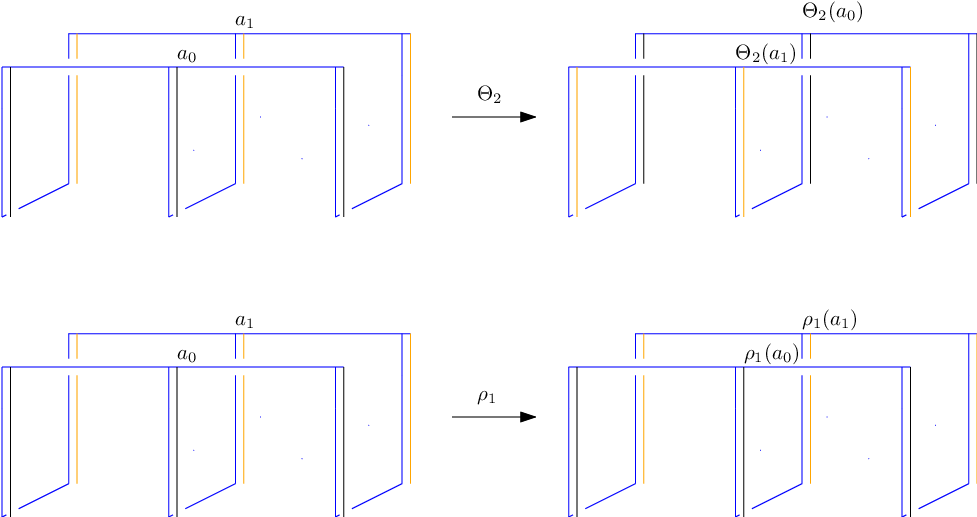}
\]
And we will simply use $Cf(n,\Theta_2(\ZZ)),\ Cf(n,\rho_1(\ZZ))$ to denote corresponding target spaces. 

Next we define a bilinear form generalizing pairing defined in \cite[Sec.~2.3]{LX19}, Let $F(\ZZ)=(Z_k)_{1\leq k\leq 2n}$ with $Z_{2n}:=Z_{0}$.


\begin{defn}
Now for $a\in Cf(n,\ZZ)$ and $a'\in Cf(n,F(\ZZ))$, we define the value of the pairing $\LL(a,\Theta_2(a'))$ to be the value of the diagram \ref{fig:Fourier_pairing} in $\CC$ (where $a$ is blue and $a'$ is puple), the coefficient is equal to $\delta^{1-n}\sqrt{d_{\overrightarrow{X}}}\sqrt{d_{\overrightarrow{Y}}}$. 
\end{defn}

\begin{figure}
    \centering
    \includegraphics[width=350pt]{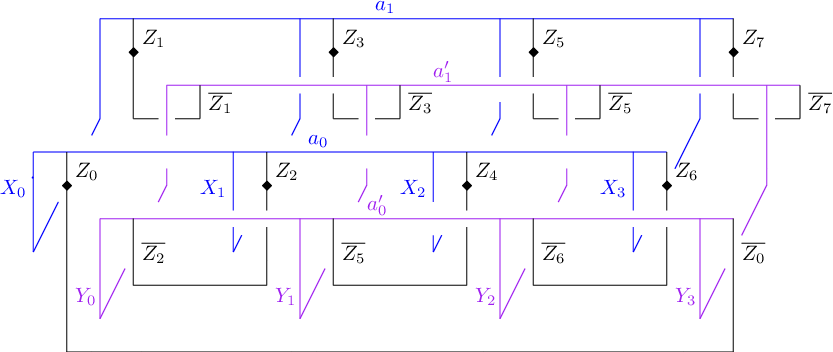}
    \caption{Fourier pairing}
    \label{fig:Fourier_pairing}
\end{figure}

The Fourier transform $\FF: Cf(n,\ZZ)\to Cf(n,F(\ZZ))$ is now defined by
\[
\FF(a)=\sum_{a'\in Cf_{ONB}}\LL(a,\Theta_2(a'))a'.
\]

\begin{exmp}
 the Fourier pairing on $Cf(2)$ with the canonical basis is the same as the $S$-matrix of $\CC$ \cite[Thm.~6.8]{LX19}.   
\end{exmp}

\begin{exmp}
The Fourier pairing between $Cf(2,\ZZ)$ and $Cf(2,F(\ZZ))$, where $\ZZ=(Z,1,1,1)$, is given by the following,

\[
\includegraphics[width=100pt]{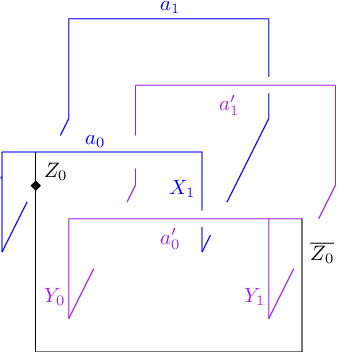}
\]

which can be viewed as a $S$-matrix of once punctured torus.
\end{exmp}

\subsection{Inclusion and contractions }

In this subsection, we will define certain operations on the configuration spaces, and study their compatibility with the Fourier transform. 

We first define the contractions and inclusions. 

\begin{defn}\label{def:contaction_and_inclusion}
 The contraction and inclusion $\phi_k: Cf(\CC)_{n,\ZZ}\to Cf(\CC)_{n-1,\phi_k(\ZZ)}$, $\iota_k:Cf(\CC)_{n,\ZZ}\to Cf(\CC)_{n+1,\iota_k(\ZZ)}$ ($\phi_k(\ZZ)$ and $\iota_k(\ZZ)$ are defined accordingly) are defined by stacking the following diagrams on the bottom. The sum is for all $0\leq i\leq n-1$, $X_i,Y,Y',Z\in \Irr(\CC)$ and all orthonormal basis $\alpha$ in corresponding $\Hom$ spaces. The global coefficient is $\delta^{1/2}$ for $\phi_{2k},\iota_{2k}$, $\delta^{-1/2}$ for $\phi_{2k+1}, \iota_{2k+1}$.

 \includegraphics[width=460pt]{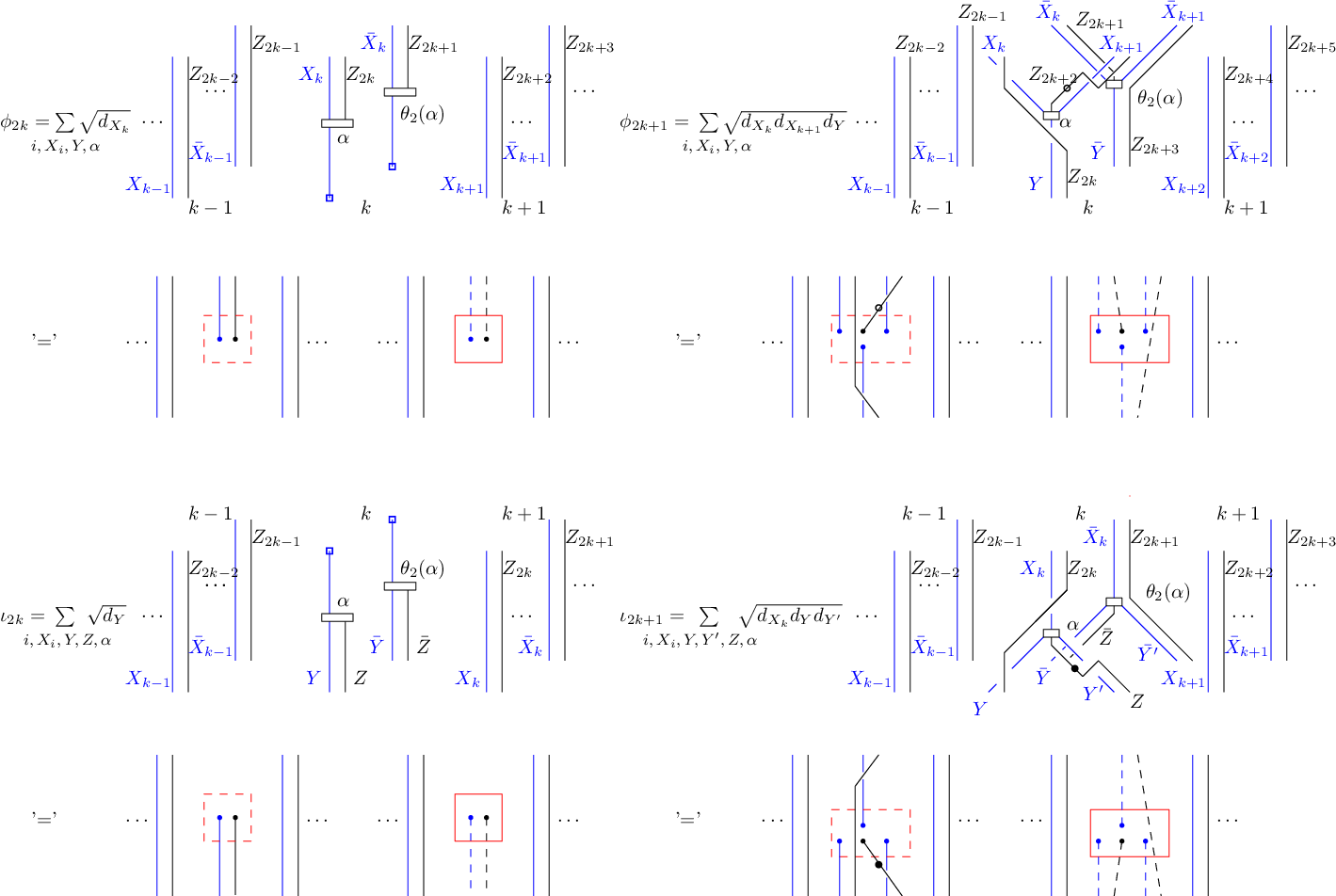}

\end{defn}

We also draw the simplified diagram using the newly introduced notation, see Lemma \ref{lem:Theta_2_lemma}. The new diagrammatic notation represents the morphisms in $\CC$, while the defined operations are indeed morphisms in $\CC\boxtimes\CC$. However, we consider these as equivalent by Remark \ref{rmk:graphic_cal_exp}.

Now we examine some properties of $\phi_k, \iota_k$.

First of all one verify $\phi_k,\iota_k$ satisfy "zig-zag" relations:
\begin{lem}\label{lem:zig-zag}
The following identities hold 
\[
\begin{aligned}
\phi_{2k-1}\circ(\Id\otimes \iota_{2k})&=(\Id\otimes \phi_{2k})\circ \iota_{2k-1}&=\Id\\
\phi_{2k-1}\circ(\iota_{2k-2}\otimes \Id )&=(\phi_{2k-2}\otimes \Id)\circ \iota_{2k-1}&=\Id
\end{aligned}
\]
\end{lem}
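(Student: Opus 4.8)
The statement to prove is Lemma \ref{lem:zig-zag}, the collection of ``zig-zag'' identities relating the contractions $\phi_k$ and inclusions $\iota_k$. The plan is to verify each identity by the standard diagrammatic argument: stack the defining diagrams of $\iota_k$ and $\phi_{k\pm1}$ as prescribed, and then simplify the resulting picture in $\CC$ (using Remark \ref{rmk:graphic_cal_exp} to work with the $2D$ morphisms in $\CC$ rather than in $\CC\boxtimes\CC$) until only the identity tangle survives. Since the four identities come in two pairs related by the left/right symmetry of the tensor product $\otimes$ on configuration spaces, it is enough to treat one representative of each pair, say $\phi_{2k-1}\circ(\Id\otimes\iota_{2k})=\Id$ and $(\Id\otimes\phi_{2k})\circ\iota_{2k-1}=\Id$; the other two follow by the obvious mirror argument (swapping which tensor leg is acted upon).

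First I would write out the composite $\phi_{2k-1}\circ(\Id\otimes\iota_{2k})$ explicitly. By Definition \ref{def:contaction_and_inclusion}, $\iota_{2k}$ introduces a sum over simple objects with a creation ($\Omega$-colored) cup and an orthonormal basis of the relevant $\Hom$ space, carrying the coefficient $\delta^{1/2}$, while $\phi_{2k-1}$ introduces the dual cap together with a sum over an orthonormal basis and the coefficient $\delta^{-1/2}$; the two global coefficients $\delta^{1/2}\cdot\delta^{-1/2}=1$ cancel. Stacking them produces a diagram in which the newly created strand is immediately contracted: a bigon (zig-zag) appears, and the two orthonormal-basis sums collapse by the completeness relation $\sum_\alpha \alpha\alpha^{\dagger}=\Id$ in $\Hom$ spaces. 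The $\Omega$-colored loop that is created and destroyed is removed using the cutting property of $\Omega$ recorded in the graphic-calculus identities of Section~2, which contributes exactly the factor needed to match the $\delta$-normalizations. After this one is left with the identity tangle, proving the first equality. The parity-shifted case $(\Id\otimes\phi_{2k})\circ\iota_{2k-1}$ is handled identically, with the roles of the $\delta^{1/2}$ and $\delta^{-1/2}$ coefficients interchanged (now $\iota_{2k-1}$ carries $\delta^{-1/2}$ and $\phi_{2k}$ carries $\delta^{1/2}$), so again the normalizations cancel and the same bigon-removal argument applies.

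I would then record the remaining two identities, $\phi_{2k-1}\circ(\iota_{2k-2}\otimes\Id)=\Id$ and $(\phi_{2k-2}\otimes\Id)\circ\iota_{2k-1}=\Id$, as obtained from the first two by reflecting the diagrams left-to-right, which exchanges the two tensor factors of $\CC^{\boxtimes 2}$ and correspondingly the roles of the even/odd index conventions; no new computation is needed. Throughout, the verification is carried out at the level of $2D$ diagrams in $\CC$, which is legitimate by Remark \ref{rmk:graphic_cal_exp} since the claimed equalities are between honest morphisms and the inner-product structure detects them.

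The main obstacle is bookkeeping rather than anything conceptual: one has to be careful that the $\Omega$-colorings, the positions of the small connecting dots between the front and back layers in the new $2D$ notation, and the placement of caps/cups created by $\iota_k$ versus $\phi_{k\pm1}$ line up so that a genuine zig-zag (and not, say, a circle linking another strand) appears after stacking; and one must track the exact power of $\delta$ produced by each $\Omega$-loop removal to confirm it cancels against the prescribed global coefficients. Once the diagrams are drawn with the conventions of Definition \ref{def:contaction_and_inclusion} fixed, each simplification is a single application of completeness of an ONB plus the cutting property of $\Omega$, and the identities follow.
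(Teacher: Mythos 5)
Your proposal is correct and takes essentially the same approach as the paper: stack the defining tangles, observe that the $\sqrt{d_Z}$--weighted sums over simples combine to form a Kirby color $\Omega$--circle together with a collapse of the ONB sums, and resolve it with isotopy plus the cutting property while confirming that the residual power of $\delta$ exactly cancels the $\delta^{\mp 1/2}$ normalization factors of $\phi_{2k-1}$ and $\iota_{2k}$. The paper likewise proves one representative identity and invokes ``similarly'' for the rest, so your symmetry-based dispatch of the remaining cases is in the same spirit.
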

\begin{proof}
We will prove the first identity, and the second one follows similarly. The proof is given by the graphic calculus below and we omit some of the labels and summations.
\[
\includegraphics[width=425pt]{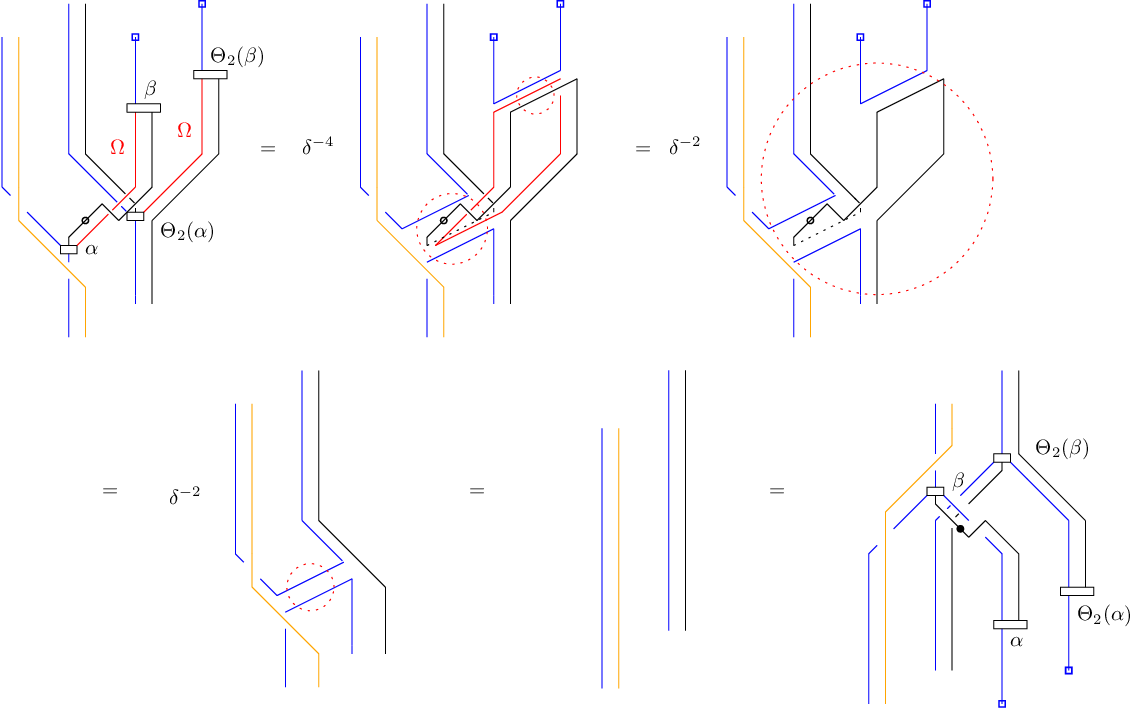}
\] 
The square root of quantum dimension coefficients for $\phi_k,\ \iota_k$ make the pair of blue lines connecting two boxes in the first diagram become a red circle (Kirby color $\Omega$), combined with Lemma \ref{lem:Theta_2_lemma}, the first equality now follows, the next ones follow from using isotopy and the cutting property of the Kirby color.   
\end{proof}

\begin{rmk}\label{rem:Kirby_color_creat}
 As in the proof of Lemma \ref{lem:zig-zag}, apart from the Lemma \ref{lem:Theta_2_lemma}, the step of changing a pair of blue-colored lines (with suitable coefficients) connecting two boxes into a red-colored circle will also be used frequently.    
\end{rmk}

\begin{lem}\label{lem:contraction_inclusion_equality}
Let $\hat{Z}:=\oplus_{Z\in \Irr(\CC)}Z$ and $\ZZ$ be the sequence whose elements are all $\hat{Z}$ , we have for $x\in Cf(n,\ZZ)$ and $y\in Cf(n-1,\ZZ)$
\begin{equation}
\begin{aligned}
\phi_k\iota_k&=\delta d_{\hat{Z}}, \\
\LL(\phi_{2k}x,\Theta_2(y))&=\LL(x,\Theta_2\iota_{2k-1}(y)),  \\
\LL(\phi_{2k+1}(x),\Theta_2(y))&=\LL(x,\Theta_2 \iota_{2k}(y)).
\end{aligned}
\end{equation}
\end{lem}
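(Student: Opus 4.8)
The three identities are of the same flavor as the zig-zag relations in Lemma~\ref{lem:zig-zag} and the adjunction computations in the configuration space, so the plan is to prove all of them by the graphic calculus, translating the defining diagrams of $\phi_k$, $\iota_k$ (Definition~\ref{def:contaction_and_inclusion}) into pictures in $\CC$ via Lemma~\ref{lem:Theta_2_lemma} and Remark~\ref{rmk:graphic_cal_exp}, and then manipulating red ($\Omega$-colored) strands using the cutting and handle-slide properties of the Kirby color.

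\emph{The identity $\phi_k\iota_k=\delta\, d_{\hat Z}$.} First I would stack the diagram of $\iota_k$ on top of the diagram of $\phi_k$. The global coefficients combine to $\delta^{1/2}\cdot\delta^{1/2}=\delta$ for even $k$ and $\delta^{-1/2}\cdot\delta^{-1/2}=\delta^{-1}$ for odd $k$; I will have to keep careful track of whether this is $\delta^{\pm1}$ and show the remaining topological factor makes up the difference so that the answer is uniformly $\delta\, d_{\hat Z}$ (or, more honestly, I expect the statement to hold with the coefficient as written because the composed diagram produces a closed $\Omega$-loop of value $d_{\hat Z}$ and a factor of $\delta^{\pm1}$ absorbed appropriately — this bookkeeping is a point to be careful about). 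Concretely, once stacked, the matching pair of blue $\Irr(\CC)$-lines between the two boxes of $\iota_k$ and $\phi_k$, carrying the square-root-of-quantum-dimension weights, collapses (as in Remark~\ref{rem:Kirby_color_creat}) into a single red $\Omega$-circle, which is then unknotted and evaluated: a free $\Omega$-circle contributes $\mu=\delta^2$, and the extra bent strand labeled by the object $\hat Z$ closes into a loop of value $d_{\hat Z}$. Reconciling the resulting power of $\delta$ with the coefficient conventions is the only subtlety here; everything else is isotopy plus the cutting property.

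\emph{The two pairing identities.} These say that, under the Fourier pairing $\LL(-,\Theta_2(-))$, the even-indexed contraction $\phi_{2k}$ is adjoint to the odd-indexed inclusion $\iota_{2k-1}$, and $\phi_{2k+1}$ is adjoint to $\iota_{2k}$. I would unfold both sides into a single diagram in $\CC$: the left-hand side is the Fourier-pairing picture of Figure~\ref{fig:Fourier_pairing} with the $\phi$-box stacked onto one of the two arguments, and the right-hand side is the same picture with the $\iota$-box stacked onto the other argument. The point is that, after applying Lemma~\ref{lem:Theta_2_lemma} to push everything onto one layer and then using isotopy (together with the handle-slide and cutting properties of $\Omega$ to move the red circle of the Fourier pairing past the inserted box), the two diagrams become manifestly identical: stacking a contraction box against the pairing's $\Omega$-loop on one side is the same morphism in $\CC$ as stacking the dual inclusion box on the other side, because $\phi_k$ and $\iota_k$ are built from mutually adjoint trivalent vertices and cap/cup morphisms. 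The index shift $2k\leftrightarrow 2k-1$ (resp.\ $2k+1\leftrightarrow 2k$) and the matching of the $\delta^{\pm1/2}$ coefficients is exactly what makes the two sides agree, and checking that the coefficients and the parity of the relabeled strands line up is the main place where care is needed.

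\textbf{Main obstacle.} I expect the genuine content to be entirely in the diagrammatics of the first two steps — turning paired blue $\Irr(\CC)$-strands with their $\sqrt{d}$ weights into $\Omega$-colored loops and then simplifying with the cutting/handle-slide identities — while the one recurring pitfall is the bookkeeping of the $\delta^{\pm1/2}$ normalization factors attached to even- versus odd-indexed $\phi_k,\iota_k$ and the factor $\delta^{1-n}\sqrt{d_{\overrightarrow X}}\sqrt{d_{\overrightarrow Y}}$ in the definition of $\LL$; getting these to cancel correctly on both sides is where a careless argument would break. I would therefore do the topology first, symbolically, and only at the end insert the numerical coefficients and verify they balance.
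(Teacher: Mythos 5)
Your plan uses the same toolkit as the paper's proof — translate via Lemma \ref{lem:Theta_2_lemma}, replace paired blue $\sqrt{d}$-weighted strands by $\Omega$-circles (Remark \ref{rem:Kirby_color_creat}), then apply cutting/handle-slide and isotopy — so there is no issue of wrong approach. The gap is that the coefficient bookkeeping you explicitly defer is not a mere verification at the end: your own back-of-envelope sketch does not balance, which is a sign that the topology you describe for the composed diagram is not quite right. For $\phi_k\iota_k$, the composed prefactor is $\delta$ when $k$ is even but $\delta^{-1}$ when $k$ is odd, so if both cases reduce to ``a free $\Omega$-circle ($=\delta^2$) times an $\hat Z$-loop ($=d_{\hat Z}$)'' as you say, one of the two parities yields $\delta^{3}d_{\hat Z}$ rather than $\delta\,d_{\hat Z}$. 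The pictures for $\phi_{2k}$ and $\phi_{2k+1}$ in Definition \ref{def:contaction_and_inclusion} are genuinely different (the differing $\delta^{\pm 1/2}$ normalizations are precisely compensating for a different number of closed $\Omega$-circles appearing after the cutting property), and your sketch treats them uniformly, which is why the powers of $\delta$ come out inconsistent. The same issue recurs in the pairing identities: $\phi_{2k}$ and $\iota_{2k-1}$ carry $\delta^{1/2}$ and $\delta^{-1/2}$ respectively, so the two sides you claim are ``manifestly identical'' differ a priori by $\delta$, and you would need to exhibit where the extra $\Omega$-circle appears or disappears under the isotopy.

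There is also a conceptual point the paper's argument relies on that your sketch omits: after passing both sides through $\Theta_2$, it is not only the diagrams that must match but one first has to note that $\Theta_2$ reverses braidings and reroutes connections (this is what turns a contraction ``between positions $2k$ and $2k+1$'' into an inclusion ``between positions $2k-1$ and $2k$'' on the other side of the pairing). Simply saying ``$\phi_k$ and $\iota_k$ are built from mutually adjoint trivalent vertices'' does not, by itself, produce the shift from index $2k$ to $2k-1$; that shift comes from tracking how $\Theta_2$ bends the purple/orange strands in Figure \ref{fig:Fourier_pairing}. To close the argument you should draw the full composed diagrams in $\CC$ for even and odd $k$ separately, count the $\Omega$-circles that get created or absorbed, and check the powers of $\delta$ explicitly rather than deferring them.
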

\begin{proof}
The first identity follows from direct graphic calculus. For other identities, one first observes the action of $\Theta_2$ will change the braiding and the way of connections. Now the proof follows from the following graphic calculus: we use again Lemma \ref{lem:Theta_2_lemma}, the summation is replaced by several $\Omega$-colored circles, now applying the cutting property and the equalities can be seen from isotopy of the orange strands. 

\[
\includegraphics[width=300pt]{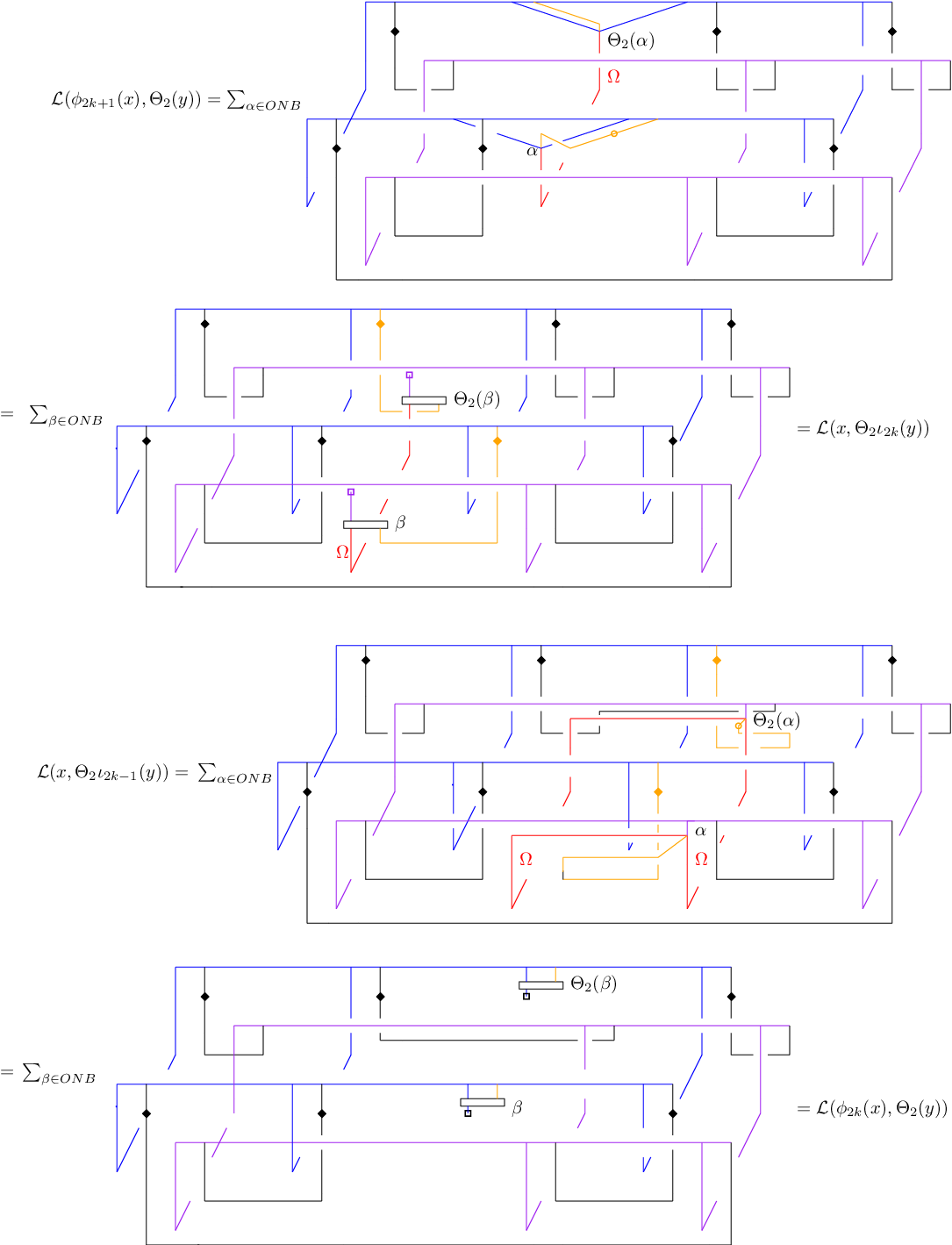}
\]

\end{proof}

From Lemma \ref{lem:contraction_inclusion_equality}, we have the following compatibility with respect to the Fourier transform.
\begin{prop}
We have the following identity
\[
\FF(\phi_{k+1}(x))=\phi_{k}\FF(x).
\] 
\end{prop}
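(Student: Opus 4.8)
The identity $\FF(\phi_{k+1}(x)) = \phi_k \FF(x)$ should be a bookkeeping consequence of the two pairing-vs-contraction relations proved in Lemma~\ref{lem:contraction_inclusion_equality}, combined with the zig-zag relations of Lemma~\ref{lem:zig-zag}, the unitarity of $\FF$ (so that $\FF$ is computed via the Fourier pairing against an orthonormal basis), and the index shift built into the ``frame'' operation $F$ (recall $F(\ZZ) = (Z_k)_{1 \le k \le 2n}$ with $Z_{2n} := Z_0$, so $F$ cyclically rotates the labelling by one slot). Concretely, I would start from the definition $\FF(\phi_{k+1}(x)) = \sum_{a' \in Cf_{ONB}} \LL(\phi_{k+1}(x), \Theta_2(a'))\, a'$, where now $a'$ runs over an orthonormal basis of $Cf(n-1, F(\phi_{k+1}(\ZZ)))$.

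\textbf{Key steps.} First I would pin down the index arithmetic: check that $F(\phi_{k+1}(\ZZ)) = \phi_k(F(\ZZ))$ as sequences, so that the two sides of the claimed identity at least live in the same space $Cf(n-1, \phi_k(F(\ZZ)))$; this is where the parity swap between $\phi_{2k}$ and $\phi_{2k+1}$ and the cyclic shift in $F$ have to be reconciled, and it dictates which of the two relations in Lemma~\ref{lem:contraction_inclusion_equality} gets used (the even case $\LL(\phi_{2k}x, \Theta_2 y) = \LL(x, \Theta_2 \iota_{2k-1} y)$ or the odd case). Second, applying the appropriate relation from Lemma~\ref{lem:contraction_inclusion_equality}, rewrite $\LL(\phi_{k+1}(x), \Theta_2(a')) = \LL(x, \Theta_2 \iota_{k}(a'))$ (up to the shift by one in the pairing index, which matches because $a'$ lives in the $F(\ZZ)$-world). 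Third, recognize that $\sum_{a' \in Cf_{ONB}} \LL(x, \Theta_2 \iota_k(a'))\, a' $ is, by definition of $\FF$ on $x$ together with the zig-zag adjunction $\phi_k \circ \iota_k$-type relations of Lemma~\ref{lem:zig-zag}, exactly $\phi_k$ applied to $\FF(x) = \sum_{b' \in Cf_{ONB}} \LL(x, \Theta_2(b'))\, b'$: the point is that $\{\iota_k(a')\}$, after projecting onto the image of $\iota_k$ (equivalently, inserting the resolution of identity and using that $\iota_k$ is, up to the scalar $\delta d_{\hat Z}$ from $\phi_k \iota_k = \delta d_{\hat Z}$, an isometry onto its image) reproduces the expansion of $\FF(x)$ hit by $\phi_k$. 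Normalizing by the scalar $\delta d_{\hat Z}$ and matching it against the $\delta^{\pm 1/2}$ global coefficients in Definition~\ref{def:contaction_and_inclusion} closes the computation.

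\textbf{Main obstacle.} I expect the genuinely delicate part to be the coefficient and index-parity bookkeeping in the middle step: making sure that the $\delta^{\pm 1/2}$ prefactors attached to $\phi_k$ and $\iota_k$ (which depend on the parity of $k$), the $\delta^{1-n}$ in the Fourier pairing, the $\sqrt{d_{\overrightarrow X}}\sqrt{d_{\overrightarrow Y}}$ factors, and the $\delta d_{\hat Z}$ from $\phi_k \iota_k$ all cancel to give precisely $\phi_k \FF(x)$ with no spurious scalar, and that the cyclic relabelling $F$ lines up the ``$k+1$ on the $\ZZ$ side'' with ``$k$ on the $F(\ZZ)$ side'' correctly at the boundary slot $Z_{2n} = Z_0$. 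The underlying diagrammatic content — that contraction is adjoint to inclusion under the Fourier pairing — is already isolated in Lemma~\ref{lem:contraction_inclusion_equality}, so no new graphical calculus is needed; the proof is the assembly of these pieces together with the observation that $\FF$ transports $\iota_k$ to $\phi_k$ on the nose once one sums against an orthonormal basis.
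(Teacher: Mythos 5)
Your proposal follows essentially the same route as the paper's proof: expand $\FF(\phi_{k+1}(x))$ against an orthonormal basis, move $\phi_{k+1}$ to an $\iota_k$ on the other side of the pairing via Lemma~\ref{lem:contraction_inclusion_equality}, and then recognize the resulting sum as $\phi_k\FF(x)$ by observing that $(\delta d_{\hat Z})^{-1/2}\iota_k$ is an isometry onto its image and that $\phi_k$ (being the vertical reflection, hence the adjoint, of $\iota_k$) annihilates the orthogonal complement, with the scalars $(\delta d_{\hat Z})^{\pm 1/2}$ cancelling. One small misattribution: the key scalar relation $\phi_k\iota_k=\delta d_{\hat Z}$ is the first identity of Lemma~\ref{lem:contraction_inclusion_equality}, not the zig-zag relations of Lemma~\ref{lem:zig-zag} (whose identities involve mismatched parities like $\phi_{2k-1}\circ(\Id\otimes\iota_{2k})$ and play no role here).
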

\begin{proof}
 \[
 \begin{aligned}
  \FF(\phi_{k+1}(x))&=\sum_y\LL(\phi_{k+1}x,\Theta_2(y))y\\
&=\sum_y\LL(x,\Theta_2\iota_{k}(y))y\\
&=\sum_{(\delta d_{\hat{Z}})^{-1/2}\iota_k y}\LL(x,\Theta_2 (\delta d_{\hat{Z}})^{-1/2}\iota_{k}(y))\phi_k(\delta d_{\hat{Z}})^{-1/2}\iota_ky \\
&=\phi_{k}\FF(x)
 \end{aligned}
 \]   
\end{proof}





\subsection{Braiding Structure}
in this section, we will define the braiding structures using the data of $\CC$. 


    

\begin{defn}\label{def:braiding}
    We define the braiding morphism as follows, where $T_{2k+1}=(\phi_{2k+1}\otimes \Id)( \Id\otimes T^{-1}_{2k+2} \otimes \Id )( \Id\otimes \iota_{2k+3})$ and the coefficient for $T_{2k+1}$ is $\delta^{-1}$ ($\delta^{-1/2}$ each for $\phi_{2k+1}$ and $\phi_{2k+3}$). 
\[
\includegraphics[width=450pt]{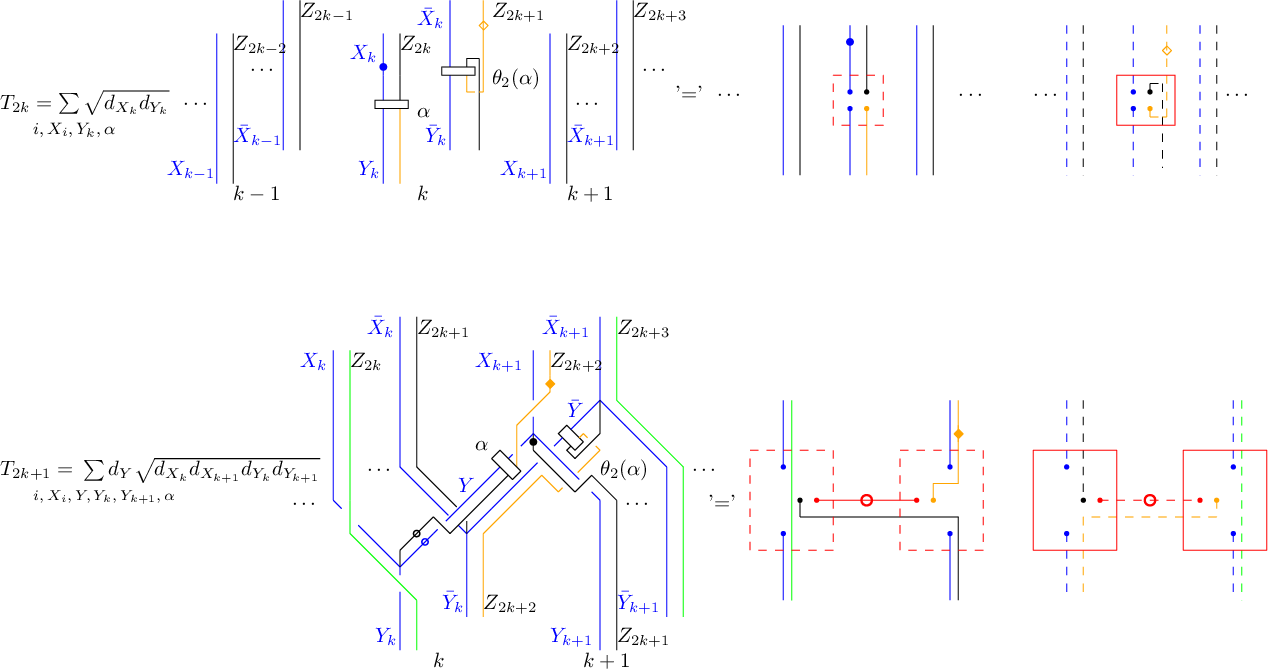}
\]
\end{defn}

Now it is straightforward to prove operators $T_{k}$ are unitary with $T_k^{\dagger}=T_k^{-1}$ given by the vertical reflection with corresponding dagger operation on morphisms.
\begin{lem}\label{lem:braiding&Fourier_comp}
 We have the following identities,
 \[
 \FF(T_{2k+1}(x))=\eta T_{2k}\FF(x)
 \]
\end{lem}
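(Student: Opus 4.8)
The statement to be proved, Lemma~\ref{lem:braiding&Fourier_comp}, asserts the compatibility
\[
\FF(T_{2k+1}(x))=\eta\,T_{2k}\FF(x),
\]
and the natural strategy is to unfold both sides entirely in terms of the contractions $\phi_j$, inclusions $\iota_j$, the even braidings $T_{2j}$, and the Fourier transform $\FF$, then reduce to relations already established. The plan is to exploit the defining identity $T_{2k+1}=(\phi_{2k+1}\otimes\Id)(\Id\otimes T^{-1}_{2k+2}\otimes\Id)(\Id\otimes\iota_{2k+3})$ from Definition~\ref{def:braiding} so that $T_{2k+1}$ is expressed through an \emph{even}-indexed braiding, whose interaction with $\FF$ we expect to control more directly, together with the contraction/inclusion--Fourier compatibility $\FF(\phi_{k+1}(x))=\phi_k\FF(x)$ proved just above. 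First I would substitute this expression for $T_{2k+1}$ into the left-hand side $\FF(T_{2k+1}(x))$ and push $\FF$ through the outer $\phi_{2k+1}\otimes\Id$ using the proposition, turning it into $\phi_{2k}$ acting after $\FF$ of the remaining piece; one must be careful that $\FF$ here is the Fourier transform on the enlarged configuration space and that the tensor-with-$\Id$ factors are handled by the appropriate ``partial'' Fourier transform, which I would set up as a preliminary observation that $\FF$ on $Cf(n,\ZZ)$ restricted to a tensor slot agrees with $\FF$ on the smaller space.

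The core of the argument is then the middle step: understanding how $\FF$ interacts with the bare even braiding $T^{-1}_{2k+2}$ sandwiched between an inclusion and a contraction. My plan is to do this by direct graphic calculus in $\CC$, exactly in the style of Lemma~\ref{lem:zig-zag} and Lemma~\ref{lem:contraction_inclusion_equality}: write out the Fourier pairing diagram of Figure~\ref{fig:Fourier_pairing} with the braiding generator of Definition~\ref{def:braiding} inserted, use Lemma~\ref{lem:Theta_2_lemma} to fold the $3D$ picture into a pair of $2D$ layer diagrams, replace the sums over $\Irr(\CC)$ by $\Omega$-colored (red) circles as in Remark~\ref{rem:Kirby_color_creat}, and then apply the handle-slide and cutting properties of the Kirby color $\Omega$ together with the twist property to slide the crossing past the red circle. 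The factor $\eta=p^+/\delta$ should appear precisely at the moment one resolves a twisted $\Omega$-circle: a full twist on a strand colored $\Omega$ produces the Gauss sum $p^{+}$, and combined with the $\delta^{\pm1}$ normalization constants attached to $\phi_j,\iota_j$ this collapses to the single scalar $\eta$. I would track these coefficients carefully, since the asymmetry between $\delta^{1/2}$ for even-indexed and $\delta^{-1/2}$ for odd-indexed contractions is exactly what forces the $\eta$ rather than $\eta^{-1}$ or $1$.

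Concretely, the order of steps I would carry out is: (i) substitute the definition of $T_{2k+1}$ and move the outer contraction through $\FF$ via the Proposition; (ii) establish a small lemma that $\FF$ commutes with inclusions in the sense dual to the Proposition, so that the inner $\Id\otimes\iota_{2k+3}$ can be pulled out to the far side as well; (iii) reduce to showing $\FF\circ T^{-1}_{2k+2}=\eta\, T^{-1}_{2k}\circ\FF$ (or the analogous relation with the correct indices) on the relevant intermediate space; (iv) prove that by the graphic computation described above, with the $\eta$ emerging from a twisted Kirby-colored loop. The main obstacle I anticipate is step (iii)--(iv): keeping the two-layer bookkeeping straight while the braiding singularity is dragged across the $\Omega$-circle, making sure the ``dot on the boundary'' $Y$-direction connections and the dashed under-strands of Remark~\ref{rmk:graphic_cal_exp} are respected, and that the scalar that pops out is exactly $\eta=p^+/\delta$ and not some product of $d_X$'s in disguise. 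A secondary subtlety is that $\FF$ is defined via a sum over an orthonormal basis $Cf_{ONB}$, so to commute $\FF$ with the braiding one should really argue at the level of the pairing $\LL$, showing $\LL(T_{2k+1}x,\Theta_2(y))=\eta\,\LL(x,\Theta_2 T_{2k}^{-1}(y))$ (using $T_{2k}^{\dagger}=T_{2k}^{-1}$ from the remark after Definition~\ref{def:braiding}) and then resumming; I would phrase the whole computation that way to avoid choosing bases prematurely.
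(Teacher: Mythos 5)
Your final paragraph identifies exactly the paper's strategy: establish the pairing-level identity
\[
\LL(T_{2k+1}x,\Theta_2(y))=\eta\,\LL(x,\Theta_2 T^{-1}_{2k}(y))
\]
by direct graphic calculus, then resum over $Cf_{ONB}$ to recover the operator statement. Your sketch of that calculus --- using Lemma \ref{lem:Theta_2_lemma} to fold into the two-layer picture, turning sums over $\Irr(\CC)$ into $\Omega$-circles as in Remark \ref{rem:Kirby_color_creat}, then using the twist and handle-slide properties to pull the crossing past a Kirby-colored loop, with $\eta=p^+/\delta$ emerging from resolving a twisted $\Omega$-circle --- is precisely what the figures in the paper's proof carry out. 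So the core of your argument is on target.

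The detour through steps (i)--(iii), however, should be discarded. The relation stated in (iii), $\FF T^{-1}_{2k+2}=\eta\,T^{-1}_{2k}\FF$, has the wrong index shift: the Fourier transform moves braiding indices by one, not two, so the correct version is $\FF T^{-1}_{2k+2}=\eta\,T^{-1}_{2k+1}\FF$. More importantly, that even-to-odd compatibility is exactly what the paper later \emph{derives} from the present lemma together with $\FF^2=\rho_1$ in Corollary \ref{Cor:full_comp}, so assuming any variant of it here is circular unless you prove it from scratch by graphic calculus --- in which case nothing has been gained over the direct computation, and you have needlessly enlarged the configuration space by unwinding $T_{2k+1}$ through $\iota_{2k+3}$ and $\phi_{2k+1}$ (note also that the "partial $\FF$ on a tensor slot" you invoke in step (i) is not how $\FF$ behaves: it is a global rotation of the whole disk, not a local operation on one factor). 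Drop (i)--(iii) and carry out the $\LL$-level graphic computation directly as in your last paragraph; that is both what the paper does and the shortest correct route.
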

\begin{proof}
   We will first prove 
\begin{equation}\label{eq:braiding_Fourier_comp}
\LL(T_{2k+1}x,\Theta_2(y))=\eta\LL(x,\Theta_2 T^{-1}_{2k}(y)),    
\end{equation}
recall $\eta=\frac{p^+}{\delta}$ is a global constant.
By the definition of the $\Theta_2$, we have the following identities, the second equality follows from Lemma \ref{lem:Theta_2_lemma}, Remark \ref{rem:Kirby_color_creat} and using the twist property. 

\[
\includegraphics[width=400pt]{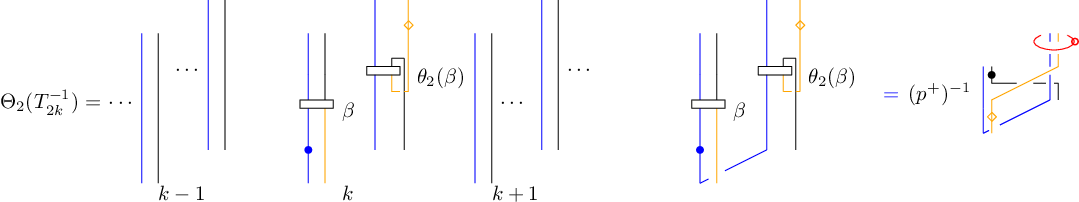}
\]

Now the identity (\ref{eq:braiding_Fourier_comp}) follows from transferring two diagrams below by the direct isotopy and cancellation of the twists. (Here we only draw the local diagrams for the pairing $\LL$, since we already did a similar proof in Lemma \ref{lem:contraction_inclusion_equality} by drawing full diagrams). 

\[
\includegraphics[width=400pt]{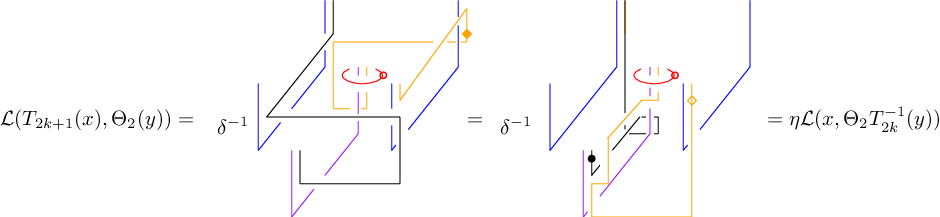}
\]

Now we have,
\[
\begin{aligned}
\FF(T_{2k+1}(x))=&\sum_y\LL(T_{2k+1}x,\Theta_2(y))y\\
=&\sum \eta\LL(x,\Theta_2 T^{-1}_{2k}(y))T_{2k}T^{-1}_{2k}(y)\\
=&\eta T_{2k}\FF(x)
\end{aligned}
\]
\end{proof}
Next we show the braiding is compatible with contractions and inclusions in the sense of the next lemma.
\begin{lem}\label{Lem:braiding_capcup_comp}
 We have the identities,
 \[
 \begin{aligned}
   \phi_{2k}\phi_{2k+1}\circ (T_{2k}\otimes \Id)=&
   \phi_{2k}\phi_{2k+1} \circ (\Id\otimes T_{2k+2}),\\
   (T_{2k}\otimes \Id)\circ \phi_{2k+1}\phi_{2k}=&
   (\Id \otimes T_{2k+2})\circ \phi_{2k}\phi_{2k+1}.
 \end{aligned}
 \]
\end{lem}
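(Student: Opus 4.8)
The plan is to reduce both identities to the graphic calculus in $\CC$, exactly as in the proofs of Lemmas \ref{lem:zig-zag}, \ref{lem:contraction_inclusion_equality} and \ref{lem:braiding&Fourier_comp}, where the square-root-of-dimension normalizations turn a pair of blue strands joining two boxes into a single $\Omega$-colored (red) circle (Remark \ref{rem:Kirby_color_creat}), and Lemma \ref{lem:Theta_2_lemma} is used to move things past the bent layer. First I would unwind the definitions: stack the diagram defining $T_{2k}$ (Definition \ref{def:braiding}) on top of the diagram defining $\phi_{2k+1}\phi_{2k}$ (Definition \ref{def:contaction_and_inclusion}), keeping track of the global coefficients ($\delta^{1/2}$ for even-index contractions, $\delta^{-1/2}$ for odd-index, and $\delta^{-1}$ for $T_{2k+1}$-type composites, though here the relevant braiding generators are the even-index $T_{2k}, T_{2k+2}$ which carry no extra scalar). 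Because the contractions $\phi_{2k}\phi_{2k+1}$ (resp.\ $\phi_{2k+1}\phi_{2k}$) close off the strands in the $2k$-th and $(2k+1)$-th positions into $\Omega$-circles, the braiding $T_{2k}$ acting on the $2k$-th/$(2k+1)$-th pair and the braiding $T_{2k+2}$ acting on the $(2k+1)$-th/$(2k+2)$-th pair become, after this closure, braidings of a single fixed strand past the same $\Omega$-circle; the handle-slide property of $\Omega$ (listed in the graphic calculus identities) then equates the two sides.

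More concretely, I would argue as follows. For the first identity, both $\phi_{2k}\phi_{2k+1}\circ(T_{2k}\otimes\Id)$ and $\phi_{2k}\phi_{2k+1}\circ(\Id\otimes T_{2k+2})$ are, after using the dimension coefficients to create the $\Omega$-circle and Lemma \ref{lem:Theta_2_lemma} to flatten the picture into $\CC$, the same morphism in $\CC\boxtimes\CC$ up to sliding the outgoing/incoming strand over the $\Omega$-loop: on one side the strand enters from the left of the region being contracted, on the other from the right, and the handle-slide (equivalently, the fact that an $\Omega$-circle acts as a categorical integral and absorbs a crossing with any object) identifies them. The second identity is the "upside-down" version: it is obtained from the first by applying the $\dagger$ (vertical reflection) together with the fact, already noted after Definition \ref{def:braiding}, that $T_k^\dagger=T_k^{-1}$, and the corresponding statement for $\phi_k$ versus $\iota_k$ (so that reflecting $\phi_{2k}\phi_{2k+1}$ gives $\iota_{2k+1}\iota_{2k}$, then one re-runs the same $\Omega$-handle-slide argument, or alternatively composes the first identity with suitable inclusions and uses the zig-zag relations of Lemma \ref{lem:zig-zag} to cancel them). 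Either route reduces the second line to the first.

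I would present the argument with two or three local diagrams (showing only the region near positions $2k, 2k+1, 2k+2$, as in Lemma \ref{lem:braiding&Fourier_comp}), rather than full configuration-space diagrams, since the rest of the picture is inert. The main obstacle I anticipate is purely bookkeeping: getting the coefficients, the orientations, and the over/under information of the crossings consistent between the two sides so that what is left really is a single clean application of the handle-slide property of $\Omega$ — in particular making sure that the twist $\theta$-factors that typically appear when a strand is dragged around a loop either cancel or are absent here (they should be, since a full handle-slide rather than a full rotation is involved). Once the diagram is set up correctly, the identity is immediate from the $\Omega$ handle-slide identity already recorded in Section 2, so there is no deep new input.
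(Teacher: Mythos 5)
Your overall strategy is the right one and matches the paper's style: the paper's own proof of the first identity is exactly one graphic-calculus figure with no prose, obtained by flattening via Lemma \ref{lem:Theta_2_lemma}, turning the contracted pair of blue strands into an $\Omega$-colored loop via the dimension normalizations (Remark \ref{rem:Kirby_color_creat}), and then reading off an isotopy; the second identity is dismissed with ``follows similarly.'' So your plan is sound in outline. Two details, however, are worth flagging before you commit to the write-up.

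First, you model $T_{2k}$ and $T_{2k+2}$ as two-strand braidings acting on adjacent pairs, and name the handle-slide property of $\Omega$ as the decisive move. But the even-index generators are \emph{not} crossings of two strands: when all $Z_i=1$ the paper records $T_{2k}=\cdots\Id\otimes\bigl(\bigoplus_V\theta^{-1}_V\Id_V\boxtimes\Id_{\bar V}\bigr)\otimes\Id\cdots$, and in the $n=2$ case $T_0=T_2$ is literally the $T$-matrix of $\CC$ — so $T_{2k}$ is a (one-sided) twist, not a braiding. The diagrammatic content of the lemma is therefore that a twist on one leg of the closing cap/cup can be carried across to the other leg through the $\Omega$-circle that the contraction creates. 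In the paper's companion arguments of this flavour (Lemma \ref{lem:zig-zag}, Lemma \ref{lem:contraction_inclusion_equality}) the move used after the $\Omega$-circle is created is the \emph{cutting property} of $\Omega$ together with ordinary (framed) isotopy and the twist property, not the handle-slide. You should check which identity you actually need once you draw the picture; invoking the handle-slide where a cap-slide isotopy and the cutting property suffice would not make the proof wrong, but it is not the move the paper uses, and it introduces the $\theta$-factor bookkeeping you yourself worry about, which the intended argument avoids.

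Second, the $\dagger$-reduction you propose for the second identity does not go through as stated: the vertical reflection of $\phi_{2k}\phi_{2k+1}$ is, up to the dimension normalization, $\iota_{2k+1}\iota_{2k}$, not another product of contractions. Reflecting the first identity therefore produces a statement with inclusions $\iota$ and inverse braidings, which is not the second line of the lemma. The paper does not reduce one identity to the other; it simply re-runs the same one-figure isotopy with the cap/cup role reversed. Your alternative route (sandwich by inclusions and cancel with the zig-zag relations of Lemma \ref{lem:zig-zag}) is closer to a genuine reduction, but you would need to verify carefully that the positions line up so that the zig-zag relations actually cancel the inserted $\iota$'s; that is a nontrivial indexing check, not an automatic cancellation, so it is cleaner to just draw the second diagram as the paper does.
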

\begin{proof}
We only prove the first statement, the second one follows similarly
\[
\includegraphics[width=350pt]{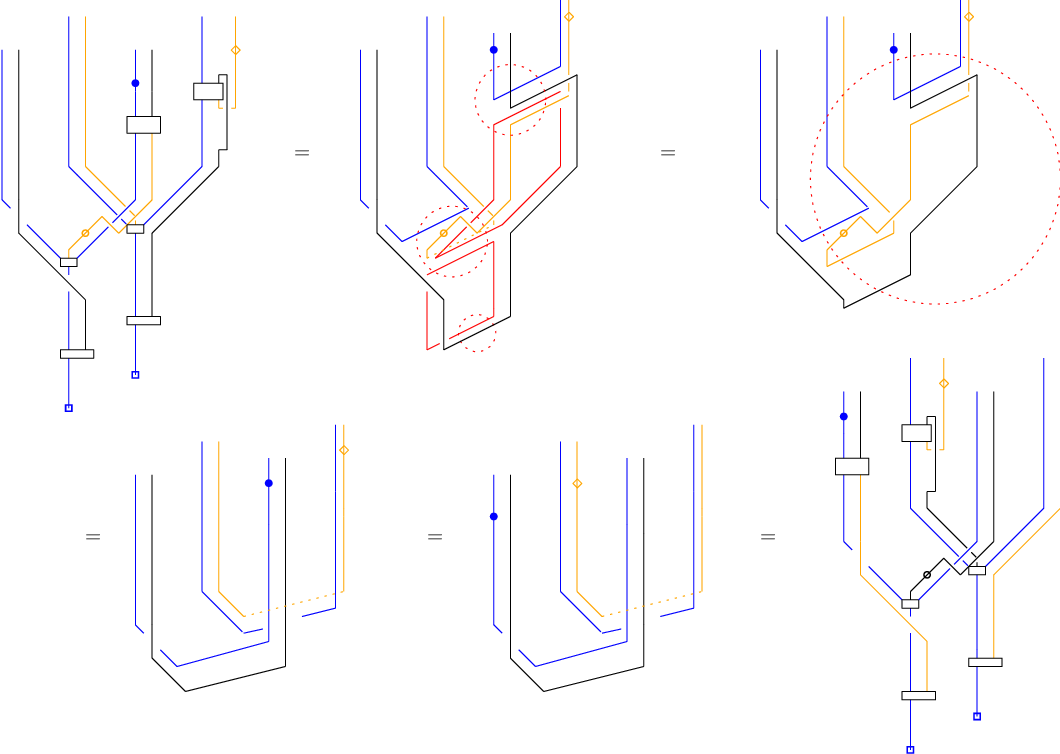}
\]
\end{proof}

Now we give another diagrammatic interpretation of the Fourier transform $\FF$. 
\begin{defn}
We define an operator $F_{+,Z}$ in configuration space from $Cf(n,\ZZ)$ to $Cf(n,F(\ZZ))$ given by the Figure \ref{fig:Fourier_transform} with a $\delta^{-1}$ factor, where $Z\in obj(\CC)$ is the color on the black strand. We denote $F_{+,1}$ by $F_{+}$, and we also denote the operator with braiding reversed by $F_{-,Z}\  (F_{-}:=F_{-,1})$.
\begin{figure}
    \centering
    \includegraphics[width=350pt]{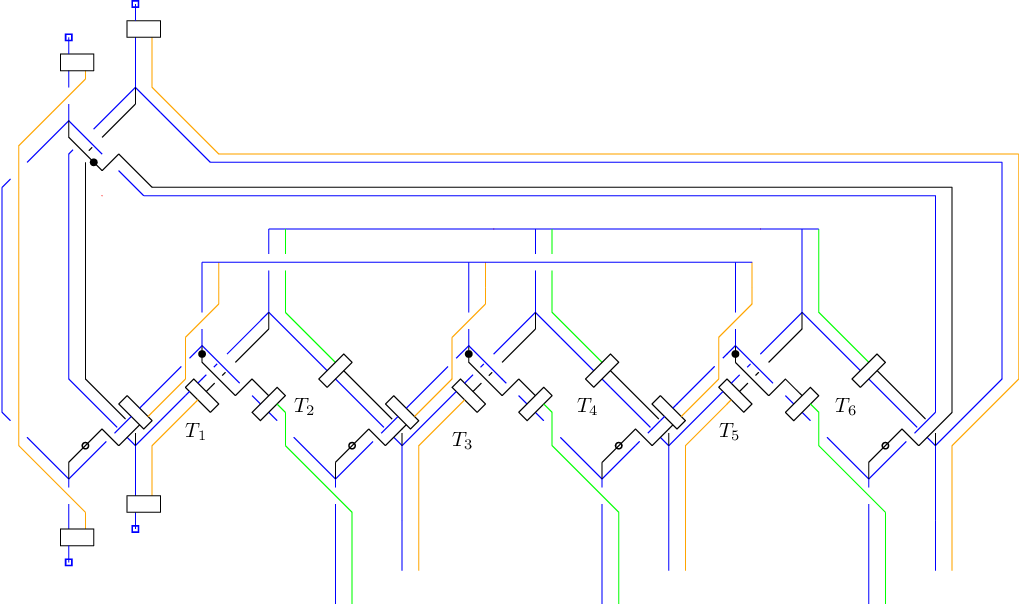}
    \caption{Fourier transform as an operator between configuration spaces}
    \label{fig:Fourier_transform}
\end{figure}  
\end{defn}

\begin{defn}\label{def:Z_2_action}
 The action $\rho_2$ on the configuration space is defined as follows. 

\[
\includegraphics[width=450pt]{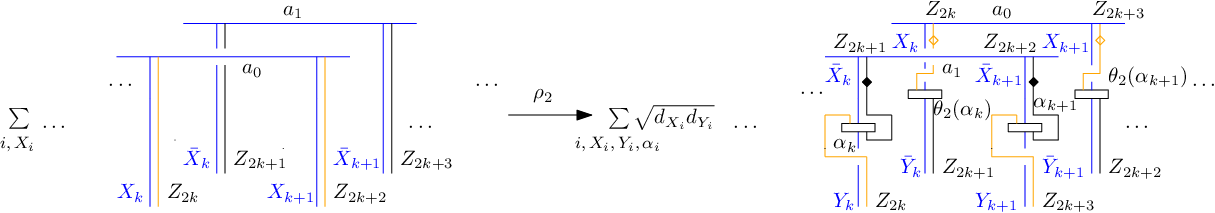}
\]   
\end{defn}
By direct graphic calculus, one gets the following lemma.

\begin{lem}
 $\rho_2$ is an isometry, $\rho_2^2=1$ and $\rho_2(x) \otimes \rho_2(y)=\rho_2(x\otimes y)$.   
\end{lem}

The next theorem is one of the main theorems that relates two definitions of the Fourier transform (Figure \ref{fig:Fourier_pairing} and \ref{fig:Fourier_transform}). 

\begin{Thm}\label{thm_flatness}
We have the following identities,
\[
\begin{aligned}
    <F_{+,Z}(x),y>&=d_Z\LL(x,\theta_2(y))\\
    <F_{-,Z}(x),y>&=d_Z\LL(\rho_2(x),\theta_2(y))\\
\end{aligned}
\]
\end{Thm}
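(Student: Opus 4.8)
\textbf{Proof proposal for Theorem \ref{thm_flatness}.}

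The plan is to establish both identities by a direct diagrammatic comparison: I will expand the left-hand sides using the definitions of $F_{\pm,Z}$ (Figure \ref{fig:Fourier_transform}) and the inner product $\langle\,,\,\rangle$ on the configuration space, then manipulate the resulting diagram in $\CC$ until it matches the diagram defining $\LL(x,\Theta_2(y))$ in Figure \ref{fig:Fourier_pairing}, keeping careful track of the scalar coefficients ($\delta^{-1}$ from $F_{+,Z}$, the $\delta^{1-n}\sqrt{d_{\overrightarrow X}}\sqrt{d_{\overrightarrow Y}}$ from $\LL$, and the $d_Z$ factor from the black strand). First I would write $\langle F_{+,Z}(x),y\rangle$ as the evaluation in $\CC$ of $F_{+,Z}(x)$ stacked on top of $y^{\dagger}$, using Remark \ref{rmk:graphic_cal_exp} to pass from $\CC\boxtimes\CC$ to a single copy of $\CC$ via the two-layer picture.

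The heart of the argument is the unfolding of the $\Theta_2$ operation. By the definition of $\Theta_2$ (the "bending" operation introduced before Lemma \ref{lem:Theta_2_lemma}), $\LL(x,\Theta_2(y))$ is computed by bending the legs of $y$ around, which introduces braidings between the $X$-strands and $Y$-strands and changes the way the two layers are connected in the $Y$-direction. I would invoke Lemma \ref{lem:Theta_2_lemma} to rewrite these bent configurations: the pairs of blue strands connecting the front and back layers, with their $\sqrt{d}$-coefficients, combine into $\Omega$-colored (red) circles as in Remark \ref{rem:Kirby_color_creat}, and then the cutting and handle-slide properties of $\Omega$ let me reorganize the diagram. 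The net effect should be that the Fourier-transform picture in Figure \ref{fig:Fourier_transform}, once one closes it up against $y$, is isotopic (in the framed three-dimensional sense, tracking twists) to the Fourier-pairing picture in Figure \ref{fig:Fourier_pairing} closed against $y$, up to exactly the factor $d_Z$ coming from the closed $Z$-colored black strand. For the second identity, the only change is that reversing all the braidings in $F_{-,Z}$ versus $F_{+,Z}$ is precisely absorbed by precomposing with $\rho_2$: comparing the definition of $\rho_2$ in Definition \ref{def:Z_2_action} with the braiding reversal shows $F_{-,Z}(x) = F_{+,Z}(\rho_2(x))$ up to the diagrammatic bookkeeping, so the second identity follows from the first applied to $\rho_2(x)$, together with $\rho_2^2 = 1$.

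The main obstacle I anticipate is the careful matching of orientations, crossings (over versus under), and the precise routing of strands between the two layers when the $\Theta_2$-bending is carried out — in particular making sure that the braidings produced by bending $y$ in the $\LL$-picture are exactly those appearing in the $F_{\pm,Z}$-picture, with no stray sign, twist, or $\eta$-factor. This is the same kind of bookkeeping that appeared in the proofs of Lemma \ref{lem:contraction_inclusion_equality} and Lemma \ref{lem:braiding&Fourier_comp}, so I would follow those proofs as a template, drawing only the relevant local portions of the diagrams and citing Lemma \ref{lem:Theta_2_lemma} and the cutting property of $\Omega$ to suppress the routine isotopies. A secondary point to check is that the coefficient $d_Z$ is the correct normalization: it arises because the black $Z$-strand, which is a genuine strand (not summed over) in both the $F_{\pm,Z}$ and the $\LL$ pictures, gets closed into a loop in exactly one of the two normalizations, and one should verify this against the trivial case $Z = 1$ where $d_Z = 1$ and the identity reduces to the plain statement $\langle F_{+}(x),y\rangle = \LL(x,\Theta_2(y))$, consistent with the $n=2$ example identifying the Fourier pairing with the $S$-matrix.
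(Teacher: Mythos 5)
Your plan for the first identity matches the paper's proof in all essentials: two-layer presentation via Lemma \ref{lem:Theta_2_lemma} and Remark \ref{rmk:graphic_cal_exp}, conversion of paired blue strands into $\Omega$-colored circles per Remark \ref{rem:Kirby_color_creat}, reduction using the cutting and handle-slide properties of $\Omega$ together with the twist property, power-of-$\delta$ bookkeeping, and the closed $Z$-loop producing exactly $d_Z$. One phrase to tighten: you describe the net reduction as a ``framed three-dimensional isotopy,'' but the crucial moves are \emph{not} isotopies --- cutting and handle-slide are algebraic properties of the Kirby color $\Omega$ that only hold in a modular category, and that is precisely what the $\delta^{2n}$ factors are tracking. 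The paper's final step also passes through a genuine $3$D diagram and a bending move that produces the $\Theta_2(y)$ on the right, which you should make explicit rather than leave implicit in ``the net effect.''

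For the second identity your route genuinely differs from the paper's. The paper re-reads the closed diagram ``from the other side,'' reruns a parallel Kirby-color computation with the twists flipped, and finds that the final rotation now puts left twists on \emph{both} the green and orange strands, accounting for the $\rho_2$ that appears. You instead want to prove $F_{-,Z}(x) = F_{+,Z}(\rho_2(x))$ and then apply the first identity to $\rho_2(x)$. That reduction is logically clean (and is consistent with what the paper later records in Lemma \ref{lem:rel_rho_2andPA_action} and Corollary \ref{Cor:F_braiding_rels}), but note the order of deduction: the paper obtains $F_{-}^{-1}F_{+} = \rho_2$ \emph{from} Theorem \ref{thm_flatness}, so if you use this relation you must establish it by a direct tangle comparison between Figure \ref{fig:Fourier_transform} (with reversed crossings) and the $\rho_2$-tangle of Definition \ref{def:Z_2_action} --- not by citing the theorem or anything downstream of it. ``Up to diagrammatic bookkeeping'' is doing real work here: the comparison must account for the framing/twist coefficients that distinguish the $F_{+}$ and $F_{-}$ tangles (compare the $\theta^{\mp 1/2}_{Z_0}$ factors visible in Corollary \ref{Cor:F_braiding_rels}), and you should also say why the $d_Z$ factor is unaffected by precomposing with the isometry $\rho_2$. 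Done carefully this is a legitimate alternative, roughly comparable in length to the paper's second computation.
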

\begin{proof}
The diagram \ref{fig:flatness1} is read as discussed before, they all come from the front views, while the left ones are first layers, and the right ones are second layers, the small dot on the boundary of an edge indicates it connects with the the other layer through $Y$-direction. surrounded by a Kirby color (dotted right circle). We emphasize that this simplification is purely notational; one could instead draw the 3D diagrams, which represent genuine morphisms in $\CC$ (see Lemma \ref{lem:Theta_2_lemma}), the graphical calculus used here is entirely derived from the graphical calculus of $\CC$. Here, unfilled (filled) diamond symbols are placed on each green (orange) line (from Definition \ref{def:braiding}), but they have been omitted for clarity in the initial steps.  

\begin{figure}
    \centering
    \includegraphics[width=400pt]{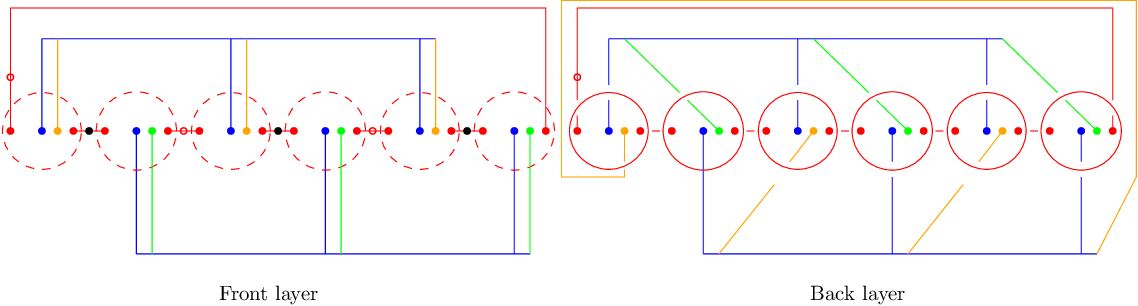}
    \caption{Two-layer presentation of $<F_{+,Z}(x),y>$}
    \label{fig:flatness1}
\end{figure}

First, observe there are two types of red circles, we call them vertical and horizontal red circles respectively, the number of each of them is $2n$,  and the twists are all on the horizontal ones. One uses definitions of the braiding and the black string is pulled out which gives the $d_{Z}$ factor. By Lemma \ref{lem:Theta_2_lemma} and the definition of the $\phi,\iota$, we have another $(\delta^{3/2-2-2})^{2n}\delta^{-1}=\delta^{-5n-1}$ as the coefficient.

Now we use the twist property to link horizontal red circles (Kirby color) with blue and orange ones, such that the twists on the red strings are all resolved. Next using the cutting property to reduce the number of both the vertical and horizontal red circles to $n$, which gives a $\delta^{2n}$ factor. By sphericality and the property of the configuration space (two layers are separated), one removes the rightmost vertical red circle, which gives a $\delta^2$ factor. The small horizontal black line segments connecting two vertical lines indicate the full left twists on two lines. These steps are illustrated in Figure \ref{fig:flatness2}.

\begin{figure}
    \centering
    \includegraphics[width=400pt]{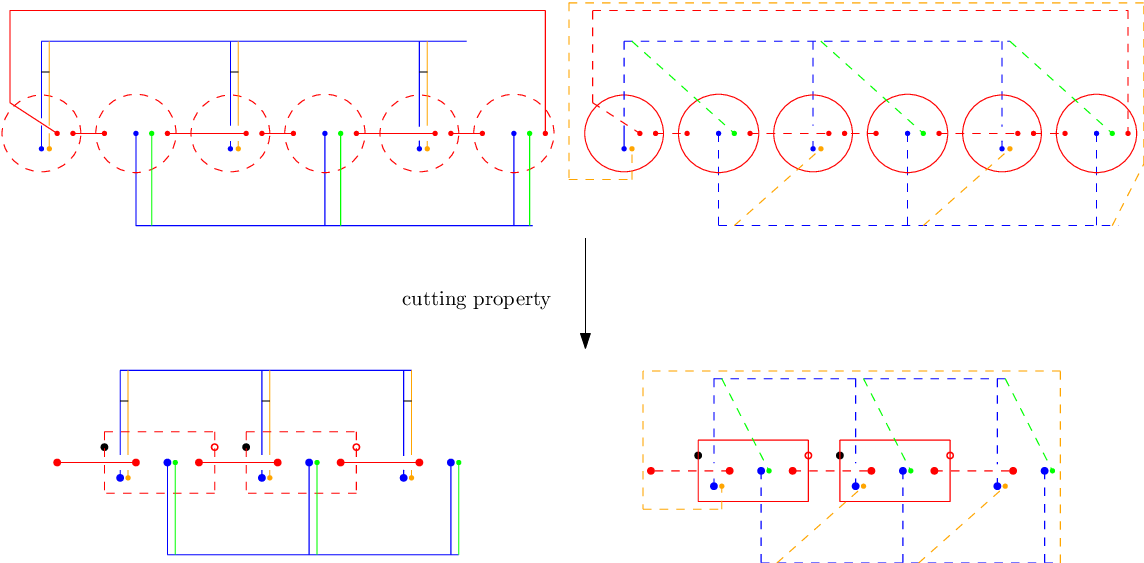}
    \caption{Graphic calculus}
    \label{fig:flatness2}
\end{figure}

Next one considers the strands that go through the horizontal red circles and applies the twist property again. Next using the handle slide property of the Kirby color, we link the $x,y$ together and all red circles are resolved, which introduces a factor of $\delta^{2n}$. And the coefficient now is equal to $d_Z\delta^{-5n-1+2n+2+2n}=d_Z\delta^{1-n}$ and the terms of the square roots of the object quantum dimensions appear as in the definition of the contraction and inclusion (Definition \ref{def:contaction_and_inclusion}), which is the same as $d_Z$ times the coefficient in the definition of $\LL$. These steps are illustrated in Figure \ref{fig:flatness3}.

\begin{figure}
    \centering
    \includegraphics[width=400pt]{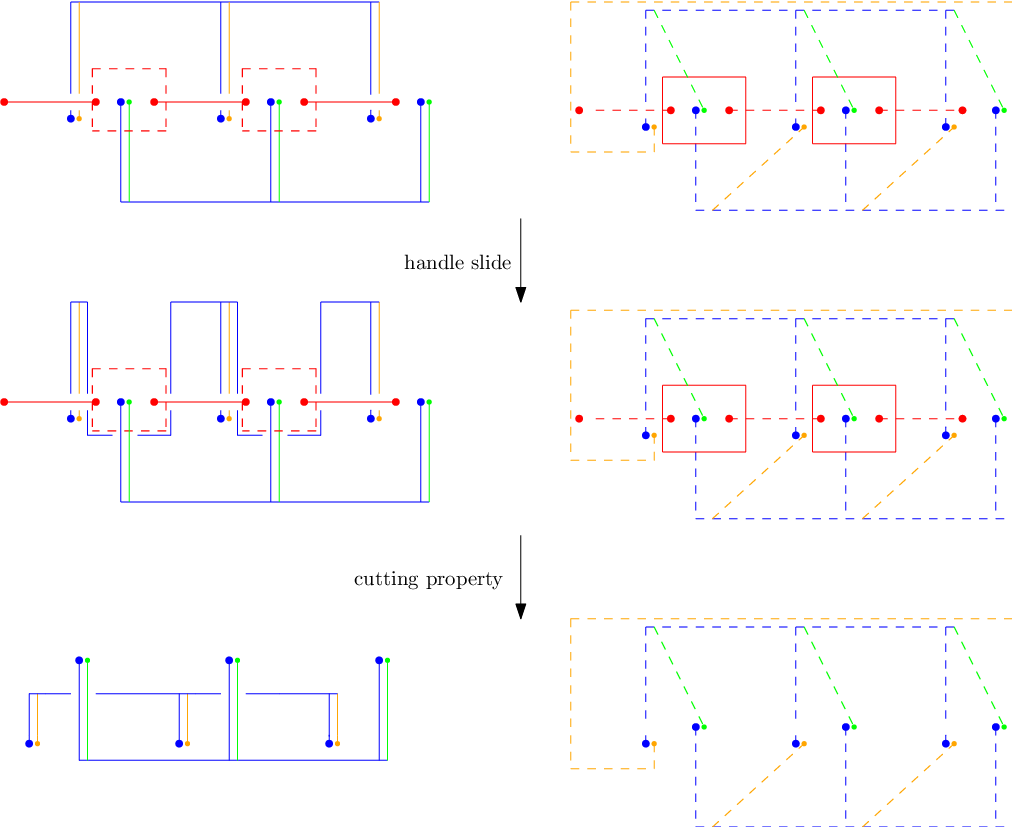}
    \caption{Graphic calculus}
    \label{fig:flatness3}
\end{figure}

Now we have the following 3D diagram and the equality follows after doing bending moves as described in the definition of $\Theta_2$. The rotations introduce the left twists to green strings and $\Theta_2$ action to $y$ (purple). 
\[
\includegraphics[width=400pt]{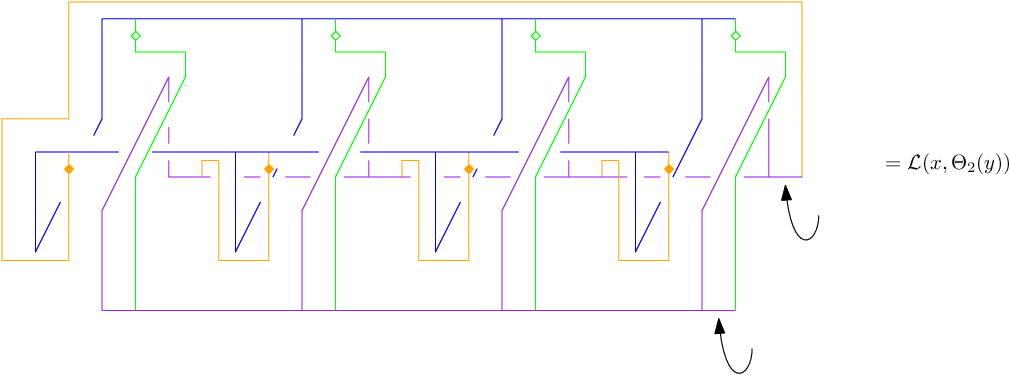}
\]
For the reverse braiding, one can imagine that we read diagrams "from the other side", then the twists change to the case of the previous situation and proof follows similarly except the position of two basis in the end gets changed. Now the rotation is here to transform it to the standard position. Here we only draw the last steps of the proof.

\[
\includegraphics[width=400pt]{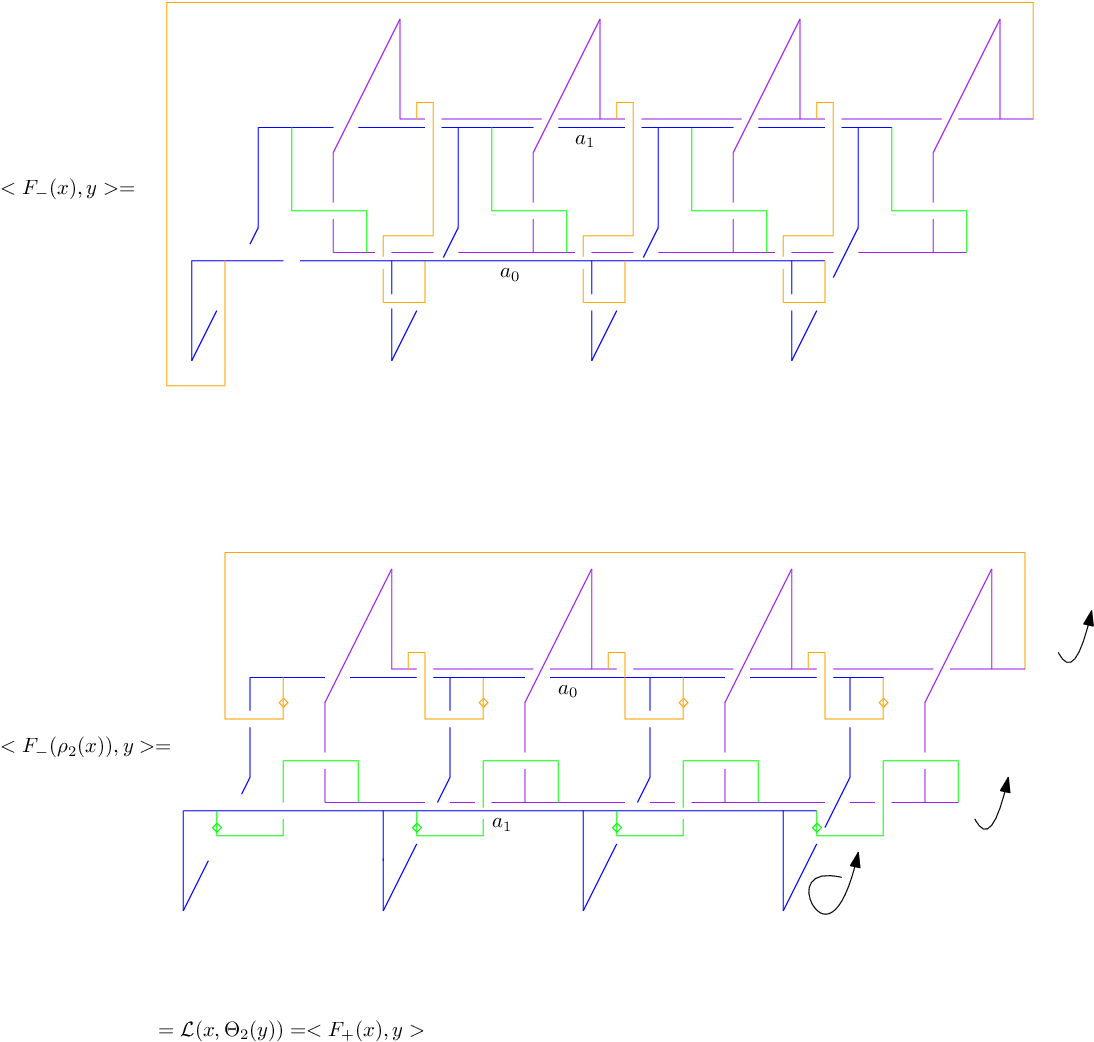}
\]

The rotations introduce the left twists to both green and orange strings and $\Theta_2$ action to $y$ (purple). 
\end{proof}

Now from Theorem \ref{thm_flatness}, we see $F_+=\FF$ as an operator from $Cf(n,\ZZ)$ to $Cf(n,F(\ZZ))$.
We will use this to prove $\FF$ is an isometry.
\begin{thm}\label{Thm:isomety_square_to_rot}
 The Fourier transform $\FF$ is an isometry from $Cf(n,\ZZ)$ to $Cf(n,\FF(\ZZ))$. Moreover we have $\FF^2=\rho_1$.  The same statements also hold for $F_-$ 
\end{thm}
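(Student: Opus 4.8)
The plan is to establish the isometry and the identity $\FF^2=\rho_1$ by combining the diagrammatic description of $\FF$ as the operator $F_+$ (Theorem \ref{thm_flatness}, already identified as $F_+=\FF$) with the zig-zag and compatibility lemmas for the contractions $\phi_k$, inclusions $\iota_k$ and the braiding generators $T_k$. Since $F_{+,Z}$ is built pictorially as a composite of the elementary moves appearing in Figure \ref{fig:Fourier_transform}, the idea is to compute $F_+^{\dagger}F_+$ (resp. $F_-^{\dagger}F_-$) directly as a diagram in $\CC$: stack $F_+(x)$ against its vertical reflection, and simplify using Lemma \ref{lem:Theta_2_lemma}, the cutting and handle-slide properties of the Kirby colour $\Omega$ (as in Remark \ref{rem:Kirby_color_creat}), exactly as in the proof of Theorem \ref{thm_flatness}. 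The coefficient bookkeeping ($\delta^{1-n}\sqrt{d_{\overrightarrow X}}\sqrt{d_{\overrightarrow Y}}$ factors from the contractions/inclusions, the $\delta^{-1}$ from the definition of $F_{+,Z}$, and the $p^{\pm}$, $\eta$ factors) is routine but must be tracked carefully; the outcome should be that all the $\Omega$-circles can be resolved pairwise, leaving the identity diagram, so that $\langle F_+(x),F_+(x')\rangle=\langle x,x'\rangle$.

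An alternative, cleaner route for the isometry is to use that $F_+=\FF$ together with the first identity of Theorem \ref{thm_flatness}: since $\langle\FF(x),y\rangle=\langle F_{+,1}(x),y\rangle=\LL(x,\Theta_2(y))$ and $\FF(a)=\sum_{a'\in Cf_{ONB}}\LL(a,\Theta_2(a'))a'$, one has $\langle\FF(x),\FF(x')\rangle=\sum_{a'}\overline{\LL(x',\Theta_2(a'))}\,\LL(x,\Theta_2(a'))$, and it suffices to show this equals $\langle x,x'\rangle$. Expanding $\langle x,x'\rangle=\sum_{a'}\langle x,a'\rangle\overline{\langle x',a'\rangle}$ over the orthonormal basis $Cf_{ONB}$, the statement reduces to the single diagrammatic identity $\sum_{a'}\LL(x,\Theta_2(a'))\,\overline{\LL(x',\Theta_2(a'))}=\langle x,x'\rangle$, which is again proved by inserting the resolution of identity, replacing the sum over $a'$ by $\Omega$-coloured circles (Remark \ref{rem:Kirby_color_creat}), and applying the handle-slide and cutting properties to glue the two copies of the Fourier pairing diagram along $a'$ into the inner product diagram for $x,x'$. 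This is the categorical analogue of the fact that the $S$-matrix is unitary, and indeed for $n=2$, $\ZZ$ trivial it recovers precisely the unitarity of $S$.

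For the relation $\FF^2=\rho_1$, I would compute $\FF^2(x)$ diagrammatically: applying $F_+$ twice stacks two copies of the Figure \ref{fig:Fourier_transform} diagram, and one should show the resulting diagram equals the one defining $\rho_1$ (the map that, on the level of $Cf(n,\ZZ)\to Cf(n,F^2(\ZZ))=Cf(n,\ZZ)$, reverses/rotates the configuration appropriately — matching $\rho_2$ up to the book-keeping, consistent with the $\rho_1,\rho_2$ definitions and the lemma $\rho_2^2=1$). Concretely, $\langle\FF^2(x),y\rangle=\sum_{a'}\LL(x,\Theta_2(a'))\langle\FF(a'),y\rangle=\sum_{a'}\LL(x,\Theta_2(a'))\LL(a',\Theta_2(y))$, and one simplifies the composite of two Fourier-pairing diagrams, summed over $a'$, into a single diagram; the sum over $a'$ again becomes $\Omega$-circles which, after two applications of handle-slide, cut down to give $\LL$ composed with itself $=\langle\rho_1(x),y\rangle$. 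The doubled braidings produce a full twist which, by the twist property and sphericality, is absorbed, leaving exactly the $\rho_1$ diagram. The case of $F_-$ is handled identically using the second identity of Theorem \ref{thm_flatness}, with $\rho_2$ inserted; since $\rho_2$ is an isometry with $\rho_2^2=1$, the isometry of $F_-$ follows from that of $F_+$ together with $\langle F_-(x),y\rangle = d_Z\LL(\rho_2(x),\Theta_2(y))$.

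The main obstacle I anticipate is not conceptual but organizational: correctly matching the $\Omega$-circle resolutions and the twist/sphericality cancellations in the doubled diagram for $\FF^2$ so that the output is literally the diagram defining $\rho_1$ (Definition \ref{def:Z_2_action} gives $\rho_2$; one must pin down the precise relation between $\rho_1$ on $Cf(n,\ZZ)$ and $\rho_2$, and check the target space $F^2(\ZZ)$ really is $\ZZ$ so that $\FF^2$ is an endomorphism). The coefficient tracking — ensuring the net power of $\delta$ and the factors of $\eta$, $p^{\pm}$ cancel to give exactly $\Id$ for the isometry and exactly $\rho_1$ (no spurious scalar) for the square — is the delicate part, and I would verify it against the $n=2$, $\ZZ=(1,1,1,1)$ case where $\FF$ specializes to the $S$-matrix and $\FF^2=\rho_1$ becomes the standard relation $S^2=C$ (charge conjugation), which fixes all normalizations.
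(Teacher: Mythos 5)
Your isometry argument is essentially the paper's: stack the $F_{+}$ diagram against its reflection, invoke Lemma~\ref{lem:Theta_2_lemma} and Remark~\ref{rem:Kirby_color_creat} to turn sums over $Cf_{ONB}$ into $\Omega$-circles, and cancel via the cutting and handle-slide properties; the paper records the explicit $\delta$-power bookkeeping ($\delta^{-10n-2}\to\delta^{-2n}$) and closes with $\delta^{-2n}\delta^{2n}\langle x,y\rangle$. Both of your two formulations of this step are just unfoldings of the same computation.

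Where you diverge is in the proof of $\FF^2=\rho_1$. The paper does not brute-force the composite of two Fourier-pairing diagrams: it observes that the two-layer picture for $\langle F_+^2x,y\rangle$ can be \emph{rotated} (clockwise, moving the lower-right box to the lower-left) so that it becomes literally the picture for $\langle F_+(x),\,F_+(\rho_1^{-1}y)\rangle$, which equals $\langle x,\rho_1^{-1}y\rangle$ by the isometry just proved. This reduces the whole squaring statement to a one-move isotopy plus a recall of the first half of the theorem, with no further Kirby-colour manipulations or coefficient tracking. Your plan --- resolving the double sum over $a'$ into $\Omega$-circles and re-running the handle-slide argument --- should also succeed, but it redoes the hard part of Theorem~\ref{thm_flatness} a second time and is therefore much more error-prone; the normalization sanity-check against $n=2$, $\ZZ$ trivial (the $S^2=C$ relation) that you propose is a sensible hedge precisely because of this.

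One point needs correcting before either route can be executed. In your parenthetical you describe the target of $\FF^2$ as ``matching $\rho_2$ up to the book-keeping, consistent with $\rho_2^2=1$,'' and you ask whether $F^2(\ZZ)=\ZZ$. Neither is the right way to frame it: $\rho_1$ is the rotation of the configuration defined on morphisms of $\CC$ (the ``bending'' operation introduced together with $\Theta_2$), not the $\Zn2$ flip $\rho_2$ of Definition~\ref{def:Z_2_action}. The relation between $\FF$ and $\rho_2$ is the separate identity $F_-^{-1}F_+=\rho_2$ (Lemma~\ref{lem:rel_rho_2andPA_action}), while $\FF^2=\rho_1$ is a map $Cf(n,\ZZ)\to Cf(n,\rho_1(\ZZ))$ with $\rho_1(\ZZ)=F^2(\ZZ)$, a cyclic shift of $\ZZ$ by two, which need not equal $\ZZ$. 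If you carry through your brute-force simplification while expecting to land on the $\rho_2$-diagram, the bookkeeping will not close; aim for the $\rho_1$-diagram (the full clockwise rotation of the configuration), and the $\rho_2^2=1$ lemma is irrelevant here.
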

\begin{proof}
 $<\FF(x),\FF(y)>$ is equal to the following diagrams, see Figure \ref{fig:isometry1}, with a $(\delta^{-1/2-2})^{4n}\delta^{-2}=\delta^{-10n-2}$ factor.

\begin{figure}
    \centering
    \includegraphics[width=350pt]{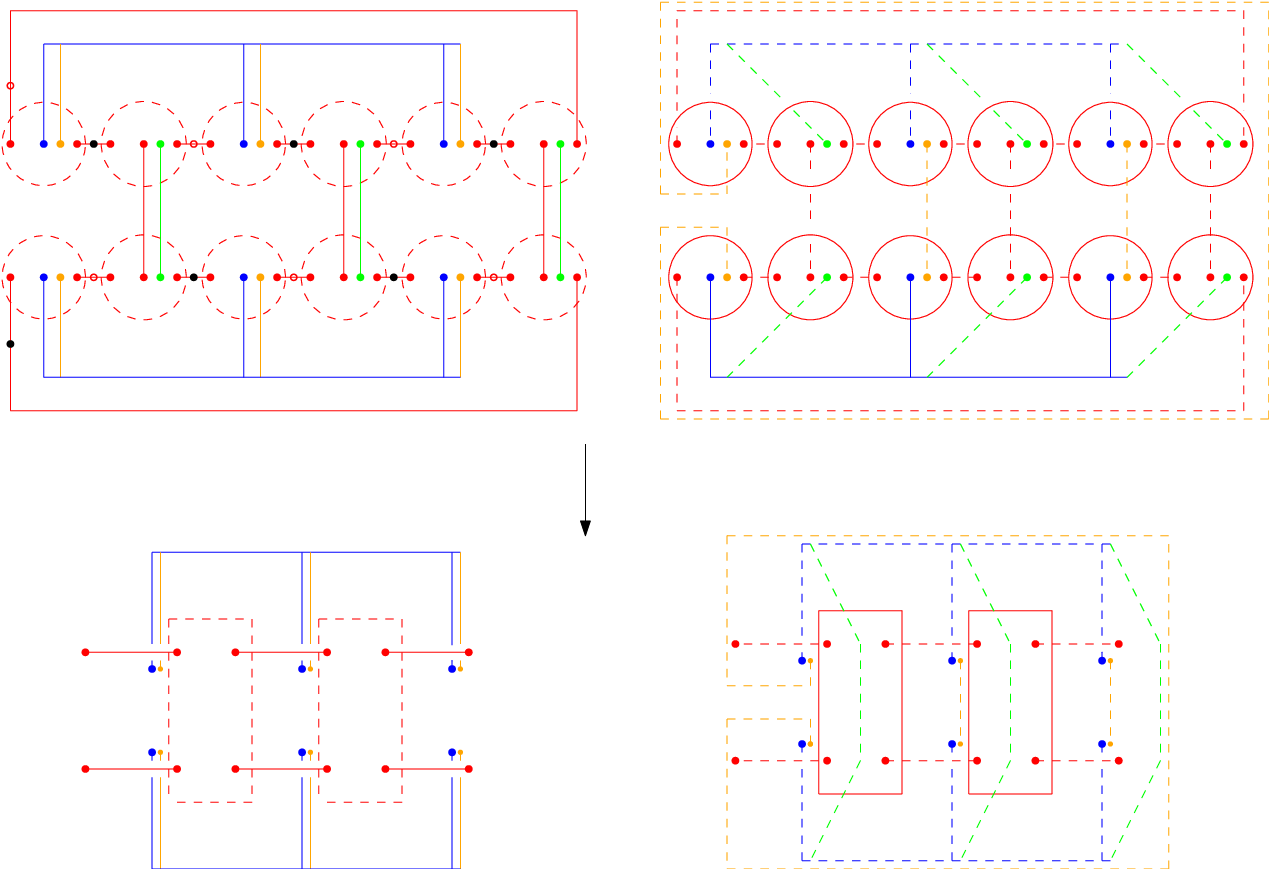}
    \caption{Graphic calculus}
    \label{fig:isometry1}
\end{figure}

Then we use similar moves both in the top and bottom part as in the proof of the theorem \ref{thm_flatness} to simplify the diagrams (the bottom is just a reflection of the top hence all the moves are the same), we get following diagrams, see Figure \ref{fig:isometry2}, with a factor of $\delta^{-10n-2+2n+4n+2+2n}=\delta^{-2n}$.
\begin{figure}
    \centering
    \includegraphics[width=350pt]{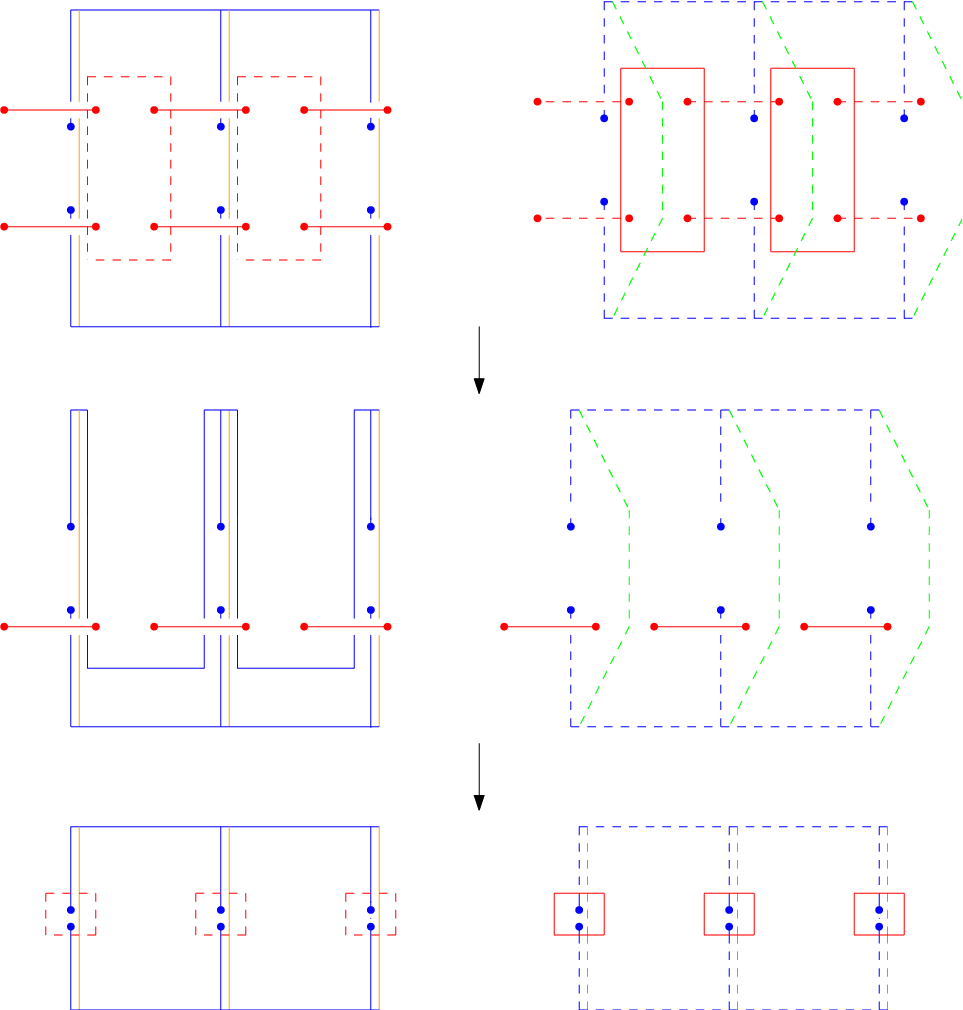}
    \caption{Graphic calculus}
    \label{fig:isometry2}
\end{figure}
The last diagram in Figure \ref{fig:isometry2} gives $\delta^{-2n}\delta^{2n}<x,y>=<x,y>$.

As for the second statement, $<F^2_+x,y>$ are given by the following diagrams, see Figure \ref{fig:square_to_rot}. 
\begin{figure}
    \centering
    \includegraphics[width=350pt]{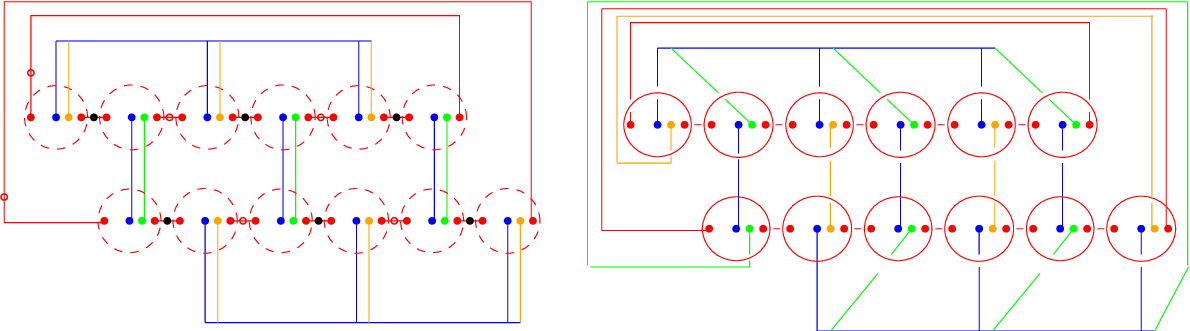}
    \caption{Two-layer presentation of $<F^2_+x,y>$}
    \label{fig:square_to_rot}
\end{figure}
One observes the isotopy given by rotating clockwise the diagram in the right lower corner to the left lower corner, one gets exactly the diagram for the inner product of $F_+(x)$ and $F_+{(\rho_1^{-1}y)}$ which is equal to $<x,\rho_1^{-1} y>$ by the proof of the first statement, which implies $F^2_+=\rho_1$. 

The statement for $F_{-}$ now follows directly from Theorem \ref{thm_flatness}.
\end{proof}

\begin{Cor}\label{Cor:full_comp}
We have the following identities,
\[
\begin{aligned}
\FF(T_{k+1}(x))&=\eta^{(-1)^k}T_{k}(\FF(x)),\\
\FF(\iota_{k+1}(x))&=\iota_{k}\FF(x).
\end{aligned}
\]
\begin{proof}
From Theorem \ref{thm_flatness}, \ref{Thm:isomety_square_to_rot} and Lemma \ref{lem:contraction_inclusion_equality}, \ref{lem:braiding&Fourier_comp}, we have, 
\[
\FF T_{2k+1}\FF^{-1}=\eta T_{2k}\implies \FF T_{2k+2}\FF^{-1}=\eta^{-1}\rho_1 T_{2k+3}\rho^{-1}_{1}=\eta^{-1}T_{2k+1}.
\]
For the first equality, we have,
\[
\FF \phi_{k+1}\FF^{-1}=\phi_k \implies \FF \phi_{k+1}\FF^{-1}\iota_k=\phi_k\iota_k=\delta d_{\hat{Z}}\implies \iota_k=\FF \iota_{k+1}\FF^{-1}.
\]
\end{proof}
\end{Cor}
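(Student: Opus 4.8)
The plan is to promote the two ``odd-index'' intertwining relations already available --- Lemma~\ref{lem:braiding&Fourier_comp}, namely $\FF(T_{2k+1}(x))=\eta T_{2k}\FF(x)$, and the Proposition stating $\FF(\phi_{k+1}(x))=\phi_k\FF(x)$ --- to statements valid for every admissible index, by bootstrapping with the two structural facts from Theorem~\ref{Thm:isomety_square_to_rot}: that $\FF$ is unitary (it is an invertible isometry, so $\FF^{-1}=\FF^{\dagger}$) and that $\FF^{2}=\rho_1$. Throughout I work with the operator forms of these relations.

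For the braiding identity, fix $k$ with $0\le k\le 2n-2$. When $k$ is even, $T_{k+1}$ has odd index and the desired relation $\FF T_{k+1}\FF^{-1}=\eta T_k$ is exactly Lemma~\ref{lem:braiding&Fourier_comp} (and $(-1)^{k}=1$). When $k$ is odd, write $k=2j+1$, so $T_k=T_{2j+1}$ and $T_{k+1}=T_{2j+2}$ has even index. Lemma~\ref{lem:braiding&Fourier_comp} with its free index equal to $j+1$ reads $\FF T_{2j+3}\FF^{-1}=\eta T_{2j+2}$, hence $T_{2j+2}=\eta^{-1}\FF T_{2j+3}\FF^{-1}$; conjugating by $\FF$ and using $\FF^{2}=\rho_1$,
\[
\FF T_{k+1}\FF^{-1}=\eta^{-1}\FF^{2}T_{2j+3}\FF^{-2}=\eta^{-1}\rho_1 T_{2j+3}\rho_1^{-1}.
\]
It then remains to identify $\rho_1 T_{2j+3}\rho_1^{-1}=T_{2j+1}=T_k$, which yields $\FF T_{k+1}\FF^{-1}=\eta^{-1}T_k=\eta^{(-1)^{k}}T_k$ and finishes this case.

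For the inclusion identity I would take adjoints in the Proposition $\FF\phi_{k+1}\FF^{-1}=\phi_k$: since $\FF$ is unitary and $\phi_m^{\dagger}=\iota_m$ (the defining tangles of $\phi_m$ and $\iota_m$ are vertical reflections of one another with identical normalizing scalars, by Definition~\ref{def:contaction_and_inclusion}), the adjoint of that identity reads $\FF\iota_{k+1}\FF^{-1}=\iota_k$, that is, $\FF(\iota_{k+1}(x))=\iota_k(\FF(x))$. Alternatively, one can post-compose $\FF\phi_{k+1}\FF^{-1}=\phi_k$ on the right with $\iota_k$ and use the scalar relations $\phi_k\iota_k=\delta d_{\hat{Z}}$ and $\phi_{k+1}\iota_{k+1}=\delta d_{\hat{Z}}$ from Lemma~\ref{lem:contraction_inclusion_equality}, together with injectivity of the inclusion maps, to solve for $\FF\iota_{k+1}\FF^{-1}$.

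The one point that is not a purely formal consequence of the relations already in hand is the index-shift identity $\rho_1 T_j\rho_1^{-1}=T_{j-2}$ --- equivalently, covariance of the braiding generators under the cyclic relabelling of $\ZZ$ induced by $F$ --- so I expect that to be the main obstacle. I would verify it directly from Definition~\ref{def:braiding} and the definitions of $\rho_1$ and $F(\ZZ)$, matching the local tangle defining $T_j$ on $Cf(n,\ZZ)$ with the one defining $T_{j-2}$ on $Cf(n,F^{2}(\ZZ))$ under the identification $\rho_1\colon Cf(n,\ZZ)\xrightarrow{\ \sim\ }Cf(n,F^{2}(\ZZ))$, paying particular attention to the boundary indices near $0$ and $2n-2$ where the cyclic convention $Z_{2n}=Z_0$ intervenes. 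Everything else reduces to Theorem~\ref{thm_flatness}, Theorem~\ref{Thm:isomety_square_to_rot}, Lemma~\ref{lem:contraction_inclusion_equality} and Lemma~\ref{lem:braiding&Fourier_comp}.
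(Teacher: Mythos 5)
Your proposal is correct and follows essentially the same route as the paper: conjugate the odd-index relation $\FF T_{2j+3}\FF^{-1}=\eta T_{2j+2}$ by $\FF$, invoke $\FF^{2}=\rho_1$, and identify $\rho_1 T_{2j+3}\rho_1^{-1}=T_{2j+1}$; then transfer $\FF\phi_{k+1}\FF^{-1}=\phi_k$ to the corresponding inclusion statement. Two small remarks: (i) your primary adjoint argument ($\phi_m^{\dagger}=\iota_m$, $\FF$ unitary, take daggers) is in fact the cleanest justification of the paper's terse implication $\phi_k\iota_k=\delta d_{\hat Z}\Rightarrow\iota_k=\FF\iota_{k+1}\FF^{-1}$, since a right inverse alone is not automatically unique; (ii) you are right that the index-shift covariance $\rho_1 T_j\rho_1^{-1}=T_{j-2}$ is used silently in the paper and should be checked against Definition~\ref{def:braiding} and the definition of $\rho_1$, as you propose.
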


Next we verify the braiding we defined here is indeed the 'square root' of the braiding on $\CC\boxtimes \CC$ ($c_{x,y}\boxtimes c_{\bar{x},\bar{y}}$).

\begin{prop}\label{braiding_squareroot}
  We have the following identity (as operators on the configuration space). 
\[
T_{2k+1}\circ T_{2k}\otimes T_{2k+2} \circ T_{2k+1}=\bigoplus_{X,Y\in \Irr(\CC)}c_{XZ_{2k},Y Z_{2k+2}}\boxtimes c_{\bar{X}Z_{2k+1},\bar{Y}Z_{2k+3}}
\]
\end{prop}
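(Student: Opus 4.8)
The plan is to reduce the identity to a single local computation in the two-block configuration space and then recognize the resulting diagram as the genuine braiding of $\CC\boxtimes\CC$. First I would observe that all four operators on the left-hand side are supported on the two consecutive blocks indexed by $k$ and $k+1$ — with $T_{2k}\otimes T_{2k+2}$ meaning the tensor product of the elementary crossings on blocks $k$ and $k+1$ with the identity on the remaining blocks, and $T_{2k+1}$ acting between them — so it suffices to check the equality of morphisms for $n=2$, $k=0$. Unwinding Definition \ref{def:braiding}, each copy of $T_1$ equals $\delta^{-1}(\phi_1\otimes\Id)(\Id\otimes T_2^{-1}\otimes\Id)(\Id\otimes\iota_3)$, so after substituting the defining diagrams for $T_0$, $T_2$, $T_2^{-1}$ the composite $T_1\circ(T_0\otimes T_2)\circ T_1$ becomes one explicit two-layer tangle built out of $\iota$'s, $\phi$'s and the elementary crossings of Definition \ref{def:braiding}, carrying a $\delta^{-2}$ prefactor and the diamond (square-root-of-twist) decorations on the green and orange strands.

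Next I would simplify this tangle by graphic calculus, in the spirit of the proofs of Lemmas \ref{lem:zig-zag} and \ref{lem:contraction_inclusion_equality} and of Theorem \ref{thm_flatness}. The summations over $\Irr(\CC)$ coming from the two $\iota_3$'s and $\phi_1$'s, together with the $\delta^{\pm1/2}$ weights of Definition \ref{def:contaction_and_inclusion}, turn the auxiliary pairs of strands into $\Omega$-colored circles (Remark \ref{rem:Kirby_color_creat}); the zig-zag relations of Lemma \ref{lem:zig-zag} and the cutting and handle-slide properties of the Kirby color then remove these circles, while Lemma \ref{lem:Theta_2_lemma} lets me pass freely between the two-layer picture and the honest three-dimensional tangle in $\CC$. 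The diamond decorations are resolved via the twist property $\theta_{X\otimes Y}=c_{Y,X}c_{X,Y}\theta_X\otimes\theta_Y$: the half-twists carried by the three elementary crossings (and by the $T_2^{-1}$ hidden inside $T_1$) should combine so that the four front-layer crossings assemble into the \emph{untwisted} double crossing of the block-$0$ strand pair past the block-$1$ pair, and similarly in the back layer. One also has the option of commuting the $\phi$'s past the even generators first, using Lemma \ref{Lem:braiding_capcup_comp}, to shorten the bookkeeping.

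Once the diagram has been reduced, identifying the answer is immediate: on the summand labelled by $X,Y\in\Irr(\CC)$ the remaining tangle crosses the block-$0$ pair of strands entirely past the block-$1$ pair, separately in the front and back layers, with $Z_0$ (resp.\ $Z_1$) on the front (resp.\ back) leg of block $0$ and $Z_2$ (resp.\ $Z_3$) on block $1$; by the definition of the braiding on a Deligne product this is $c_{XZ_0,YZ_2}\boxtimes c_{\bar XZ_1,\bar YZ_3}$, and summing over $X,Y\in\Irr(\CC)$ (the decomposition of $\gamma$ in each block) gives the stated formula, with the case of general $k$ following from the locality reduction above. The hard part will be the scalar and framing bookkeeping in the middle step: I must verify that every auxiliary Kirby circle introduced by the definitions of the odd generators cancels cleanly, and that the diamond decorations together with all the $\delta$ and $\eta$ factors conspire to leave coefficient exactly $1$ rather than a stray global scalar — in particular that the ``half-twisted'' crossings of the $T_{2k}$'s really do compose to the untwisted braiding of $\CC\boxtimes\CC$.
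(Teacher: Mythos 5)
Your proposal takes essentially the same approach as the paper: pass to the two-layer presentation via Lemma \ref{lem:Theta_2_lemma}, expand the odd generators through the $\phi$/$\iota$ definitions so that paired strands become $\Omega$-coloured circles (Remark \ref{rem:Kirby_color_creat}), and then resolve them with the twist property and the cutting property of the Kirby colour to recover the braiding of $\CC\boxtimes\CC$. The paper records this as a short chain of figure manipulations (twist property inside the vertical, then horizontal, red circles; cutting; a final cut using the fact that the two layers in the configuration space are separated), which is precisely the calculus you describe; you have correctly identified both the tools and the place where the scalar and framing bookkeeping must be checked.
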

\begin{proof}
As usual, using Lemma \ref{lem:Theta_2_lemma} (Here we indeed show they are equal by evaluating in $\CC$ the inner product with basis vectors,  hence the lemma can be applied), we draw the simplified diagrams as follows. Now we first apply the twist property inside vertical red circles at the right upper and left lower corners, then apply the twist property again inside the upper and lower horizontal red circles and apply the cutting property. We have     
\[
\includegraphics[width=350pt]{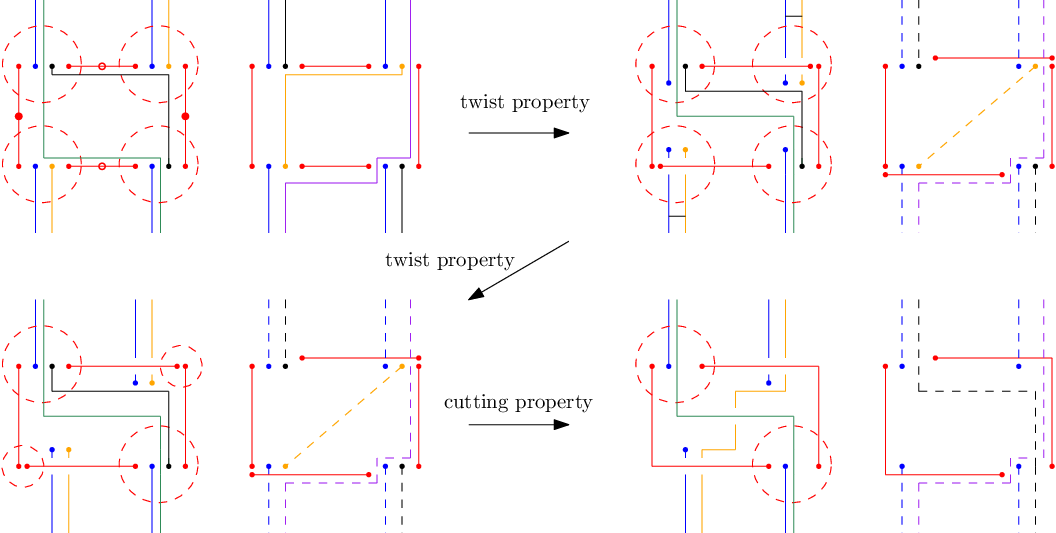}
\]
Now we use the fact that we are working in the configuration space and apply the cutting property, therefore we have,
\[
\includegraphics[width=350pt]{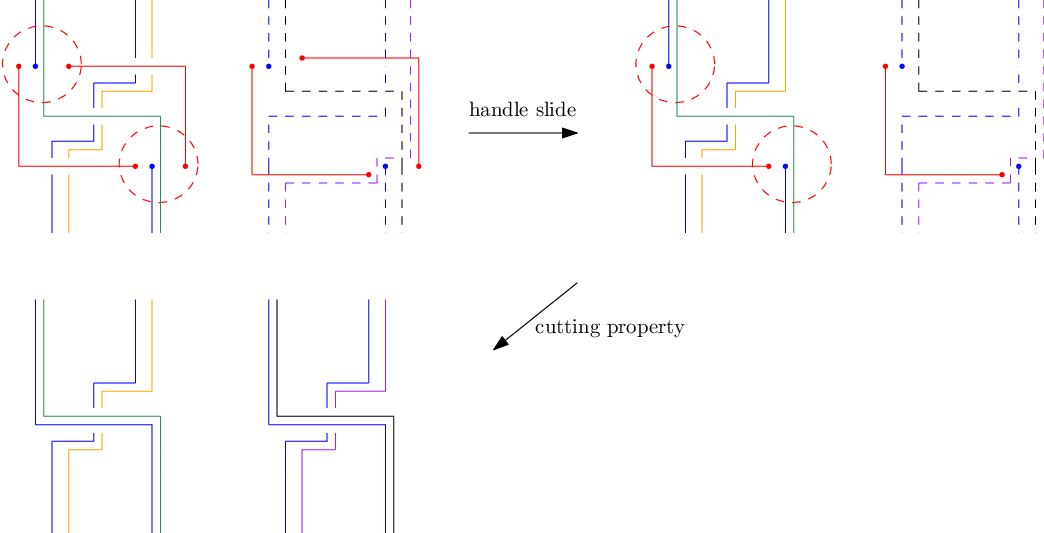}
\]
\end{proof}


\section{$\Zn2$ permutation extension}
Now we define an even part of the shaded planar algebra with a shaded $\Zn2$ braiding structure from the data of $\CC$.

\begin{defn}
We define a colored and oriented shaded planar algebra $\PP^{\CC}$, the coloring set is given by the set of objects in $\CC$, with the involution given by the duality. Moreover $P^{\CC}_{n,+,\ZZ}=Cf(n,\ZZ)$ and the generating tangles are defined as follows.
\end{defn}

\includegraphics[width=350pt]{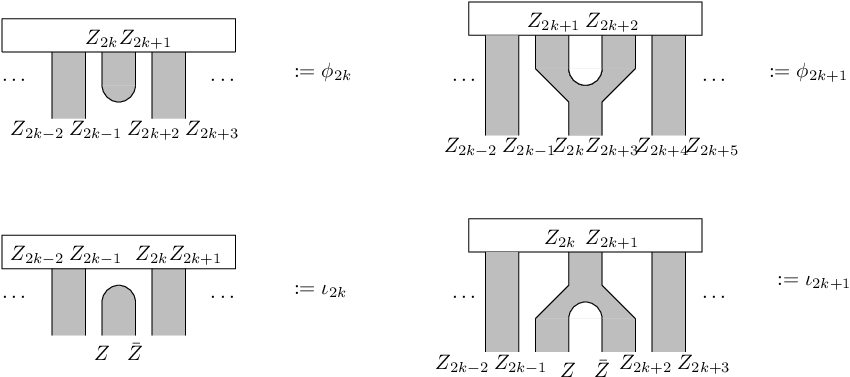}

\includegraphics[width=350pt]{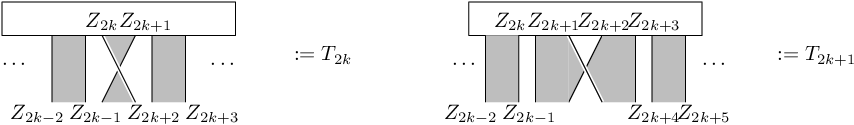}

\begin{Thm}\label{PA-Zn2_braiding_structures_verify}
 They define an even part of the shaded planar algebra with a shaded $\Zn2$ braiding structure (Definition \ref{def:Zn2_braiding_struc}).   
\end{Thm}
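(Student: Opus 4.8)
The plan is to identify the generating-tangle actions of $\PP^{\CC}$ with the operations already built on the configuration spaces: the multiplication and unit with the products of Definition \ref{def:PA-composition} transported through the two-layer picture of Lemma \ref{lem:Theta_2_lemma}, the shaded cups and caps with the contractions and inclusions $\phi_k,\iota_k$ of Definition \ref{def:contaction_and_inclusion}, and the half-braiding tangles with the operators $T_k$ of Definition \ref{def:braiding}. With this dictionary fixed, showing that $\PP^{\CC}$ is an even part of a shaded planar algebra reduces to checking that these operations satisfy the shaded planar algebra relations: associativity and the interchange relations follow from the graphic calculus of $\CC$ once one passes to the two-layer picture via Lemma \ref{lem:Theta_2_lemma} and Remark \ref{rmk:graphic_cal_exp}; the cup--cap zig-zag relations are exactly Lemma \ref{lem:zig-zag}; sphericality is inherited from that of $\CC$; and positive-definiteness of the inner product is inherited from unitarity of $\CC$.

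The heart of the argument is to verify the shaded $\Zn2$ braiding relations $(C0)$--$(C5)$ of Definition \ref{def:Zn2_braiding_struc}, which I would treat one at a time. The compatibility of a crossing with cups and caps, relations $(C1)$ and $(C2)$, follows from Lemma \ref{Lem:braiding_capcup_comp} together with the very definition of the odd crossings $T_{2k+1}=(\phi_{2k+1}\otimes\Id)(\Id\otimes T_{2k+2}^{-1}\otimes\Id)(\Id\otimes\iota_{2k+3})$, which says precisely that odd crossings are obtained from even ones by sliding along a cup--cap pair. The Reidemeister-type and Yang--Baxter relations gathered in $(C0)$ are extracted from Proposition \ref{braiding_squareroot}: since $T_{2k+1}(T_{2k}\otimes T_{2k+2})T_{2k+1}$ equals the genuine braiding $c\boxtimes c$ on $\CC^{\boxtimes 2}$, which satisfies the braid relations and hexagons, the corresponding relations for the $T_k$ follow by the standard square-root bookkeeping. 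Relation $(C3)$---the one from which Lemma \ref{lem:Z_2_action} derives the induced action $\rho$ as a double-loop rescaling---I would check by verifying directly that the tangle appearing in Lemma \ref{lem:Z_2_action} agrees with the action $\rho_2$ of Definition \ref{def:Z_2_action}, the equality of the two expressions there being a handle-slide computation for the Kirby colour $\Omega$ using the cutting and handle-slide properties recalled in \S2.

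Relation $(C5)$, the analog of symmetric self-duality, is where the Fourier transform $\FF$ plays the role of the self-duality isomorphism $\Phi_\pm$ of Definition \ref{def_symmtric_self_dual}: its content is exactly that $\FF$ is an isometry with $\FF^2=\rho_1$ (Theorem \ref{Thm:isomety_square_to_rot}), together with the identification $F_+=\FF$ coming from Theorem \ref{thm_flatness} and the compatibilities of $\FF$ with $\phi_k,\iota_k,T_k$ recorded in Corollary \ref{Cor:full_comp} and Lemma \ref{lem:braiding&Fourier_comp}. The twist relations $(C4)$ are where the choice of square roots $\theta^{1/2}$ in Definition \ref{def:braiding} really enters; here I would expand both sides in the two-layer picture, apply the twist property $\theta_{X\otimes Y}=c_{Y,X}c_{X,Y}\,\theta_X\otimes\theta_Y$ of $\CC$, use the normalization $\theta^{1/2}_{\bar V}=\overline{\theta^{1/2}_V}$, and track the diamond decorations carried by the $T_k$.

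I expect the main obstacle to be $(C4)$ and, more broadly, keeping all the global scalar factors consistent across the relations simultaneously: each relation on its own is a graphic-calculus identity in $\CC$, but the square roots of quantum dimensions introduced by $\phi_k,\iota_k$ and the diamond twists and the constant $\eta$ introduced by the $T_k$ must be shown to cancel in exactly the pattern prescribed by Definition \ref{def:Zn2_braiding_struc}. Once $(C4)$ is established, the remaining relations are either already proved above or reduce to short isotopy arguments of the kind used repeatedly in \S4.
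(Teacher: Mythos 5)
The overall architecture of your proof is the right one and matches the paper's: fix the dictionary between generating tangles and the configuration-space operations ($\phi_k,\iota_k,T_k$), reduce the even-part planar algebra axioms to the established graphic-calculus lemmas (zig-zag from Lemma \ref{lem:zig-zag}, etc.), and then verify $(C0)$--$(C5)$. However, your handling of the Yang--Baxter relation---which the paper singles out as $(C3)$, $T_{k+1}T_kT_{k+1}=\eta^{(-1)^k}T_kT_{k+1}T_k$, deferred to Lemma \ref{Lem:C3verify}---has a genuine gap. You propose two routes, and neither works as stated. First, you want to ``extract'' it from Proposition \ref{braiding_squareroot} by ``standard square-root bookkeeping,'' but the fact that $T_{2k+1}(T_{2k}\otimes T_{2k+2})T_{2k+1}=c\boxtimes c$ satisfies the braid relation does not transfer to the $T_k$ themselves; a square satisfying a relation does not generically give the relation for the square root, and the paper's proof is quite different: it uses the Fourier compatibility $\FF T_{2k+1}\FF^{-1}=\eta T_{2k}$ (Lemma \ref{lem:braiding&Fourier_comp}) together with the factorization of $\FF$ into a product of $T_k$'s (Corollary \ref{Cor:F_braiding_rels}) and cancellation, which is a real argument and not bookkeeping. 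Second, you want to verify $(C3)$ by checking that the double-loop tangle agrees with $\rho_2$; but that identification (Lemma \ref{lem:rel_rho_2andPA_action}) is a consequence of already having the $\Zn2$ planar algebra structure, not a way to establish the local relation $(C3)$ that makes $\rho$ a planar-algebra action in the first place, so this route is essentially circular.

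Your reading of $(C5)$ is also off. You take its content to be $\FF^2=\rho_1$ (Theorem \ref{Thm:isomety_square_to_rot}), playing the role of $\Phi_{\mp}\circ\Phi_{\pm}=\mathrm{id}$, but the paper proves $(C5)$ from Proposition \ref{braiding_squareroot}: the relation is the local statement that a double half-crossing equals the genuine braiding $c\boxtimes c$ of $\CC\boxtimes\CC$, which is indeed the precise analog of symmetric self-duality at the level of tangles. The two statements are related but not interchangeable, and invoking $\FF^2=\rho_1$ would not by itself give a local planar relation. A smaller omission: the paper also needs the relations with reversed shading, and handles them by conjugating by $F_+$; this should be stated, otherwise one has only half of what Theorem \ref{thm:lifting_thm} requires. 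The remainder of your outline---$(C1)$/$(C2)$ from Lemma \ref{Lem:braiding_capcup_comp} plus the definition of the odd crossings, $(C4)$ from expanding and using the twist property and the $\theta^{1/2}$ normalization---is in the right spirit and aligns with the paper's use of $(R5)$--$(R7)$.
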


\begin{proof}
The following relations can be verified. By conjugating with $F_+$, we have $(R2)-(R6)$ with reversed shading are also satisfied (the relation holds for any labeling, hence we omit labels for simplicity).

\[
\includegraphics[width=350pt]{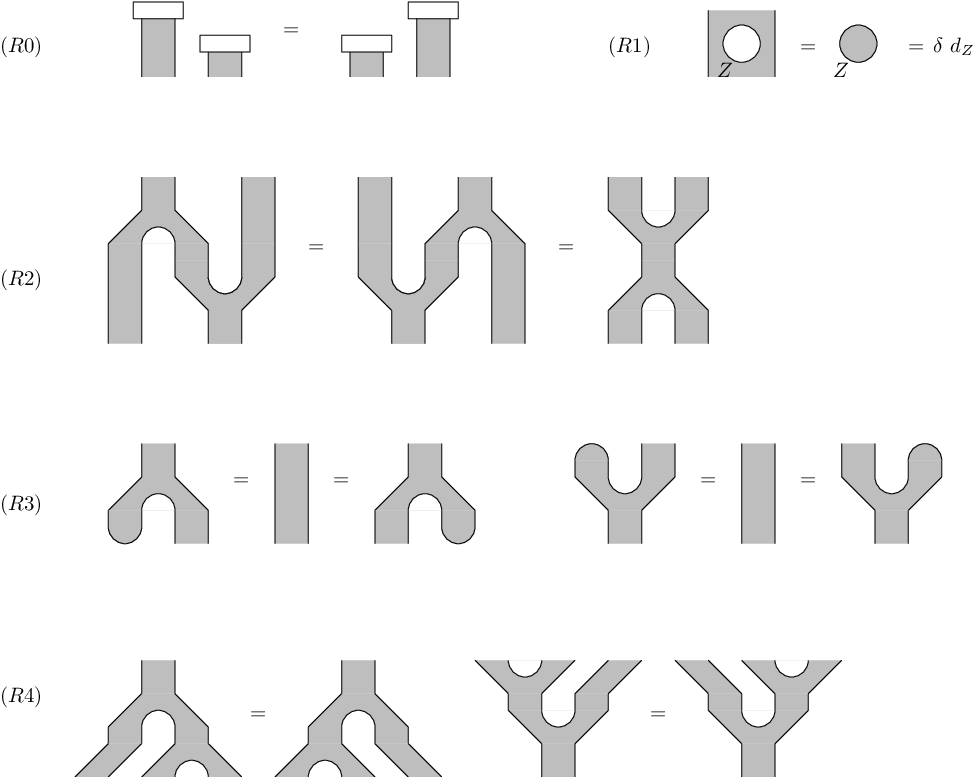}
\]

\[
\includegraphics[width=350pt]{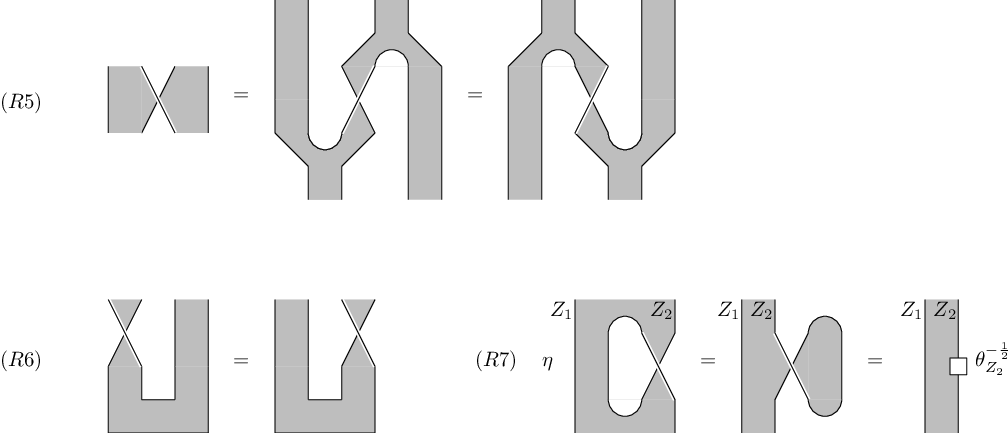}
\]

Here the braiding in $(R5),\ (R6)$ can be either positive or negative. 
$(R0)$ is obvious, $(R1)$ is the first identity in Lemma \ref{lem:contraction_inclusion_equality}, $(R2),(R4),(R7)$ follow from direct graphic calculus using Lemma \ref{lem:Theta_2_lemma} and Remark \ref{rem:Kirby_color_creat}, $(R3)$ follows from "zig-zag" relation which is proved in Lemma \ref{lem:zig-zag}.

The first equality of $(R5)$ is the definition \ref{def:braiding}, the second one follows from direct graphic calculus. $(R6)$ is proved in Lemma \ref{Lem:braiding_capcup_comp}. 

Now $(R0)-(R4)$ (also with shading reversed) gives the structure of an even part of planar algebra. $(C0)$ can be verified easily (Definition \ref{def:braiding}), $(C1)$ follows from $(R5),\ (R4)$. $(C2)$ can be verified using $(R5)$ on the right-hand side and applying $(R2),\ (R3),\ (R6)$. $(C3)$ will be proved later (Lemma \ref{Lem:C3verify}). $(C4)$ follows from $(R7)$ and other equalities are direct consequence of applying $(R5)$ and $(C3)$. And $(C5)$ follows from Proposition \ref{braiding_squareroot}.
\end{proof}

\begin{rmk}
We will abuse the notation to denote curls or the square root of twist in $(R7)$ by again a diamond symbol on the corresponding strand.
\[
\includegraphics[width=100pt]{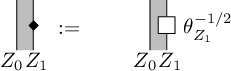}
\]
\end{rmk}

The immediate Corollary of $(C5)$ is the following 
\begin{Cor}\label{Cor:F_braiding_rels}
For $x\in P_{n,\ZZ}$, we have 
\[
F_{\pm,Z}(x)=d_Z(\theta^{\mp 1/2}_{Z_0}\otimes Id)\circ\prod^{2n-2}_{k=0}T^{\pm 1}_{2n-2-k}(x)
\]
\end{Cor}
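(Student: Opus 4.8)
The plan is to read off relation $(C5)$ (i.e.\ Proposition~\ref{braiding_squareroot}) in the graphical calculus, where it turns the defining tangle of $F_{\pm,Z}$ (Figure~\ref{fig:Fourier_transform}) into the asserted product of generators. First I would pass to the two-layer picture, so that by Lemma~\ref{lem:Theta_2_lemma} and Remark~\ref{rmk:graphic_cal_exp} every diagram is evaluated inside $\CC$. In that picture the tangle defining $F_{+,Z}$ carries the strand over the $0$-th slot of the configuration (the one labelled $Z_0$) once around the whole configuration, past the strands over slots $1,\dots,2n-1$; the effect on the outer boundary is the cyclic relabelling $\ZZ\mapsto F(\ZZ)$, so in particular both sides of the claimed identity are operators $Cf(n,\ZZ)\to Cf(n,F(\ZZ))$. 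Cutting this motion into its $2n-1$ successive elementary crossings and comparing with Definition~\ref{def:braiding}, the crossing of the $0$-th strand past the $j$-th strand is exactly $T_{j-1}$, so the composite is $T_{2n-2}T_{2n-3}\cdots T_1T_0=\prod_{k=0}^{2n-2}T_{2n-2-k}$; for the odd generators $T_{2k+1}$ this uses the contraction--inclusion expression $(\phi_{2k+1}\otimes\Id)(\Id\otimes T^{-1}_{2k+2}\otimes\Id)(\Id\otimes\iota_{2k+3})$ from that definition, and it is $(C5)$ that guarantees the elementary crossings glue into an honest braiding rather than one correct only up to input-disk corrections.

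Second, I would account for the scalars. Pulling the black $Z$-loop out of the diagram yields the factor $d_Z$, exactly as in the first reduction of the proof of Theorem~\ref{thm_flatness}. The half-twist decorations carried by each $T_k$ (the diamond symbols in Definition~\ref{def:braiding}), together with the framing curl that the $0$-th strand drags back to its final position, collapse via $(R7)$ and $(C4)$ to the single factor $\theta^{\mp1/2}_{Z_0}$ on the correct tensor leg, giving the prefactor $\theta^{\mp1/2}_{Z_0}\otimes\Id$. Finally the $\delta^{\pm1/2}$ normalizations attached to the $\phi$'s and $\iota$'s inside the odd generators, together with the $\delta^{-1}$ built into the definition of $F_{\pm,Z}$, cancel by the zig-zag relations of Lemma~\ref{lem:zig-zag}, leaving the coefficient $d_Z$ with no residual power of $\delta$. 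The negative case follows, as in the last paragraph of the proof of Theorem~\ref{thm_flatness}, by reading the same diagram ``from the other side'': this exchanges over- and under-crossings, turning each $T_k$ into $T_k^{-1}$ and $\theta^{-1/2}$ into $\theta^{+1/2}$.

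The step I expect to be the main obstacle is the bookkeeping for the odd-indexed generators: since $T_{2k+1}$ is itself a contraction--$T_{2k+2}^{-1}$--inclusion sandwich, one must check that in the full composite the inclusion produced by one such generator is annihilated by the contraction of the next, via Lemma~\ref{lem:zig-zag}, so that precisely one genuine crossing of the $Z_0$-strand past the $(2k+1)$-st strand survives, with no leftover Kirby-coloured circle and no stray $\delta$. Granting this, the remainder is a routine diagram chase using Lemma~\ref{lem:Theta_2_lemma}, Remark~\ref{rem:Kirby_color_creat}, and the cutting and handle-slide properties of the Kirby colour.
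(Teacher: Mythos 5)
Your plan is essentially the paper's proof: the paper redraws $F_{\pm,Z}$ as a tangle in the planar algebra $\PP^{\CC}$ (this is the content of the figure shown in its proof), then reads off the product of crossings, invoking $(C5)$ so that the black $Z$-loop can be slid off to produce $d_Z$, and $(R7)$ to collapse the remaining curl on the $Z_0$-strand into $\theta^{\mp1/2}_{Z_0}$; the negative case is the mirrored diagram, exactly as you say. Two small points are worth correcting so your account matches what $(C5)$ and the tangle calculus actually do. First, $(C5)$ does not ``guarantee the elementary crossings glue into an honest braiding''---that is already built into the planar-algebra structure via $(R0)$--$(R4)$ and $(C0)$--$(C2)$; the role of $(C5)$ (Proposition~\ref{braiding_squareroot}) is precisely the double-crossing-over-a-two-layer-strand relation, which is what lets the $Z$-loop be removed cleanly. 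Second, the ``main obstacle'' you anticipate, namely that the $\phi$--$T^{-1}_{2k+2}$--$\iota$ sandwich defining $T_{2k+1}$ would have to telescope against its neighbors via zig-zag cancellations, does not arise in the paper's argument: once one works at the level of planar-algebra tangles, each $T_k$ (even or odd) is a primitive crossing generator of the shaded planar algebra, and the composite $\prod_k T_{2n-2-k}$ is just the stacked braid tangle; the sandwich form (recorded as the first equality of $(R5)$) is only needed when one wants to \emph{evaluate} $T_{2k+1}$ inside a configuration space, not to compare tangles. So the approach and the key inputs $(C5)$, $(R7)$, $(C4)$ are the right ones, but the worry about zig-zag bookkeeping is a red herring rather than a genuine gap.
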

\begin{proof}
 By comparing various definitions, The operators $F_{\pm,(Z)}$ are given by the following tangles, and the results follows from $(C5)$ (Proposition \ref{braiding_squareroot}) and $(R7)$. Here we present the proof for $F_{+,Z}$. 
   
\[
   \includegraphics[width=400pt]{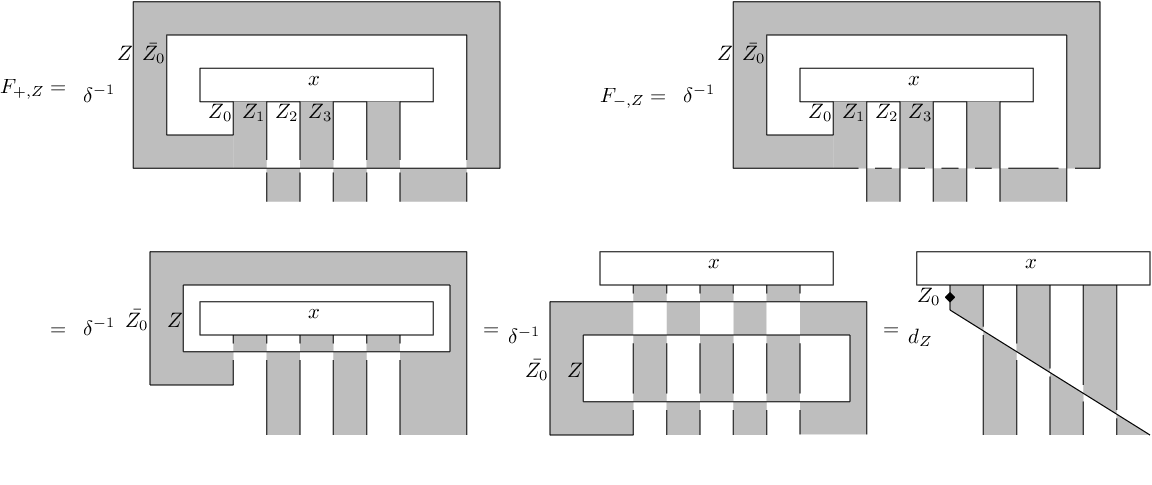}
\]  
\end{proof}

Now the $(C3)$ (see the next lemma) follows from Corollary \ref{Cor:F_braiding_rels}, Lemma \ref{lem:braiding&Fourier_comp} and Corollary \ref{Cor:full_comp}.

\begin{lem}\label{Lem:C3verify}
 The braidings we defined satisfy the following
 \[
 T_{k+1}T_{k}T_{k+1}=\eta^{(-1)^k}T_{k}T_{k+1}T_{k}.
 \]
\end{lem}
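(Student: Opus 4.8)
The plan is to deduce $(C3)$ formally, with no further graphical work, from two facts now in hand: Corollary~\ref{Cor:F_braiding_rels}, which exhibits the Fourier transform $\FF$ as an ordered product of \emph{all} the braiding generators, and Corollary~\ref{Cor:full_comp} (whose odd case is Lemma~\ref{lem:braiding&Fourier_comp}), which says that conjugation by $\FF$ lowers a generator's index by one while multiplying it by a power of $\eta$. Conjugating a single letter of a word that is itself a product of every generator is exactly what forces a braid-type relation.

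Concretely, I would take $Z=\one$ in Corollary~\ref{Cor:F_braiding_rels}: then $d_{\one}=1$, and working in $Cf(n)$ (all $Z_i$ trivial), so that the twist prefactor $\theta^{-1/2}_{Z_0}\otimes\Id$ disappears, we obtain $\FF=T_{2n-2}T_{2n-3}\cdots T_1T_0$. Fix $k$ with $0\le k\le 2n-3$ and factor this word as $\FF=L\,T_{k+1}T_k\,R$, where $L=T_{2n-2}\cdots T_{k+2}$ and $R=T_{k-1}\cdots T_0$ (one of $L,R$ possibly empty, at the endpoints $k=2n-3$ or $k=0$). Every generator in $R$ has index $\le k-1$ and every generator in $L$ has index $\ge k+2$, so the far-commutativity $T_iT_j=T_jT_i$ for $|i-j|\ge 2$ --- immediate from the locality of braidings in the planar calculus, since the two tangles then occupy disjoint horizontal ranges --- lets us slide $T_{k+1}$ leftward through $R$ and $T_k$ rightward through $L$:
\[
\FF\,T_{k+1}=L\,(T_{k+1}T_kT_{k+1})\,R,\qquad T_k\,\FF=L\,(T_kT_{k+1}T_k)\,R .
\]
Substituting these into the identity $\FF\,T_{k+1}=\eta^{(-1)^k}T_k\,\FF$ of Corollary~\ref{Cor:full_comp} and cancelling the invertible (indeed unitary) operators $L$ and $R$ from the two sides yields $T_{k+1}T_kT_{k+1}=\eta^{(-1)^k}T_kT_{k+1}T_k$, which is $(C3)$. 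For arbitrary $\ZZ$ the same computation applies once the diagonal twist prefactor $\theta^{-1/2}_{Z_0}\otimes\Id$ of Corollary~\ref{Cor:F_braiding_rels} is carried along: it commutes with all three braidings appearing in the relation (for $k\ge1$ because they act away from the first strand, and for $k=0$ by the twist--braiding compatibility already used in Proposition~\ref{braiding_squareroot}), so it cancels in the end.

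I do not expect a serious obstacle, since all the genuine diagrammatic content is already packaged in Theorems~\ref{thm_flatness} and~\ref{Thm:isomety_square_to_rot}, and what remains is bookkeeping. The points that need care are: that the index ranges in $L$ and $R$ really do stay away from $\{k,k+1\}$ by at least two, so far-commutativity is legitimate (and that far-commutativity itself is recorded --- it is built into the planar-algebra formalism, but may deserve an explicit one-line statement before this lemma); that the two endpoint values of $k$ are handled with the appropriate factor empty; that the source and target configuration spaces line up along $\FF\,T_{k+1}=\eta^{(-1)^k}T_k\,\FF$, which they do because $F$ is exactly the cyclic relabelling carrying the $(k{+}1)$-st strand to the $k$-th; and that the parity $(-1)^k$ is used consistently with the even/odd split $\FF T_{2k+1}=\eta T_{2k}\FF$ versus $\FF T_{2k+2}=\eta^{-1}T_{2k+1}\FF$ recorded in the proof of Corollary~\ref{Cor:full_comp}. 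If any step deserves the name ``hard part'', it is just ensuring that far-commutativity is in place at this point in the development.
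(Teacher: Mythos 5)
Your proposal is correct and takes essentially the same route as the paper: both deduce the relation from the far-commutativity of distant $T_k$'s, the product expression of $\FF$ in Corollary~\ref{Cor:F_braiding_rels}, and the conjugation identity $\FF T_{k+1}=\eta^{(-1)^k}T_k\FF$ of Corollary~\ref{Cor:full_comp}. The only cosmetic difference is that the paper first reduces to the single case $T_1T_0T_1=\eta T_0T_1T_0$ by iterated conjugation with $F_{+,Z}$ and then cancels, whereas you factor $\FF=L\,T_{k+1}T_k\,R$ around the position $k$ directly; both treat the twist prefactor with the same degree of informality.
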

\begin{proof}

the identity is equivalent, by conjugating $F_{+,Z}$, to $T_1T_0T_1=\eta T_0T_1T_0$.

Now from Corollary \ref{Cor:F_braiding_rels} and \ref{Cor:full_comp}, we have $\prod^{2n-2}_{m=0}T_{2n-2-m}T_{1}=\eta T_0\prod^{2n-2}_{m=0}T_{2n-2-m}$. Since $T_mT_{m+2}=T_{m+2}T_{m}$, after cancellations we have $T_1T_0T_1=\eta T_0T_1T_0$

\end{proof}

Now in order to resolve the global constant $\eta$, one can easily check it is enough to let $\tilde{T}_{k}=\eta^{\frac{(-1)^k} {2}}T_k$. Then in favor of Theorem \ref{thm:lifting_thm}, we obtain an unshaded planar algebra $\PP$ with a $\Zn2$ braiding (the braiding structure now comes from $\tilde{T_k}$). The corresponding $\Zn2$ action is given by placing the input disks in between two $1$-labeled circles, see Lemma \ref{lem:Z_2_action}. Next lemma shows this action is the same as $\rho_2$ (Definition \ref{def:Z_2_action}).
\begin{lem}\label{lem:rel_rho_2andPA_action}
  The $\Zn2$ action is given by $\rho_2$.  
\end{lem}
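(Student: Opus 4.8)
\textbf{Proof proposal for Lemma \ref{lem:rel_rho_2andPA_action}.}

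The plan is to compare the two descriptions of the $\Zn2$ action directly in the configuration space, using the diagrammatic calculus of $\CC$. The action coming from the lifting construction (Lemma \ref{lem:Z_2_action}) is defined by sandwiching an input disk $x \in P^u_{n} = P_{n,+}$ between a $1$-labeled circle placed above and a $1$-labeled circle placed below, rescaled by $c^{-1}$, where $c$ is the relevant double loop value; recall from Theorem \ref{thm:lifting_thm} that in the shaded setting this produces the tangle displayed in the proof of Lemma \ref{lem:Z_2_action}. On the other hand, $\rho_2$ is defined in Definition \ref{def:Z_2_action} by the explicit pair-of-layers diagram that swaps the two layers (together with the indicated twists and Kirby-color circle). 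So the lemma asks us to identify these two a priori different recipes.

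First I would write out the lifting-action tangle explicitly in terms of the generating tangles of $\PP^{\CC}$: the two $1$-labeled circles become, via the planar-algebra structure on $Cf(n,\ZZ)$, a composition of contractions and inclusions $\phi_k, \iota_k$ wrapped around $x$, precisely because the even-part generating tangles (the caps, cups and the braidings $\tilde T_k$) were built from $\phi_k,\iota_k$ in Section 4. I expect that, after using the square-root-of-quantum-dimension coefficients attached to $\phi_k,\iota_k$, the pair of $1$-labeled loops closing up around the box turns into a single $\Omega$-colored (red) circle encircling the relevant strands — this is exactly the ``blue pair $\to$ Kirby color'' move highlighted in Remark \ref{rem:Kirby_color_creat}. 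Then I would invoke Lemma \ref{lem:Theta_2_lemma} to pass to the two-layer $2$D presentation, where the effect of the encircling $\Omega$-loop plus the shading-change circle is to braid one layer past the other; sphericality and the cutting/handle-slide properties of $\Omega$ should then collapse the red circles and reproduce precisely the diagram defining $\rho_2$, including the bookkeeping of the half-twists (which on the lifting side come from the $\tilde T_k = \eta^{(-1)^k/2} T_k$ normalization and on the $\rho_2$ side are the twists drawn in Definition \ref{def:Z_2_action}). Matching the global scalar $c^{-1}$ against the accumulated $\delta$-powers from the $\phi_k,\iota_k$ coefficients and the $\Omega$-manipulations gives the final equality.

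The main obstacle I anticipate is purely bookkeeping-of-conventions: keeping track of which layer gets the twist, the orientation of the encircling Kirby color relative to the strands being permuted, and reconciling the scalar $c^{-1}$ (a double loop value depending on the labels of the strands passing through) with the product of $\delta^{\pm 1/2}$ factors carried by the contractions and inclusions. Since $\rho_2$ is already known to be an isometry with $\rho_2^2 = 1$ and $\rho_2(x)\otimes\rho_2(y) = \rho_2(x\otimes y)$, and the lifting action is likewise a planar-algebra automorphism of order $2$ by Lemma \ref{lem:Z_2_action} and $(C5)$, one could alternatively argue that both actions agree on a generating set of the planar algebra — it suffices to check agreement on the generators listed after Definition \ref{def:contaction_and_inclusion} (or on a small box space such as $P^u_1$) and then extend by multiplicativity and compatibility with the planar operations. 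I would carry out the direct diagrammatic identification first, since it is the most transparent, and fall back on the generator-by-generator comparison only if the scalar matching becomes unwieldy.
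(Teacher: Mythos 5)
Your proposed argument takes a genuinely different route from the paper's, and misses the fastest path.

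The paper's proof is two lines: Theorem~\ref{thm_flatness} already gives $\langle F_{+,Z}(x),y\rangle = d_Z\,\LL(x,\Theta_2(y))$ and $\langle F_{-,Z}(x),y\rangle = d_Z\,\LL(\rho_2(x),\Theta_2(y))$, so $F_-\rho_2 = F_+$ and hence $\rho_2 = F_-^{-1}F_+$ as operators; one then notes (via the tangle presentation of $F_\pm$ worked out in the proof of Corollary~\ref{Cor:F_braiding_rels}) that the planar tangle for $F_-^{-1}F_+$ is isotopic to the circle-sandwich tangle that defines the lifting action. That is the entire argument. Your plan instead starts from the circle-sandwich tangle and tries to simplify it directly via the two-layer calculus (Lemma~\ref{lem:Theta_2_lemma}, Remark~\ref{rem:Kirby_color_creat}, cutting/handle-slide) until it visibly equals the diagram in Definition~\ref{def:Z_2_action}. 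This is not wrong in spirit, but it amounts to re-deriving the hard half of Theorem~\ref{thm_flatness} from scratch. Since the whole point of having proved Theorem~\ref{thm_flatness} (which required three figures of genuinely nontrivial $\Omega$-manipulations) is precisely to relate $F_\pm$ to $\LL$ and $\rho_2$, redoing an equivalent computation here is wasted effort; the paper's approach buys you the lemma essentially for free once Theorem~\ref{thm_flatness} and Corollary~\ref{Cor:F_braiding_rels} are in place.

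There is also a small imprecision in your setup: you describe the two $1$-labeled circles as becoming $\phi_k,\iota_k$ compositions and then an $\Omega$-colored circle via the ``blue pair $\to$ Kirby color'' move. But the circles in the lifting action are the shading-change circles from the proof of Theorem~\ref{thm:lifting_thm} (carrying the $c^{-1/2}$ factors), not the labeled blue strand pairs to which Remark~\ref{rem:Kirby_color_creat} applies; the Kirby-color circles already live inside the generating tangles (see Definition~\ref{def:contaction_and_inclusion} and Definition~\ref{def:braiding}), not in the circles you add when lifting. This distinction matters for the scalar bookkeeping you flag as the main obstacle. Your fallback suggestion --- check on generators and extend by multiplicativity, using that both sides are planar-algebra automorphisms of order $2$ --- is sound and would work, but again it is strictly more work than citing Theorem~\ref{thm_flatness}. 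In short: the conclusion is reachable by your route, but you should recognize $F_-^{-1}F_+ = \rho_2$ as an immediate consequence of a result you already have, rather than redoing the diagrammatic heavy lifting.
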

\begin{proof}
From Theorem \ref{thm_flatness}, we have $F^{-1}_-F_+=\rho_2$, and it is easy to see the corresponding tangle of the left-hand side is isotopic to the desired form (see the proof of Corollary \ref{Cor:F_braiding_rels}).  
\end{proof}

Now by detailed analysis of the minimal projections and the $\Zn2$ action in the unshaded planar algebra, one gets the following.
\begin{Thm} \label{cat_of_Z_2-extension}
  We obtain a $\Zn2$-graded (unitary) spherical fusion category $\DD:=(\CC\bot\CC)^{\times}_{\Zn2}$ with a (unitary) $\Zn2$-crossed braiding. Moreover the projections in the odd part are given by a single cap labeled by $V$ for $V\in obj(\CC)$,  we denote them by $\{\hat{V}\}_{V\in obj(\CC)}$, we have 
    \begin{itemize}
        \item $\hat{V}$ is minimal $\Leftrightarrow$ $V$ is simple,
        \item $\hat{V}X\boxtimes Y=X\boxtimes Y\hat{V}=\widehat{VXY}$,
        \item $\hat{V}\hat{W}=\hat{W}\hat{V}=\oplus_{X\in \Irr(\CC)}XV\boxtimes \bar{X}W$. 
    \end{itemize}  
 The $\Zn2$ action $\rho$ (given by $\rho_2$ )acts on objects by $\rho(\hat{V})=\hat{V}$ and $\rho(X\bot Y)=Y\bot X$   
\end{Thm}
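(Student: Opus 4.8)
The plan is to analyze the unshaded planar algebra $\PP$ constructed via Theorem \ref{thm:lifting_thm} from the even part $\PP^{\CC}$ equipped with its shaded $\Zn2$ braiding (Theorem \ref{PA-Zn2_braiding_structures_verify}), and to extract from it the $\Zn2$-crossed braided fusion category $\DD$. First I would recall the general dictionary between unshaded finite-depth planar algebras and fusion categories (as in the references \cite{BHP12, MPS10} cited in Section 2.3): the box spaces $P^u_n$ are the morphism spaces of the category generated by a single object, and a system of minimal idempotents decomposes the identity box into simple summands. Since $P^u_{2n} = P_{n}$ (i.e. $Cf(n,\ZZ)$) with the even (unshaded-to-unshaded) region and $P^u_{2n+1} = P_{n,-}$ corresponds to the odd region, the even part recovers $\CC\boxtimes\CC$: indeed $P^{\CC}_{n,+,\ZZ} = Cf(n,\ZZ) = \Hom_{\CC^{\boxtimes 2}}(1, \otimes_i \gamma Z_{2i-2}\boxtimes Z_{2i-1})$, and by Lemma \ref{lem:contraction_inclusion_equality} the contraction/inclusion structure together with the fact that $\gamma = \bigoplus_X X\boxtimes\bar X$ is the canonical Frobenius algebra giving $\CC\boxtimes\CC$ as the category of $\gamma$-bimodules in a suitable sense means the trivially-graded part of $\DD$ is braided equivalent to $\CC\boxtimes\CC$ (the braiding being the one from Proposition \ref{braiding_squareroot}, since $\tilde T_{2k+1}\tilde T_{2k}\otimes \tilde T_{2k+2}\tilde T_{2k+1}$ reproduces $c_{XZ,YZ}\boxtimes c_{\bar X Z,\bar Y Z}$ up to the resolved scalar $\eta$).

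Next I would identify the odd part. The odd box space $P^u_1 = P_{?,-}$ contains the single-cap diagrams labeled by objects $V\in\obj(\CC)$; write $\hat V$ for the corresponding idempotent (a single cap labeled $V$, appropriately normalized). The three fusion rules then follow from direct planar-algebra computations: (i) $\hat V$ is minimal iff $V$ is simple, because $\Hom(\hat V,\hat W)$ in the odd part is computed by closing up the cap diagram, which by the cutting property of $\Omega$ and Lemma \ref{lem:Theta_2_lemma} evaluates to $\Hom_{\CC}(V,W)$; (ii) $\hat V \cdot (X\boxtimes Y) = \widehat{VXY}$ and symmetrically, because stacking a single cap labeled $V$ against an even box labeled $X\boxtimes Y$ merges the caps, again via the Frobenius/cutting identities, producing a single cap labeled $VXY$; (iii) $\hat V\hat W = \bigoplus_{X\in\Irr(\CC)} XV\boxtimes \bar X W$, because composing two single caps produces a diagram with an $\Omega$-colored circle which by the handle-slide and cutting properties decomposes over simple $X$ into the claimed sum — this is exactly the "create a Kirby color from a pair of blue lines" step flagged in Remark \ref{rem:Kirby_color_creat}. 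Commutativity $\hat V\hat W = \hat W\hat V$ and $\hat V X\boxtimes Y = X\boxtimes Y\hat V$ use the $\Zn2$-crossed braiding $\sigma$ (from $\tilde T_k$); one checks the braiding isomorphism intertwines the two orders, using that $\rho(\hat V) = \hat V$.

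Then I would assemble the categorical structure. Sphericality and unitarity of $\DD$ descend from the unitary structure and sphericality built into $\PP^{\CC}$ (the $\dagger$-structure is vertical reflection, and the trace is the closed-diagram evaluation in $\CC$, which is spherical because $\CC$ is). The $\Zn2$-grading is the even/odd grading of the unshaded planar algebra, and faithfulness of the grading follows because the odd part is nonzero (it contains $\hat 1$). The $\Zn2$-action $\rho$ is the one from Lemma \ref{lem:Z_2_action}, which by Lemma \ref{lem:rel_rho_2andPA_action} equals $\rho_2$; on objects it fixes each $\hat V$ (Definition \ref{def:Z_2_action} acts trivially on the single-cap diagrams up to the relation in $(C5)$, which for the odd box is an equality) and swaps the two layers $X\boxtimes Y\mapsto Y\boxtimes X$ (visible directly from the two-layer picture of $\rho_2$). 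The $\Zn2$-crossed braiding axioms (the mixed hexagons and the compatibility $g(c_{X,Y}) = c_{g(X),g(Y)}$, $g(\mathcal C_h) = \mathcal C_{ghg^{-1}}$) follow from the axioms of a $\Zn2$ braiding on an unshaded planar algebra (Definition \ref{def:Zn2_braiding(unshaded)}) — the crossed-braiding hexagons are precisely the Reidemeister-III / generic-tangle relations, and the twist-past-input-disk behavior of $\rho$ encodes the crossed compatibility.

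The main obstacle I expect is the careful bookkeeping in step two, specifically the identification $\hat V\hat W = \bigoplus_{X\in\Irr(\CC)} XV\boxtimes\bar X W$: one must track how the composition of two odd-box single-cap diagrams produces (after applying the shading-change circles of the lifting construction, scaled by $c^{-1/2}$) a configuration that, via the cutting and handle-slide properties of the Kirby color, resolves into the direct sum over simples with the correct multiplicities and with the correct two-layer labeling $XV$ in front and $\bar X W$ behind. A secondary subtlety is verifying minimality and orthogonality of the $\{\hat V\}_{V\in\Irr(\CC)}$ and that these, together with the even-part simples, exhaust $\Irr(\DD)$ — this amounts to a dimension count: $\dim\DD = 2\,\dim(\CC\boxtimes\CC) = 2\mu^2$, matched by $\sum_{\text{even simples}} d^2 + \sum_{V\in\Irr(\CC)} d_{\hat V}^2$, where $d_{\hat V}^2 = \mu\, d_V^2$ follows from the fusion rule (iii) evaluated via the quantum trace. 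Everything else is routine once the planar-algebra generators and relations $(R0)$–$(R7)$, $(C0)$–$(C5)$ of Theorem \ref{PA-Zn2_braiding_structures_verify} are in hand.
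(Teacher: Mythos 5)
Your proposal follows the same overall strategy as the paper: extract $\DD$ from the unshaded planar algebra $\PP$, identify the even part with $\CC\boxtimes\CC$ via minimal idempotents, identify the odd-part idempotents with single caps $\hat V$, and read off the fusion rules from the planar calculus. However, there are a few places where the paper does real work that your sketch either omits or replaces with something vaguer, and at least one of these is a genuine gap.

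The most significant omission is semisimplicity of the odd box spaces. Your proposal never argues that $P_{2n-1}$ is a semisimple algebra; you go straight to claiming the $\hat V$ (for $V$ simple) exhaust the minimal odd idempotents and back this up with a global dimension count. But a dimension count only confirms you have all the simples once you already know the category is semisimple; it does not establish semisimplicity, so the logic is circular as stated. The paper proves semisimplicity of $P_{2n-1}$ from the unital inclusion $\iota_{2n-1}\colon P_{2n-1}\to P_{2n}$ and the trace-preserving conditional expectation $\phi_{2n-1}\colon P_{2n}\to P_{2n-1}$, invoking \cite[Lem.~3.14]{muger03}; this step is load-bearing and needs to appear.

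A second, smaller gap: your argument for $\hat V\cdot(X\boxtimes Y)=\widehat{VXY}$ (``stacking a single cap against an even box merges the caps'') is too vague to carry the claim, particularly because $X$ and $Y$ live on different layers and the resulting cap label is the single object $VXY$, not a two-layer datum. The paper instead first shows that every odd idempotent is a summand of some $(X\boxtimes Y)\hat V$ (by tensoring an arbitrary odd projection $Q$ with $\hat V$ to land in the even part, then using rigidity and the $\hat V\hat{\bar V}$ fusion rule), proves $(X\boxtimes Y)\hat V\simeq\hat V(X\boxtimes Y)$ via an explicit equivalence of projections through braidings, computes $\Hom_\DD((X\boxtimes Y)\hat V,\hat W)\cong\Hom_\CC(XYV,W)$ directly, and finishes by comparing quantum dimensions. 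That sequence is what actually pins down the odd fusion rule; ``merging caps'' does not obviously give it. Finally, your framing of the even part as ``$\gamma$-bimodules'' is a mis-description; the paper instead exhibits a concrete $C^*$-algebra isomorphism of Hom spaces, constructs explicit representatives $p_{X\boxtimes Y}$, and matches rigidity and braiding via $(R6)$ and Proposition \ref{braiding_squareroot}. Apart from these points your outline of the $\Zn2$-grading, the action $\rho_2$, and the coherence checks is aligned with the paper.
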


\begin{proof}
Recall the algebra structure of $P_n$ is given in Definition \ref{def:PA-composition}. $\theta_1$ is given by the following tangle,
\[
\includegraphics[width=150pt]{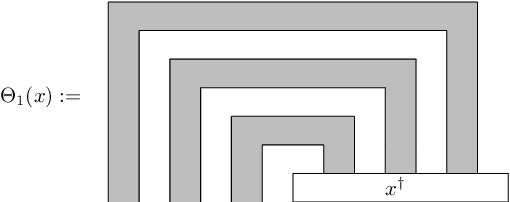}
\]
and the unitarity follows from the planar isotopy and that of $\CC$. 
We first describe the category coming from the even part. When $n$ is even, it is not hard to show the following two $\Hom$ space is isomorphic, as $\mathbb{C}^*$ algebras:
 \[
 \includegraphics[width=200pt]{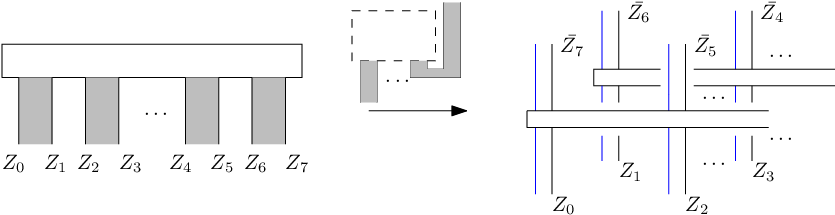}
 \] Hence the minimal projections in $P_n$ for $n$ even is equivalent to those in $P_2$ which can be described by the simple objects in $\CC\boxtimes \CC$, we denoted them by $p_{X\boxtimes Y},\ X,Y\in \Irr(\CC)$. The following representative will be used in further calculations
 \[
 \includegraphics[width=250pt]{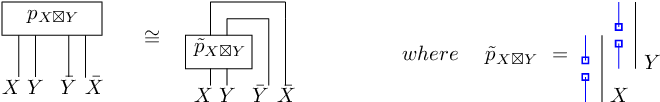}
 \]
 The corresponding fusion rings are isomorphic. Note the corresponding evaluation and coevaluation maps for $p_{X\boxtimes Y}$ are given by the diagram in $(R6)$ and its vertical reflection without the square root of twists (see also in Lemma \ref{Lem:braiding_capcup_comp}), and the rigidity follows from $(R6)$ and the planar isotopy.  Moreover thanks to Proposition \ref{braiding_squareroot}, the braiding structures agree. Therefore the category of projections from the even part is equivalent, as a unitary braided fusion category to $\CC\boxtimes\CC$, in particular, $P_{2n}$ is semisimple with a non-degenerate trace. 

 Now the semisimplicity of $\PP$ (or $P_{2n-1}$) follows from the unital inclusion $\iota_{2n-1}:P_{2n-1}\to P_{2n} $ and the trace preserving conditional expectation $\phi_{2n-1}:P_{2n}\to P_{2n-1}$, see \cite[Lem.~3.14]{muger03}.

 When $n=1$, let $\ZZ=\{V,W\}$, by computing the dimension of the configuration space, we have $\dim(P_{1,\ZZ})=\dim(\Hom_{\CC}(V,W^*))$. Hence the projections in $1$-box space are given by the single cap labeled by $V,\bar{V}$, and the projection is minimal when $V$ is simple. The dual projection is given by the cap labeled $\bar{V},V$ (rigidity follows from the planar isotopy of the single string), we denote them by $\hat{V},\ \hat{\bar{V}}$ respectively. Using the structure of the even part described previously, the fusion rule for $\hat{V}\hat{W}$ is immediate. ($\hat{V}\hat{W}=\hat{W}\hat{V}$ can be shown either by direct arguing $\oplus_{X\in \Irr(\CC)}XV\boxtimes XW=\oplus_{X\in \Irr(\CC)}XW\boxtimes XV$ or by establishing equivalence of projections simply using braidings)  
 
 Now for any projections $Q$ in $P_n$ for $n$ is odd, pick any $V\in  \Irr(\CC)$, we have $Q\otimes \hat{V}\in P_{n+1}$, hence  it can be written as sums of projections in $\CC\boxtimes \CC$. Using rigidity and the fusion rule for $\hat{V}\boxtimes \hat{\bar{V}}$, we have the projections in $P_n$ for $n$ odd are the sum of projections of the form $(X\boxtimes Y)\hat{V}$. Next we prove they are equivalent to the projections in the $P_1$.

 $(X\boxtimes Y)\hat{V}=\hat{V}(X\boxtimes Y)$ follows from the equivalence of two projections given by braidings: 
\[
\includegraphics[width=200pt]{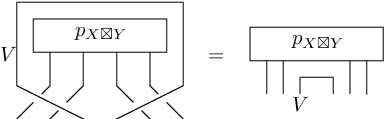}
\]
Next by direct calculation of $\oplus_{Z\in \Irr(\CC)}XZV\boxtimes Y\bar{Z}W=\oplus_{Z\in \Irr(\CC)}XYV\bar{W}Z\boxtimes\bar{Z}$, we have (again use rigidity and fusion rule):
\begin{equation}
 \Hom_{\DD}((X\boxtimes Y)\hat{V},\hat{W})\cong \Hom_{\CC\boxtimes \CC}(X\boxtimes Y\hat{V}\hat{\bar{W}},1\otimes 1)\cong\Hom_{\CC}(XYV,W)
\end{equation}
And now $(X\boxtimes Y)\hat{V}=\widehat{VXY}$ follows from comparing the quantum dimension.

Next, we examine the $\Zn2$ actions on the objects, $\rho_2(\hat{V})=(\hat{V})$ follows directly from the planar algebra description of the action (Lemma \ref{lem:Z_2_action}) and the objects in odd part. The action on the even part needs a little bit of calculation. By observing the similarities between more explicit definition (Definition \ref{def:Z_2_action}) of $\rho_2$ and braidings action on the $\tilde{p}_{X\bot Y}$, the following identity holds (we also use the proof of Lemma \ref{Lem:braiding_capcup_comp} to make the flipping action involved in $\rho_2$ more direct).
\[
\includegraphics[width=250pt]{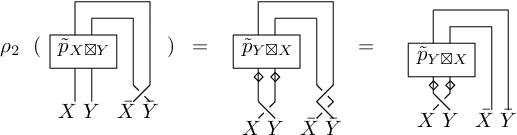}
\]
Now $\rho_2(X\bot Y)\cong Y\bot X$ follows from the next identities.
\[
\includegraphics[width=250pt]{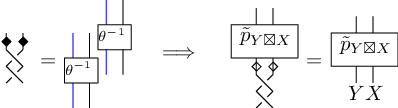}
\]
Where the first identity can easily proved by first isotopy the left-hand side to a full twist on double strings, then using Proposition \ref{braiding_squareroot} (also see Remark \ref{rmk:doulestrand_braiding}).  

Finally, the coherence conditions of $\Zn2$-crossed braiding can be checked easily from Corollary \ref{Cor:Z_2_action} and $(C3)$. 
\end{proof}
\begin{rmk}
The sphericality can also be derived simply using the braiding that strands can be isotopied above everything.   
\end{rmk}

\begin{rmk}\label{rmk:doulestrand_braiding}
 Proposition \ref{braiding_squareroot} has the following graphic presentation.
 \[
 \includegraphics[width=100pt]
 {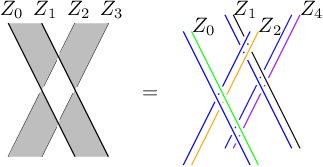}
 \]
\end{rmk}

\begin{rmk}
Even though the unitarity is crucial for the reconstruction program, it is not necessary in our construction, the whole proof works very similarly. Now instead of picking the orthonormal basis in the configuration space, we take the basis and the dual basis under the non-degenerate bilinear form given by $<,>$. The place we also use the unitarity is when we take the square root in $\mathbb{C}$ of the quantum dimension that appears in the coefficient of many operations ($\phi_k,\ \iota_k,\ T_k$). These can be adjusted to any field by redefining the vertical composition with additional $\frac{1}{d}$ factors ($d$ is the quantum dimension of the corresponding simple object on blue strands) so that the coefficient only involves the product of the quantum dimensions. 
\end{rmk}

\section{Some applications}
In this section, we will first prove the relations satisfied by the $\Zn2$ braidings in the twisted category $\DD:=(\CC\boxtimes\CC)^{\times}_{\Zn2}$ (Theorem \ref{mainthm3}), then we will discuss the twisted/untwisted correspondence (see Table \ref{tab:t/unt}). Finally, several identities for modular data of $\CC$ will be derived including equation \ref{eq:intro_obstruct} and its generalization using simple isotopes in $\DD$. 
\begin{Thm}\label{thm:relations}
  The $\Zn2$ braidings $T_k\ (0\leq k\leq 2n-2)$ satisfy the following relations 
   \[
   \begin{aligned}
       T_kT_{k+2}&=T_{k+2}T_k,\\
       T_{k+1}T_{k}T_{k+1}&=\eta^{(-1)^k}T_{k}T_{k+1}T_{k},\\
       (T_{2n-2}T_{2n-1}\cdots T_1T_0)^{2n}&=(\prod^{2n-1}_{k=0}\theta^{\frac{1}{2}}_{Z_k})\Id,\\
       (T_0T_1\cdots T_{2n-2}T_{2n-2}\cdots T_1T_0)^2&=\theta^2_{Z_0}\Id.\\
       T_0T_1\cdots T_{2n-2}T_{2n-2}\cdots T_1T_0&\ commutes\ with\ T_k\ (0\leq k\leq 2n-2)
   \end{aligned}
   \]
\end{Thm}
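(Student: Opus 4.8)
The plan is to reduce the remaining assertions --- the third and fourth identities of Theorem~\ref{mainthm3} and the new commutation statement --- to the interplay between the Fourier transform and the $\Zn2$ action developed in Section~4; the first two relations are already in hand, namely locality of the crossed braiding (the two crossings have disjoint support, cf.\ Definition~\ref{def:braiding}) and Lemma~\ref{Lem:C3verify}. Write $R:=T_{2n-2}T_{2n-3}\cdots T_1T_0$ (the descending product) and $W:=T_0T_1\cdots T_{2n-2}T_{2n-2}\cdots T_1T_0$. The engine is Corollary~\ref{Cor:F_braiding_rels} with $Z=1$, which reads $F_{\pm}=(\theta^{\mp1/2}_{Z_0}\otimes\Id)\,T^{\pm1}_{2n-2}T^{\pm1}_{2n-3}\cdots T^{\pm1}_1T^{\pm1}_0$, so that $R=(\theta^{1/2}_{Z_0}\otimes\Id)F_{+}$; I will combine it with $\FF=F_{+}$, $\FF^2=\rho_1$ (Theorem~\ref{Thm:isomety_square_to_rot}) and $\rho_2=F_-^{-1}F_+$ (Theorem~\ref{thm_flatness}, Lemma~\ref{lem:rel_rho_2andPA_action}).

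For the third identity I would iterate $R=(\theta^{1/2}_{Z_0}\otimes\Id)F_{+}$: each factor $F_{+}$ cyclically rotates $\ZZ$ by one slot, so the $2n$ accumulated half-twist prefactors are exactly $\theta^{1/2}_{Z_0},\dots,\theta^{1/2}_{Z_{2n-1}}$ (one per strand), while $\FF^{2n}=(\FF^2)^n=\rho_1^{\,n}$ is the full meridian rotation, which acts as the identity on $Cf(n,\ZZ)$; collecting the factors gives $R^{2n}=\big(\prod_{k=0}^{2n-1}\theta^{1/2}_{Z_k}\big)\Id$. For the fourth identity and for the commutation I would substitute the two formulas for $F_{\pm}$ into $\rho_2=F_-^{-1}F_+$; after the half-twist factors combine one obtains
\[
\rho_2=(T_0\cdots T_{2n-2})\,(\theta^{-1}_{Z_0}\otimes\Id)\,(T_{2n-2}\cdots T_0),
\]
and then, conjugating the single-strand twist through the rotation $T_{2n-2}\cdots T_0$ (which carries the $Z_0$-labelled strand to the last slot and back --- a direct move via Lemma~\ref{lem:Theta_2_lemma}),
\[
W=(T_0\cdots T_{2n-2})(T_{2n-2}\cdots T_0)=\theta_{Z_0}\,\rho_2,
\]
where $\theta_{Z_0}$ now denotes the twist acting on the $Z_0$-labelled strand. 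Since $\rho_2^2=\Id$ this also recovers $W^2=\theta_{Z_0}^2\rho_2^2=\theta_{Z_0}^2\Id$, the fourth identity.

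Granting $W=\theta_{Z_0}\rho_2$, the commutation statement follows quickly. The operator $\theta_{Z_0}$ is supported on a single strand, so by naturality of the (crossed) ribbon structure it commutes with every braiding generator $T_k$; and $\rho_2$ is, by Lemma~\ref{lem:rel_rho_2andPA_action}, the $\Zn2$ planar-algebra automorphism $\rho$ of $\PP$, which respects every planar-algebra operation including the braiding (this is precisely the content of the shaded $\Zn2$-braiding relations and Corollary~\ref{Cor:Z_2_action}), hence commutes with each braiding tangle $T_k$. Therefore $W=\theta_{Z_0}\rho_2$ commutes with all $T_k$, $0\le k\le 2n-2$.

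The step I expect to be the main obstacle is the bookkeeping underlying the passage from $\rho_2=F_-^{-1}F_+$ to $W=\theta_{Z_0}\rho_2$: one must carefully track the half-twist prefactors $\theta^{\pm1/2}_{Z_0}$ (and, for the third identity, all the $\eta$-factors absorbed in passing to $\tilde T_k=\eta^{(-1)^k/2}T_k$) and verify that the residual twist assembles into exactly one $Z_0$-strand twist rather than a conjugate that might fail to commute with $T_0$. Concretely this reduces to the graphical fact that the rotation $T_{2n-2}\cdots T_0$ conjugates the twist on the last strand to the twist on the $Z_0$-strand, which follows from Proposition~\ref{braiding_squareroot} and the twist property in $\CC$, exactly as in the proof of Theorem~\ref{thm_flatness}. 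A secondary subtlety is confirming that $\rho_1^{\,n}$ really is trivial on $Cf(n,\ZZ)$ (so that all of the right-hand side of the third identity comes from the prefactors), which again is a sphericality-and-cutting computation of the type carried out in Figures~\ref{fig:flatness1}--\ref{fig:flatness3}.
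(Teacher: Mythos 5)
Your proposal follows essentially the same approach as the paper. The authors' proof is quite terse: the first relation is declared obvious, the second is Lemma~\ref{Lem:C3verify}, the third and fourth are said to follow from rewriting $(F_+)^{2n}=\Id$ and $F_+=F_-\rho_2$ via Corollary~\ref{Cor:F_braiding_rels} together with $\rho_2^2=\Id$, and the last is deduced from Lemma~\ref{lem:rel_rho_2andPA_action} plus the fact that the planar-algebra $\Zn2$ action respects the braiding (Lemma~\ref{lem:Z_2_action}). You use precisely the same ingredients and merely expand the computation: you spell out the iteration of $R=(\theta^{1/2}_{Z_0}\otimes\Id)F_+$ to track the shifting half-twist prefactors, you record the intermediate identity $W=\theta_{Z_0}\rho_2$ (which the paper uses implicitly when squaring), and you flag that $(F_+)^{2n}=\Id$ is itself equivalent, via $\FF^2=\rho_1$ from Theorem~\ref{Thm:isomety_square_to_rot}, to the triviality of the full rotation $\rho_1^{\,n}$ on $Cf(n,\ZZ)$ — a small justification that the paper takes for granted but does not present. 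So your argument is correct and coincides with the paper's, just written out in more detail.
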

 \begin{proof}
     The first one is obvious, the second one is Lemma \ref{Lem:C3verify}. The third and fourth ones  follow from rewriting $(F_+)^{2n}=\Id$ and $F_+=F_-\rho_2$ using Corollary \ref{Cor:F_braiding_rels}, and $\rho_2^2=\Id$. The last one follows from Lemma \ref{lem:rel_rho_2andPA_action} and the fact that the planar algebra action respects braidings (Lemma \ref{lem:Z_2_action}).
 \end{proof}
Now consider the configuration space with all black strands labeled by the same simple object $V$. We denote it by $Cf^{V}(n)$.  We refer to \cite[Thm.~8]{BH71} for the presentation of symmetric mapping class group $\SMod(\Sigma_{n-1})$,  we thus have the following Corollary,
 \begin{Cor}\label{cor:SMod_rep}
  $Cf^{V}(n)$ gives a unitary projective representation of $\SMod(\Sigma_{n-1})$.     
 \end{Cor}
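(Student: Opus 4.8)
The plan is to match the unitary operators $T_k$ acting on $Cf^V(n)$ against the Birman--Hilden presentation of the symmetric mapping class group. First I would recall from \cite[Thm.~8]{BH71} that $\SMod(\Sigma_{n-1})$ is generated by $2n-1$ Dehn twists $\zeta_1,\dots,\zeta_{2n-1}$ along a chain of curves, subject to the braid relations $\zeta_i\zeta_j=\zeta_j\zeta_i$ for $|i-j|\ge 2$ and $\zeta_i\zeta_{i+1}\zeta_i=\zeta_{i+1}\zeta_i\zeta_{i+1}$, the chain relation $(\zeta_1\zeta_2\cdots\zeta_{2n-1})^{2n}=1$, the relation making the hyperelliptic involution $w:=\zeta_1\cdots\zeta_{2n-1}\zeta_{2n-1}\cdots\zeta_1$ of order two, and the centrality of $w$. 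I would then send $\zeta_i$ to $T_{2n-1-i}$, the reindexing being chosen so that the chain $\zeta_1\cdots\zeta_{2n-1}$ corresponds to the descending product $T_{2n-2}\cdots T_0$ appearing in Theorem \ref{thm:relations}; the content of the proof is that every defining relator of $\SMod(\Sigma_{n-1})$ is carried to a scalar multiple of the identity on $Cf^V(n)$.

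This is a direct translation of Theorem \ref{thm:relations}, and here it is essential that every black strand of $Cf^V(n)$ carries the \emph{simple} object $V$, so that $\theta_V^{1/2}$ and $\theta_V$ are genuine scalars rather than operators. The far-commutation relations hold on the nose because $T_k$ and $T_m$ are supported on disjoint parts of the planar diagram when $|k-m|\ge 2$ (cf.\ Definition \ref{def:braiding}); the case $|k-m|=2$ is the first relation of Theorem \ref{thm:relations}. The braid relation is carried to $T_{k+1}T_kT_{k+1}=\eta^{(-1)^k}T_kT_{k+1}T_k$, hence to the scalar $\eta^{\pm1}$, which has modulus one since $|p^+|=\delta$. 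The chain relation is carried to $(T_{2n-2}\cdots T_0)^{2n}=\theta_V^{n}\,\Id$, the order-two relation for $w$ to $(T_0\cdots T_{2n-2}T_{2n-2}\cdots T_0)^2=\theta_V^{2}\,\Id$, and the centrality of $w$ to the statement that $T_0\cdots T_{2n-2}T_{2n-2}\cdots T_0$ commutes with every $T_k$; all three are supplied by Theorem \ref{thm:relations}.

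Since every relator now lies in $\mathbb{C}^{\times}\Id$, the assignment extends from the free group on $\{\zeta_i\}$ to a homomorphism into $U(Cf^V(n))$ (the $T_k$ being unitary), and composing with the quotient $U(Cf^V(n))\to PU(Cf^V(n))$ annihilates all relators; it therefore factors through $\SMod(\Sigma_{n-1})\to PU(Cf^V(n))$, which is the desired unitary projective representation. If one wants a genuine linear representation of a central extension of $\SMod(\Sigma_{n-1})$ instead, one replaces each $T_k$ by $\tilde T_k=\eta^{(-1)^k/2}T_k$ (still unitary, as $|\eta|=1$), for which a one-line computation with the exponents of $\eta$ turns the braid relations into strict equalities while leaving the remaining relators scalar.

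The only genuinely delicate point is the faithful transcription of the presentation of \cite{BH71}: aligning its chain of Dehn twists with our ordering of the $T_k$, verifying that its chain-relation exponent is $2n$ when the genus is $g=n-1$ (so $2g+2=2n$ and $2g+1=2n-1$), and recognizing its relations for the hyperelliptic involution as precisely the last two relations of Theorem \ref{thm:relations}. The rest is the bookkeeping above, together with the elementary observation that setting all the $Z_k$ equal to a single simple object turns the right-hand sides of Theorem \ref{thm:relations} into scalars.
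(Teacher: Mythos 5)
Your proof is correct and is exactly the route the paper has in mind: it states the corollary immediately after Theorem~\ref{thm:relations} with a pointer to the Birman--Hilden presentation \cite[Thm.~8]{BH71} and leaves the translation to the reader, which is precisely the translation you carry out. One small bookkeeping slip: under your reindexing $\zeta_i\mapsto T_{2n-1-i}$ the image of the hyperelliptic involution $w=\zeta_1\cdots\zeta_{2n-1}\zeta_{2n-1}\cdots\zeta_1$ is $T_{2n-2}\cdots T_0\,T_0\cdots T_{2n-2}$, not $T_0\cdots T_{2n-2}\,T_{2n-2}\cdots T_0$ as you wrote; this is harmless since the two words are conjugate to each other (by $T_{2n-2}\cdots T_0$), so once one of them is a scalar multiple of the identity so is the other, and the rest of the argument is unaffected.
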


Now we consider the case that $V$ is the trivial object $1$, then the space $Cf^{1}(n)$ is the same as the configuration space in \cite{LX19} for $m=2$. And the planar algebra we constructed here corresponds to the 2-interval Jones-Wassermann subfactor. As a result of Theorem \ref{Thm:isomety_square_to_rot}, we have a different proof for the following Theorem.

\begin{prop}[\cite{LX19}]
 The 2-interval Jones-Wassermann subfactor is self-dual.     
\end{prop}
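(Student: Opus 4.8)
The plan is to recognize that this is precisely the statement that the $2$-interval Jones--Wassermann subfactor associated to a modular category (equivalently, to a completely rational conformal net) has standard invariant isomorphic to its own dual standard invariant, and to deduce it from the Fourier transform machinery established above. The key observation is that, after restricting to the case $V=1$, the planar algebra $\PP^{\CC}$ (with $P^{\CC}_{n,+}=Cf(n)=Cf^1(n)$) is exactly the shaded planar algebra whose even part encodes the $2$-interval inclusion: the boxes $P_{2n}$ are the configuration spaces identified in \cite[Sec.~2]{LX19}, and the $\star$-algebra structure from Definition \ref{def:PA-composition} matches the one coming from the subfactor. Self-duality of a subfactor planar algebra $(\PP_+,\PP_-)$ in the sense of Definition \ref{def_symmtric_self_dual} means exhibiting the isomorphisms $\Phi_{\pm}$ intertwining the tangle actions with the shading reversed, and moreover symmetrically, i.e.\ $\Phi_{\mp}\circ\Phi_{\pm}=\id$.

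First I would take $\Phi_{+}$ to be the Fourier transform operator $\FF=F_+$ (with $Z=1$), which by the earlier results maps $Cf(n)\to Cf(n,F(\ZZ))=Cf(n)$ and is an isometry by Theorem \ref{Thm:isomety_square_to_rot}; one checks $F_+$ indeed sends $P_{n,+}$ to $P_{n,-}$ (the shading-reversed boxes) under the identification of the even part with the $2$-interval subfactor planar algebra, because conjugation by $F_+$ is exactly what swaps the two shadings in Theorem \ref{PA-Zn2_braiding_structures_verify}. Then I would verify the naturality of $\FF$ with respect to the generating tangles: the compatibility with inclusions/contractions is the Proposition stating $\FF(\phi_{k+1}(x))=\phi_k\FF(x)$ together with $\FF(\iota_{k+1}(x))=\iota_k\FF(x)$ from Corollary \ref{Cor:full_comp}, the compatibility with the algebra multiplication and the Jones projections follows from these plus isotopy invariance of $\FF$ built into $F_{+}$, and compatibility with the $\dagger$/reflection structure follows from $F_+^{\dagger}=F_+^{-1}$ and $F_+$ being an isometry. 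Since every tangle decomposes into these generators, this gives that $\FF$ is a shaded planar algebra isomorphism with the colors flipped.

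Next, for the symmetric condition $\Phi_{-}\circ\Phi_{+}=\id$, I would argue that on the even part $\FF^2=\rho_1$ (Theorem \ref{Thm:isomety_square_to_rot}), and $\rho_1$ is the rotation/charge-conjugation tangle, which on the self-dual standard invariant of the $2$-interval subfactor acts as the identity (or can be absorbed into the choice of $\Phi_{\pm}$, after possibly replacing $\Phi_+$ by $\FF$ and $\Phi_-$ by $\rho_1^{-1}\FF$, whose composite is the identity); since $\CC^{\bot 2}$ is built from $\CC$ and $\bar X\cong X$-type identifications are already baked into $\gamma=\bigoplus_X X\bot\bar X$, the rotation acts trivially up to the canonical identifications, so the symmetry holds. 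Therefore $(\PP^{\CC}, \FF, \rho_1^{-1}\FF)$ is a symmetrically self-dual shaded planar algebra, and by \cite[Thm.~A]{LMP20} (or directly by Theorem \ref{thm:lifting_thm}, which is how we actually use it) this is equivalent to self-duality of the $2$-interval Jones--Wassermann subfactor.

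The main obstacle I anticipate is the bookkeeping of identifying the even part of $\PP^{\CC}$ with the actual $2$-interval Jones--Wassermann subfactor planar algebra in a way that matches the $\star$-algebra structure and the shading convention, so that ``$\FF$ flips the shading'' is literally the statement of self-duality rather than merely an analogue; this is where one must invoke the identification of $Cf^1(n)$ with the configuration space of \cite[Sec.~2]{LX19} and the known fact (from \cite{LX19,LX04}) that those planar algebra boxes compute the $m=2$ interval subfactor. Once that dictionary is in place, everything else is a direct consequence of Theorem \ref{Thm:isomety_square_to_rot} and Corollary \ref{Cor:full_comp}, so the proof is short; the content is really the observation that our Fourier transform, restricted to $V=1$, is the self-duality isomorphism of \cite[Sec.~2.3]{LX19}.
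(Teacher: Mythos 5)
Your proposal takes essentially the same route as the paper: the paper gives no explicit proof body for this proposition, stating only that it follows ``as a result of Theorem~\ref{Thm:isomety_square_to_rot},'' i.e.\ once $Cf^1(n)$ is identified with the configuration space of \cite{LX19} for $m=2$, the Fourier transform $\FF=F_+$ being an isometry is precisely the self-duality isomorphism of the shaded planar algebra. Your invocation of Corollary~\ref{Cor:full_comp} and the tangle description of $F_+$ from Corollary~\ref{Cor:F_braiding_rels} to establish naturality is the same mechanism, spelled out.

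One small remark on accuracy: the proposition claims self-duality, not symmetric self-duality, so the paragraph verifying $\Phi_-\circ\Phi_+=\id$ is beyond what is asserted. Moreover, in that paragraph the intermediate claim that the rotation $\rho_1$ ``acts as the identity'' on the standard invariant is not correct in general ($\rho_1$ is the one-click rotation, and $\FF^2=\rho_1$ is a genuine shift, not $\id$); you immediately repair this by instead taking $\Phi_-=\rho_1^{-1}\FF$, which is the right fix, so the conclusion survives, but the initial sentence should be removed rather than hedged.
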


And the braiding in this case is easier to describe, for example, 
\[T_{2k}=\cdots \Id\otimes(\bigoplus_{V\in \Irr(\CC)}\theta^{-1}_{V}\Id_V\boxtimes \Id_{\bar{V}})\otimes \Id\cdots\]
When working in the $2$-box space, we have $T_0=T_2$ is the $T$-matrix of $\CC$, the Fourier transformation $F_{+}$ corresponds to $S$-matrix (\cite{LX19}) and the charge conjugation matrix $C$ is given by the $Z_2$-action $\rho_2$. See Figure \ref{fig:basic_identity} for more details. 

\begin{figure}
    \centering
    \includegraphics{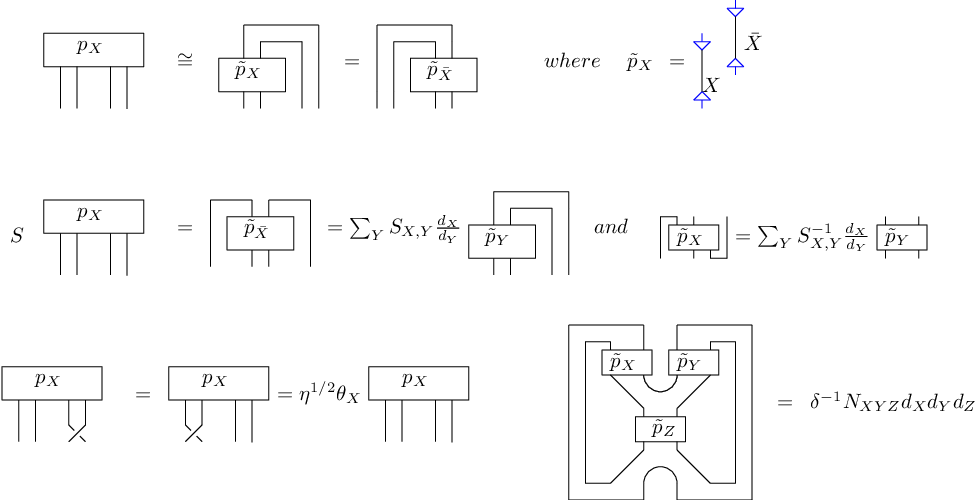}
    \caption{Some basic identities when all strands are labeled by $1$ (or $\hat{1}$)}
    \label{fig:basic_identity}
\end{figure}

The following Corollary follows from Theorem \ref{thm:relations} and Lemma \ref{Cor:F_braiding_rels}, which gives the well-known projective representation of $\SL_2(\mathbb{Z})$
\begin{Cor}\label{Cor:SL2Z_relation}
The following identities hold
\[
\begin{aligned}
S^4=\Id & (S^2=C)\\
(T^{-1}S)^3&=\eta C\\
(ST)^3&=\eta^{-1}
\end{aligned}
\]
   
\end{Cor}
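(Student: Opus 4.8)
The goal is to establish Corollary \ref{Cor:SL2Z_relation}, i.e.\ the $\SL_2(\mathbb{Z})$-type relations $S^4=\Id$ (with $S^2=C$), $(T^{-1}S)^3=\eta C$, and $(ST)^3=\eta^{-1}$, in the special case $n=2$ with all black strands labeled by the trivial object. The plan is to read all of these off from the statements already proved for the $T_k$ and the Fourier operators, after unwinding the dictionary of Figure \ref{fig:basic_identity}: in the $2$-box space $Cf^{1}(2)$ one has $T_0=T_2=T$ (the $T$-matrix of $\CC$), $F_+=S$ (the $S$-matrix, by the example after Definition~\ref{def:braiding} and Theorem \ref{thm_flatness}), and $\rho_2=C$ (the charge conjugation). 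So everything reduces to specializing Theorem \ref{thm:relations} and Corollary \ref{Cor:F_braiding_rels} to $n=2$.

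\textbf{Key steps.} First I would specialize Corollary \ref{Cor:F_braiding_rels} to $n=2$ with $Z=1$: this gives $F_+ = (\theta^{-1/2}_1\otimes\Id)\, T_0 T_1 T_2 = T_0 T_1 T_2$ (since $\theta_1=1$) and similarly $F_- = T_2^{-1}T_1^{-1}T_0^{-1}$. Using $T_0=T_2=T$ and the braid-type relation $T_1 T_0 T_1 = \eta\, T_0 T_1 T_0$ from Lemma \ref{Lem:C3verify} (the case $k=0$), I would rewrite $S=F_+=T\,T_1\,T$ and compute $S^2$, $S^3$, $S^4$ by repeatedly substituting the braid relation. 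Concretely, the relation $(T_0 T_1 T_2)^{2n}=(\prod\theta^{1/2}_{Z_k})\Id$ from Theorem \ref{thm:relations} at $n=2$, $Z_k=1$, gives directly $S^4 = (T T_1 T)^4 = \Id$; and $(T_0 T_1 T_2 T_2 T_1 T_0)^2 = \theta^2_{1}\Id = \Id$ together with $S^4=\Id$ and $F_+ = F_-\rho_2$ (equivalently $\rho_2 = F_-^{-1}F_+$, from Lemma \ref{lem:rel_rho_2andPA_action}) yields $S^2 = C$. For the cubic relations, I would observe that $T^{-1}S = T^{-1}(T T_1 T) = T_1 T$ and $ST = T T_1 T^2$, then compute $(T_1 T)^3$ and $(T T_1 T^2)^3$ by inserting the braid relation $T_1 T T_1 = \eta\, T T_1 T$ once at each of the three cyclic positions, collecting one factor of $\eta^{\pm1}$ each time; the residual word should collapse to $C$ (resp.\ $\Id$) using $S^4=\Id$, $S^2=C$, and $C^2=\Id$, $CT=TC$.

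\textbf{Main obstacle.} None of this requires new ideas — the heavy lifting (the flatness theorem, the compatibility of $\FF$ with the $T_k$, and the global relations among the $T_k$) is already done. The only delicate point is bookkeeping: correctly tracking the scalars $\eta^{\pm1}$ and $\theta^{1/2}_1$ through the substitutions, and being careful about the direction of the braid relation (the exponent $(-1)^k$ means $T_1T_0T_1$ and $T_0T_1T_0$ are related with $\eta$, not $\eta^{-1}$), as well as getting the sign conventions for $F_-$ versus $F_+$ right so that $\rho_2 = C$ comes out with no stray twist. I would double-check the final normalizations against the known $\SL_2(\mathbb{Z})$ presentation for modular categories (e.g.\ $(ST)^3 = p^+ / \delta \cdot \Id$ up to convention), which is exactly $\eta^{-1}$ here, to confirm the conventions are consistent.
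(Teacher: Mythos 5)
Your proposal is correct and follows essentially the same route as the paper, which simply cites Theorem \ref{thm:relations} and Corollary \ref{Cor:F_braiding_rels} and leaves the $n=2$ specialization to the reader; you fill in that specialization explicitly. The algebra checks out: with $T_0=T_2=T$ one has $S=F_+=TT_1T=\eta^{-1}T_1TT_1$, so $(T^{-1}S)^3=(T_1T)^3=\eta(TT_1T)^2=\eta S^2=\eta C$, $(ST)^3=\eta^{-3}(T_1T_0)^6=\eta^{-3}(\eta C)^2=\eta^{-1}$, and $S^2=(TT_1T)^2=T_0T_1T_2T_2T_1T_0=\rho_2=C$, with $S^4=\Id$ the $n=2$ case of the relation $(T_{2n-2}\cdots T_0)^{2n}=\Id$.
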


\begin{rmk} \label{rmk_twist_untwist}
We will show in Section $8$, in general our $\SMod(\Sigma_{n-1})$ representation given by $Cf^1(n)$ is equivalent to the representation coming from Reshetikhin-Turaev TQFT state space associated with closed surface of genus $n-1$, see Theorem \ref{thm:Rel_to_RT}, which indicates the genus-$0$ data (braidings) of the extension theory contains the higher genus data of the original theory. It interprets the twisted/untwisted correspondence \cite{Bantay98, Gui21}. 
\end{rmk}

Previous identities are derived only from the isotopy of strands.
One may expect there are more identities derived from the isotopy in $\DD$. Indeed we obtain numerous interesting identities. For example in the next proposition, we list here only three of them involving merely one isotopy through input disks of trivial labeled $2$-box space ($Cf(2,\ZZ)$ with all $Z_i$ being trivial objects).
\begin{prop}
  We have the following identities. 
\[
\begin{aligned}
 \sum_{V\in \Irr(\CC)}\frac{\theta_V}{\theta_X}S_{YV}N^Z_{VX}&=\sum_{V\in \Irr(\CC)}\frac{\theta_V}{\theta_W}S_{XV}N^Z_{VY},\\
 \sum_{V,W\in \Irr(\CC)}\delta_{Y,Z}\frac{\theta_V}{\theta_W}S_{XV}N^Z_{VW}d_Xd_W&=\sum_{V,W\in \Irr(\CC)}\delta_{X,Z}\frac{\theta_V}{\theta_W}S_{YV}N^Z_{VW}d_Yd_W,\\
 \sum_{V\in \Irr(\CC)}\delta_{Y,Z}\frac{\theta_V}{\theta_X}N^Z_{VX}d_V&=\sum_{V,W\in \Irr(\CC)}\frac{\theta_V}{\theta_W}S_{XZ}S_{Y\bar{W}}N^Z_{WV}d_V.
\end{aligned}
\]
\end{prop}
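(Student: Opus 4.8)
The plan is to obtain all three identities by computing a single inner product in the twisted category $\DD$ in two different ways, using the graphic calculus of $\DD$ together with Theorem \ref{thm_flatness} and Corollary \ref{Cor:F_braiding_rels}. Concretely, I would work in the trivially-labeled $2$-box space $Cf(2,\ZZ)$ (all $Z_i$ trivial) and fix the canonical orthonormal basis indexed by pairs $X\boxtimes Y$, $X,Y\in\Irr(\CC)$. The key point, recorded in Remark \ref{rmk_twist_untwist} and the $n=2$ column of Table \ref{tab:t/unt}, is that on this space the operators $T_0=T_2$ act as the $T$-matrix of $\CC$, the Fourier transform $F_+=\FF$ acts as the $S$-matrix, $\rho_2$ acts as the charge conjugation $C$, and contraction/inclusion $\phi_k,\iota_k$ insert or remove a $\Omega$-colored circle. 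Each of the three identities will come from writing a morphism in $\DD$ that admits an isotopy pulling one strand through exactly one input disk (i.e. one application of Corollary \ref{Cor:Z_2_action}, the flatness relation), and then reading off the two sides as matrix products of $S$, $T^{\pm1}$, $C$, and $N$ (the fusion coefficients appear because $N^Z_{VX}=\dim\Hom_\CC(VX,Z)$ is exactly the multiplicity space controlling how a $Z$-labeled cap meets a fused pair).

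The first step is to set up the dictionary precisely: translate the diagrammatic quantities appearing after an isotopy-through-a-disk into the entries $S_{XY}$, $\theta_X$, $N^Z_{XY}$. The ratio $\theta_V/\theta_X$ in each identity signals that we are resolving two half-twists of opposite sign (from the diamond decorations in Definition \ref{def:braiding} and relation $(R7)$), so I would track the twist bookkeeping carefully using the twist property and Lemma \ref{lem:Theta_2_lemma}. For the first identity, I expect the relevant statement is the compatibility of the braiding with a $Z$-labeled cap/cup in $\DD$ — that is, sliding a $Z$-strand past a crossing — which after passing through one input disk gives $\sum_V (\theta_V/\theta_X) S_{YV} N^Z_{VX}$ on one side and the mirror expression on the other. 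For the second identity, the extra $d_X d_W$ factors and the Kronecker $\delta_{Y,Z}$ suggest we close up a strand (take a trace/partition function) of the first configuration, so this should follow from the first by capping off with $\Omega$ and using the cutting property (Remark \ref{rem:Kirby_color_creat}). For the third identity, the presence of two $S$-factors $S_{XZ}S_{Y\bar W}$ indicates one application of $F_+=\FF$ combined with the flatness move, i.e. it is the Fourier transform of the first identity, obtained via Corollary \ref{Cor:full_comp} and Lemma \ref{lem:braiding&Fourier_comp}.

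In terms of order of operations, I would: (1) write down explicitly the $\DD$-morphism whose two evaluations give identity one, apply Corollary \ref{Cor:Z_2_action} once, and match both sides against the $S,T,N$ dictionary; (2) derive identity two by taking the appropriate closure (inserting $\Omega$ and using sphericality plus the cutting property of $\Omega$), which produces the $d_X d_W$ weights and the $\delta$; (3) derive identity three by applying $\FF$ to identity one and using $\FF T_{k+1}\FF^{-1}=\eta^{(-1)^k}T_k$ (Corollary \ref{Cor:full_comp}) together with $\FF^2=\rho_1$ and Theorem \ref{thm_flatness} to convert the lone $T$-ratio on one side into the pair of $S$-entries on the other. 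I would then check the normalization constants ($\delta$-powers, $\eta$) cancel as claimed — here unitarity and $p^+p^-=\mu$, $\eta=p^+/\delta$ are the relevant inputs.

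The main obstacle I anticipate is the second step: correctly identifying \emph{which} closure of the first diagram yields precisely the weights $d_X d_W$ and the constraint $\delta_{Y,Z}$ rather than some other combination, since there are several inequivalent ways to cap off a two-strand configuration in $\DD$ and only one reproduces the stated identity; getting the twist signs ($\theta_V/\theta_W$ versus $\theta_V/\theta_X$) consistent across all three requires careful use of the diamond conventions from $(R7)$ and Definition \ref{def:braiding}. A secondary subtlety is making sure that the single isotopy-through-a-disk is genuinely unavoidable — i.e. that these are not consequences of modular group relations alone — which is really the content of the surrounding discussion (Proposition \ref{New_identity}, Remark \ref{rem:infinite_obstruc}) and not needed for the proof itself, so I would not belabor it here.
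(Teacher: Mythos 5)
Your approach to the first identity matches the paper's: work in the trivially-labeled $2$-box space, translate the operators into the $S,T,N$ dictionary, and apply one isotopy through an input disk (Corollary \ref{Cor:Z_2_action}). That part is right, and your bookkeeping of the diamond/twist conventions is the correct thing to track.

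However, your route to the second and third identities departs from the paper's and is not established. The paper proves each of the three identities from its own isotopy-through-a-disk move (the three pictures in the accompanying figure), whereas you propose to \emph{derive} identities two and three algebraically from identity one: identity two by a closure with $\Omega$ and sphericality, identity three by conjugating with $\FF$. You yourself flag the closure step as your ``main obstacle,'' and rightly so --- inspecting the algebraic shapes, there is no obvious closure of identity one that produces the $\delta_{Y,Z}$ constraint together with the $d_Xd_W$ weights and a fresh summation index $W$, nor does a single application of $\FF$ to the two sides of identity one produce the pair $S_{XZ}S_{Y\bar W}$ on one side while leaving a bare Kronecker $\delta_{Y,Z}$ with $d_V$ on the other (note that identity one has a single $S$-entry per side, but identity three has zero on one side and two on the other --- applying $\FF$ once cannot change the count asymmetrically like that). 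These steps are asserted but not verified, and as stated they look false. The fix is to do what the paper does: write down a separate $\DD$-diagram and a separate through-disk isotopy for each of the three identities, using closures or Fourier rotations only to \emph{design} those diagrams, not to bypass the isotopy. Your step (1) is exactly the template; repeat it for the other two configurations rather than trying to shortcut them algebraically.

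One smaller point worth double-checking: in the first identity the denominator $\theta_W$ on the right-hand side involves an index $W$ that appears nowhere else in that line; you silently used it as if it were $\theta_Y$, which is almost certainly the intended reading, but you should say so explicitly when matching the diagram against the stated formula.
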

\begin{proof}
    
\end{proof}
Here we list all three isotopy identities we used, and the proof of the first one is established, the others can be derived similarly.
\[
\includegraphics[width=400pt]{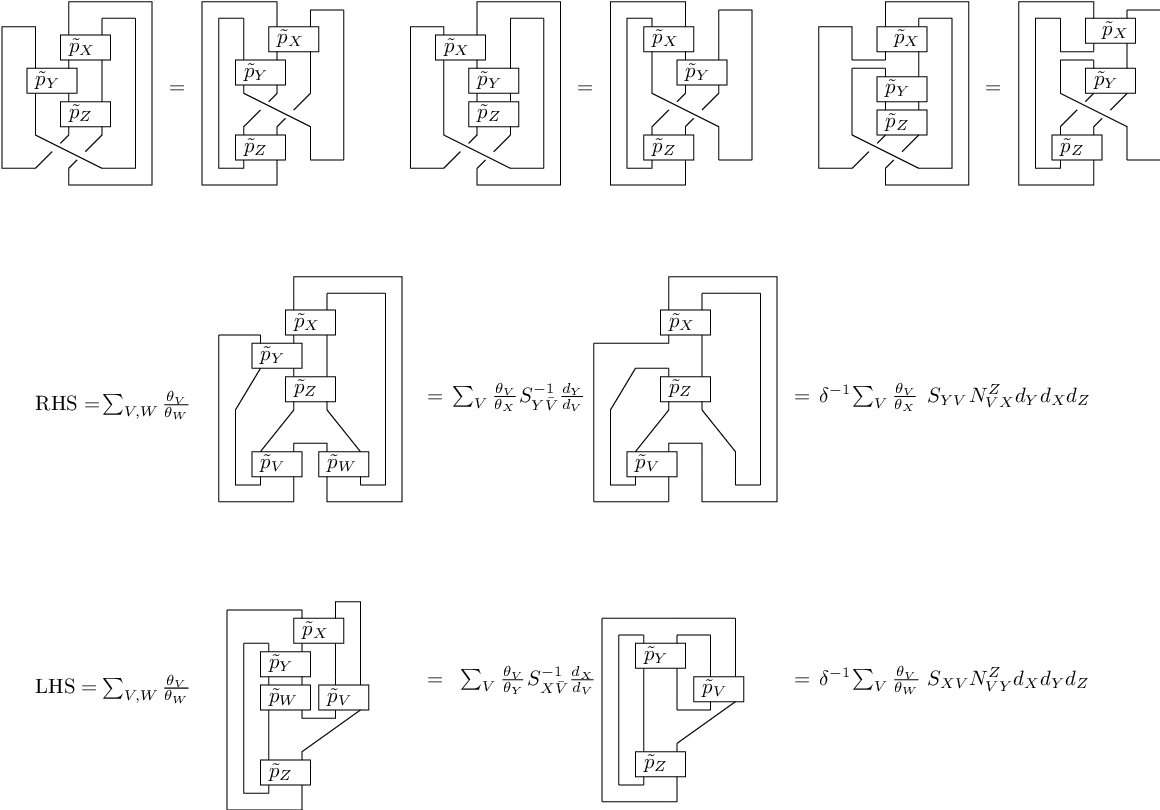}
\]

We don't know, at this stage, whether all the identities we derived can be generated by the Verlinde formula, balancing equation or $\SL_2(Z)$ relations. If not, we will have new obstructions for the realization of the modular data. We conjecture one can deduce some new identities using isotopy only involving trivial labeled $2$-boxes .

\begin{rmk}
 The Verlinde formula and the balancing equation can be easily derived from isotopy in $\DD$. For instance, the Verlinde formula is obtained by evaluating the last diagram in Figure \ref{fig:basic_identity} after shifting the bottom strand to the top using sphericality (braiding).
\end{rmk}

The next identity comes from considering only planar isotopy among nontrivial labeled $2$-box spaces, which are not generated by the known identities.
\begin{prop}\label{New_identity}    
We have the following identity,
\[
\includegraphics[width=325pt]{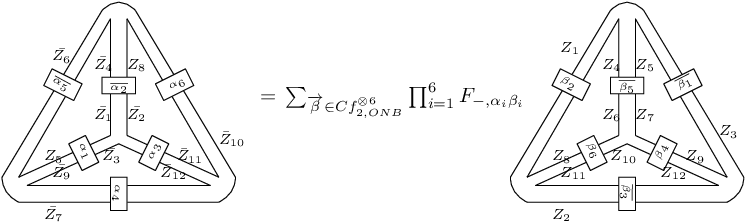}
\]
Here instead of making strings zigzag, we simply rotate the boxes (one can retrieve everything to a standard form), and $\beta_i,\alpha_i$ are the orthonormal basis in the corresponding configuration space. We denote $\overline{\alpha}:=\rho_1(\alpha)$. $F_{-,\alpha_i\beta_i}:=<F_-(\alpha),(\beta)>$.
\end{prop}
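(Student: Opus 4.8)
The plan is to read both sides of the claimed identity as diagrams in the unshaded planar algebra $\PP$ (equivalently, in the two-layer graphical calculus of $\CC$ furnished by Lemma \ref{lem:Theta_2_lemma}) and to produce an explicit isotopy in $\PP$ carrying the left-hand diagram to the right-hand one. First I would expand $F_{-,\alpha_i\beta_i}=\langle F_-(\alpha_i),\beta_i\rangle$ and use the resolutions of the identity $\sum_i|\alpha_i\rangle\langle\alpha_i|$ and $\sum_i|\beta_i\rangle\langle\beta_i|$ to glue the constituent pieces on each side into a single diagram (pairing with an additional pair of basis vectors if needed, so that one is comparing closed diagrams, i.e.\ scalars). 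Using Corollary \ref{Cor:F_braiding_rels} I would replace every occurrence of $F_{-,Z}$ by the ordered product of braidings $\prod_k \tilde{T}^{-1}_{2n-2-k}$ together with the square-root-of-twist factor coming from $(R7)$, so that both sides become diagrams built purely from the generating tangles of $\PP^{\CC}$ applied to the input boxes $\alpha_i,\beta_i$.

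The heart of the argument is then the isotopy itself. In $\PP$ a strand may be slid above any input disk at no cost, whereas sliding it below introduces the $\Zn2$ action $\rho$ (Corollary \ref{Cor:Z_2_action}); the two box configurations appearing in the picture differ precisely by one such move followed by a global rotation bringing the boxes back to standard position, and it is this rotation that accounts for the appearance of $\rho_1(\alpha)=\overline{\alpha}$ on the right-hand side, via $\FF^2=\rho_1$ (Theorem \ref{Thm:isomety_square_to_rot}). During the isotopy the crossings are straightened using the Reidemeister moves and $(C0)$--$(C4)$, the Kirby-coloured loops that get created are reduced using the twist and cutting properties of $\CC$ (Remark \ref{rem:Kirby_color_creat}), and sphericality is used to reroute the outer strands; the $\eta$-factors attached to the $\tilde{T}_k$ cancel because equal numbers of positive and negative braidings occur on the two sides, exactly as in the proof of Lemma \ref{Lem:C3verify}.

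Translating the resulting diagrammatic equality back into matrix coefficients yields the stated relation between $F_-$ and the $S$-matrix. A useful preliminary sanity check is the specialisation to $n=2$ with all black strands trivial, which should reduce to the self-duality of $6j$-symbols of \cite{Liu19}. The main obstacle I anticipate is not conceptual but bookkeeping: because the labels are nontrivial, none of the shortcuts available in the all-trivial case apply, so the powers of $\delta$ contributed by each $\phi_k$ and $\iota_k$, the factors $\sqrt{d_{\overrightarrow{X}}}\sqrt{d_{\overrightarrow{Y}}}$, the twist square roots, and the $\eta$'s must all be tracked by hand, systematically invoking Lemma \ref{lem:Theta_2_lemma} to keep every intermediate step inside the ordinary graphical calculus of $\CC$.
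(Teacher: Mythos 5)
Your high-level picture — interpret the right-hand side as a tangle in the unshaded planar algebra $\PP$ and carry it to the left-hand side by isotopy — is the right one, but the route you propose is considerably more roundabout than the paper's, and, more importantly, it misses the single observation that makes the paper's proof a two-line argument. The Fourier transform $F_{-}$ on a $2$-box is, by construction of the unshaded planar algebra, \emph{exactly} the rotation tangle: the operator $\langle F_-(\alpha_i),\beta_i\rangle$ appearing on the right-hand side is the coefficient of $\beta_i$ in the diagram obtained by rotating $\alpha_i$ by $\pi/2$ counter-clockwise. So after resolving the identity exactly as you propose, the right-hand side of the proposition becomes literally the left-hand diagram with each $2$-box rotated in place by a quarter turn, and the identity is then nothing but planar isotopy in $\PP$ — no need to invoke Corollary \ref{Cor:F_braiding_rels}, no $\eta$'s to cancel, no $\delta$-power bookkeeping, no Reidemeister moves. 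Expanding each $F_{-}$ into the word $\tilde T_2^{-1}\tilde T_1^{-1}\tilde T_0^{-1}$ as you suggest is the same tangle written out in shaded generators, so your approach would eventually arrive at the same place, but it re-derives by hand precisely the isotopy that the quarter-rotation interpretation gives you for free.

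Two smaller points. First, the appearance of $\overline{\alpha}=\rho_1(\alpha)$ has nothing to do with sliding strands under input disks: that move (Corollary \ref{Cor:Z_2_action}) introduces the $\Zn2$ action $\rho$, which is $\rho_2$, not $\rho_1$; the $\rho_1$ simply records that some of the boxes, after the planar isotopy, sit in a position rotated by $\pi$, and $\rho_1$ is by definition the half-turn on a $2$-box (equivalently $\FF^2$ by Theorem \ref{Thm:isomety_square_to_rot}). Keeping $\rho$ and $\rho_1$ distinct matters here, since they are different operations. Second, your claim that the $\eta$-factors cancel ``exactly as in Lemma \ref{Lem:C3verify}'' is an expectation, not an argument — if you did take the braiding-word route you would have to verify it, and this is exactly the kind of bookkeeping the quarter-rotation picture renders unnecessary. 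As it stands, the proposal correctly locates the tools but is incomplete: it does not supply the explicit isotopy nor verify the scalar factors, both of which are supplied automatically once one recognizes $F_-$ as rotation.
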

\begin{proof}
 The right-hand side is given by applying (inverse) Fourier transform $F_{-}$ to every $2$-box (rotate $\frac{\pi}{2}$ counter-clockwise), see the following diagram.
\[
\includegraphics[width=100pt]{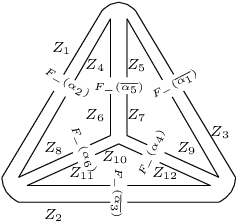}
\]

Direct isotopy then gives the left-hand side.
\end{proof}

When all $Z_i's$ are trivial, then we replace $\alpha_i,\beta_i$ by $\tilde{p}_{X_i}$ and $\tilde{p}_{Y_i}$ respectively, and $F_{-,\alpha_i\beta_i}=S^{-1}_{X_iY_i}$. The previous proposition reduces to the identity known as 6j-symbol self-duality \cite{Liu19}. Now we use $\Lambda$ to denote the subspace spanned by admissible colors $\{X_1\otimes X_2\otimes X_3\otimes X_4\otimes X_5\otimes X_6\}_{\overrightarrow{X}}$ such that $N_{\bar{X}_1\bar{X}_5X_2}N_{\bar{X}_1X_4X_3}N_{X_2X_6\bar{X}_3}N_{X_4X_5X_6}\neq 0$ . Now we have the following non-modular-group-relation identity. 
\begin{Thm}\label{thm:obstruct}
 We have the following identity,
 \begin{equation}\label{eq:Obstruction}
  Det|_{\Lambda}(S^{\otimes 6}P-I^{\otimes 6})=0.   
 \end{equation}
Where $P$ is the composition of a permutation matrix permuting the tensor factors by the cycle $(16)(25)(34)$ and $C^{\otimes 3}\times I^{\otimes3}$ ($C$ is the charge conjugation matrix). $Det|_\Lambda$ is the determinant of the matrix restricted in $\Lambda$. 
\end{Thm}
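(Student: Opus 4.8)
The plan is to exploit the non-trivial planar isotopy in the twisted category $\DD$ applied to the six-fold tensor product of trivial-labeled $2$-boxes, building on Proposition~\ref{New_identity}. Concretely, I would take the configuration space $Cf(2,\ZZ)$ with all $Z_i$ trivial, where the canonical orthonormal basis is $\{\tilde p_{X}\}_{X\in\Irr(\CC)}$ and the (inverse) Fourier transform $F_-$ acts as the matrix $S^{-1}$; the $\Zn2$-action $\rho_2$ acts as the charge conjugation $C$, and $\rho_1=F_+^2$ acts as $C$ as well (by Theorem~\ref{Thm:isomety_square_to_rot} and Corollary~\ref{Cor:SL2Z_relation}). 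The idea is that the "hexagonal" arrangement of six $2$-boxes used in Proposition~\ref{New_identity}, when all labels are trivial, becomes a linear operator on $\bigotimes_{i=1}^{6}\left(\bigoplus_{X}\BC\,\tilde p_{X}\right)$ that equals both $I^{\otimes 6}$ (read one way, as a trivial diagram up to isotopy) and $S^{\otimes 6}P$ (read the other way, after rotating each box by $\pi/2$ and using $F_-=S^{-1}$ together with the cyclic gluing that produces the permutation $(16)(25)(34)$ and the conjugations $C^{\otimes 3}$).

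The key steps, in order, are: (1) Specialize Proposition~\ref{New_identity} to trivial labels, recording that $F_{-,\alpha_i\beta_i}=S^{-1}_{X_iY_i}$ and $\overline{\alpha}=\rho_1(\alpha)$ corresponds to applying $C$; (2) identify the ambient vector space: the admissible-color subspace $\Lambda$ is exactly the span of basis tensors $\bigotimes X_i$ for which the four fusion coefficients $N_{\bar X_1\bar X_5 X_2}$, $N_{\bar X_1 X_4 X_3}$, $N_{X_2 X_6\bar X_3}$, $N_{X_4 X_5 X_6}$ are all nonzero, reflecting the four trivalent vertices in the hexagonal diagram; (3) carefully match the gluing pattern of the six boxes around the hexagon with a permutation of the six tensor legs — tracking which strand of box $i$ is glued to which strand of box $j$ yields precisely the cycle structure $(16)(25)(34)$, and the orientation reversals at three of the gluings produce the $C^{\otimes 3}$ factors; (4) observe that the left-hand side of Proposition~\ref{New_identity}, after the specialization and after resolving the trivial $2$-boxes, collapses by planar isotopy in $\DD$ to the identity operator on $\Lambda$, while the right-hand side is $S^{\otimes 6}P$ restricted to $\Lambda$; (5) conclude that $S^{\otimes 6}P|_\Lambda = I^{\otimes 6}|_\Lambda$, hence $\Det|_\Lambda(S^{\otimes 6}P - I^{\otimes 6}) = 0$. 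One should double-check that $\Lambda$ is genuinely invariant under $S^{\otimes 6}P$, which follows because the hexagonal diagram has a well-defined value only on admissible colorings, so both sides of the identity are honestly operators on $\Lambda$.

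The main obstacle I expect is step (3): getting the bookkeeping of the gluing combinatorics exactly right — which of the six strands emanating from each $2$-box corner connects to which, and in what orientation — so that the resulting permutation is verifiably $(16)(25)(34)$ rather than some conjugate of it, and so that the three charge-conjugation matrices land on the correct three tensor factors (the first three, as claimed). This is essentially a careful diagram-chase rather than a deep difficulty, but it is the step where an error would most easily creep in; drawing the hexagonal configuration with all six boxes rotated to a standard position and labeling every strand is essential. A secondary subtlety is confirming that no global scalar (a power of $\delta$, $\eta$, or a product of twists) survives: since all $Z_i$ are trivial the twist factors $\theta_{Z_i}$ are $1$, and the $\delta$-coefficients in the definitions of $\phi_k,\iota_k,T_k$ are arranged precisely so that $F_-$ is a genuine isometry equal to $S^{-1}$ with no extra normalization, so the identity is scalar-free — but this should be stated explicitly rather than taken for granted.
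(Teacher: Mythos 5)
Your proposal correctly identifies the relevant input (Proposition~\ref{New_identity} specialized to trivial labels, the identification $F_-=S^{-1}$, $\rho_1=\rho_2=C$, the hexagonal arrangement of six $2$-boxes, and the combinatorial bookkeeping that produces the permutation $(16)(25)(34)$ and the three $C$-factors). But the final steps (4)--(5) contain a genuine error: you claim the hexagonal diagram equals both $I^{\otimes 6}$ and $S^{\otimes 6}P$ \emph{as operators} on $\Lambda$, and then conclude $S^{\otimes 6}P|_\Lambda = I^{\otimes 6}|_\Lambda$. This would be a vastly stronger (and false) statement: $S^{\otimes 6}P$ restricted to $\Lambda$ is emphatically not the identity operator. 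The diagram in Proposition~\ref{New_identity}, for each fixed coloring $(X_1,\dots,X_6)$, produces a \emph{scalar} (the basis-free $6j$-symbol $\bigl|\Sym{X_1}{X_2}{X_3}{X_4}{X_5}{X_6}\bigr|^2$), and the proposition is an equality of two such scalars for each fixed coloring; it is not an equality of linear maps on $\bigotimes_{i=1}^{6}\bigoplus_X \BC\,\tilde p_X$.

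What the scalar identity actually yields is that the single vector $v$ with components $v_{\vec X}=\bigl|\Sym{X_1}{X_2}{X_3}{X_4}{X_5}{X_6}\bigr|^2$ satisfies $S^{\otimes 6}P\,v = v$, i.e.\ $v$ is an eigenvector of $S^{\otimes 6}P$ with eigenvalue $1$. Since the basis-free $6j$-symbol vanishes precisely when one of the four fusion constraints fails, $v$ is supported on $\Lambda$ and is nonzero there. Hence the restricted operator $\bigl(S^{\otimes 6}P - I^{\otimes 6}\bigr)\big|_\Lambda$ has nontrivial kernel, which gives $\Det|_\Lambda\bigl(S^{\otimes 6}P - I^{\otimes 6}\bigr)=0$. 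In short: the identity you need is that \emph{one} vector is fixed, not that the operator is trivial, and conflating the two is the gap in your argument.
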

\begin{proof}
Let $\Big|\Sym{X_1}{X_2}{X_3}{X_4}{X_5}{X_6}\Big|^2$ be the basis-free $6j$-symbol ($F$-symbol) as described in \cite{Liu19} and we denote the vector $\biggl\{\Big|\Sym{X_1}{X_2}{X_3}{X_4}{X_5}{X_6}\Big|^2\biggr\}_{\overrightarrow{X}}$ by $v$.
Now $\Big|\Sym{Y_1}{Y_2}{Y_3}{Y_4}{Y_5}{Y_6}\Big{|}^2$ is given by the value of the diagram on the right-hand side in the previous proposition, while the left-hand side is $\Big|\Sym{\bar{X_6}}{\bar{X_5}}{\bar{X_4}}{X_3}{X_2}{X_1}\Big|^2$. The result follows by observing that $v$ is an eigenvector of the matrix $S^{\otimes 6}P$ with eigenvalue $1$ and $v|_{\Lambda^c}=0$.
\end{proof}

\begin{rmk}\label{rem:infinite_obstruc}
Identity \ref{eq:Obstruction} is an obstruction for modular category realization of $S$-matrix. This identity can be applied to the $S$-matrix of the gauged theory discussed in the next section, and obtain another obstruction. As one can repeatedly perform the $\Zn2$ permutation gauging, there will be infinitely many obstructions!
\end{rmk}
\section{Equivariantization theory}

In this section we will work on the $\Zn2$-equivariantization of the category $\DD$ obtained in Theorem \ref{cat_of_Z_2-extension}. The resulting category is denoted by $\DD^{\Zn2}$.

Here in our case, the action $\rho$ is explicit and moreover it is strict ($\rho^2=\Id$), one obtains the simple objects in $\DD^{\Zn2}$ are of the form $(XY):=(X\bt Y\oplus Y\bt X,\  \Id)$, $(X,\pm):=(X\bt X,\pm\Id)$ and $(\hat{X},\pm):=(\hat{X},\pm\Id)$ ($X,Y\in \Irr(\CC)$, $1_{\DD^{\Zn2}}=(1,+)$). Moreover, thanks to the properties of the braiding and the topological description of the action. It is straightforward to see the structure morphisms (ev, coev, braiding) commute with the action, thus giving rise to the  structure morphism in $\DD^{\Zn2}$. Therefore we can do calculations in the planar algebra $\PP$ for $\DD^{\Zn2}$.    

Now let $S',T, N, S^{eq}, T^{eq}, \mathcal{N} $ denote the $S (unnomalized),T$ matrix and fusion coefficient for $\CC$ and $\DD^{\Zn2}$ respectively. 
\begin{Thm}\label{thm:S_data}
We have $S^{eq}$ is symmetric and 
\[
\begin{aligned}
   S^{eq}_{(XY),(ZW)}&=2(S'_{X,Z}S'_{Y,W}+S'_{XW}S'_{YZ} )\\
   S^{eq}_{(XY),(Z,\epsilon)}&=2S'_{X,Z}S'_{Y,Z}\\
    S^{eq}_{(X,\epsilon_1),(Y,\epsilon_2)}&=(S'_{X,Y})^2\\
   S^{eq}_{(XY),(\hat{Z},\epsilon)}&=0\\
   S^{eq}_{(X,\epsilon_1),(\hat{Y},\epsilon_2)}&=\epsilon_1 \delta S'_{X,Y}\\
   S^{eq}_{(\hat{X},\epsilon_1),(\hat{Y},\epsilon_2)}&=\epsilon_1\epsilon_2\eta^{-1}\theta^{1/2}_X\theta^{1/2}_Y(S'T^2S')_{X,Y}\\
\end{aligned}
\]
In addition, $T^{eq}$ is diagonal matrix with
\[
\begin{aligned}
T^{eq}_{(XY),(XY)}&=\theta_X\theta_Y\\
   T^{eq}_{(X,\epsilon),(X,\epsilon)}&=\theta^2_X\\
   T^{eq}_{(\hat{X},\epsilon),(\hat{X},\epsilon)}&=\epsilon\eta^{1/2}\theta^{1/2}_X    
\end{aligned}
\]
\end{Thm}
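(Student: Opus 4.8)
The plan is to compute both matrices by evaluating the relevant closed diagrams in the planar algebra $\PP$, using the explicit description of the simple objects of $\DD^{\Zn2}$ and the identification of all structure morphisms with those already built in Section~5. Recall that $S^{eq}_{A,B}$ is (up to the global normalization) the value of the Hopf link with the two components colored by projections representing $A$ and $B$, while $T^{eq}_{A,A}$ is the twist eigenvalue on the simple object $A$. For objects coming from the even part, $(XY)$ and $(X,\epsilon)$, the underlying $\DD$-object is a subobject of $X\bt Y\oplus Y\bt X$, and by Theorem~\ref{cat_of_Z_2-extension} the braiding on the even part agrees with that of $\CC\boxtimes\CC$ (Proposition~\ref{braiding_squareroot}); so the even--even entries are just bilinear combinations of the Hopf-link values $S'_{X,Z}$ of $\CC$, with the factor $2$ and the sign $\epsilon$ bookkeeping the orbit size and the equivariant structure exactly as in the standard $G$-equivariantization $S$-matrix formula recalled in Section~2.3. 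This handles the first three rows of $S^{eq}$ and the first two rows of $T^{eq}$; here the twist eigenvalues are immediate since $\theta$ on $X\bt Y$ is $\theta_X\theta_Y$.

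The substantive computations are the rows involving the odd-part objects $(\hat X,\pm)$. First, $S^{eq}_{(XY),(\hat Z,\epsilon)}=0$ should follow from the $\Zn2$-grading: linking an even-part object with an odd-part object is linking a $\DD_0$-object with a $\DD_1$-object, and after closing up one lands in an $\Hom$-space that is forced to vanish (equivalently, the relevant configuration-space pairing is between spaces of incompatible parity). Next, $S^{eq}_{(X,\epsilon_1),(\hat Y,\epsilon_2)}=\epsilon_1\delta\, S'_{X,Y}$: here I would draw the Hopf link with one component the minimal even projection $\tilde p_{X\bt X}$ and the other the single cap $\hat Y$, resolve the crossings using the braiding generators $T_k$ from Definition~\ref{def:braiding}, and use the graphic calculus of Lemma~\ref{lem:Theta_2_lemma} together with the cutting/handle-slide properties of the Kirby color; the $\delta$ arises from creating one $\Omega$-colored circle and the sign $\epsilon_1$ from the equivariant structure on the $(X,\epsilon_1)$ side. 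The key identity here is essentially that linking $\hat Y$ around a strand colored $X$ produces $S'_{X,Y}$ up to the normalizing power of $\delta$.

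The hardest entry is $S^{eq}_{(\hat X,\epsilon_1),(\hat Y,\epsilon_2)}=\epsilon_1\epsilon_2\,\eta^{-1}\theta^{1/2}_X\theta^{1/2}_Y(S'T^2S')_{X,Y}$, and also $T^{eq}_{(\hat X,\epsilon),(\hat X,\epsilon)}=\epsilon\,\eta^{1/2}\theta^{1/2}_X$. For the twist, I would compute the self-crossing of $\hat X$ with itself: by Corollary~\ref{Cor:F_braiding_rels} (or $(R7)$) the relevant curl on the odd strand contributes the rescaled square-root twist $\eta^{1/2}\theta^{1/2}_X$ coming precisely from the $\tilde T_k=\eta^{(-1)^k/2}T_k$ rescaling used to pass to the unshaded planar algebra $\PP$, and the $\epsilon$ is the action eigenvalue. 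For the odd--odd $S$-entry, the point is the twisted/untwisted correspondence in Table~\ref{tab:t/unt}: linking two copies of $\hat X$ and $\hat Y$ in $\DD$ is, via Corollary~\ref{Cor:F_braiding_rels} and Lemma~\ref{lem:braiding&Fourier_comp}, a composite of Fourier transforms and braidings that lands on the genus-one ``once-punctured-torus'' type diagram — concretely the composite $\sigma_0\sigma_1\sigma_2$ appearing in the table — whose evaluation in $\CC$ is the matrix product $S'T^2S'$ (the two $T$'s being the two twists inserted along the torus). The factors $\theta^{1/2}_X\theta^{1/2}_Y$ and $\eta^{-1}$ then collect from the square-root-twist normalization on the two odd strands and the overall power of $\eta$ accumulated in rewriting $F_+^{\,k}$ in terms of the $\tilde T_k$'s. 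I expect this last diagrammatic evaluation — carefully tracking all powers of $\delta$, $\eta$, and the square-root twists through the cutting and handle-slide moves — to be the main obstacle; symmetry of $S^{eq}$ and its invertibility then follow either from the general equivariantization theory (Section~2.3, since $\DD_0=\CC\boxtimes\CC$ is non-degenerate) or, for symmetry, from the manifest symmetry of each closed-diagram evaluation under exchanging the two link components.
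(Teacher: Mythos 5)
The overall strategy — compute $S^{eq}$ and $T^{eq}$ by evaluating closed (Hopf-link / curl) diagrams in the planar algebra $\PP$, using Proposition~\ref{braiding_squareroot} for the even--even entries, the rescaled twists $\tilde T_k=\eta^{(-1)^k/2}T_k$ and $(R7)$ for $T^{eq}$ on the odd part, and the Fourier-transform identities from Corollary~\ref{Cor:F_braiding_rels} for the odd entries — is the same one the paper follows. Your sketches of the even--even entries, of $T^{eq}$, and of the last (hardest) entry $S^{eq}_{(\hat X,\epsilon_1),(\hat Y,\epsilon_2)}$ are in line with the paper's diagrammatic evaluations.

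There is, however, a genuine error in your argument for $S^{eq}_{(XY),(\hat Z,\epsilon)}=0$. You attribute the vanishing to ``linking a $\DD_0$-object with a $\DD_1$-object'' and claim the resulting $\Hom$-space is ``forced to vanish'' by parity. This cannot be the reason: the object $(X,\epsilon_1)$ also lies over the trivially graded part $\DD_0$, and yet $S^{eq}_{(X,\epsilon_1),(\hat Y,\epsilon_2)}=\epsilon_1\delta\,S'_{X,Y}$ is generically nonzero. So a pure grading/parity constraint cannot distinguish $(XY)$ from $(X,\epsilon)$. The actual mechanism is finer: when the odd-colored strand $\hat Z$ braids around an even object, the $\Zn2$-crossed braiding inserts the action $\rho$, which sends the summand $X\bt Y$ to $Y\bt X$. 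Composing with the equivariant isomorphism $u_1=\Id$ on $(XY)$ still gives an endomorphism of $(XY)\otimes\hat Z$ in $\DD^{\Zn2}$, but its underlying $\DD$-morphism exchanges the two distinct simple summands $X\bt Y$ and $Y\bt X$ (distinct precisely because $X\neq Y$), hence has no diagonal block and zero trace. For $(X,\epsilon_1)$ the summand $X\bt X$ is $\rho$-fixed, the double braiding is diagonal, and the trace picks up the factor $\epsilon_1$ from $u_1=\epsilon_1\Id$. This is exactly what the second diagram in the paper's proof encodes; your parity shortcut skips the step that actually produces the $\epsilon_1$ and the $0$ in the two cases, and as stated would give the wrong answer for the fifth entry.

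Two smaller remarks. First, for the odd--odd entry you appeal to the table's heuristic $\sigma_0\sigma_1\sigma_2\leftrightarrow S$-matrix, but the stated formula involves $S'T^2S'$; this is fine as a guiding picture, but the actual bookkeeping of the $\delta$-, $\eta$-, and $\theta^{1/2}$-powers in the paper is done by the explicit diagram evaluation, and you should be careful that the ``once-punctured-torus'' reading does not silently drop the $T^2$ and the $\eta^{-1}$. Second, symmetry and invertibility of $S^{eq}$ indeed follow from the general equivariantization theory once $\DD_0\simeq\CC\boxtimes\CC$ is known to be non-degenerate, as you say; the paper invokes the same fact in Corollary~\ref{Cor:fusionrule}.
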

\begin{proof}
As described in the preliminary section. It suffices to evaluate the following four diagrams in the planar algebra. Here $\epsilon_i=\pm 1$, indicates the morphism we pick for the $u_g$.
\[
\includegraphics[width=400pt]{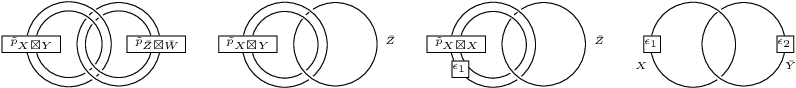}
\]
The first three identities follow from the evaluation of the first diagram by using Proposition \ref{braiding_squareroot}. The fourth and fifth ones come from the evaluation of the second and third diagrams, the detailed calculations are as follows,
\[
\includegraphics[width=400pt]{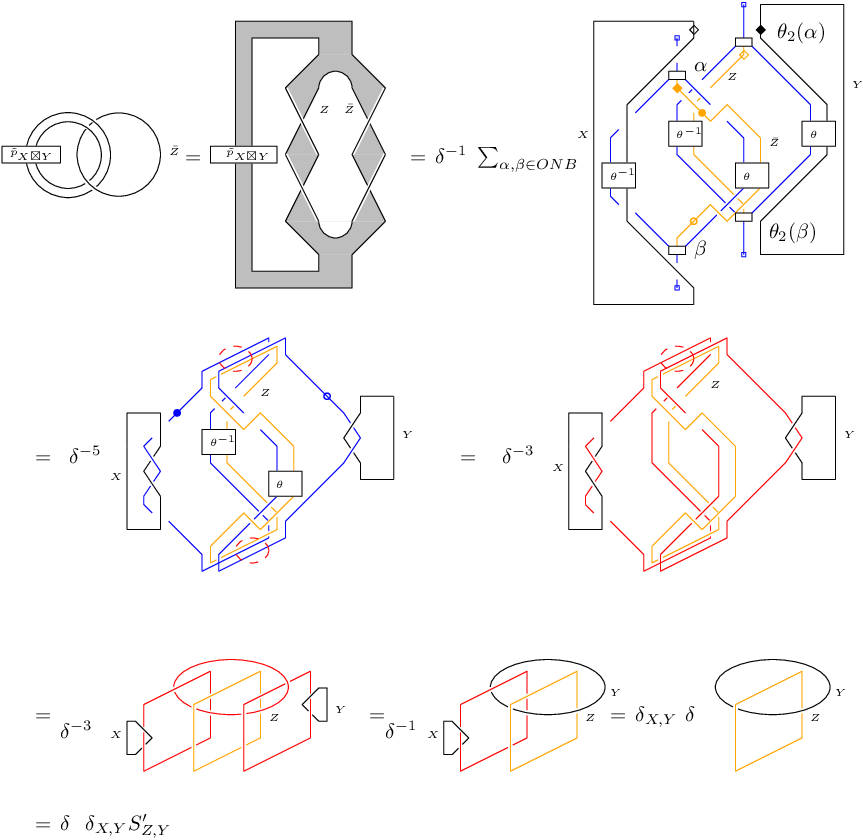}
\] 

Finally, the last identity follows from the following evaluation of the last diagram,
\[
\includegraphics[width=300pt]{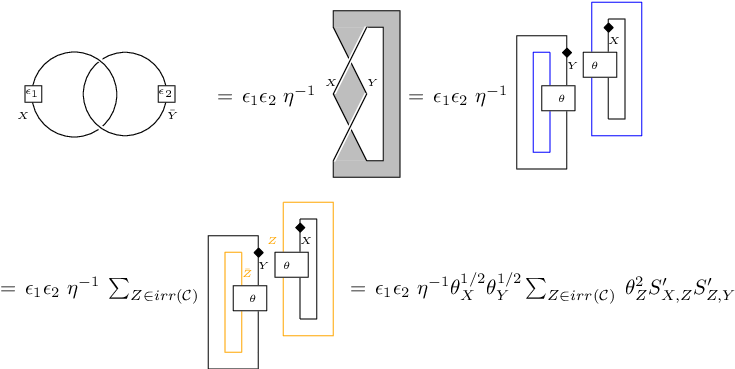}
\]
\end{proof}

Now we simply denote the normalized $S$-matrix for $\CC$ by $S$, and define the matrix $P:=\eta^{-1}T^{1/2}ST^2ST^{1/2}$(this is the same $P$ defined in \cite{Bantay98, BHS98, KLX05}, since their $T$ matrix is nomalized by $\eta^{-1/3}$). The following corollary is obtained by direct computation.
\begin{Cor} \label{Cor:fusionrule}
  $\DD^{\Zn2}$ is modular and the fusion rules are as follows
\[
\begin{aligned}
 \mathcal{N}_{(X_1Y_1),(X_2Y_2),(X_3Y_3)}&=N_{X_1X_2X_3}N_{Y_1Y_2Y_3}+N_{X_1X_2Y_3}N_{Y_1Y_2X_3}+N_{X_1Y_2X_3}N_{Y_1X_2Y_3}+N_{Y_1X_2X_3}N_{X_1Y_2Y_3},\\
 \mathcal{N}_{(X_1Y_1),(X_2Y_2),(Z,\epsilon)}&=N_{X_1X_2Z}N_{Y_1Y_2Z}+N_{X_1Y_2Z}N_{Y_1X_2Z},\\
 \mathcal{N}_{(XY),(Z_1,\epsilon_1),(Z_2,\epsilon_2)}&=N_{XZ_1Z_2}N_{YZ_1Z_2},\\
 \mathcal{N}_{(Z_1,\epsilon_1),(Z_2,\epsilon_2),(Z_3,\epsilon_3)}&=\frac{1}{2}N_{Z_1Z_2Z_3}(N_{Z_1Z_2Z_3}+\epsilon_1\epsilon_2\epsilon_3),\\
\mathcal{N}_{(XY),(\hat{Z}_1,\epsilon_1),(\hat{Z}_2,\epsilon_2)}&=\sum_{W\in \Irr(\CC)}\frac{S_{XW}S_{YW}S_{Z_1W}S_{Z_2W}}{S^2_{1W}},\\
\mathcal{N}_{(X,\epsilon),(\hat{Z}_1,\epsilon_1),(\hat{Z}_2,\epsilon_2)}&=\frac{1}{2}\sum_{W\in \Irr(\CC)}\frac{S^2_{XW}S_{Z_1W}S_{Z_2W}}{S^2_{1W}}+\frac{1}{2}\epsilon\epsilon_1\epsilon_2\sum_{W\in \Irr(\CC)}\frac{S_{XW}P_{Z_1W}P_{Z_2W}}{S_{1W}}.\\
\end{aligned}
\]
  
\end{Cor}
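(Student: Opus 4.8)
The plan is to deduce both assertions from the explicit modular data of Theorem \ref{thm:S_data}. For modularity, I would invoke the general theory of the gauging process recalled in Section~2: since $\DD$ is a $\Zn2$-crossed braided spherical fusion category (Theorem \ref{cat_of_Z_2-extension}) whose trivial graded component is $\CC\boxtimes\CC$, the equivariantization $\DD^{\Zn2}$ is a braided (spherical) fusion category, and $\CC\boxtimes\CC$ being non-degenerate (it is modular, as $\CC$ is) forces $\DD^{\Zn2}$ to be non-degenerate, hence modular. Alternatively one checks directly from Theorem \ref{thm:S_data} that $S^{eq}$ is invertible and that $(S^{eq})^2$ is a scalar times a permutation matrix (charge conjugation), using unitarity of the normalized $S$ of $\CC$ and the relation $S^2=C$ from Corollary \ref{Cor:SL2Z_relation}; the block-diagonal pattern of zeros $S^{eq}_{(XY),(\hat Z,\epsilon)}=0$ makes this bookkeeping very transparent.

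For the fusion rules, the approach is to feed the rows of $S^{eq}$ from Theorem \ref{thm:S_data} into the Verlinde formula for the modular category $\DD^{\Zn2}$,
\[
\mathcal{N}_{a,b,c}=\frac{1}{\dim(\DD^{\Zn2})}\sum_{x\in\Irr(\DD^{\Zn2})}\frac{S^{eq}_{a,x}\,S^{eq}_{b,x}\,S^{eq}_{c,x}}{S^{eq}_{(1,+),x}},\qquad \dim(\DD^{\Zn2})=|\Zn2|\dim(\DD)=4\mu^2,
\]
and to split the sum over $\Irr(\DD^{\Zn2})$ into the three families: the objects $(WV)$ with $W\neq V\in\Irr(\CC)$, the objects $(W,\pm)$, and the objects $(\hat W,\pm)$. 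Each of the six formulas then comes out of substituting the appropriate entries. The key simplifications are: (i) combining the off-diagonal contributions $W\neq V$ with the two diagonal lifts $(W,+)\oplus(W,-)$ reassembles an honest sum over all ordered pairs, which via the Verlinde formula of $\CC$ produces the stated products of the $N$'s; (ii) on the $(\hat W,\pm)$-family the factor $\epsilon_1\epsilon_2\,\eta^{-1}\theta^{1/2}_X\theta^{1/2}_Y(S'T^2S')_{X,Y}$ together with the definition $P=\eta^{-1}T^{1/2}ST^2ST^{1/2}$ and unitarity of $S$ turns the sub-sum into the $\tfrac{S_{XW}S_{YW}S_{Z_1W}S_{Z_2W}}{S^2_{1W}}$ and $\tfrac{S_{XW}P_{Z_1W}P_{Z_2W}}{S_{1W}}$ expressions; (iii) summing over the sign label of each lift ($W\mapsto(W,\pm)$ and $\hat W\mapsto(\hat W,\pm)$) yields exactly the combinatorial prefactors $\tfrac12(N+\epsilon_1\epsilon_2\epsilon_3)$ and $\tfrac12(\cdots)+\tfrac12\epsilon\epsilon_1\epsilon_2(\cdots)$ of the last three identities, while $S^{eq}_{(XY),(\hat Z,\epsilon)}=0$ removes the $(\hat W,\pm)$-family from the first three.

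As a parallel sanity check I would compute the same tensor products directly from the $\DD$-fusion rules of Theorem \ref{cat_of_Z_2-extension} using the standard description of tensor products of $\Zn2$-equivariant objects: decompose $(X\boxtimes Y\oplus Y\boxtimes X)\otimes(Z\boxtimes W\oplus W\boxtimes Z)$, $\hat X\hat Y=\bigoplus_{Z\in\Irr(\CC)}ZX\boxtimes\bar ZW$ with its induced action, etc., in $\CC\boxtimes\CC$ and then extract the two $\Zn2$-isotypic components; this is needed only to pin down the $\pm$-refinement and overall scalars, not the ``shape'' of the answer.

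The main obstacle I expect is precisely steps (ii)--(iii) for the last two lines: turning $\sum_{W}(S'T^2S')_{X,W}(\cdots)$ into a sum over $W$ of products of entries of $P$ requires unitarity of $S$, the $\SL_2(\BZ)$ relations $S^2=C$, $(T^{-1}S)^3=\eta C$, $(ST)^3=\eta^{-1}$ from Corollary \ref{Cor:SL2Z_relation}, and the Gauss-sum data $p^+p^-=\mu$, $\eta=p^+/\delta$ from Section~2; and keeping the global scalar correct --- tracking the powers of $\delta$, $\eta$, and the square roots $\theta^{1/2}$ through $\dim(\DD^{\Zn2})=4\mu^2$ and through the passage between $S^{eq}$ and the unnormalized $S'$ of $\CC$ --- is the error-prone part that must actually be carried out rather than quoted.
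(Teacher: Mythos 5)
Your proposal is correct and follows essentially the same route as the paper: establish modularity by showing (from Theorem \ref{thm:S_data} and the unitarity of $S$) that the normalized $S^{eq}$ is unitary, then read off the fusion coefficients from the Verlinde formula. The paper's proof is exactly this, stated very tersely; your extra bookkeeping (splitting the Verlinde sum over the three families of simples, pairing off the $\pm$-lifts, and the alternative appeal to $\CC_1$ non-degenerate $\Leftrightarrow$ $\DD^{\Zn2}$ non-degenerate from the preliminaries) is a faithful expansion of the same argument rather than a different one.
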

\begin{proof}
One shows the normalized $S^{eq}$ is unitary by using Theorem \ref{thm:S_data} and the unitarity of $S$ . The fusion rules follow from the Verlinde formula.   
\end{proof}
\begin{rmk}
 The above results coincide with those in \cite{Bantay98}, \cite[Sec.~9.3]{KLX05} and \cite[Sec.~4.4]{BHS98},  which are in the setting of conformal field theory.
\end{rmk}

\begin{exmp}
 Metaplectic categories $SO(N)_2$ ($N=2r+1$) are the unitary modular categories with the same fusion rule as the type $B$ quantum group category at level $2$,  see \cite{HNW13,HNW14,ACRW16} for the characterization of inequivalent theories. Combined with Corollary \ref{Cor:fusionrule}, we have the following result.   
\end{exmp}

\begin{Cor}
 $SO(N)_2$  ($N=2r+1$)  is equivalent to the subcategory of a $\Zn2$ permutation gauging of $\CC(\Zn N,q)$ generated by $(\hat{1},+)$.    
\end{Cor}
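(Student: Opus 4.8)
The plan is to read off the fusion rules of the fusion subcategory $\langle(\hat 1,+)\rangle\subseteq\DD^{\Zn2}$ directly from Corollary \ref{Cor:fusionrule} in the pointed case $\CC=\CC(\Zn N,q)$, recognize them as the type-$B_r$ level-$2$ (metaplectic) fusion rules, and conclude by the very definition of $SO(N)_2$; the explicit modular data from Theorem \ref{thm:S_data} will then be used to see that, as $q$ ranges over its possible values, one realizes every inequivalent metaplectic modular category.

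First I would specialize everything to $\CC=\CC(\Zn N,q)$: here $\Irr(\CC)=\Zn N$, all simple objects are invertible ($d_X=1$, $\dim\CC=N$, $\delta=\sqrt N$), $S'_{X,W}=\tfrac1{\sqrt N}\zeta^{XW}$ for the character $\zeta$ attached to $q$, and $T$ is diagonal with the Gauss phases $\theta_X$; since $N$ is odd, $\overline X=-X\neq X$ for $X\neq 1$. Plugging this into the list of simples of $\DD^{\Zn2}$, the objects $(XY)$ with $X\neq Y$ have dimension $2$, the $(X,\pm)$ have dimension $1$, and the $(\hat X,\pm)$ have dimension $\sqrt N=d_{\hat X}$. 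Next, using Corollary \ref{Cor:fusionrule}, I would compute $(\hat 1,+)^{\otimes 2}$ and $(\hat1,+)\otimes(\hat1,-)$: the $(XY)$-multiplicities are $\mathcal N_{(XY),(\hat1,\epsilon_1),(\hat1,\epsilon_2)}=\sum_W S'_{XW}S'_{YW}=\delta_{X+Y\equiv 0}$, so each of these tensor products equals $\bigl(\bigoplus_{\{X,-X\},\,X\neq 1}(X\,\overline X)\bigr)\oplus(1,\varepsilon_0)$ for a single rank-one summand $(1,\varepsilon_0)\in\{(1,+),(1,-)\}$, the sign $\varepsilon_0$ being fixed by the sign of the Gauss sum $\sum_W P_{1W}^2$ appearing in the formula for $\mathcal N_{(1,\epsilon),(\hat1,\epsilon_1),(\hat1,\epsilon_2)}$ (with $P=\eta^{-1}T^{1/2}ST^2ST^{1/2}$), and with the opposite sign occurring for the two products. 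In particular the $r+4$ objects $\{(1,+),(1,-),(\hat1,+),(\hat1,-)\}\cup\{(X\,\overline X):X=1,\dots,r\}$ are closed under $\otimes$ and duality ($(X\,\overline X)$ self-dual, $\overline{(1,-)}=(1,-)$, $\overline{(\hat1,\pm)}\in\{(\hat1,+),(\hat1,-)\}$), they are exactly the simple objects of $\langle(\hat1,+)\rangle$, and the global dimension is $2+2N+2r=4N$.

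It then remains to match the internal fusion rules and to handle surjectivity. From the $(X_1Y_1),(X_2Y_2),(X_3Y_3)$ and $(X_1Y_1),(X_2Y_2),(Z,\epsilon)$ formulas in Corollary \ref{Cor:fusionrule}, the pointed specialization gives $(X\,\overline X)\otimes(Y\,\overline Y)=\bigl((X{+}Y)\overline{(X{+}Y)}\bigr)\oplus\bigl((X{-}Y)\overline{(X{-}Y)}\bigr)$, a factor with $X\pm Y\equiv 0$ degenerating to $(1,+)\oplus(1,-)$; together with $(1,-)\otimes(\hat1,\pm)=(\hat1,\mp)$ (or $(\hat1,\pm)$, whichever the sign dictates), $(1,-)\otimes(X\,\overline X)=(X\,\overline X)$, and $(X\,\overline X)\otimes(\hat1,\pm)=(\hat1,+)\oplus(\hat1,-)$, all read off the Corollary, these are precisely the metaplectic $B_r$-level-$2$ fusion rules. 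Since $\DD^{\Zn2}$ is modular (Corollary \ref{Cor:fusionrule}) and one checks from Theorem \ref{thm:S_data} that the $S^{eq}$-submatrix on these $r+4$ objects is nondegenerate, $\langle(\hat1,+)\rangle$ is a unitary modular category with metaplectic fusion rules, hence an $SO(N)_2$ by definition. To see that every inequivalent metaplectic modular category is realized, I would write out the restricted modular data explicitly from Theorem \ref{thm:S_data} — for instance $S^{eq}_{(\hat1,\epsilon_1),(\hat1,\epsilon_2)}=\epsilon_1\epsilon_2\,\eta^{-1}(S'T^2S')_{1,1}$ and $T^{eq}_{(\hat1,\epsilon),(\hat1,\epsilon)}=\epsilon\,\eta^{1/2}\theta_1^{1/2}$ — as functions of $q$, and compare with the classification of metaplectic modular data in \cite{HNW13,HNW14,ACRW16}, which exhibits $q\mapsto\langle(\hat1,+)\rangle$ as a surjection onto equivalence classes. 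The main obstacle is exactly this last matching: one must evaluate the $P$-matrix Gauss sums of $\CC(\Zn N,q)$, keep careful track of the chosen square roots $\theta^{1/2}$ and of the sign of $\eta^{1/2}$, and reconcile normalization conventions with the cited papers (whose $T$ is normalized by $\eta^{-1/3}$, as already noted after Theorem \ref{thm:S_data}); the conceptual content, however, is entirely carried by Corollary \ref{Cor:fusionrule} and Theorem \ref{thm:S_data}.
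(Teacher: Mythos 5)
Your identification of the simple objects of $\langle(\hat1,+)\rangle$, the computation of the multiplicities from Corollary~\ref{Cor:fusionrule} in the pointed case, and the matching with the $B_r$-level-$2$ fusion ring are all fine and follow the same route as the paper (which says exactly ``one can directly check the fusion rule using Corollary~\ref{Cor:fusionrule}'').

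The gap is in the surjectivity argument. You claim that $q\mapsto\langle(\hat1,+)\rangle$ is already a surjection onto equivalence classes of $SO(N)_2$'s. This cannot work: for $N$ with $s$ distinct prime factors there are $2^s$ inequivalence classes of non-degenerate quadratic forms on $\Zn N$, but $2^{s+1}$ inequivalent metaplectic modular categories $SO(N)_2$ (see \cite{ACRW16}). Varying $q$ therefore realizes only half of them. The paper's proof supplies the missing factor of $2$: the $\Zn2$ permutation gauging construction itself involves a choice of square root $\delta$ of the global dimension $\mu$ (the value assigned to a closed strand with trivial label), and replacing $\delta$ by $-\delta$ flips $\eta$ to $-\eta$ and hence changes $T^{eq}_{(\hat1,\pm),(\hat1,\pm)}=\pm\,\eta^{1/2}\theta^{1/2}_1$. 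You do remark that one must ``keep careful track of the sign of $\eta^{1/2}$,'' but you frame that as a normalization issue to reconcile with the conventions of the cited papers, not as a genuine second input to the construction producing inequivalent theories — yet that is precisely what completes the count. Concretely, your last paragraph should be replaced by: the $2^s$ choices of $q$ together with the $2$ choices of $\delta$ yield $2^{s+1}$ distinct pairs of restricted modular data, matching the classification.
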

\begin{proof}
 One can directly check the fusion rule using Corollary \ref{Cor:fusionrule}, therefore it suffices to show we can obtain all $2^{s+1}$ inequivalent categories \cite{ACRW16} ($s$ is the number of prime factors of $N$). The choices of non-degenerate quadratic form $q$ give $2^s$ inequivalent theories \cite{ACRW16}, and the last two choices come from picking a different square root of the global dimension (as the value of a single strand circle with the trivial label), which, in particular, change $\eta$ to $-\eta$ and the value $T_{(\hat{1},\pm),(\hat{1},\pm)}$ will be different.  (One can also compare to the calculations in \cite[Section~5]{GNN09})  
\end{proof}

\begin{rmk}
In our construction of the twisted category $\DD$, we make two choices. The first involves lifting the shaded planar algebra, as discussed after Theorem \ref{thm:lifting_thm}. The second is selecting a square root of the global dimension $\mu$ of $\CC$. We expect that these choices will account for all inequivalent $\Zn2$ permutation extensions as $\Zn2$-crossed braided fusion categories . 
\end{rmk}


\section{Some observations and questions} 

In this section, we present additional applications, problems, and future prospects related to the $\Zn2$ permutation gauging. We will first demonstrate the equivalence of the 0-genus data in the twisted category $\DD$ (specifically, the configuration space $Cf^1(n)$ and the action of the crossed braiding) with the Reshetikhin-Turaev TQFT state space of $\CC$ associated with closed $(n-1)$-genus surfaces, as well as the isomorphism concerning the action of the symmetric mapping class group. Consequently, the finiteness of braid group representations in $\DD$ is equivalent to the finiteness of representations in the corresponding higher genus surface $\SMod(\Sigma)$. Based on this, we discuss the conjecture proposed by Naidu and Rowell in 2011, raise several questions, and prove Theorem \ref{thm:ProF}. Finally, we outline some future research directions including more general cyclic permutation gauging and extensions of Quon languages in quantum information.
\subsection{Relation to the Reshetikhin-Turaev TQFT}
We refer to \cite{RT91, Tur10} for the basic definition of Reshetikhin-Turaev TQFT and the corresponding mapping class group actions.  

The state space associated to the closed genus $g$ surface $\Sigma_g$ is given by (here we adopt the notation in \cite[Chapter~IV]{Tur10}, $I$ is the index set and $\{V_{i}\}_{i\in I}$ is the set of all simple objects)
$$
\Psi_g=\bigoplus_{i\in I^{g}}\Psi^i_g=\bigoplus_{i\in I^{g}}\Hom(1,\bigotimes_{r=1}^g(V_{i_r}\otimes \overline{V}_{i_r})).
$$
Here we denote the vector space $\Hom(1,\bigotimes_{r=1}^g(V_{i_r}\otimes \overline{V}_{i_r}))$ by $\Psi^i_g$,

Now we define the map $\Phi_g: \Psi_g\to Cf^1(g+1)$ by 
$$
\Phi_g(\bigoplus_{i\in I^g} f_i)=\sum_{i\in I^g, \alpha, W_r\in \Irr(\CC)} (Id_{V_{i_1}}\otimes \bigotimes^{g-1}_{r=1}\alpha_{\overline{V}_{i_r},V_{i_{r+1}}}^{W_r}\otimes \Id_{\overline{V}_{i_g}})\circ f_i\otimes\Theta_2\big((Id_{V^*_{i_1}}\otimes \bigotimes^{g-1}_{r=1}\alpha_{\overline{V}_{i_r},V_{i_{r+1}}}^{W_r}\otimes \Id_{V_{i_g}})\circ \bigotimes^g_{r=1}(coev_{\overline{V}_{i_r}})\big) 
$$
See the following diagram.
\[
\includegraphics[width=400pt]{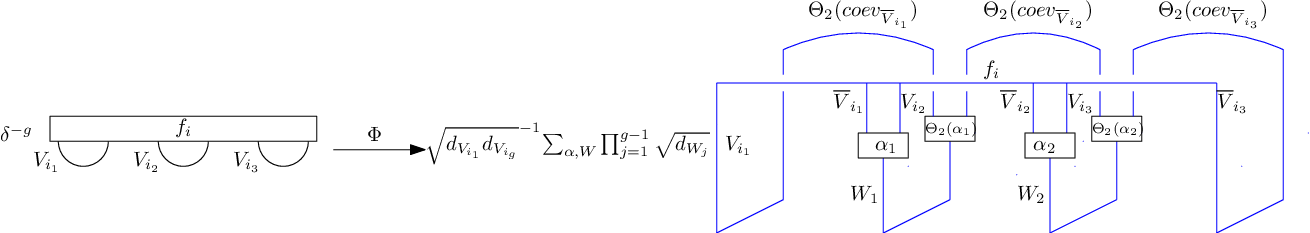}
\]
\begin{lem}\label{lem:samedim}
The dimensions of the two vector spaces are equal,
 \[
 \dim(\Psi_g)=\dim(Cf^1(g+1)).
 \]
\end{lem}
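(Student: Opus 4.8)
The plan is to compute both dimensions as a sum over labelings of the quantity $\prod N^{\bullet}_{\bullet\bullet}$, i.e. as a dimension count in terms of fusion coefficients, and to exhibit a bijection-preserving-multiplicities between the two index sets. On the Reshetikhin--Turaev side, $\dim(\Psi_g) = \sum_{i\in I^g} \dim\Hom(1,\bigotimes_{r=1}^g(V_{i_r}\otimes \overline V_{i_r}))$, and inserting a complete set of intermediate simple objects along the ``spine'' of the tensor product (the standard fusion-tree decomposition) writes each summand as a product of fusion multiplicities; summing over $i$ then gives $\dim(\Psi_g)$ as a sum over admissible labelings of a graph that looks like a chain of $g$ ``bubbles.'' On the configuration-space side, I would use the explicit orthonormal basis of $Cf^1(g+1)$ described in Section~4 (the vectors drawn in \texttt{CfBasis.eps}), which are indexed by a choice of simple objects $\overrightarrow{X}=(X_0,\dots,X_g)$ on the two layers together with admissible basis vectors $\alpha$ in the relevant $\Hom$ spaces; counting these is again a sum over admissible labelings of a product of fusion multiplicities.

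The key steps, in order: first, I would fix notation for the fusion-tree basis of $\Psi^i_g$ and write $\dim\Psi^i_g = \sum_{\vec U} \prod_{r} N^{U_r}_{U_{r-1},\, i_r\bar i_r\text{-channel}}$ (being careful about which ``bubble'' the pairs $V_{i_r}\otimes\overline V_{i_r}$ attach to); then sum over $i\in I^g$ to get a closed combinatorial expression for $\dim\Psi_g$. Second, I would read off from Definition~\ref{def:contaction_and_inclusion} and the basis picture in Section~4 the precise combinatorial description of the orthonormal basis of $Cf^1(g+1)=\Hom_{\CC^{\boxtimes 2}}(1,\gamma^{\otimes(g+1)})$ with all black strands trivial: unfolding $\gamma=\bigoplus_X X\boxtimes\bar X$ over each of the $g+1$ factors and decomposing the resulting $\Hom$ space gives another sum-of-products-of-$N$'s expression. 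Third, I would match the two expressions: the map $\Phi_g$ itself is written down explicitly right before the lemma, and examining its formula shows it sends the fusion-tree basis vector labeled by $(i,\vec U,\dots)$ to (a scalar multiple of) the configuration-space basis vector labeled by the ``doubled'' data, so the combinatorial index sets are manifestly in bijection and the corresponding multiplicity products agree term by term. That is really all the lemma asserts — equality of dimensions, not yet that $\Phi_g$ is an isomorphism — so establishing the term-by-term match of the two sums suffices.

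The main obstacle I anticipate is purely bookkeeping: getting the indexing of the fusion channels in $\Psi_g$ to line up exactly with the two-layer indexing of $Cf^1(g+1)$, since the RT state space is built from a linear chain of $g$ handles while the configuration space is built from $g+1$ copies of $\gamma$ glued in a circle-like fashion with a distinguished ``last'' factor identified with the ``first'' (recall $F(\ZZ)$ wraps $Z_{2n}:=Z_0$). Concretely one must check that the off-by-one ($g$ versus $g+1$) is absorbed correctly — e.g. that the extra $\gamma$-factor contributes exactly the sum over the ``outer'' simple object that closes the RT fusion tree, using sphericality/semisimplicity of $\CC$ — and that no spurious factors appear. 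Once the dictionary between the admissible-labeling sets is pinned down, the equality $\dim(\Psi_g)=\dim(Cf^1(g+1))$ follows immediately by comparing the two finite sums, and this also sets up the stronger claim (that $\Phi_g$ is an isomorphism intertwining the mapping class group actions) proved in the subsequent Theorem~\ref{thm:Rel_to_RT}.
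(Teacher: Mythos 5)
Your proposal is correct in spirit and identifies the right mechanism, but it takes a longer route than the paper does. You propose a full fusion-tree decomposition of both sides into sums of products of fusion multiplicities $N^\bullet_{\bullet\bullet}$, together with an appeal to $\Phi_g$ to match the two index sets term by term. The paper avoids all of that: it never introduces fusion trees or $\Phi_g$ in this lemma, and instead chains three standard $\Hom$-space identities. Writing $\dim\Psi^i_g=\dim\Hom(1,\bigotimes_{r}(V_{i_r}\otimes\overline{V}_{i_r}))$, it uses duality to rewrite this as $\dim\End(\bigotimes_r V_{i_r})$, applies semisimplicity \emph{once} to insert a single sum over a simple object $W$, namely $\sum_W\dim\Hom(\bigotimes_r V_{i_r},W)\cdot\dim\Hom(W,\bigotimes_r V_{i_r})$, and then uses duality again to put both factors back in the form $\Hom(1,\cdots)$. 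After summing over $i\in I^g$, the resulting expression is, after the relabeling $(W,\overline V_{i_1},\dots,\overline V_{i_g})\leftrightarrow(X_1,\dots,X_{g+1})$, \emph{literally} the defining expression for $\dim Cf^1(g+1)=\dim\Hom_{\CC^{\boxtimes2}}(1,\gamma^{\otimes(g+1)})$, with no term-by-term matching required. Your anticipated bookkeeping obstacle — that the extra $\gamma$-factor on the configuration-space side should supply exactly the summation over the ``outer'' simple object closing the RT fusion tree — is precisely correct, and in the paper that role is played by the single object $W$ introduced at the semisimplicity step. So what the paper's argument buys you over yours is economy: one semisimplicity cut instead of a full fusion-tree expansion, and no dependence on $\Phi_g$ (which is reserved for the subsequent Theorem~\ref{thm:Rel_to_RT}).
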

\begin{proof}
 $\dim(\Psi_g)=\sum_{i\in I^g}\dim(\Psi^i_g)$, and we have
 \[
 \begin{aligned}
  \dim(\Psi^i_g)&= \dim(\Hom(1,\bigotimes_{r=1}^g(V_{i_r}\otimes \overline{V}_{i_r})))\\
  &=\dim(\Hom(\bigotimes_{r=1}^g V_{i_r}, \bigotimes_{r=1}^g V_{i_r}))\\
  &=\sum_{W\in \Irr(\CC)}\dim (\Hom(\bigotimes_{r=1}^g V_{i_r},W))\dim (\Hom(W,\bigotimes_{r=1}^g V_{i_r}))\\
  &=\sum_{W\in \Irr(\CC)}\dim (\Hom(1,W\otimes\bigotimes_{r=1}^g \overline{V}_{i_r}))\dim (\Hom(1,\overline{W}\otimes \bigotimes_{r=1}^g V_{i_r}))
 \end{aligned} 
 \]
\end{proof}

\begin{Thm}\label{thm:Rel_to_RT}
 $\Phi_g$ is an (isometric) isomorphism of $\SMod(\Sigma_g)$ representations. Here we also denote the $\SMod(\Sigma_g)$ action on $\Psi_g$ by $T_j(0\leq j\leq 2g+1  )$ for simplicity.
\end{Thm}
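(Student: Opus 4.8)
The plan is to show that $\Phi_g$ is both an isometry and a morphism of $\SMod(\Sigma_g)$-representations, after which bijectivity is automatic from Lemma~\ref{lem:samedim} (equal dimensions) together with injectivity of an isometry. First I would verify that $\Phi_g$ is well-defined and isometric: the target vectors in the definition of $\Phi_g$ are, up to the $\Theta_2$-bending, exactly the standard orthonormal-type basis vectors of $Cf^1(g+1)$ built from trivalent trees, so computing $\langle \Phi_g(f_i),\Phi_g(f_j)\rangle$ reduces, via Lemma~\ref{lem:Theta_2_lemma} and Remark~\ref{rmk:graphic_cal_exp}, to evaluating a diagram in $\CC$ that splits into the $\Hom(1,\bigotimes(V_{i_r}\otimes\overline V_{i_r}))$ pairing on the $f$-part times orthogonality relations on the tree-basis part; choosing the $\alpha$'s to be an $ONB$ makes this collapse to $\langle f_i,f_j\rangle$ with the correct normalization. (One must track the $\sqrt{d}$ factors and the $\delta$-powers baked into the configuration-space inner product, but this is the same bookkeeping already done in Sections~4--5.)

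Next I would prove intertwining of the generators. By \cite[Thm.~8]{BH71} (cited after Corollary~\ref{cor:SMod_rep}) the group $\SMod(\Sigma_g)$ is generated by the elements whose images under the $Cf^1(g+1)$-representation are the $T_j$, $0\le j\le 2g+1$, together with the relations recorded in Theorem~\ref{thm:relations}; correspondingly the Reshetikhin--Turaev $\SMod(\Sigma_g)$-action on $\Psi_g$ is generated by Dehn twists along the standard curves, which in the skein-theoretic picture of \cite[Chapter~IV]{Tur10} act by inserting twists $\theta^{-1}_V$ on the tree edges and by the genus-$1$ $S,T$ moves on consecutive pairs. I would check, one generator at a time, that $\Phi_g\circ T_j^{\Psi}=T_j^{Cf}\circ\Phi_g$: for the ``even'' twists $T_{2k}$ this is immediate since both sides multiply the $W_r$-edge (resp.\ $V_{i_r}$-edge) by $\theta^{-1}$; for the ``$S$-type'' generators it follows from the identification (Theorem~\ref{thm_flatness}, Theorem~\ref{Thm:isomety_square_to_rot}) of $\FF=F_+$ with the genus-$1$ Fourier transform/$S$-matrix acting on the relevant local $2$-box, applied inside the tree; the hyperelliptic/$\rho_2$ generator matches the RT charge-conjugation move by Lemma~\ref{lem:rel_rho_2andPA_action}. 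Since these generators and relations are the defining presentation on both sides, an intertwiner on generators is an intertwiner of representations.

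The main obstacle I anticipate is the precise matching of the \emph{local} moves: the RT mapping-class-group action is usually presented via surgery/Dehn-twist formulas on a handlebody, whereas our $T_j$ are braiding morphisms in the planar algebra $\PP$, so one must fix an explicit homeomorphism between the genus-$g$ surface with its standard curve system and the ``doubled'' picture underlying $Cf^1(g+1)$ (the $g$ pairs $(V_{i_r},\overline V_{i_r})$ versus the $2g+1$ strands of a $(g+1)$-box), and then confirm that Dehn twists along the chosen $a$-, $b$-, and $c$-curves go to the correct $T_j$ up to the same scalar on both sides. Getting the scalars to agree exactly — rather than merely projectively — will require care with the square-root-of-twist conventions ($\theta^{1/2}$, the diamond symbols of $(R7)$) and with the global $\eta$-factors appearing in Lemma~\ref{lem:braiding&Fourier_comp} and Corollary~\ref{Cor:full_comp}; I expect that with the normalization already adopted in Theorem~\ref{thm:relations} these match on the nose, but this verification is the crux of the argument. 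Everything else is routine graphical calculus of the kind carried out in Sections~4--6.
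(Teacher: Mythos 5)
Your proposal follows essentially the same strategy as the paper: (i) establish that $\Phi_g$ is an isometry by a graphical inner-product computation using Lemma~\ref{lem:Theta_2_lemma} and Remark~\ref{rmk:graphic_cal_exp}, (ii) deduce bijectivity from Lemma~\ref{lem:samedim}, and (iii) verify that $\Phi_g$ intertwines the generators $T_j$ one at a time, the even $T_{2k}$ being immediate. The paper carries out (iii) for the odd $T_{2k+1}$ by direct graphical calculus, whereas you propose to deduce it from the Fourier-transform conjugation $\FF T_{k+1}\FF^{-1}=\eta^{\pm1}T_k$ (Corollary~\ref{Cor:full_comp}); that route is viable but incomplete as stated, since it requires first showing that $\Phi_g$ itself intertwines $\FF$, a step you do not supply. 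Two smaller points: in the presentation from \cite{BH71} the generators of $\SMod(\Sigma_g)$ are exactly the half-twists $T_j$, so the ``$S$-type generators'' and the hyperelliptic involution you single out are not additional generators to check --- they are words in the $T_j$ (indeed $\rho_2=F_-^{-1}F_+$ and $F_\pm$ are given by Corollary~\ref{Cor:F_braiding_rels}); and the scalar-matching worry you flag as the crux is in fact already absorbed into the normalizations fixed for $\phi_k$, $\iota_k$, $T_k$ and $\theta^{1/2}$ in Sections~4--5, so no extra tuning is needed beyond tracking the same $\delta$- and $\sqrt d$-powers as in the isometry computation. Apart from these caveats the plan is correct and matches the paper's proof in substance.
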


\begin{proof}
 We first prove $\Phi_g$ is an (isometric) isomorphism using the non-degenerated bilinear form (or inner product in the unitary case) for these two spaces.
\[
\includegraphics[width=400pt]{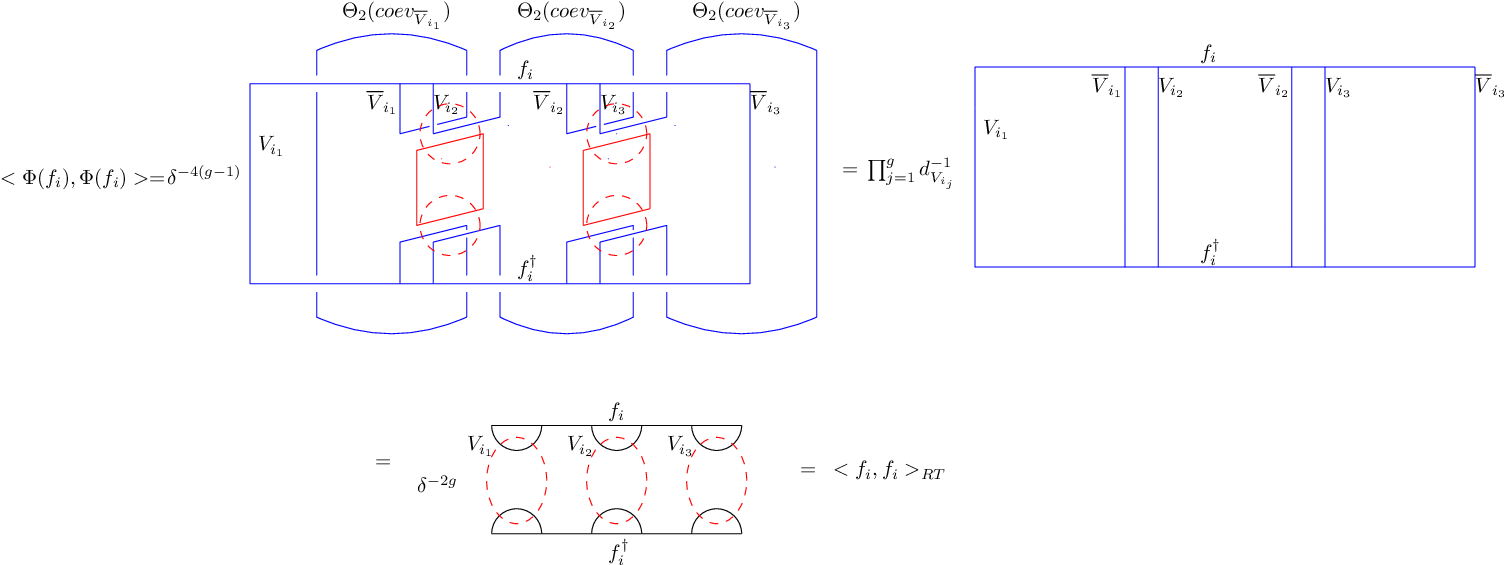}
\]
Now it follows from Lemma \ref{lem:samedim}.
Then we prove they are $\SMod(\Sigma_g)$-equivariant.
\[
\includegraphics[width=300pt]{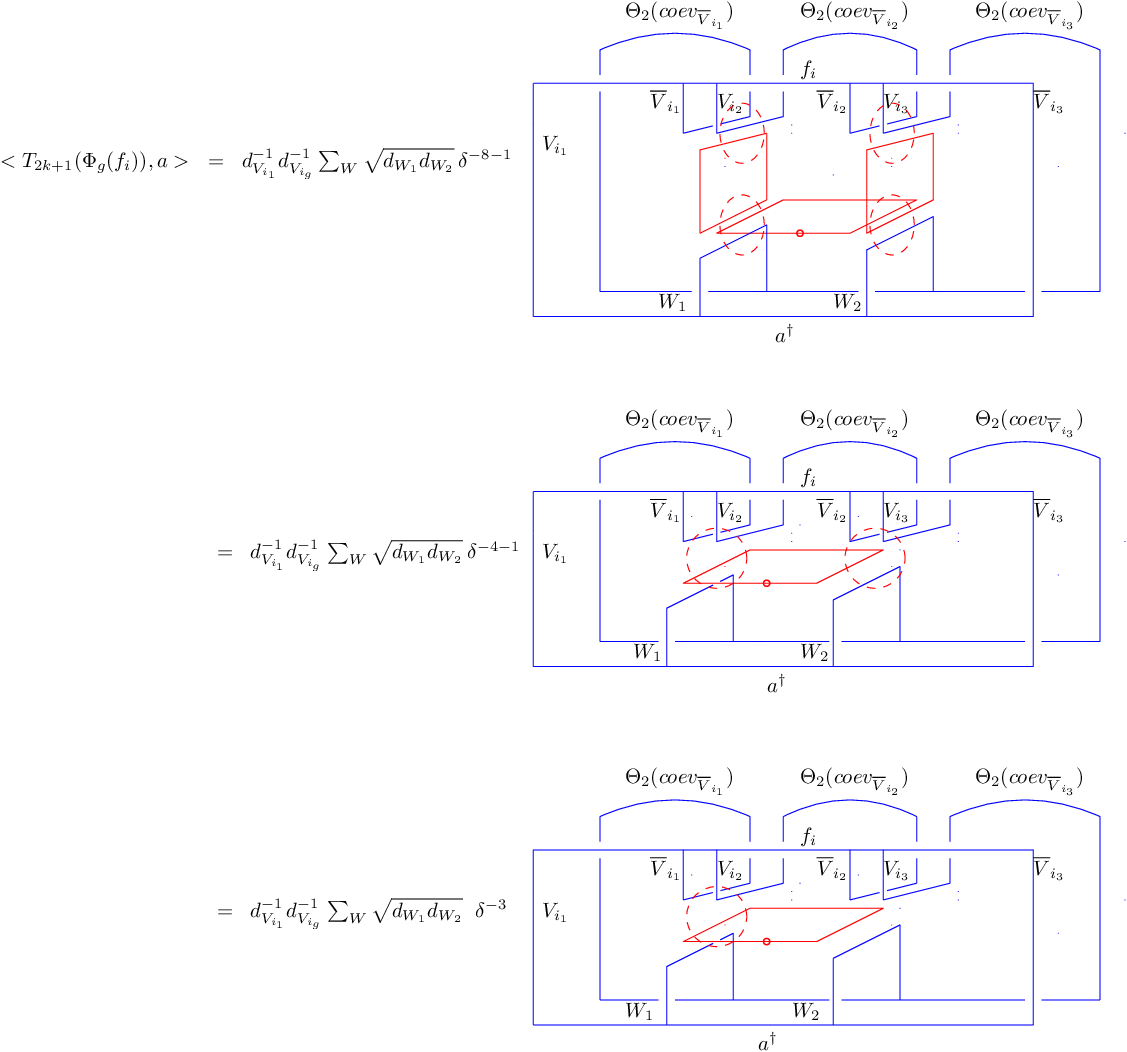}
\]
\[
\includegraphics[width=325pt]{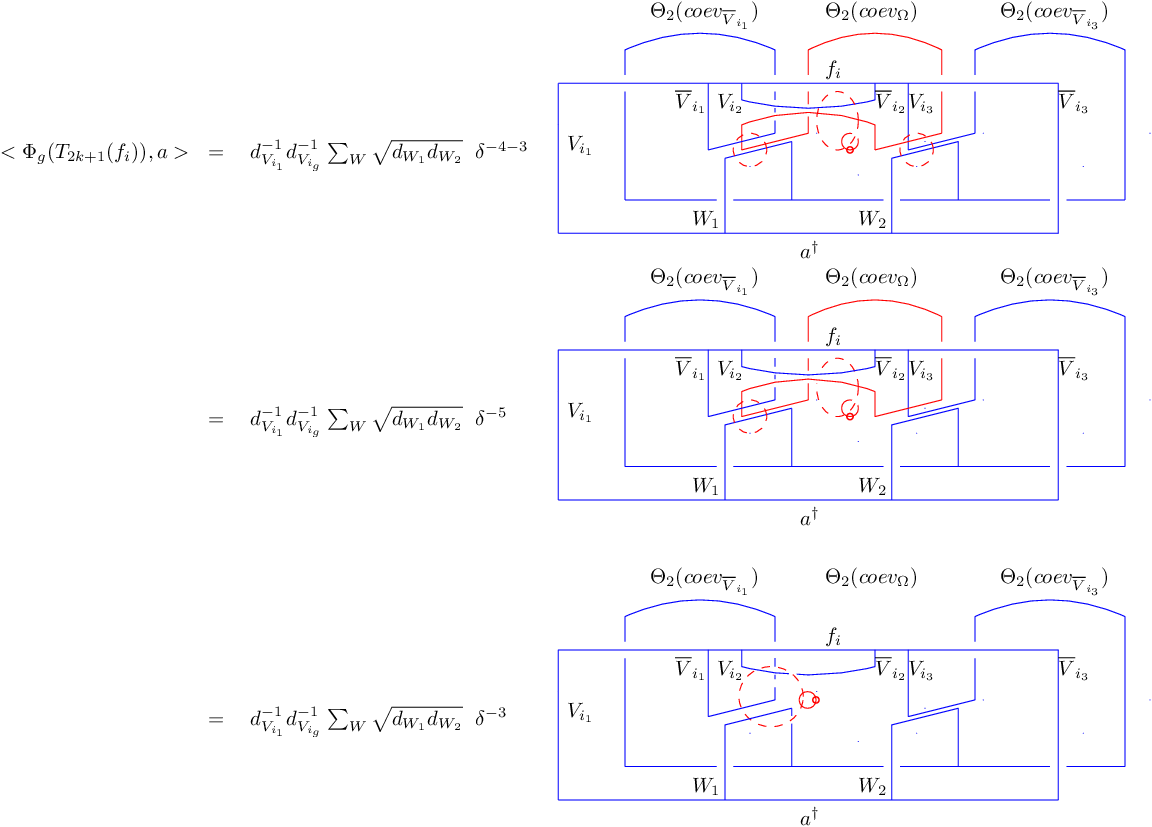}
\]
The equivalence of $T_{2k}(0\leq k\leq g )$ action is straightforward, hence the theorem is proved.
\end{proof}
\begin{rmk}
 The equivalence is easier to see when working with Temperley-Lieb-Jones modular categories, see \cite{Ruan22} for more calculations and formulas of the similar Fourier pairing in this basis.  
\end{rmk}

\begin{rmk}
 It will be interesting to see the relations with the general orbifold constructions, developed in \cite{CMRSS24, CRS19, CRS20}, for (defect) TQFTs.
 
\end{rmk}
\subsection{On Property F conjecture}
A conjecture of Naidu and Rowell (known as Property $F$ conjecture) states as follows
\begin{conj}(\cite{NR11})
The braid group representations associated with any object in weakly integral braided fusion categories have finite images. (The property that the braid group image is finite is called property $F$)
\end{conj}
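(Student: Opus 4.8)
The conjecture is open in full generality, and I do not expect the present methods to reach arbitrary weakly integral braided categories; what I would aim for with the machinery of this paper is the weakly group-theoretical case. The plan has three steps: first reduce Property $F$ for a braided category to Property $F$ for a modular one via the Drinfeld center; second, realize the braid group representations attached to objects of a weakly group-theoretical modular category inside the symmetric mapping class group representations of closed surfaces produced in Corollary \ref{cor:SMod_rep}; third, invoke Theorem \ref{thm:ProF}.

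For the first step I would use the fully faithful braided functor $\mcB\hookrightarrow\mathcal{Z}(\mcB)$, $X\mapsto(X,c_{X,-})$, from a braided fusion category into its Drinfeld center; fullness is immediate from naturality of the braiding, and the functor carries the braiding of $\mcB$ to the braiding of $\mathcal{Z}(\mcB)$ up to replacing it by its reverse, which does not change the image of any braid group representation. Since $\mathcal{Z}(\mcB)$ is always modular and, by \cite{ENO10}, weakly group-theoretical precisely when $\mcB$ is, Property $F$ for weakly group-theoretical modular categories implies Property $F$ for all weakly group-theoretical braided categories. So I may assume $\CC$ is a weakly group-theoretical modular category.

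For the second step I would first reduce, using functoriality of the braiding, to the regular object $R=\bigoplus_{V\in\Irr(\CC)}V$: any object $X$ embeds in a direct sum of copies of $R$, the resulting inclusion $X^{\otimes n}\hookrightarrow(R^{\oplus k})^{\otimes n}$ intertwines the braid generators by naturality of $c$, and since each braiding operator $\rho(\sigma_i)$ squares to a scalar on every isotypic component (by the ribbon identity $c_{Y,X}c_{X,Y}$ acting as $\theta_Z\theta_X^{-1}\theta_Y^{-1}$) hence is semisimple with roots-of-unity eigenvalues and of finite order, finiteness of the braid group image on the spaces $\Hom_\CC(Y,R^{\otimes n})$ propagates to finiteness on every $\Hom_\CC(Y,X^{\otimes n})$. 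Next I would realize the $R$-representations inside the representations of Corollary \ref{cor:SMod_rep}: the symmetric mapping class group $\SMod(\Sigma_g)$ contains a braid subgroup (via the hyperelliptic, i.e.\ Birman--Hilden, correspondence for the double branched cover $\Sigma_g\to S^2$), and by Proposition \ref{braiding_squareroot} the ordinary braiding of $\CC\boxtimes\CC$ is a product of the crossed braidings $T_k$, so the braid group representations built from $\gamma=\bigoplus_X X\boxtimes\bar X$ (and, by varying the black-strand labels, those attached to $R$) occur as subrepresentations of the restriction to that braid subgroup of the $\SMod(\Sigma_g)$-representation $Cf^{V}(g+1)$.

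The third step is then immediate: by Theorem \ref{thm:ProF} the $\SMod(\Sigma_g)$-representation $Cf^{V}(g+1)$ has finite image, and restriction to a subgroup and passage to a subrepresentation preserve finiteness, so every braid group representation coming from an object of $\CC$ has finite image, proving Property $F$ for $\CC$ and hence for all weakly group-theoretical braided fusion categories. The main obstacle is the hypothesis itself: both Theorem \ref{thm:ProF} and the Drinfeld-center reduction genuinely use weak group-theoreticity, so this argument establishes the Naidu--Rowell conjecture only for weakly group-theoretical categories and says nothing about the potentially larger class of weakly integral ones. A secondary, technical obstacle is the bookkeeping in the second step: pinning down precisely which braid subgroup of $\SMod(\Sigma_g)$ one uses, which subrepresentation of $Cf^{V}(g+1)$ it acts on, and matching it with the representation on $\Hom_\CC(Y,R^{\otimes n})$ via the explicit map $\Phi_g$ of Theorem \ref{thm:Rel_to_RT} and standard stabilization of punctured-sphere mapping class groups.
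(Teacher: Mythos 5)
You have mis-located the logical status of this statement. In the paper it is stated as a \emph{conjecture} attributed to Naidu--Rowell, not a theorem, and the paper offers no proof of it. Immediately after the conjecture the authors record where it \emph{has} been established in the literature, and in particular cite \cite{GN21} for precisely the weakly group-theoretical case that you aim to reprove. So there is no proof in the paper for you to compare against; at best you are proposing an independent re-derivation of the GN21 result.

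The re-derivation you propose, however, is circular. Your third step invokes Theorem \ref{thm:ProF}, which in the paper is proved (as Corollary 8.7 of the final section) by the opposite chain of implications: one takes a weakly group-theoretical $\CC$, observes that its $\Zn2$ permutation gauging $\DD$ is again weakly group-theoretical by \cite{ENO11}, \emph{then invokes \cite{GN21} to conclude that $\DD$ has Property $F$}, and finally transports the resulting finiteness of the braid group image on $\Hom_{\DD}((1,+)\oplus(1,-),(\hat 1,+)^{\otimes 2n})$ through Theorem \ref{thm:Rel_to_RT} to obtain finiteness of the $\SMod(\Sigma_{n-1})$-image. In other words, the paper proves ``$\SMod$ has finite image'' \emph{from} ``weakly group-theoretical $\Rightarrow$ Property $F$,'' whereas you are attempting to prove ``weakly group-theoretical $\Rightarrow$ Property $F$'' \emph{from} ``$\SMod$ has finite image.'' You cannot use the conclusion of the argument to re-derive its premise.

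There is also a directional mismatch in your second step: the twisted/untwisted correspondence in this paper (Table \ref{tab:t/unt}, Theorem \ref{thm:Rel_to_RT}, Proposition \ref{braiding_squareroot}) relates braid group representations of the \emph{gauged} category $\DD$ to mapping class group representations of the \emph{original} category $\CC$; it does not provide an embedding of the braid group representations of $\CC$ itself into $\SMod(\Sigma_g)$-representations of $\CC$. The fact that the ordinary braiding of $\CC\boxtimes\CC$ is a product of two crossed braidings $T_k$ gives you a subgroup of the image, but realizing the braid representations attached to the regular object $R$ of $\CC$ (as opposed to the odd-graded object $\hat 1$ of $\DD$) as subrepresentations of $Cf^{V}(g+1)$ is exactly the piece you flag as ``bookkeeping'' and it is not supplied by any statement in the paper. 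Even if that gap were filled, the circularity in step three would remain.

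The honest assessment is that the statement is and remains a conjecture; the paper's contribution in this direction is the \emph{converse-flavored} Theorem \ref{thm:ProF}, proved by appealing to the already-known weakly group-theoretical case of Property $F$, not a proof of Property $F$ itself.
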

the conjecture is verified for metaplectic categories \cite{RW17, GRR20}, and for general weakly group-theoretical categories \cite{GN21}. Since gauging preserves integrality. We ask the following related question,
\begin{ques}\label{ques:propertyF}
Does $\Zn2$ permutation gauging preserve property $F$?    
\end{ques}
Our detailed construction has the following indication, the braid group representation associated with object $\hat{1}$ is the same as the symmetric mapping class group representation (Corollary \ref{cor:SMod_rep} and Theorem \ref{thm:Rel_to_RT}) of the original category. In particular, the space $\Hom_{D^{\Zn2}}((1,+)\oplus (1.-), (\hat{1},+)^{\otimes 2n})$ is naturally isomorphic to $Cf^1(n)$. Hence the braid group representation is equivalent (as we work in the same planar algebra). Now the question \ref{ques:propertyF} leads to the following one,

\begin{ques}
  Given a modular category $\CC$, if $\CC$ has property $F$, does symmetric mapping class group representation (described in Corollary \ref{cor:SMod_rep}) associated with $\CC$ have a finite image?  
\end{ques}
Since the property being weakly group-theoretical is invariant under gauging \cite{ENO11}, together with the results of \cite{GN21}, we can obtain the following result,
\begin{Cor}
The symmetric mapping class group representation of a weakly group-theoretical category has a finite image.    
\end{Cor}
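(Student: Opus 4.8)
The plan is to deduce finiteness of the symmetric mapping class group representations from Property $F$ for the gauged category $\DD^{\Zn2}$, using the theorem of \cite{GN21} that weakly group-theoretical braided fusion categories have Property $F$. First I would fix a weakly group-theoretical unitary modular category $\CC$, form the $\Zn2$ permutation extension $\DD$ of Theorem \ref{cat_of_Z_2-extension} and its equivariantization $\DD^{\Zn2}$ (Section 7, Theorem \ref{thm:S_data}). The trivial graded component of $\DD$ is braided equivalent to $\CC\boxtimes\CC$, which is weakly group-theoretical since $\CC$ is; as the class of weakly group-theoretical fusion categories is stable under $G$-crossed extensions and under equivariantization \cite{ENO11}, the modular category $\DD^{\Zn2}$ is weakly group-theoretical. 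Hence \cite{GN21} applies: for every object $X$ of $\DD^{\Zn2}$ and every $m$, the braid group $B_m$ acts on $\Hom_{\DD^{\Zn2}}(1,X^{\otimes m})$ through a finite group.

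Next I would specialize to $X=(\hat 1,+)$ and $m=2n$. Since $\DD$ and $\DD^{\Zn2}$ are built inside the planar algebra $\PP$, the multiplicity space $\Hom_{\DD^{\Zn2}}\big((1,+)\oplus(1,-),(\hat 1,+)^{\otimes 2n}\big)$ is canonically $Cf^{1}(n)$, with $B_{2n}$ acting through the crossed braidings of $\DD$, i.e.\ through the rescaled operators $\sigma_i\mapsto\tilde T_i:=\eta^{(-1)^i/2}T_i$; by the previous step this linear action has finite image. On the other hand, by Corollary \ref{cor:SMod_rep} the same space $Cf^1(n)$ carries the projective $\SMod(\Sigma_{n-1})$ representation, and (this is implicit in the proof of that corollary, via the presentation of $\SMod(\Sigma_{n-1})$ in \cite[Thm.~8]{BH71} and the relations of Theorem \ref{thm:relations}, noting that with all $Z_i$ trivial the only scalar occurring there is the power of $\eta$ in the braid relation) the generators of $\SMod(\Sigma_{n-1})$ act through $T_0,\dots,T_{2n-2}$. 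Therefore the image of $\SMod(\Sigma_{n-1})$ in $\PGL(Cf^1(n))$ is generated by the classes $[T_i]=[\tilde T_i]$, so it coincides with the projective image of $B_{2n}$ and is finite. Since $\eta$ is a root of unity (its square is the Gauss sum ratio $p^+/p^-$ of $\CC$), one moreover obtains a finite linear image after passing to the evident finite central extension of $\SMod(\Sigma_{n-1})$. As the genus $g=n-1$ is arbitrary, this yields the Corollary, which is Theorem \ref{thm:ProF}.

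The argument is largely an assembly of earlier results, so the main obstacle is the one point that does require care: transferring finiteness from the braid group to the mapping class group. One must check that the projective $\SMod(\Sigma_{n-1})$ representation on $Cf^1(n)$ is indeed generated by the operators $T_i$ that define the braid action, i.e.\ that the relations of Theorem \ref{thm:relations} reproduce the presentation of $\SMod(\Sigma_{n-1})$ in \cite[Thm.~8]{BH71} up to the $\eta$-cocycle, and that all scalar ambiguities ($\eta^{1/2}$, and the $\theta_{Z_i}^{1/2}$, which are trivial here) are roots of unity, so that the projective conclusion upgrades to a linear one. The genuinely external inputs enter as black boxes: the stability of weak group-theoreticity under gauging \cite{ENO11}, Property $F$ for weakly group-theoretical categories \cite{GN21}, and the planar-algebra identification of the two representations on $Cf^1(n)$, which is immediate since both live in $\PP$.
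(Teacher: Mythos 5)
Your proof is correct and follows essentially the same route as the paper: use \cite{ENO11} to conclude $\DD^{\Zn2}$ is weakly group-theoretical, apply \cite{GN21} to get Property $F$, specialize to $\Hom_{\DD^{\Zn2}}((1,+)\oplus(1,-),(\hat 1,+)^{\otimes 2n})\cong Cf^1(n)$, and transfer finiteness from the braid group to $\SMod(\Sigma_{n-1})$ via the identification of the two actions (Corollary~\ref{cor:SMod_rep} and Theorem~\ref{thm:Rel_to_RT}). Your added care about the projective/linear distinction and the observation that $\eta$ is a root of unity are appropriate refinements of the same argument.
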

\begin{proof}
Suppose the modular category $\CC$ is weakly group-theoretical, then the category $\DD=(\CC\boxtimes \CC)^{\Zn2}_{\times, \Zn2}$ of its $\Zn2$ permutation gauging is again weakly group-theoretical \cite{ENO11}, hence has property $F$ by \cite{GN21}. Therefore we have the braid group image obtained from the
$\Hom_{\DD}((1,+)\oplus (1.-), (\hat{1},+)^{\otimes 2n})$ is finite which is equivalent to the finite image of the $\SMod(\Sigma_{n-1})$ representation by previous argument and Theorem \ref{thm:Rel_to_RT}.
\end{proof}
Moreover, our symmetric mapping class group representation associated with $\CC(\Zn {(2r+1)},q)$ is equivalent (again by Theorem \ref{thm:Rel_to_RT}) to ones calculated in \cite{GN21} and \cite{BWrep}, hence the image is finite, as a direct consequence, we have 

\begin{Cor}\cite{GRR20}\label{cor:meta_prop_f}
$SO(N)_2$ for $N=2r+1$ have property $F$. 
\end{Cor}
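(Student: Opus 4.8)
The plan is to deduce this immediately as a special case of the preceding corollary (the $\SMod(\Sigma)$-finiteness for weakly group-theoretical modular categories), once we identify the braid group representations associated to the metaplectic categories $SO(N)_2$ with symmetric mapping class group representations of a suitable weakly group-theoretical input category. First I would recall from the Example preceding Theorem \ref{thm:mainthm4} and from the Corollary identifying $SO(N)_2$ with a subcategory of a $\Zn2$ permutation gauging that $SO(N)_2$ (for $N=2r+1$) is equivalent to the subcategory of a $\Zn2$ permutation gauging of the pointed modular category $\CC(\Zn N, q)$ generated by the object $(\hat 1, +)$. Since $\CC(\Zn N, q)$ is pointed, hence weakly group-theoretical (indeed group-theoretical), its $\Zn2$ permutation gauging $\DD = (\CC(\Zn N,q)\boxtimes \CC(\Zn N,q))^{\times,\Zn2}_{\Zn2}$ is again weakly group-theoretical by \cite{ENO11}, and therefore has property $F$ by \cite{GN21}.

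The second step is to transport finiteness from the whole gauged category $\DD$ down to the subcategory generated by $(\hat 1,+)$: braid group representations attached to any object of a category with property $F$ have finite image, and in particular so do those attached to $(\hat 1,+)^{\otimes 2n}$. By the discussion following Question \ref{ques:propertyF}, the space $\Hom_{\DD^{\Zn2}}\big((1,+)\oplus(1,-), (\hat 1,+)^{\otimes 2n}\big)$ is naturally isomorphic, as a braid group representation, to the configuration space $Cf^1(n)$, because both are computed inside the same planar algebra $\PP$. Invoking Theorem \ref{thm:Rel_to_RT}, this $Cf^1(n)$ representation is in turn equivalent, as an $\SMod(\Sigma_{n-1})$-representation, to the Reshetikhin--Turaev state space representation of $\CC(\Zn N, q)$ on the genus-$(n-1)$ surface. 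Comparing this with the computations of the metaplectic (and more generally weakly group-theoretical) mapping class group representations in \cite{GN21} and \cite{BWrep}, one checks that the resulting $\SMod(\Sigma)$-representations coincide with those whose finiteness we have just established; hence the images are finite.

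The third step is purely bookkeeping: property $F$ for $SO(N)_2$ is by definition the statement that all braid group representations attached to objects of $SO(N)_2$ have finite image, and since $SO(N)_2$ is (equivalent to) a full subcategory of $\DD^{\Zn2}$ closed under the relevant structure, every such braid group representation is a subrepresentation of one attached to $\DD^{\Zn2}$, and finiteness descends to subrepresentations. Combining the three steps yields the corollary, recovering the result of \cite{GRR20}.

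I expect the main obstacle to be the second step --- precisely matching our $\SMod(\Sigma)$-representation of $\CC(\Zn N,q)$ (via Theorem \ref{thm:Rel_to_RT} and the planar-algebraic identification $Cf^1(n)\cong \Hom_{\DD^{\Zn2}}((1,+)\oplus(1,-),(\hat 1,+)^{\otimes 2n})$) with the metaplectic mapping class group representations appearing in \cite{GN21, BWrep}, since these are presented in different models and one must track the normalizations (the choice of square root of $\mu$, the factor $\eta$, and the $\theta^{1/2}$ conventions from Definition \ref{def:braiding}) carefully to see that the images agree; the finiteness itself, once the identification is in place, is immediate from \cite{GN21}.
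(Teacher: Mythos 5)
Your proposal is correct and follows essentially the same route as the paper. The paper's proof is quite terse: it observes that $SO(N)_2$ is generated by $(\hat 1,+)$, identifies $\Hom_{SO(N)_2}((1,+), (\hat 1,+)^{\otimes 2n})$ as a braid-invariant subspace of $\Hom((1,+)\oplus(1,-), (\hat 1,+)^{\otimes 2n}) \cong Cf^1(n)$, and invokes the finiteness established just before (via the preceding Corollary for weakly group-theoretical $\CC$ and via comparison with \cite{GN21, BWrep}). You unfold those same ingredients --- $\CC(\Zn N,q)$ pointed hence weakly group-theoretical, gauging preserves this by \cite{ENO11}, so $\DD^{\Zn2}$ has property $F$ by \cite{GN21}, and $SO(N)_2$ inherits it as a full tensor subcategory generated by $(\hat 1,+)$ --- which is exactly the content of the preceding Corollary plus the paper's subspace observation. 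One small remark: your concern in the last paragraph about carefully matching normalizations against \cite{GN21, BWrep} is unnecessary for the conclusion, since the weakly group-theoretical route already gives finiteness without needing the representations to literally coincide with those in the cited works; the paper mentions the comparison as an interpretive remark, not as a load-bearing step.
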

\begin{proof}
It is clear that $SO(N)_2$ is generated by the object $(\hat{1},+)$, hence it suffices to consider the braid group representation on  $\Hom_{SO(N)_2}((1,+), (\hat{1},+)^{\otimes 2n})$, which is an invariant subspace of $\Hom_{SO(N)_2}((1,+)\oplus (1,-), (\hat{1},+)^{\otimes 2n})$. And the latter space is naturally isomorphic to the space $Cf^1(n)$.
\end{proof}

\subsection{On Cyclic Permutation Gauging}
In \cite{DRX21} and \cite{DXY22}, the formula of the $S$-matrix for the $\Zn n$ permutation orbifold is given in the setting of vertex operator algebras. We ask the following natural question.
\begin{ques}
Can our techniques be generalized to the case of $\Zn n$ permutation gauging?   
\end{ques}
We believe the general idea is the same, and one needs to consider $n$-layer generalized configuration space instead.  Hence the main difficulty here will be defining all these delicate structures, in particular, the braiding structures.

\subsection{On Quantum Information}

If we take modular category $\CC=\CC(\Zn d,q)$, the $\Zn2$ permutation extension $\DD$ has simple objects of the form $(j_1, j_2)$, $\hat{j}$ where $0\leq j,j_1,j_2\leq d-1$ (see Theorem \ref{cat_of_Z_2-extension}). 
Let $q(j)=e^{\frac{2\pi ij^2}{d}}$.
Its planar subalgebra, generated by the object $\tau=\hat{1}$, along with its braiding, corresponds to the parafermion planar para algebras introduced in \cite{JL17}. The charge can be interpreted as an object connecting a local $\tau$-color string and a $\tau$-color at infinity on the left, which is given by objects $(j,\bar{j}=d-j)$ in our case.

For example, the square root of phase in \cite[Proposition~2.15]{JL17} is given by the normalization factor $\eta^{\frac{1}{2}}$ in our case, also see \cite[Section~8]{JL17} for the corresponding braiding structure and $\Zn2$ action. Moreover, the image group of braid group representation is calculated in \cite[Proposition~9.1]{JL17}, which can also be applied to prove Corollary \ref{cor:meta_prop_f}.

This parafermion planar para algebras has been used to construct the Quon language for qudits to study quantum information \cite{LWJ17}. 
It will be interesting to extend the Quon language using the full $\Zn2$ twisted category $D$ as string-net on the surface, together with the recent development of the $2+1$ alterfold theory \cite{LMWW23a,LMWW23b}.

\bibliographystyle{abbrv}
\bibliography{Reference}

\begin{thebibliography}{10}

\bibitem{ACRW16}
E.~Ardonne, M.~Cheng, E.~C. Rowell, and Z.~Wang.
\newblock Classification of metaplectic modular categories.
\newblock {\em J. Algebra}, 466:141--146, 2016.

\bibitem{BK01}
B.~Bakalov and A.~Kirillov, Jr.
\newblock {\em Lectures on tensor categories and modular functors}, volume~21
  of {\em University Lecture Series}.
\newblock American Mathematical Society, Providence, RI, 2001.

\bibitem{Bantay98}
P.~Bantay.
\newblock Characters and modular properties of permutation orbifolds.
\newblock {\em Phys. Lett. B}, 419(1-4):175--178, 1998.

\bibitem{Bantay02}
P.~Bantay.
\newblock Permutation orbifolds.
\newblock {\em Nuclear Phys. B}, 633(3):365--378, 2002.

\bibitem{BBCW19}
M.~Barkeshli, P.~Bonderson, M.~Cheng, and Z.~Wang.
\newblock Symmetry fractionalization, defects, and gauging of topological
  phases.
\newblock {\em Phys. Rev. B}, 100:115147, Sep 2019.

\bibitem{BM10}
M.~Barkeshli and X.-G. Wen.
\newblock
  $u(1)\ifmmode\times\else\texttimes\fi{}u(1)\ensuremath{\rtimes}{Z}_{2}$
  chern-simons theory and ${Z}_{4}$ parafermion fractional quantum hall states.
\newblock {\em Phys. Rev. B}, 81:045323, Jan 2010.

\bibitem{BS11}
T.~Barmeier and C.~Schweigert.
\newblock A geometric construction for permutation equivariant categories from
  modular functors.
\newblock {\em Transform. Groups}, 16(2):287--337, 2011.

\bibitem{BH71}
J.~S. Birman and H.~M. Hilden.
\newblock {\em On the mapping class groups of closed surfaces as covering
  spaces}.
\newblock Ann. of Math. Stud., No. 66. Princeton Univ. Press, Princeton, NJ,
  1971.

\bibitem{BH73}
J.~S. Birman and H.~M. Hilden.
\newblock On isotopies of homeomorphisms of {R}iemann surfaces.
\newblock {\em Ann. of Math. (2)}, 97:424--439, 1973.

\bibitem{BWrep}
W.~Bloomquist and Z.~Wang.
\newblock On topological quantum computing with mapping class group
  representations.
\newblock {\em J. Phys. A}, 52(1):015301, 23, 2019.

\bibitem{BHS98}
L.~Borisov, M.~B. Halpern, and C.~Schweigert.
\newblock Systematic approach to cyclic orbifolds.
\newblock {\em Internat. J. Modern Phys. A}, 13(1):125--168, 1998.

\bibitem{BHP12}
A.~Brothier, M.~Hartglass, and D.~Penneys.
\newblock Rigid {$C^*$}-tensor categories of bimodules over interpolated free
  group factors.
\newblock {\em J. Math. Phys.}, 53(12):123525, 43, 2012.

\bibitem{BB19}
D.~Bulmash and M.~Barkeshli.
\newblock Gauging fractons: Immobile non-abelian quasiparticles, fractals, and
  position-dependent degeneracies.
\newblock {\em Phys. Rev. B}, 100:155146, Oct 2019.

\bibitem{BN13}
S.~Burciu and S.~Natale.
\newblock Fusion rules of equivariantizations of fusion categories.
\newblock {\em J. Math. Phys.}, 54(1):013511, 21, 2013.

\bibitem{CMRSS24}
N.~Carqueville, V.~Mulevi\v{c}ius, I.~Runkel, G.~Schaumann, and D.~Scherl.
\newblock Reshetikhin--{T}uraev {TQFT}s {C}lose {U}nder {G}eneralised
  {O}rbifolds.
\newblock {\em Comm. Math. Phys.}, 405(10):Paper No. 242, 2024.

\bibitem{CRS19}
N.~Carqueville, I.~Runkel, and G.~Schaumann.
\newblock Orbifolds of {$n$}-dimensional defect {TQFT}s.
\newblock {\em Geom. Topol.}, 23(2):781--864, 2019.

\bibitem{CRS20}
N.~Carqueville, I.~Runkel, and G.~Schaumann.
\newblock Orbifolds of {R}eshetikhin-{T}uraev {TQFT}s.
\newblock {\em Theory Appl. Categ.}, 35:Paper No. 15, 513--561, 2020.

\bibitem{CGPW16}
S.~X. Cui, C.~Galindo, J.~Y. Plavnik, and Z.~Wang.
\newblock On gauging symmetry of modular categories.
\newblock {\em Comm. Math. Phys.}, 348(3):1043--1064, 2016.

\bibitem{DRX21}
C.~Dong, L.~Ren, and F.~Xu.
\newblock {$S$}-matrix in orbifold theory.
\newblock {\em J. Algebra}, 568:139--159, 2021.

\bibitem{DXY22}
C.~Dong, F.~Xu, and N.~Yu.
\newblock {$S$}-matrix in permutation orbifolds.
\newblock {\em J. Algebra}, 606:851--876, 2022.

\bibitem{DGNO}
V.~Drinfeld, S.~Gelaki, D.~Nikshych, and V.~Ostrik.
\newblock On braided fusion categories. {I}.
\newblock {\em Selecta Math. (N.S.)}, 16(1):1--119, 2010.

\bibitem{EJP19}
C.~Edie-Michell, C.~Jones, and J.~Y. Plavnik.
\newblock Fusion rules for {$\Bbb{Z}/2\Bbb{Z}$} permutation gauging.
\newblock {\em J. Math. Phys.}, 60(10):102302, 15, 2019.

\bibitem{EGNO}
P.~Etingof, S.~Gelaki, D.~Nikshych, and V.~Ostrik.
\newblock {\em Tensor categories}, volume 205 of {\em Mathematical Surveys and
  Monographs}.
\newblock American Mathematical Society, Providence, RI, 2015.

\bibitem{ENO10}
P.~Etingof, D.~Nikshych, and V.~Ostrik.
\newblock Fusion categories and homotopy theory.
\newblock {\em Quantum Topol.}, 1(3):209--273, 2010.
\newblock With an appendix by Ehud Meir.

\bibitem{ENO11}
P.~Etingof, D.~Nikshych, and V.~Ostrik.
\newblock Weakly group-theoretical and solvable fusion categories.
\newblock {\em Adv. Math.}, 226(1):176--205, 2011.

\bibitem{EG22}
D.~E. Evans and T.~Gannon.
\newblock Reconstruction and local extensions for twisted group doubles, and
  permutation orbifolds.
\newblock {\em Trans. Amer. Math. Soc.}, 375(4):2789--2826, 2022.

\bibitem{GJ19}
T.~Gannon and C.~Jones.
\newblock Vanishing of categorical obstructions for permutation orbifolds.
\newblock {\em Comm. Math. Phys.}, 369(1):245--259, 2019.

\bibitem{GNN09}
S.~Gelaki, D.~Naidu, and D.~Nikshych.
\newblock Centers of graded fusion categories.
\newblock {\em Algebra Number Theory}, 3(8):959--990, 2009.

\bibitem{Ghosh11}
S.~K. Ghosh.
\newblock Planar algebras: a category theoretic point of view.
\newblock {\em J. Algebra}, 339:27--54, 2011.

\bibitem{GN21}
J.~Green and D.~Nikshych.
\newblock On the braid group representations coming from weakly
  group-theoretical fusion categories.
\newblock {\em J. Algebra Appl.}, 20(1):Paper No. 2150210, 20, 2021.

\bibitem{Gui21}
B.~Gui.
\newblock Genus-zero permutation-twisted conformal blocks for tensor product
  vertex operator algebras: The tensor-factorizable case.
\newblock {\em arXiv:2111.04662}, 2021.

\bibitem{GJS10}
A.~Guionnet, V.~F.~R. Jones, and D.~Shlyakhtenko.
\newblock {\em Random matrices, free probability, planar algebras and
  subfactors}, volume~11 of {\em Clay Math. Proc.}
\newblock Amer. Math. Soc., Providence, RI, 2010.

\bibitem{GRR20}
P.~Gustafson, E.~C. Rowell, and Y.~Ruan.
\newblock Metaplectic categories, gauging and property {$F$}.
\newblock {\em Tohoku Math. J. (2)}, 72(3):411--424, 2020.

\bibitem{HNW13}
M.~B. Hastings, C.~Nayak, and Z.~Wang.
\newblock {Metaplectic Anyons, Majorana Zero Modes, and their Computational
  Power}.
\newblock {\em Phys. Rev.}, B87(16):165421, 2013.

\bibitem{HNW14}
M.~B. Hastings, C.~Nayak, and Z.~Wang.
\newblock On metaplectic modular categories and their applications.
\newblock {\em Comm. Math. Phys.}, 330(1):45--68, 2014.

\bibitem{HPT23}
A.~G. Henriques, D.~Penneys, and J.~Tener.
\newblock Planar algebras in braided tensor categories.
\newblock {\em Mem. Amer. Math. Soc.}, 282(1392):vi+106, 2023.

\bibitem{JL17}
A.~Jaffe and Z.~Liu.
\newblock Planar para algebras, reflection positivity.
\newblock {\em Comm. Math. Phys.}, 352(1):95--133, 2017.

\bibitem{Jones17}
V.~Jones.
\newblock Some unitary representations of {T}hompson's groups {$F$} and {$T$}.
\newblock {\em J. Comb. Algebra}, 1(1):1--44, 2017.

\bibitem{Jones21}
V.~F.~R. Jones.
\newblock Planar algebras, {I}.
\newblock {\em New Zealand J. Math.}, 52:1--107, 2021 [2021--2022].

\bibitem{KLX05}
V.~G. Kac, R.~Longo, and F.~Xu.
\newblock Solitons in affine and permutation orbifolds.
\newblock {\em Comm. Math. Phys.}, 253(3):723--764, 2005.

\bibitem{KLM01}
Y.~Kawahigashi, R.~Longo, and M.~M\"{u}ger.
\newblock Multi-interval subfactors and modularity of representations in
  conformal field theory.
\newblock {\em Comm. Math. Phys.}, 219(3):631--669, 2001.

\bibitem{Kir04}
A.~{Kirillov}, Jr.
\newblock {On $G$-equivariant modular categories}.
\newblock {\em ArXiv Mathematics e-prints}, Jan. 2004.

\bibitem{Liu19}
Z.~Liu.
\newblock Quon language: surface algebras and {F}ourier duality.
\newblock {\em Comm. Math. Phys.}, 366(3):865--894, 2019.

\bibitem{LMWW23a}
Z.~Liu, S.~Ming, Y.~Wang, and J.~Wu.
\newblock {3-Alterfolds and Quantum Invariants}.
\newblock arXiv preprint arXiv:2307.12284.

\bibitem{LMWW23b}
Z.~Liu, S.~Ming, Y.~Wang, and J.~Wu.
\newblock {Alterfold Topological Quantum Field Theory}.
\newblock arXiv preprint arXiv:2312.06477.

\bibitem{LMP20}
Z.~Liu, S.~Morrison, and D.~Penneys.
\newblock Lifting shadings on symmetrically self-dual subfactor planar
  algebras.
\newblock In {\em Topological phases of matter and quantum computation}, volume
  747 of {\em Contemp. Math.}, pages 51--61. Amer. Math. Soc., [Providence],
  RI, [2020] \copyright 2020.

\bibitem{LWJ17}
Z.~Liu, A.~Wozniakowski, and A.~M. Jaffe.
\newblock Quon 3{D} language for quantum information.
\newblock {\em Proc. Natl. Acad. Sci. USA}, 114(10):2497--2502, 2017.

\bibitem{LX19}
Z.~Liu and F.~Xu.
\newblock Jones-{W}assermann subfactors for modular tensor categories.
\newblock {\em Adv. Math.}, 355:106775, 40, 2019.

\bibitem{LX04}
R.~Longo and F.~Xu.
\newblock Topological sectors and a dichotomy in conformal field theory.
\newblock {\em Comm. Math. Phys.}, 251(2):321--364, 2004.

\bibitem{MW21}
D.~Margalit and R.~R. Winarski.
\newblock Braids groups and mapping class groups: the {B}irman-{H}ilden theory.
\newblock {\em Bull. Lond. Math. Soc.}, 53(3):643--659, 2021.

\bibitem{MPS10}
S.~Morrison, E.~Peters, and N.~Snyder.
\newblock Skein theory for the {$D_{2n}$} planar algebras.
\newblock {\em J. Pure Appl. Algebra}, 214(2):117--139, 2010.

\bibitem{muger03}
M.~M\"{u}ger.
\newblock From subfactors to categories and topology. {II}. {T}he quantum
  double of tensor categories and subfactors.
\newblock {\em J. Pure Appl. Algebra}, 180(1-2):159--219, 2003.

\bibitem{Muger05}
M.~M\"{u}ger.
\newblock Conformal orbifold theories and braided crossed {$G$}-categories.
\newblock {\em Comm. Math. Phys.}, 260(3):727--762, 2005.

\bibitem{NR11}
D.~Naidu and E.~C. Rowell.
\newblock A finiteness property for braided fusion categories.
\newblock {\em Algebr. Represent. Theory}, 14(5):837--855, 2011.

\bibitem{RT91}
N.~Reshetikhin and V.~G. Turaev.
\newblock Invariants of {$3$}-manifolds via link polynomials and quantum
  groups.
\newblock {\em Invent. Math.}, 103(3):547--597, 1991.

\bibitem{Row05f}
E.~C. Rowell.
\newblock From quantum groups to unitary modular tensor categories.
\newblock In {\em Representations of algebraic groups, quantum groups, and
  {L}ie algebras}, volume 413 of {\em Contemp. Math.}, pages 215--230. Amer.
  Math. Soc., Providence, RI, 2006.

\bibitem{RW17}
E.~C. Rowell and H.~Wenzl.
\newblock {${\rm SO}(N)_2$} braid group representations are {G}aussian.
\newblock {\em Quantum Topol.}, 8(1):1--33, 2017.

\bibitem{Ruan22}
Y.~Ruan.
\newblock Projective representations of hecke groups from topological quantum
  field theory.
\newblock {\em arXiv:2203.10745}, 2022.

\bibitem{TurHQFT}
V.~Turaev.
\newblock {\em Homotopy quantum field theory}, volume~10 of {\em EMS Tracts in
  Mathematics}.
\newblock European Mathematical Society (EMS), Z\"{u}rich, 2010.
\newblock Appendix 5 by Michael M\"{u}ger and Appendices 6 and 7 by Alexis
  Virelizier.

\bibitem{Tur10}
V.~G. Turaev.
\newblock {\em Quantum invariants of knots and 3-manifolds}, volume~18 of {\em
  De Gruyter Studies in Mathematics}.
\newblock Walter de Gruyter \& Co., Berlin, revised edition, 2010.

\end{thebibliography}
\end{document}